\newcommand{\wcV}{\widetilde{\mathcal{V}}}
\newcommand{\Fav}{\mathbf{Fav}}
\newcommand{\ubarr}{\underline{\mathrm{bar}}}
\newcommand{\Inside}{\mathbf{Inside}}
\newcommand{\boxx}{\mathrm{Box}}
\newcommand{\Barr}{\mathbf{Bar}}
\newcommand{\obarr}{\overline{\mathrm{bar}}}
\newcommand{\QQ}{\mathbb{Q}}
\newcommand{\Across}{\mathbf{Across}}
\newcommand{\wcL}{\widetilde{\mathcal{L}}}
\newcommand{\corr}{\mathrm{Corridor}}
\newcommand{\In}{\mathrm{In}}
\newcommand{\cI}{\mathcal{I}}
\newcommand{\RR}{\mathbb{R}}
\newcommand{\Reg}{\mathbf{Reg}}
\newcommand{\PP}{\mathbb{P}}
\newcommand{\EE}{\mathbb{E}}
\newcommand{\good}{\mathrm{good}}
\newcommand{\Z}{\mathbb{Z}}
\newcommand{\NN}{\mathbb{N}}
\newcommand{\NU}{\mathrm{NU}}
\newtheorem{theorem}{Theorem}
\newtheorem{lemma}[theorem]{Lemma}
\newtheorem{definition}[theorem]{Definition}
\newtheorem{proposition}[theorem]{Proposition}
\theoremstyle{definition}
\newtheorem{remark}[theorem]{Remark}
\def\Var{{\rm Var}}
\newcommand{\cR}{\mathcal{R}}
\newcommand{\cP}{\mathcal{P}}
\newcommand{\cA}{\mathcal{A}}
\newcommand{\cB}{\mathcal{B}}
\newcommand{\cE}{\mathcal{E}}
\newcommand{\cN}{\mathcal{N}}
\newcommand{\cL}{\mathcal{L}}
\newcommand{\cS}{\mathcal{S}}
\newcommand{\inte}{\mathrm{int}}
\newcommand{\ZZ}{\mathbb{Z}}
\newcommand{\dis}{\mathrm{dis}}
\newcommand{\cH}{\mathcal{H}}
\newcommand{\cT}{\mathcal{T}}
\newcommand{\cor}{\mathrm{Cor}}
\newcommand{\leb}{\mathrm{Leb}}
\begin{document}
\title[]{A Peano curve from mated geodesic trees in the directed landscape}
\author[]{Riddhipratim Basu}
\address{Riddhipratim Basu, International Centre for Theoretical Sciences, Tata Institute of Fundamental Research, Bangalore, India} 
\email{rbasu@icts.res.in}
\author[]{Manan Bhatia}
\address{Manan Bhatia, Department of Mathematics, Massachusetts Institute of Technology, Cambridge, MA, USA}
\email{mananb@mit.edu}
\date{}
\maketitle
\begin{abstract}
For the directed landscape, the putative universal space-time scaling limit object in the (1+1) dimensional Kardar-Parisi-Zhang (KPZ) universality class, consider the geodesic tree-- the tree formed by the coalescing semi-infinite geodesics in a given direction. As shown in \cite{Bha23}, this tree comes interlocked with a dual tree, which (up to a reflection) has the same marginal law as the geodesic tree. Analogous examples of one ended planar trees formed by coalescent semi-infinite random paths and their duals are objects of interest in various other probability models, a classical example being the Brownian web, which is constructed as a scaling limit of coalescent random walks. In this paper, we continue the study of the geodesic tree and its dual in the directed landscape and exhibit a new space-filling curve traversing between the two trees that is naturally parametrized by the area it covers and encodes the geometry of the two trees; this parallels the construction of the T\'oth-Werner curve between the Brownian web and its dual. We study the regularity and fractal properties of this Peano curve, exploiting simultaneously the symmetries of the directed landscape and probabilistic estimates obtained in planar exponential last passage percolation, which is known to converge to the directed landscape in the scaling limit. On the way, we develop a novel coalescence estimate for geodesics, and this has recently found application in other work. 
\end{abstract}
\begin{figure}
  \centering
    \includegraphics[width=0.50\linewidth]{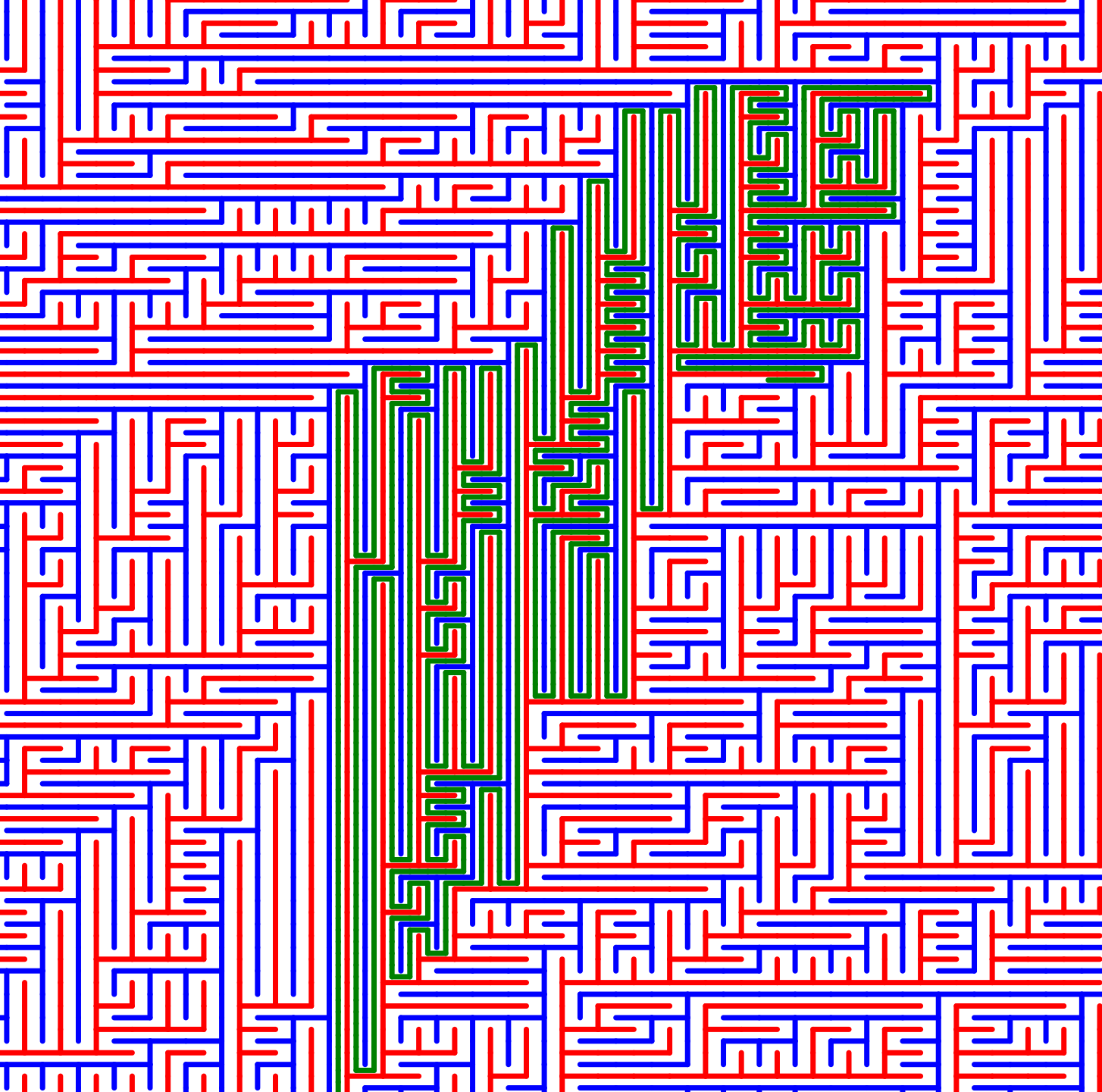}
  \caption{\textbf{A simulation of the Peano curve for exponential LPP}: The blue paths form the tree of semi-infinite geodesics in the $(1,1)$ direction, while the red paths form the dual tree, which we note has the same law as the geodesic tree up to a reflection \cite{Pim16}. The green curve drawn in between the two mated trees is a portion of the Peano curve, and the corresponding curve in the directed landscape is space-filling.}
  \label{fig:simul}
\end{figure}

\newpage
 \tableofcontents

\section{Introduction}
\label{sec:introduction}
First conceived in the doctoral thesis of Arratia \cite{Arr79,Arr81}, the Brownian web is an object consisting of the tree formed by coalescing one-dimensional Brownian motions starting from every point in $\RR^2$, with the vertical coordinate being viewed as the time coordinate and the horizontal axis being the space axis. Remarkably, the Brownian web enjoys a duality, in the sense that the above-mentioned tree can be interlaced with a tree of dual paths which completely avoid the primal paths, and it turns out that this dual tree is also marginally a Brownian web. A couple decades after the work of Arratia, T\'oth and Werner \cite{TW98} revived the study of the Brownian web by constructing a space-filling curve exploring the interface between the Brownian web and its dual, a curve which is now known as the T\'oth-Werner curve. In fact, after the above work, the theme of a space-filling curve {interleaving} between two ``mated'' trees has occurred a number of times in probability, with prominent examples being the Peano curve corresponding to the uniform spanning tree \cite{LSW04} on $\ZZ^2$ and the Schramm-Loewner evolution curves arising in the mating of trees \cite{DMS14} construction of Liouville quantum gravity \cite{She22}.

Apart from constructing the Peano curve corresponding to the Brownian web (the T\'oth-Werner curve), the work \cite{TW98} found an intriguing connection between the so-called ``true'' self-avoiding walk and the corresponding Peano curve for a family of coalescing simple random walks on the lattice. Using this connection, they were able to interpret the T\'oth-Werner curve as the true self-repelling motion (TSRM), the scaling limit of true self-avoiding walks. Moreover, by investigating the precise continuity and variation properties of the T\'oth-Werner curve, they uncovered fine information about the corresponding TSRM.

From the perspective of the above results and the connection to true self-avoiding walks, an important property of the Brownian web which makes it tractable is the independence present within. Indeed, if we reveal the Brownian path emanating upwards from a point $p\in \RR^2$, then the Brownian path from another point $q\neq p$ evolves independently till it encounters the above path and merges thereafter. In this paper, we shall give a construction of a Peano curve in the setting of the so-called (1+1) dimensional Kardar-Parisi-Zhang (KPZ) universality class, where the above independence structure is no longer present. Indeed, we shall work with the directed landscape, a model of planar random geometry constructed in \cite{DOV18} and believed to be the canonical scaling limit of a large class of growth models in (1+1) dimensional Kardar-Parisi-Zhang (KPZ) universality class. The goal of this work is to define a natural space-filling curve in the directed landscape-- the Peano curve between the geodesic tree and its dual, and to investigate its precise regularity and fractal properties. The stark difference in this setting when compared to the Brownian web is that if one reveals one of the strands comprising the geodesic tree, then this provides non-trivial information about distant portions of the geodesic tree. In particular, due to the rich correlation structure in this setting, there is no direct analogue of the iterative sampling of the Brownian web via coalescing Brownian motions.

We now introduce the directed landscape in some detail.
The directed landscape $\cL$ is a random real valued function on
\begin{equation}
  \label{eq:55}
  \RR_\uparrow^4=\{(x,s;y,t)\in \RR^4 :s<t\},
\end{equation}
with the value $\cL(x,s;y,t)$ to be interpreted as the ``distance" between two points $(x,s)$ and $(y,t)$ satisfying $s<t$. One can go a step further and define the weight $\ell(\psi)$ of a path $\psi\colon [s,t]\rightarrow \RR$ by
\begin{equation}
  \label{eq:56}
  \ell(\psi)=\inf_{k\in \NN}\inf_{s=t_0<t_1<\cdots<t_k=t}\sum_{i=1}^k \cL(\psi(t_{i-1}),t_{i-1};\psi(t_i),t_i).
\end{equation}
Now, a natural definition of a geodesic $\gamma_{(x,s)}^{(y,t)}$ between points $(x,s),(y,t)$ is a path $\psi$ which maximizes $\ell(\psi)$ while satisfying $\psi(s)=x$ and $\psi(t)=y$. As expected, it can be shown that geodesics exist almost surely between any two ordered points, and further, every fixed pair of points a.s.\ have a unique geodesic between them.

In fact, it can be shown that a.s.\ all points $(x,s)$ have an upward semi-infinite geodesic emanating out, in the sense that there is a path $\Gamma_{(x,s)}\colon [s,\infty)\rightarrow \RR$ with $\Gamma_{(x,s)}(s)=x$ and $\lim_{t\rightarrow \infty}\Gamma_{(x,s)}(t)/t\rightarrow 0$ along with the property that any finite segment of $\Gamma_{(x,s)}$ is a geodesic between its endpoints; we note that as in the above, we will always use $\Gamma_{(x,s)}$ to denote both a function of time and the graph of this function. For any fixed $(x,s)$, it can be shown that $\Gamma_{(x,s)}$ is a.s.\ unique, and in fact, the union of the interiors of $\Gamma_p$ over all $p\in \RR^2$ a.s\ forms a one-ended tree which we call the geodesic tree and denote by $\cT_\uparrow$. The above geodesic tree turns out to be interlaced with another tree \cite{Bha23, Pim16}, the interface portrait $\cI_\downarrow$, which is in fact equal to the set of points $p$ with multiple `downward' geodesics $\Gamma_{p,\downarrow}$ emanating out of them, where the latter are defined analogously to upward semi-infinite geodesics. With the motivations described earlier in mind, it is thus natural to consider the Peano curve $\eta$ snaking in between the above two mated trees $\cT_\uparrow$ and $\cI_\downarrow$, and we normalize $\eta$ such that $\eta(0)=0$ and parametrize it so that $\leb( \eta[v,w])=w-v$ for every $v<w\in \RR$, where $\leb$ denotes the Lebesgue measure on the plane. %

Having defined the Peano curve $\eta$ (the formal construction appears in Theorem \ref{thm:1} below), we seek to understand its precise smoothness, and in fact establish that when $\RR^2$ is endowed with the `intrinsic' metric (see \eqref{eq:54}) attuned to the symmetries inherent in the directed landscape, then $\eta$ is a.s.\ locally H\"older $1/5-$ continuous. The proof of the above is delicate and involves ruling out geodesics which have very few other geodesics merging into them throughout their length. This is difficult since it involves showing some degree of decorrelation in the environment around a geodesic, and the existing techniques \cite{DSV22,MSZ21} for doing so are not quantitative. To this end, we go back to the pre-limiting exponential LPP model (that converges to the directed landscape \cite{DV21}) and harness the understanding of the environment conditional on a geodesic using the FKG inequality and a barrier-style argument %
to show that with a \emph{superpolynomially} high probability, at least at some place along the length of the geodesic, the geodesics from nearby points merge quickly into the aforementioned geodesic. The crucial novelty and central difficulty here is that the above estimate is required to hold with \emph{superpolynomially} high probability since, until now, all coalescence estimates in the directed landscape or exponential LPP yield polynomial bounds.

We also show that the $\eta$ is not H\"older $1/5$ continuous with respect to the intrinsic metric, and in fact, we show that it has finite $5$th variation in a certain natural sense, and this turns out to be deterministic just as the case of quadratic variation for Brownian motion. Finally, we investigate some fractal aspects of the Peano curve-- we show that under mild conditions, the intrinsic Hausdorff dimension of a set $\cA$ directly determines the dimension of the set $\dim \eta^{-1} (\cA)$.

The techniques used in this paper form a juxtaposition of two approaches. The first among these attempts to harness the rich class of symmetries present in the directed landscape, and we use these extensively-- as can especially be noticed in the proof of Theorem \ref{thm:3}, where a Poisson process is specifically introduced to leverage these symmetries. The second approach is percolation theoretic and much more hands-on and makes a crucial appearance in the proof of the $1/5-$ H\"older continuity, where we first need to argue that the geodesic looks regular at many scales and then subsequently place barriers at a positive density of locations along its whole length.

We note that the results of this work have recently found application in another work of the second author \cite{Bha24+}, and we refer the reader to Section \ref{s:application} for a discussion of this.
\subsection{The main results}
\label{sec:main-result}
To set up the stage for defining the Peano curve, we first discuss infinite geodesics along with the corresponding geodesic tree and interface portrait. In the directed landscape, for every direction $\theta\in \RR$ and point $p=(x,s)$, there exists \cite{RV21,GZ22,BSS22} a path $\Gamma^\theta_p\colon[s,\infty)\rightarrow \RR$ satisfying $\Gamma^\theta_p(s)=x$, $\lim_{t\rightarrow \infty}\Gamma^\theta_p(t)/t=\theta$ along with the property that $\Gamma^\theta_p\lvert_{[s,t]}$ is a finite geodesic for every interval $[s,t]$. We refer to $\Gamma^\theta_p$ as a $\theta$-directed upward semi-infinite geodesic emanating from $p$, and note that for a fixed direction $\theta$ and a fixed point $p$, there is a.s.\ a unique \cite{RV21} such geodesic. We will always work with $\theta=0$ and in this case, we drop $\theta$ from the notation, and just call such a geodesic $\Gamma_p$ as an upward semi-infinite geodesic. Similarly, for $p=(x,s)$, we can analogously define downward $\theta$-directed semi-infinite geodesics $\Gamma_{p,\downarrow}^\theta\colon (-\infty,s]\rightarrow \RR$ which have $\Gamma_{p,\downarrow}^\theta(s)=x$ and satisfy $\Gamma_{p,\downarrow}^\theta(t)/t\rightarrow \theta$ as $t\rightarrow -\infty$, and we again drop $\theta$ from the notation if $\theta=0$. %
We define the upward and downward geodesic trees by $\cT_\uparrow=\bigcup_{p\in \RR^2}\inte \Gamma_p$, $\cT_\downarrow=\bigcup_{p\in \RR^2}\inte \Gamma_{p,\downarrow}$, where the union in both cases is over all points $p$ and all upward/downward geodesics emanating out of $p$, and by the interior of a semi-infinite path, we mean the path with its endpoint removed.

Even for the fixed direction $0$, there still do exist exceptional points $p\in \RR^2$ which admit multiple semi-infinite geodesics $\Gamma_p$, and we use $\NU_\uparrow^k(\cL)$ to denote the collection of points admitting at least $k$ such geodesics. In case of downward geodesics we denote the non-uniqueness set corresponding to $\NU_\uparrow^k(\cL)$ by $\NU_\downarrow^k(\cL)$.
The following result from \cite{Bha23} establishes a duality for the landscape by using the well-known pre-limiting duality between geodesics and competition interfaces from \cite{Pim16} and, in particular, shows that the set $\NU_\uparrow^2(\cL)$ itself has a natural tree structure.
\begin{proposition}[{\cite{Bha23}}]
  \label{prop:4}
  Almost surely, $\cT_\uparrow, \cT_\downarrow$ form one ended trees. Further, there exists a coupling of $\cL$ with a dual landscape $\wcL\stackrel{d}{=}\cL$ such that $\NU_\downarrow^2(\wcL)=\cT_\uparrow(\cL)$ and $\NU_\uparrow^2(\cL)=\cT_\downarrow(\wcL)$. %
 With $\cI_\downarrow=\cI_\downarrow(\cL)$ denoting the latter object, which we call the interface portrait, the sets $\cT_\uparrow$ and $\cI_\downarrow$ are a.s.\ disjoint. 
\end{proposition}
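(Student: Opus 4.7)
The plan is to establish in sequence the tree structure of $\cT_\uparrow$ and $\cT_\downarrow$, the duality coupling with $\wcL$, and the disjointness $\cT_\uparrow \cap \cI_\downarrow = \emptyset$. For the tree structure, the key inputs are a.s.\ uniqueness of the upward $0$-directed semi-infinite geodesic $\Gamma_p$ for any fixed $p \in \RR^2$ (from \cite{RV21}) together with a.s.\ coalescence of any two such geodesics in finite time. These imply that the union $\bigcup_{p \in D} \Gamma_p$ over a countable dense set $D \subset \RR^2$ is a one-ended tree, and the full $\cT_\uparrow$ is obtained by appending the (necessarily coalescing) geodesics from the remaining points. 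The statement for $\cT_\downarrow$ then follows from the time-reversal symmetry of $\cL$.

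For the duality coupling, I would transport the discrete construction of \cite{Pim16} to the continuum via the convergence of exponential LPP to $\cL$ proved in \cite{DV21}. In exponential LPP, the competition interface tree is dual to the geodesic tree, and the lattice rotation by $180^\circ$ exchanges the two while preserving the i.i.d.\ exponential weight distribution. Implementing this at the pre-limiting level produces a coupled pair $(\cL_n, \wcL_n)$ with $\wcL_n \stackrel{d}{=} \cL_n$ in which the multi-geodesic set of $\cL_n$ equals the geodesic tree of $\wcL_n$; passing to the scaling limit then yields the required coupling $(\cL, \wcL)$ together with the identification $\NU_\uparrow^2(\cL) = \cT_\downarrow(\wcL)$. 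I expect the main obstacle here to be that $\NU_\uparrow^2(\cL)$ is an exceptional measure-zero fractal set, so substantial quantitative control on LPP geodesics is required to verify that no multi-geodesic points are created or destroyed in the limit, and in particular that the scaling limit of the discrete competition interface tree is exactly $\NU_\uparrow^2(\cL)$ and not a strict sub- or super-set.

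Finally, disjointness of $\cT_\uparrow$ and $\cI_\downarrow = \cT_\downarrow(\wcL)$ reduces, by the preceding identification, to showing $\cT_\uparrow(\cL) \cap \cT_\downarrow(\wcL) = \emptyset$ under the coupling. At the discrete LPP level the primal geodesic tree lives on the primal lattice while the competition interface tree lives on the shifted dual lattice, so the two are disjoint as subsets of $\RR^2$ by construction; the task is to show that this disjointness survives the scaling limit, again requiring sufficient local regularity of both trees. As a more direct check complementing this, if a point $p = (\Gamma_{p_0}(t), t)$ lay in both $\cT_\uparrow$ and $\NU_\uparrow^2(\cL)$, concatenating each of the two distinct upward continuations at $p$ with $\Gamma_{p_0}|_{[s_0,t]}$ would yield two distinct upward geodesics from $p_0$; combining this splicing with the tree structure of $\cI_\downarrow$ just obtained propagates the coincidence along $\cT_\uparrow$ in a manner that contradicts the already-established tree structure, and hence rules it out.
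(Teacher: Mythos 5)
This proposition is not proved in the paper at all: it is imported verbatim from \cite{Bha23}, with the paper explicitly taking $\cI_\downarrow:=\NU_\uparrow^2(\cL)$ as a \emph{definition} precisely because the identification of the separately-constructed interface portrait with the non-uniqueness set is a nontrivial theorem there. Your sketch follows the same general route as \cite{Bha23} (Pimentel's discrete geodesic/competition-interface duality plus the \cite{DV21} convergence), so at the level of strategy you are aligned with the cited proof; the problem is that the steps you wave at are exactly the hard content, and two of your concrete claims do not hold as stated.

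First, the assertion that at the pre-limiting level one gets a coupled pair $(\cL_n,\wcL_n)$ "in which the multi-geodesic set of $\cL_n$ equals the geodesic tree of $\wcL_n$" is wrong: in discrete exponential LPP the $(1,1)$-directed semi-infinite geodesic from every lattice vertex is almost surely unique \emph{simultaneously} (there are only countably many vertices), so the discrete analogue of $\NU_\uparrow^2$ is empty. The discrete duality is between the geodesic tree and the \emph{competition interface} tree on the shifted dual lattice; the equality of the limiting interface tree with the continuum non-uniqueness set $\NU_\uparrow^2(\cL)$ is a purely continuum statement that has to be proved separately (this is precisely the theorem in \cite{Bha23} alluded to in the remark after the proposition), and it does not follow from "no multi-geodesic points are created or destroyed in the limit," since there are none to begin with in the prelimit. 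Second, your "direct check" of disjointness is flawed: if $p$ lies in the interior of $\Gamma_{p_0}$ and also admits a second upward semi-infinite geodesic $\Gamma_p'$, concatenating $\Gamma_{p_0}\lvert_{[s_0,t]}$ with $\Gamma_p'$ need not produce a semi-infinite geodesic from $p_0$ (a finite segment straddling $p$ need not be weight-maximizing), so no contradiction with uniqueness or with the tree structure is actually derived; the disjointness of $\cT_\uparrow$ and $\cI_\downarrow$ genuinely requires the finer continuum arguments of \cite{Bha23}. Relatedly, one-endedness of $\cT_\uparrow$ as a union over \emph{all} points and \emph{all} (possibly non-unique) geodesics needs more than uniqueness and coalescence for a countable dense set of starting points; the exceptional points are exactly where care is needed.
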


We note that in \cite{Bha23}, the interface portrait $\cI_\downarrow$ has its own separate definition, and the relation $\NU_\uparrow^2(\cL)=\cI_\downarrow$ is obtained as a theorem. However, for the purpose of this paper, it is more efficient to take the latter as a definition and this is what we do. The trees $\cT_\uparrow,\cI_\downarrow$ should be thought of as mated trees (see Figures \ref{fig:simul}, \ref{fig:simul1} for simulations in exponential LPP). As in the above proposition, we will use $\wcL$ to denote the dual landscape throughout the paper and will further use $\Upsilon_p$ to denote a geodesic $\Gamma_{p,\downarrow}(\wcL)$, where we note that there will exist points $p$ for which $\Upsilon_p$ is not unique. The paths $\Upsilon_p$ will usually be called interfaces. We now state a theorem introducing the Peano curve $\eta$ and describing some of its fundamental properties.

\begin{theorem}
  \label{thm:1}
  There is a random space-filling curve $\eta=(\eta_h,\eta_u)\colon \RR\rightarrow \RR^2$ measurable with respect to $\cL$ and satisfying $\eta(0)=0$ with the following additional properties.
  \begin{enumerate}
  \item Almost surely, for every $v<w\in \RR$, $\leb (\eta([v,w]))=w-v$, where $\leb$ denotes the Lebesgue measure on the plane.
  \item Almost surely, for (Lebesgue) a.e.\ point $p$ in $\RR^2$, there is a unique $v$ for which $\eta(v)=p$ and moreover, the set of points $p$ for which the above $v$ is non-unique is equal to the set $\cT_\uparrow\cup \cI_\downarrow$ and a.s.\ has Euclidean Hausdorff dimension $4/3$.
  \item Almost surely, the set of points $p$ for which there exist three values of $v$ with $\eta(v)=p$ is equal to $\NU_\uparrow^3(\cL)\cup \NU_\downarrow^3(\wcL)$. Moreover, there is a.s.\ no point $p\in \RR^2$ admitting four values of $v$ for which $\eta(v)=p$.
  \item The curve $\eta$ exhibits translation invariance in the sense that for any fixed $v_0\in \RR$,
     \begin{displaymath}
      (\eta(\cdot+v_0)-\eta(v_0),\cL(\cdot+\eta(v_0);\cdot+\eta(v_0)))\stackrel{d}{=} (\eta(\cdot),\cL(\cdot;\cdot)).
    \end{displaymath}
  \item The curve $\eta$ respects $\mathrm{KPZ}$ scaling in the sense that for any fixed $\beta>0$, 
    \begin{displaymath}
      \eta(\beta v)\stackrel{d}{=}(\beta^{2/5}\eta_h(v),\beta^{3/5}\eta_u(v)),
    \end{displaymath}
    where the distributional equality holds as parametrized curves.
  \item The curve $\eta$ has time reversal symmetry in the sense that $-\eta(-v)\stackrel{d}{=}\eta(v)$.
  \end{enumerate}
\end{theorem}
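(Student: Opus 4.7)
The plan is to construct $\eta$ intrinsically from the two mated trees via a signed-area cocycle. For each generic point $p \in \RR^2$ (off the trees), the upward geodesic $\Gamma_p$ and downward interface $\Upsilon_p$ are unique, and their concatenation $\pi_p := \Upsilon_p \cup \{p\} \cup \Gamma_p$ is a bi-infinite continuous curve. For generic $p_1, p_2$, coalescence of upward $\cL$-geodesics in direction $0$ and of downward $\wcL$-geodesics (inherited from Proposition~\ref{prop:4}) forces $\pi_{p_1}$ and $\pi_{p_2}$ to agree outside a bounded time interval, so they bound a finite-area region $R(p_1, p_2) \subset \RR^2$. Define $A(p_1, p_2) = \pm \leb(R(p_1, p_2))$, with sign determined by which side of $\pi_{p_1}$ the point $p_2$ lies on; additivity of Lebesgue measure on the natural symmetric-difference decomposition gives the cocycle $A(p_1, p_2) + A(p_2, p_3) = A(p_1, p_3)$. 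Fix the a.s.\ generic origin as base point, set $\phi(p) = A(\0, p)$, and let $\eta(v)$ be the point(s) with $\phi = v$.

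Continuity and surjectivity of $\eta\colon\RR\to\RR^2$ reduce to showing that level sets $\phi^{-1}([v, v+\epsilon])$ have small spatial diameter (a quantitative coalescence statement following from transversal-fluctuation bounds for $\cL$-geodesics), together with the observation that $\phi$ takes arbitrarily large positive and negative values as $p$ ranges far from $\pi_\0$. Property~(1) is then immediate: $\eta([v,w]) = \phi^{-1}([v,w])$ has Lebesgue measure $w - v$ by the very definition of $\phi$. For (2) and (3), I would use a \emph{sector count}: at any $p$ with $j$ upward geodesics and $k$ downward interfaces, the $j + k$ half-paths locally partition a neighborhood of $p$ into $j + k$ sectors, and $\eta$ visits $p$ exactly $j + k - 1$ times, one for each cyclically adjacent pair. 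Multiply-visited points require $j + k \ge 3$; the case $j = k = 2$ is excluded by $\cT_\uparrow \cap \cI_\downarrow = \emptyset$, giving the non-unique set $\NU_\uparrow^2(\cL) \cup \NU_\downarrow^2(\wcL) = \cI_\downarrow \cup \cT_\uparrow$, whose Hausdorff dimension $4/3$ is from \cite{Bha23}. The same count identifies triple points with $\NU_\uparrow^3(\cL) \cup \NU_\downarrow^3(\wcL)$, and excludes quadruple points by combining $\cT_\uparrow \cap \cI_\downarrow = \emptyset$ with the a.s.\ absence of four-fold geodesic stars \cite{Bha22, Dau23+}.

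The symmetries (4)--(6) descend from corresponding invariances of $\cL$. Translation invariance of $\cL$ combined with the intrinsic character of the cocycle shows that re-centering at $\eta(v_0)$ simply reparametrizes $\eta$, yielding the joint distributional equality in (4). The KPZ $1{:}2{:}3$ scaling $q^{-1}\cL(q^2 x, q^3 s; q^2 y, q^3 t) \stackrel{d}{=} \cL(x, s; y, t)$ scales area by $q^5$; setting $\beta = q^5$ converts the spatial scaling $q^2$ into $\beta^{2/5}$ and the temporal scaling $q^3$ into $\beta^{3/5}$, giving (5). For (6), the time-reversal symmetry of $\cL$ swaps upward and downward semi-infinite geodesics in the two coupled landscapes, and combined with reflection through the origin the intrinsic Peano curve transforms as $-\eta(-v)\stackrel{d}{=}\eta(v)$. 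The \emph{main obstacle} in this plan is the quantitative continuity of $\eta$: ruling out that a very small area sliver between two bi-infinite paths can correspond to a spatially wide region. This requires sharp coalescence control for both $\cL$- and $\wcL$-geodesics at all scales, and is the same flavor of input that ultimately drives the intrinsic H\"older-$1/5$ regularity established later in the paper.
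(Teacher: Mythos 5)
Your construction is, at bottom, the same as the paper's: ordering points by the planar position of the pair $(\Upsilon_p,\Gamma_p)$ and parametrizing by enclosed Lebesgue area is exactly the corner ordering of Definition \ref{def:order} together with the defining relation $\leb([\widetilde{\eta}(0),\widetilde{\eta}(v)])=v$, and your sector count reproduces the identification of $k$-fold points with $\NU_\uparrow^k(\cL)\cup\NU_\downarrow^k(\wcL)$ (though note the correct count of visits is the number of admissible (interface, geodesic) pairs, i.e.\ $jk$, which equals $j+k-1$ only because disjointness of $\cT_\uparrow$ and $\cI_\downarrow$ forces $\min(j,k)=1$; ``one for each cyclically adjacent pair'' would literally give $j+k$). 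The first genuine gap is the continuity/space-filling step, which you defer as ``the main obstacle'' and claim needs sharp quantitative coalescence control at all scales. It does not: once one knows that any two distinct corners enclose a region of positive area (Lemma \ref{lem:1}) and has the compactness and overlap convergence of geodesics and interfaces (Proposition \ref{prop:7}, plus duality), continuity follows by a soft contradiction argument, which is precisely Lemma \ref{lem:2}. Conversely, the quantitative route you propose is not a consequence of transversal-fluctuation bounds alone: a lower bound on the area trapped near a geodesic is exactly the content of Proposition \ref{t:va}, which occupies all of Section \ref{sec:lpp} and is only needed for the H\"older exponent, not for Theorem \ref{thm:1}. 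So as written this step is both unproven and routed through a much harder statement than necessary.

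The second genuine gap is property (4). Translation invariance of $\cL$ gives invariance under re-rooting at a \emph{fixed deterministic point} $p\in\RR^2$, i.e.\ at the random time $v_p=\eta^{-1}(p)$ (this is Lemma \ref{lem:5}); but (4) asserts invariance under re-rooting at $\eta(v_0)$ for a \emph{fixed time} $v_0$, and $\eta(v_0)$ is a random point depending on $\cL$. Saying that the ``intrinsic character of the cocycle'' makes this a mere reparametrization only shows the deterministic identity between the re-rooted curve and the shifted one; it does not give the distributional equality. The paper needs a separate measure-theoretic argument (Lemma \ref{lem:6}): one pushes forward $\leb_\RR\times\mu_{\eta\times\cL}$ under the re-rooting map, uses the volume parametrization and a.e.\ injectivity of $\eta$ to show this map is a.e.\ invertible with image $\leb_{\RR^2}\times\mu_{\eta\times\cL}$, and then inverts to pass from invariance at a.e.\ fixed spatial point to invariance at a fixed time. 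Without some such Palm/re-rooting argument your proof of (4) is incomplete, and this matters downstream, since (4) is the symmetry used repeatedly in the H\"older and variation arguments.
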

It should be possible to isolate a minimal list of properties which determine the curve $\eta$ up to reflection/ time-reversal symmetries. However, we do not carry out the above and instead give a detailed hands-on construction of the Peano curve $\eta$ in Section \ref{sec:constr}, and the rest of the results pertain to this particular curve $\eta$. 

On encountering a random curve, a natural question is to investigate its regularity properties and this is what we turn to next. In order to present the most natural formulation of the result, we introduce the intrinsic metric for the directed landscape-- such a notion has been used in the works \cite{CHHM21, Dau23+}. For points $(x,s), (y,t)\in \RR^2$, we will often work with the metric
\begin{equation}
  \label{eq:54}
  d_\mathrm{in}((x,s),(y,t))=|x-y|^{1/2}+|s-t|^{1/3},
\end{equation}
and we will simply refer to the above as the intrinsic metric and use $\dim_\mathrm{in}$ to denote Hausdorff dimensions with respect to it. %
Throughout the paper, we will often work with different fractals $\cA\subseteq \RR$ will use $\dim \cA$ to denote the Hausdorff dimension with respect to the Euclidean metric on $\RR$. We now state our results on the the H\"older continuity of $\eta$.

\begin{theorem}
  \label{thm:2}
  The curve $\eta$ is a.s.\ locally $1/5-$ H\"older continuous with respect to the intrinsic metric, by which we mean that, almost surely, for any $\epsilon>0$ and $M>0$, $\eta\lvert_{[-M,M]}$ is $1/5-\epsilon$ H\"older continuous with respect to the metric $d_{\mathrm{in}}$ on $\RR^2$. 
\end{theorem}

\begin{theorem}
  \label{thm:5}
  Almost surely, for all intervals $I\subseteq \RR$, $\eta\lvert_I$ is not $1/5$ H\"older continuous with respect to the intrinsic metric.
\end{theorem}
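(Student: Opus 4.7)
The plan is to reduce the global statement to a pointwise one at the deterministic time $v=0$, and then combine KPZ scaling with a zero-one law to rule out the H\"older bound at that single point. By the translation invariance of $\eta$ in item (4) of Theorem \ref{thm:1} and Fubini, if
\[L_{v_0} := \limsup_{h \to 0^+} \frac{d_\mathrm{in}(\eta(v_0 + h), \eta(v_0))}{h^{1/5}}\]
satisfies $\PP(L_0 = +\infty) = 1$, then almost surely the set $\{v : L_v = +\infty\}$ has full Lebesgue measure in every interval of $\RR$, hence is dense in every such interval; density then contradicts $\eta|_I$ being $1/5$-H\"older on any $I$, since such H\"older-ness would force a finite uniform upper bound on $L_v$ for all $v\in I$. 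So it suffices to prove $L_0 = +\infty$ almost surely.

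Set $X_h := d_\mathrm{in}(\eta(h), \eta(0))/h^{1/5}$. Item (5) of Theorem \ref{thm:1} immediately yields $X_h \stackrel{d}{=} X_1$ for every $h > 0$, and more generally $\sup_{h \in (0, \epsilon]} X_h \stackrel{d}{=} \sup_{h \in (0, 1]} X_h$ for every $\epsilon > 0$ (functional scaling). Since
\[\{L_0 \geq M\} = \bigcap_{\epsilon > 0} \Bigl\{\sup_{h \in (0, \epsilon]} X_h \geq M\Bigr\}\]
is a decreasing intersection of events of common probability $q(M) := \PP(\sup_{h \in (0, 1]} X_h \geq M)$, one obtains $\PP(L_0 \geq M) = q(M)$.

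I would then prove $q(M) > 0$ for every $M$ by a direct construction: with positive probability there is a branch of $\cT_\uparrow \cup \cI_\downarrow$ near the origin whose width is atypically small compared to its intrinsic length (two nearby upward semi-infinite geodesics coalesce only after a time much larger than the KPZ-predicted coalescence scale, and such long-coalescence tails are classical in the LPP literature, cf.\ \cite{BSS14,BSS19,BB21}); when the Peano curve traces such a thin branch it moves a macroscopic intrinsic distance while accumulating very little area, forcing $X_h \geq M$ at some $h \in (0, 1]$. Thus $q(M) > 0$, hence $\PP(L_0 \geq M) > 0$ for every $M$. A short calculation from item (5) of Theorem \ref{thm:1} also shows $L_0$ is invariant under the KPZ scaling action on $\cL$ and depends only on the germ of $\cL$ at the origin; assuming triviality of this germ $\sigma$-algebra, $L_0$ is a.s.\ equal to a deterministic constant $c \in [0, \infty]$, and the positivity above forces $c \geq M$ for every $M$, i.e.\ $c = +\infty$.

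The main obstacle I anticipate is establishing the germ triviality used at the end. The directed landscape is a non-Markov field with long-range correlations, so the classical Blumenthal argument for Brownian motion does not transfer directly. One route is to exploit local absolute continuity of the parabolic Airy line ensemble with respect to Brownian motion on compact intervals to reduce to a Gaussian germ triviality; an alternative is to prove ergodicity of the scaling action on the pre-limit exponential LPP model (where the edge weights are genuinely independent, making a Kolmogorov-style argument natural) and transfer to the landscape via the convergence in \cite{DV21}.
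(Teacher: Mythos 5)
Your reduction to the single point $v=0$ (translation invariance plus Fubini, then density of $\{v:L_v=\infty\}$) and the scaling identity $\PP(L_0\geq M)=\PP(\sup_{h\in(0,1]}X_h\geq M)$ are both fine, but the proposal has two genuine gaps, one at each of the remaining steps. First, the positivity $q(M)>0$ for all $M$ is exactly the content of Proposition \ref{lem:9} (unbounded support of $\eta_u(1)$), and it does not follow from ``classical long-coalescence tails'': two nearby geodesics failing to coalesce for a long time does not force the region between them to have small area, since non-coalescing geodesics typically separate to order-one transversal distance, so the Peano curve can accumulate order-one area before covering the corresponding intrinsic distance. What is actually needed is a \emph{confinement} event -- two interfaces $\Upsilon_{p_\delta^1},\Upsilon_{p_\delta^2}$ pinned inside corridors of width $O(\delta)$ over unit height, so that the curve started between them reaches height $1$ within area $O(\delta)$ -- and establishing that this has positive probability requires the barrier-plus-FKG construction carried out in the prelimiting exponential LPP (in the spirit of \cite{BB21}) and transferred via Proposition \ref{prop:6}; this is substantive work, not an off-the-shelf estimate.

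Second, and more seriously, your upgrade from $\PP(L_0\geq M)>0$ to $\PP(L_0=\infty)=1$ rests on a germ (Blumenthal-type) zero-one law for the directed landscape at a space-time point, which you assume and which is not available; neither suggested route resolves it, and it is not even clear that $L_0$ is measurable with respect to the germ $\sigma$-algebra of $\cL$ at $0$, because $\eta$ near $v=0$ is built from semi-infinite geodesics and interfaces, which are non-local functionals of $\cL$ -- one only has the \emph{approximate} locality of Lemma \ref{lem:19}, valid on events of probability tending to $1$ but not equal to $1$. Note also that without the full zero-one law your Fubini step collapses: $\PP(L_0=\infty)=p\in(0,1)$ only gives expected positive measure for $\{v:L_v=\infty\}$, not almost-sure density in every interval. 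The paper circumvents any zero-one law precisely at this point: using the resampling coupling of Proposition \ref{prop:3} through Lemma \ref{lem:19}, the restrictions of $\eta$ to small parameter-windows around $K$ well-separated points are nearly independent, so the probability that none of them exhibits an increment with $|\eta_u(v_{z}+m^{-1})-z|^{5/3}\geq Mm^{-1}$ is at most $\PP(|\eta_u(1)|^{5/3}\leq M)^K+o(1)$, which tends to $0$ as $K\to\infty$ by the unbounded support; this finitely-many-nearly-independent-points argument (Lemma \ref{lem:18}), combined with a Fubini/ball argument to localize to every interval, is the substitute for the triviality you are missing, and you would need to incorporate something of this kind for your plan to close.
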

Though $\eta$ fails to be $1/5$ H\"older, one might hope that just as in the parallel case of Brownian motion, $\eta$ still has a well defined $5$th ``variation'' which is also deterministic, and we will show a version of this. For any $\alpha>0$, a given $f\colon\RR\rightarrow \RR^2$ with coordinates $f_h,f_u$, define $\In^\alpha_v(f,\epsilon)$ as the $\alpha$th ``variation'' increment
  \begin{equation}
    \label{eq:38}
    \In^\alpha_{v}(f,\epsilon)=|f_u(v+\epsilon)-f_u(v)|^{\alpha/3}+|f_h(v+\epsilon)-f_h(v)|^{\alpha/2}.
  \end{equation} %
With the above notation at hand, we have the following result.

\begin{theorem}
  \label{thm:3}
  Let $\Pi_n^1$ denote a Poisson point process on $\RR$ with rate $n$ independent of $\cL$. For any fixed interval $I$, let $\Pi^1_n\cap I=\{v^n_1,\dots,v^n_{\tau^n}\}$ %
  for a random integer $\tau^n$, with the $v^n_i$ being increasing in $i$. Then the sum
  \begin{equation}
    \label{eq:3}
    \sum_{i=1}^{\tau^n-1} \In^5_{v^n_i}(\eta,v^n_{i+1}-v^n_i)
  \end{equation}
 converges to $\EE \In^5_0(\eta,1)\leb(I)$ in probability as $n\rightarrow \infty$.
\end{theorem}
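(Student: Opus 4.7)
I plan to prove convergence of $S_n := \sum_{i=1}^{\tau^n-1} \In^5_{v_i^n}(\eta,v_{i+1}^n-v_i^n)$ to $C\leb(I)$, where $C := \EE\In^5_0(\eta,1)$, in $L^2$; this implies the claimed convergence in probability. The analysis separates into a first-moment computation, which follows directly from the symmetries of $\eta$, and a variance bound, which is where all the work lies. Throughout, the independence of $\Pi_n^1$ from $\eta$ is crucial.

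Combining Property~4 (translation invariance) with Property~5 (KPZ scaling) of Theorem~\ref{thm:1} yields $\EE\In^5_v(\eta,\epsilon) = \epsilon C$ independently of $v$; the finiteness of $C$ is not automatic, but by scaling it reduces to moment bounds on $|\eta_u(1)|^{5/3}$ and $|\eta_h(1)|^{5/2}$, which should follow from the explicit construction of $\eta$ in Section~\ref{sec:constr} together with standard upper-tail estimates for $\cL$. Applying Campbell's formula to the independent Poisson process $\Pi_n^1$ then gives
\[
\EE S_n \;=\; n^2 \int\!\!\!\int_{\substack{v,\, v+g\in I\\ g>0}} e^{-ng}\, gC\, dg\, dv \;\longrightarrow\; C\leb(I).
\]

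For the variance I use the conditional decomposition $\Var(S_n) = \EE[\Var(S_n\mid \eta)] + \Var(\EE[S_n\mid \eta])$. The first piece is handled by the Poincar\'e inequality for Poisson functionals: conditional on $\eta$, the discrete derivative $D_v S_n$ produced by inserting a point at $v$ into $\Pi_n^1$ equals a signed combination of at most three $\In^5$-increments over gaps of typical size $1/n$, each of $L^2$-norm $O(1/n)$ by scaling and by the second-moment bound $\EE\In^5_0(\eta,1)^2 < \infty$ (which again requires higher-moment estimates on $\eta(1)$). Integrating against intensity $n$ yields $\EE[\Var(S_n \mid \eta)] = O(1/n)$. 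By Mecke's formula, $\EE[S_n\mid \eta] = \int_I h_n(v)\, dv + (\text{boundary})$ with $h_n(v) := n^2 \int_0^\infty e^{-ng}\In^5_v(\eta,g)\, dg$, which is stationary in $v$ by Property~4 and has mean converging to $C$; scaling forces its $v$-correlation length to be of order $1/n$, so controlling $\Var(\EE[S_n\mid \eta])$ reduces to showing that the covariance integral $\int\!\!\int \Cov(h_n(v),h_n(w))\, dv\, dw$ is $o(1)$.

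\textbf{Main obstacle.} This last step is the crux: one needs quantitative decorrelation of $\In^5_v(\eta,\cdot)$ and $\In^5_w(\eta,\cdot)$ when $|v-w|$ exceeds $1/n$. Since $v$-shifts by $v_0$ correspond in law to spatial shifts of $\cL$ by the random displacement $\eta(v_0)$, the required mixing ultimately reduces to a spatial decorrelation statement for $\cL$ over disjoint regions combined with an a priori bound ensuring that $\eta(v)$ and $\eta(w)$ are not anomalously close when $|v-w|$ is macroscopic. An alternative route, suggested by the introduction's remark that the Poisson process is introduced precisely to leverage these symmetries, is to first rescale via Property~5 so that $\Pi_n^1$ becomes a rate-$1$ Poisson on the dilated window $nI$; then $S_n/n$ rewrites as a spatial average of a stationary marked Poisson functional built from a curve equidistributed with $\eta$, and the Poisson-independence together with translation invariance should yield the required concentration by a soft ergodic-theoretic argument, provided ergodicity of $\eta$ under $v$-translations can be established from the joint spatial ergodicity of $\cL$.
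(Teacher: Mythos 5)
Your setup is sound and your first-moment computation matches the paper's (Campbell/Palm calculus plus the translation and scaling symmetries), but the proposal has a genuine gap precisely at the point you flag as the ``main obstacle'': you never supply the decorrelation needed to show $\Var(\EE[S_n\mid\eta])\to 0$, and neither of your two suggested routes is a routine completion. Your reduction ``spatial decorrelation of $\cL$ over disjoint regions plus an a priori bound that $\eta(v),\eta(w)$ are not anomalously close'' is exactly the nontrivial content: since a shift in the $v$-parameter recenters $\cL$ at the \emph{random, $\cL$-dependent} point $\eta(v_0)$, one cannot directly invoke independence of $\cL$ on disjoint regions. The paper resolves this with a specific locality statement, Lemma \ref{lem:19}: for a fixed $z$, with probability tending to $1$ as $\delta\to0$ the restriction $\eta\lvert_{[v_z-\delta,v_z+\delta]}$ is a measurable function of $\cL$ restricted to a small box around $z$ (proved via the tail bounds of Propositions \ref{p:utail}, \ref{p:htail}, transversal-fluctuation estimates and geodesic compactness), which is then coupled to independently resampled landscapes via Proposition \ref{prop:3}. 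This is implemented not on $\RR$ but after pushing $\Pi^1_n$ forward by $\eta$ to a spatial Poisson process independent of $\cL$ (Lemma \ref{lem:8}), so that the off-diagonal second-moment term can be computed with two-point Palm measures at \emph{fixed, well-separated} spatial locations $z_1\neq z_2$, where Lemma \ref{lem:19} applies; without some substitute for this lemma your covariance integral $\int\!\!\int\Cov(h_n(v),h_n(w))\,dv\,dw=o(1)$ remains unproved.

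Two further points. First, your ergodic alternative is not ``soft'': ergodicity of the $v$-shift flow is not established anywhere (Lemma \ref{lem:6} gives only stationarity), and deducing it from spatial ergodicity of $\cL$ requires a skew-product argument (the realized shifts are the random points $\eta(v_0)$) together with a justification that $\cL$ is ergodic under spatial shifts -- none of which is in the paper, and the paper deliberately avoids this route. Second, your claim that finiteness of $\EE\In^5_0(\eta,1)$ and its higher moments follows from the construction of $\eta$ plus ``standard upper-tail estimates for $\cL$'' understates what is needed: upper tails of $|\eta_u(1)|,|\eta_h(1)|$ amount to lower bounds on the volume accumulated along semi-infinite geodesics, i.e.\ (at least a polynomial version of) Proposition \ref{t:va}, which feeds into Propositions \ref{p:utail}, \ref{p:htail} and then Lemma \ref{lem:22}. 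So the moment input is available in the paper but is not a standard estimate on $\cL$, and your variance argument (Poincar\'e bound and the off-diagonal term) quietly relies on it as well.
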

We note that a more natural formulation of the above result on the existence of $5$th variation appears as Proposition \ref{prop:2}. Also, we remark that by using Theorem \ref{thm:2} and Theorem \ref{thm:5}, it can be obtained that if the $5$ in \eqref{eq:3} is replaced by $\alpha$, then the sum converges in probability to $0$ or $\infty$ as $n\rightarrow \infty$ in the cases $\alpha>5$ and $\alpha< 5$ respectively.%

We now come to the final main result of the paper, and this deals with the question of how the fractal properties for sets $\cA\subseteq \RR^2$ transform when pulled back by $\eta$ to a subset of $\RR$. Morally, it is true that for a large class of sets $\cA$, the Hausdorff dimension $\dim \eta^{-1}(\cA)=(\dim_\mathrm{in} \cA)/5$, but the precise statement is more complicated and we state it now.
\begin{theorem}
  \label{thm:4}
  Almost surely, simultaneously for every set $\cA\subseteq \RR^2$, we have $\dim\eta^{-1}(\cA)\geq (\dim_{\mathrm{in}} \cA)/5$. On the other hand, for a fixed $M>0$, let $\cA\subseteq [-M,M]^2$ be a random closed set which is measurable with respect to $\cL$ with the additional property that there exists a $d>0$ and a constant $C>0$ such that $\EE\leb(B^\mathrm{in}_\epsilon(\cA))\leq C\epsilon^{5-d}$ for all $\epsilon \in (0,1)$, where $B^\mathrm{in}_\epsilon(\cA)$ denotes the intrinsic $\epsilon$ neighbourhood of $\cA$. Then we have the a.s.\ inequality $\dim \eta^{-1}(\cA)\leq d/5$.
\end{theorem}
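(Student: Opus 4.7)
Both bounds in Theorem \ref{thm:4} derive from the intrinsic $1/5$-H\"older continuity of $\eta$ (Theorem \ref{thm:2}) combined with the area parametrization and essential injectivity stipulated in Theorem \ref{thm:1}. For the lower bound, work on the full-probability event that, for every $L \in \NN$, $\eta$ is intrinsically $1/5$-H\"older on $[-L, L]$ with some finite random constant $C_L$. Writing $B_L = \eta^{-1}(\cA) \cap [-L, L]$, the H\"older bound gives $\dim_\mathrm{in} \eta(B_L) \leq 5 \dim B_L$; since $\eta$ is space-filling, $\bigcup_L \eta(B_L) = \cA$, so countable stability of Hausdorff dimension yields $\dim_\mathrm{in} \cA \leq 5 \dim \eta^{-1}(\cA)$. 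This argument is deterministic given the H\"older event, so the desired inequality holds for every $\cA \subseteq \RR^2$ simultaneously.

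For the upper bound, fix $s > d/5$ and, for each $n \geq 1$, partition $\RR$ into dyadic intervals $I_j^n = [j 2^{-5n}, (j+1) 2^{-5n})$ with lengths calibrated to the $1/5$-H\"older scale. Fix $L, N \in \NN$ and work on $\{C_L \leq N\}$. Let $S_n$ denote the set of $j$ such that $I_j^n \subseteq [-L, L]$ and $\eta(I_j^n) \cap \cA \neq \emptyset$; on this event the H\"older bound forces $\eta(I_j^n) \subseteq B_{N 2^{-n}}^\mathrm{in}(\cA)$ for each such $j$. Essential injectivity (property (2) of Theorem \ref{thm:1}) implies that the images $\eta(I_j^n)$, $j \in S_n$, have pairwise intersections of Lebesgue measure zero, and combined with the identity $\leb(\eta(I_j^n)) = 2^{-5n}$ from property (1) we obtain
\begin{equation*}
  |S_n|\, 2^{-5n} \;=\; \sum_{j \in S_n} \leb(\eta(I_j^n)) \;\leq\; \leb\bigl(B_{N 2^{-n}}^\mathrm{in}(\cA)\bigr).
\end{equation*}

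The intervals $(I_j^n)_{j \in S_n}$ cover $\eta^{-1}(\cA) \cap [-L + 1, L - 1]$ once $n$ is sufficiently large, so the Hausdorff $s$-premeasure at scale $2^{-5n}$ is bounded by $2^{5n(1 - s)} \leb(B_{N 2^{-n}}^\mathrm{in}(\cA))$. Taking expectations and invoking $\EE \leb(B_\epsilon^\mathrm{in}(\cA)) \leq C \epsilon^{5 - d}$ with $\epsilon = N 2^{-n}$, this bound is at most $C N^{5-d} 2^{n(d - 5s)}$, which is summable in $n$ because $s > d/5$. A standard Borel--Cantelli argument then gives $|S_n|\, 2^{-5sn} \to 0$ a.s.\ on $\{C_L \leq N\}$, whence $\mathcal{H}^s(\eta^{-1}(\cA) \cap [-L + 1, L - 1]) = 0$ a.s.\ there. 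Letting $N \to \infty$ (using $C_L < \infty$ a.s.), then $L \to \infty$, and finally passing through a countable sequence $s_k \downarrow d/5$ yields $\dim \eta^{-1}(\cA) \leq d/5$ almost surely. The real content is concentrated in Theorem \ref{thm:2}; once intrinsic $1/5$-H\"older continuity is in hand, the rest is a routine mass-transport estimate powered by the area parametrization, and the main quantitative leverage comes from matching the dyadic time-scale $2^{-5n}$ to the intrinsic spatial scale $2^{-n}$ so that the H\"older bound, the area identity, and the $\epsilon^{5-d}$ neighborhood hypothesis all align.
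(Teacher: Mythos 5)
Your upper-bound strategy — counting dyadic time intervals that meet $\eta^{-1}(\cA)$ and converting the count into Lebesgue area via the area parametrization and essential injectivity of Theorem \ref{thm:1} — is a genuinely different route from the paper's, which instead Poissonizes $\cA$ into a translation-invariant fractal $\cA^\dagger$, deduces a uniform hitting bound $\PP(B^\mathrm{in}_\epsilon(0)\cap\cA^\dagger\neq\emptyset)\leq C\epsilon^{5-d}$, and then combines the translation invariance of $\eta$ with the tail estimate of Proposition \ref{prop:tails}. However, as written your argument rests on a false premise: you assume that on each $[-L,L]$ the curve $\eta$ is intrinsically H\"older continuous with exponent exactly $1/5$ and a finite random constant $C_L$. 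Theorem \ref{thm:2} (the "$1/5-$" statement, via Propositions \ref{t:holderu} and \ref{t:holderh}) only yields every exponent strictly below $1/5$, and Theorem \ref{thm:5} of this very paper asserts that $\eta$ restricted to any interval is a.s.\ \emph{not} intrinsically $1/5$-H\"older, so no such $C_L$ exists. Hence the key step "$\eta(I_j^n)\cap\cA\neq\emptyset$ and $C_L\leq N$ force $\eta(I_j^n)\subseteq B^\mathrm{in}_{N2^{-n}}(\cA)$" fails as stated; the same misstatement appears (harmlessly) in your lower-bound paragraph, where the paper's own argument also only uses exponents $1/5-\delta$ and passes to the limit.

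The gap is repairable without changing your scheme: run the identical counting argument with the available exponent $1/5-\delta$, so that on $\{C_{L,\delta}\leq N\}$ one gets $\eta(I_j^n)\subseteq B^\mathrm{in}_{N2^{-n(1-5\delta)}}(\cA)$ and therefore $\EE\bigl[|S_n|\,2^{-5sn}\,\ind_{\{C_{L,\delta}\leq N\}}\bigr]\leq CN^{5-d}2^{n\left(5-5s-(1-5\delta)(5-d)\right)}$, which is summable whenever $s>d/5+\delta(5-d)$; then send $\delta\downarrow 0$ along a countable sequence, exactly as the paper absorbs its analogous $\theta$-loss in \eqref{eq:53}. With this repair your proof is correct, and it is arguably more elementary than the paper's: it avoids the Poisson augmentation of $\cA$ and does not invoke the translation invariance of $\eta$, relying only on the a.s.\ H\"older estimates together with properties (1) and (2) of Theorem \ref{thm:1}; the lower bound is then the same argument as in the paper modulo the exponent bookkeeping.
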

Though the upper bound part of the above statement a priori seems weaker than the lower bound, it is in practice good enough to generate matching upper bounds for a large class of fractals. We refer the reader to Remark \ref{rem:1} for a detailed discussion of this point. We also mention that Theorem \ref{thm:4} is analogous to the KPZ dimension change formulae \cite{RV08,DS09} which arise in Liouville quantum gravity, with the work \cite{GHM20} being an exact analogue.
\begin{figure}
  \centering
    \includegraphics[width=0.8\linewidth]{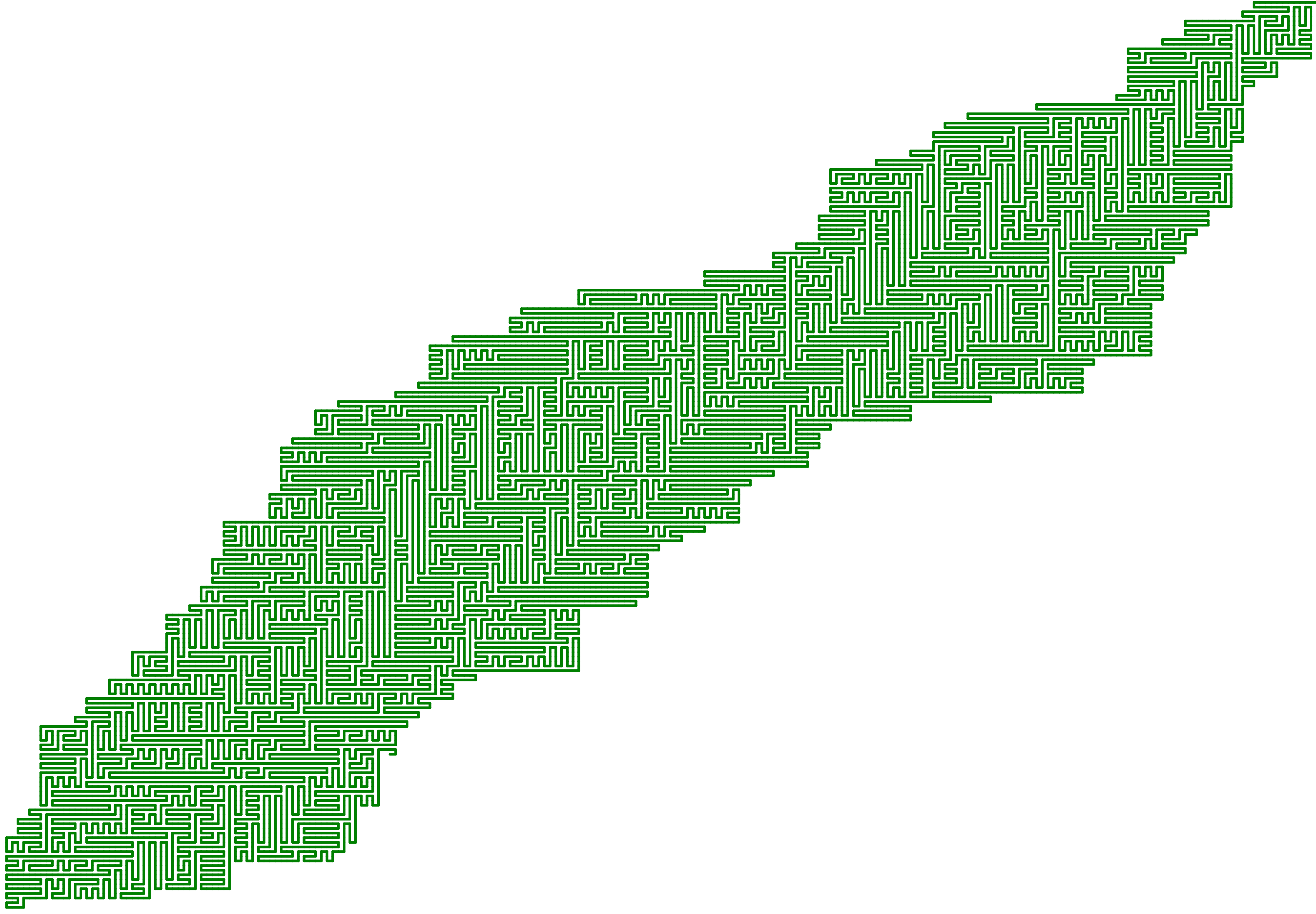}
  \caption{A large simulation showing a portion of the Peano curve corresponding to the tree formed by $(1,1)$ directional semi-infinite geodesics in exponential LPP}
  \label{fig:simul1}
\end{figure}

Having stated the main results, we now state two auxiliary results, which are the core technical ingredients in the proofs of Theorem \ref{thm:5} and Theorem \ref{thm:2} respectively, but are of independent interest as well. The following, together with a result on the decorrelation of well-separated parts of the Peano curve (Lemma \ref{lem:19}), will be used to prove Theorem \ref{thm:5}.

\begin{proposition}
  \label{lem:9}
  The distribution of the random variable $\eta_u(1)$ has unbounded support.
\end{proposition}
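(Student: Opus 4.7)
My plan is to produce, for each $M>0$, an explicit event in the directed landscape of positive probability on which $\eta_u(1)\geq M/2$; this immediately yields the unbounded support. The natural strategy is to exhibit a \emph{tall narrow corridor} configuration near the origin that traps the Peano curve, forcing it to ascend to large $u$-coordinates before the area-parametrization $v$ reaches $1$. The rough scaling picture is that a corridor of width $\sim 1/M$ and height $\sim M$ has area $\Theta(1)$, so if the Peano curve is forced into such a corridor, then by $v=1$ it should be near the top.

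Concretely, for large $M>0$, set $\epsilon_M=c_0/M$ for a small constant $c_0>0$ so that $\mathcal{C}_M=[-\epsilon_M,\epsilon_M]\times [0,M]$ has area $\Theta(1)$. Let $E_M$ be the event that (i) the upward semi-infinite geodesics $\Gamma_{(\pm \epsilon_M,0)}$ stay in an enlarged strip $[-2\epsilon_M,2\epsilon_M]\times [0,M]$ and do not merge before height $M$, and (ii) the downward interfaces (paths in $\cI_\downarrow$) emanating from two points just above the top of $\mathcal{C}_M$ also stay in this strip all the way down to height $0$. On $E_M$, the pair $(\cT_\uparrow,\cI_\downarrow)$ effectively forms the boundary of a narrow region of area $\Theta(1)$, and the topology of the mated-trees picture forces the Peano curve starting at $0$ to fill this region before escaping. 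After passing to a further positive-probability sub-event (producing a shallow, wide ``platform'' near height $M$, where the tree structure allows the Peano curve to wind horizontally over an area of order one), I would arrange that at parameter $v=1$ the curve lies in the upper portion of the corridor, yielding $\eta_u(1)\geq M/2$.

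The key technical input is $\PP(E_M)>0$. I would establish this through the prelimiting exponential LPP model, which converges to $\cL$ by Dauvergne--Virag \cite{DV21}: narrow-corridor events for LPP geodesics are classical, obtained by constraining vertex weights in specified strips and applying the FKG inequality, in the spirit of the barrier arguments of \cite{BSS14,BSS19,BB21}. The resulting probabilities are super-polynomially small in $M$ but strictly positive for each fixed $M$, and the convergence of LPP to $\cL$ transfers the positivity to the continuum. The main obstacle I anticipate is the geometric verification that on $E_M$ with the platform sub-event, the Peano curve at parameter exactly $v=1$ is in the upper portion of the corridor (rather than midway through an ascent or descent). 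This needs a careful analysis of the order in which $\eta$ traces the trapped region, based on the explicit construction of $\eta$ from $\cT_\uparrow$ and $\cI_\downarrow$ in Section~\ref{sec:constr}; constructing the platform sub-event and checking its positivity and independence from the corridor event is a secondary subtlety.
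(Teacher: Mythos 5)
The barrier/corridor half of your plan is indeed the paper's mechanism: a positive-probability confinement event is produced in exponential LPP via FKG and barrier constructions and transferred to $\cL$ through the convergence of \cite{DV21}, and on such an event the trapped curve must sweep a region of area $\Theta(1)$ and hence reach height of order $M$ by a bounded parameter time. The genuine gap is precisely the step you flag and then leave unconstructed: converting this into a statement about the value at the \emph{exact} parameter $v=1$. The point $\eta(1)$ is the corner at which the right boundary $\Upsilon_{\eta(1)}\cup\Gamma_{\eta(1)}$ of the swept region $\eta([0,1])$ switches from interface to geodesic, and nothing in your corridor event controls at what height along that boundary this corner sits: even though the swept region has vertical extent of order $M$ (area $1$ inside a strip of width $\sim 1/M$), the curve may perfectly well be near the bottom of the corridor at time exactly $1$. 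Your ``platform'' sub-event is meant to fix this, but it would require controlling the random area swept below the platform so that $v=1$ falls in the window during which the curve winds at height $\approx M$; you cannot achieve this by intersecting with a typical modulus-of-continuity event, since the corridor event has extremely small probability and no conditional continuity estimate is available on it. (A smaller issue: your confining interfaces must extend strictly below the starting height $0$, not merely ``down to height $0$'', or the curve can slip around their lower endpoints.)

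The paper sidesteps exact-time pinning entirely. It only proves $\PP\bigl(\sup_{v\in[0,12\delta]}|\eta_u(v)|\geq 1\bigr)>0$, by trapping the curve between two interfaces confined to $\corr^1,\corr^2$ (an event converted by duality into a two-geodesic confinement event and then produced by an LPP barrier/FKG argument), and then deduces unbounded support of $\eta_u(1)$ by a soft scaling argument: if $|\eta_u(1)|\leq K$ almost surely, then $\eta_u(v)\stackrel{d}{=}v^{3/5}\eta_u(1)$ forces $|\eta_u(v)|\leq K$ a.s.\ for every rational $v\in[0,1]$, hence $\sup_{v\in[0,1]}|\eta_u(v)|\leq K$ a.s.\ by continuity, contradicting the corridor event once $\delta$ is small. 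Your proof becomes complete if you replace the platform step by this scaling-plus-continuity contradiction; as written, that conversion is the missing idea, and it is the essential one beyond the barrier construction.
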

In fact, a result similar to the above can be obtained for $\eta_h(1)$ as well, and this is discussed in Remark \ref{rem:unb}.

Next, for the proof of Theorem \ref{thm:2}, we will obtain stretched exponential tail estimates for $\eta_{u}(1)$ and $\eta_{h}(1)$ (Propositions \ref{p:utail} and \ref{p:htail}). For this we shall need to argue that it is stretched exponentially rare for geodesics to have very few other geodesics merging with them on either side, and the next result is the core estimate which addresses this. Let $\wcV_R(\Gamma_0;1)$ denote the set of points $q=(y,t)$ which are to the right of $\Gamma_0$ in the sense that $t\in [0,1]$ and $y>\Gamma_0(t)$, and further, admit a geodesic $\Gamma_q$ satisfying $\Gamma_q(s)=\Gamma_0(s)$ for all $s\geq 1$. Using $\widetilde{V}_R(\Gamma_0;1)$ to denote the value $\leb(\wcV_R(\Gamma_0;1))$, we have the following.

\begin{proposition}
 \label{t:va}
 There exist constants $c,C>0$ such that for all $\epsilon>0$, we have $\PP(\widetilde{V}_{R}(\Gamma_0;1)\le \epsilon)\le C\exp(-c\epsilon^{-3/11})$. 
\end{proposition}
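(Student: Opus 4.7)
Following the program outlined in the introduction, I will work in pre-limiting exponential LPP, where the estimate transfers back to the directed landscape via \cite{DV21}, and combine a barrier construction with the FKG inequality in the i.i.d.\ LPP environment. Fix a scale parameter $\sigma$ and partition $[0,1]$ into $N = 1/\sigma$ windows $[t_j, t_{j+1}]$ with $t_j = j\sigma$. At each window I define a ``fast coalescence'' event $G_j$ which, when it holds, guarantees a block of area at least $c\sigma^{5/3}$ to the right of $\Gamma_0$ on $[t_j, t_{j+1}]$ whose upward geodesics all merge with $\Gamma_0$ before time $t_{j+1}$, yielding a contribution of at least $c\sigma^{5/3}$ to $\widetilde V_R(\Gamma_0;1)$. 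The plan is to show that enough of the $G_j$ succeed that $\widetilde V_R(\Gamma_0;1) > \epsilon$ fails only on an event of the claimed probability.

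\textbf{Barrier construction via FKG.} The event $G_j$ is the intersection of a regularity event for $\Gamma_0\lvert_{[t_j,t_{j+1}]}$ (no atypical stars, transversal fluctuations of typical order $\sigma^{2/3}$) with a ``ditch'' event requiring the LPP weights in a thin vertical strip at horizontal offset $c\sigma^{2/3}$ to the right of $\Gamma_0(t_j)$ to be atypically small. The ditch is energetically too expensive to cross, so any geodesic starting in the strip between $\Gamma_0$ and the ditch is forced to merge with $\Gamma_0$ before $t_{j+1}$ rather than remain to the right of the ditch. The ditch event is decreasing in the LPP weights, so FKG applies, and its probability $p(\sigma)$ can be computed explicitly using the product form of the exponential weights.

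\textbf{Aggregation and the hard step.} The main obstacle is aggregating the $G_j$ quantitatively while controlling dependence, since the placement of each ditch is determined by the random path $\Gamma_0$. On the regularity event, however, the ditches associated to different windows sit in disjoint regions of the environment: transversal fluctuations of $\Gamma_0$ are at most $\sim\sigma^{2/3}$ per window, the ditches occupy only a thin strip, and consecutive windows are separated by time $\sigma$. This near-independence, combined with FKG positive association applied to the monotone ditch events, yields a concentration bound of the form $\PP(\#\{j : G_j \text{ holds}\} < cNp(\sigma)) \le \exp(-c'Np(\sigma))$. On the complementary event one has $\widetilde V_R(\Gamma_0;1) \gtrsim Np(\sigma)\sigma^{5/3} = p(\sigma)\sigma^{2/3}$. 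Setting this equal to $\epsilon$ and optimizing over $\sigma$---while tracking the probability cost of the regularity event, which itself has superpolynomial decay in $\sigma$---then produces the stretched exponential bound with exponent $3/11$. I expect the technical heart of the proof to lie precisely in this optimization: a fully independent barrier chain (with constant per-window success probability) would produce a stronger exponent $3/2$, and the gap to $3/11$ reflects the joint cost of (i) the per-window ditch probability $p(\sigma)$, (ii) the probability that $\Gamma_0$ and its transversal fluctuations behave regularly at scale $\sigma$, and (iii) the combinatorial requirement that enough windows simultaneously succeed.
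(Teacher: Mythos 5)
Your high-level architecture (work in exponential LPP, window the geodesic at a mesoscopic scale, place decreasing barrier events via FKG, aggregate, transfer via the convergence to $\cL$) matches the paper's, but there is a genuine gap at the single step where all the difficulty actually sits: the claim that on the ditch event ``any geodesic starting in the strip between $\Gamma_0$ and the ditch is forced to merge with $\Gamma_0$ before $t_{j+1}$,'' so that each successful window contributes volume $c\sigma^{5/3}$. A barrier to the right only gives \emph{confinement}, not coalescence: a semi-infinite geodesic started between $\Gamma_0$ and the ditch can stay in that strip throughout the window, exit through its top, and never meet $\Gamma_0$ by time $1$; nothing in your construction rules this out. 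The paper's proof supplies precisely the missing mechanism: on the good-plus-barrier event, \emph{non}-coalescence forces an atypically large Busemann increment $|\cB^\dis(p_i,p')|\geq \tfrac{L}{8}(\epsilon n)^{1/3}$ (Proposition \ref{l:contra}, proved by comparing the geodesic from $p'$ with a path that detours around the barrier box), and Busemann increments along anti-diagonal lines form a mean-zero random walk (Proposition \ref{p:Busemann}) whose maximal fluctuations can be bounded uniformly (Lemma \ref{l:bussebd}). This certifies coalescence only for points within horizontal distance $\sim\epsilon^{5/3}n^{2/3}$ of $\Gamma$, and after also controlling transversal fluctuations of the auxiliary geodesics (Lemma \ref{l:tfglobal}) the guaranteed region per successful window is a rectangle of area $\sim\epsilon^{14/3}n^{5/3}$, not $\epsilon_1^{5/3}n^{5/3}$.

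This also means your account of where $3/11$ comes from is off. In the paper the per-window barrier probability is a \emph{constant} $\rho>0$ independent of the scale (Lemma \ref{l:barlb}), and the failure probability of the aggregation is $\exp(-c\epsilon_1^{-1})$; the exponent drops from the naive $3/2$ to $3/11$ solely because each successful window only yields volume $\epsilon^{14/3}n^{5/3}$, so $\epsilon^{-1}$ windows give $\epsilon^{11/3}n^{5/3}$, and reparametrizing gives $\exp(-c\epsilon^{-3/11})$ --- there is no optimization over a scale-dependent ditch probability $p(\sigma)$. Two further points you gloss over, though they are secondary to the gap above: (i) because the barrier boxes are placed relative to the random path $\Gamma$, the paper must set things up so that the ``good'' events are measurable with respect to $\Gamma$ and the weights \emph{outside} the boxes (Lemma \ref{l:mble}), and then uses that conditioning on $\Gamma$ is a negative conditioning off the geodesic together with disjointness of the boxes to stochastically dominate the barrier indicators by i.i.d.\ Bernoullis (Lemma \ref{l:dom}); and (ii) ``most windows are regular'' cannot be obtained by demanding on-scale fluctuations in every window, but requires the chaining/union bound over coarse geodesic trajectories of Propositions \ref{l:geodfluc} and \ref{p:percn}. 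Without the Busemann step (or a substitute argument converting confinement into quantified coalescence), the proposal does not prove the proposition.
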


At this point we would like to note that the exponent $3/11$ in the above proposition is not expected to be optimal, but it suffices for our purposes. Note also that that the probability in Proposition \ref{t:va} can be easily shown to be upper bounded by a polynomial in $\epsilon$, but we need a super-polynomial upper bound for our proof of Theorem \ref{thm:2} as mentioned above (see the proof of Proposition \ref{t:holderu}).

  \subsection{Applications of this work}
  \label{s:application}
  Techniques from this paper have recently found application in the work \cite{Bha24+} and we now briefly discuss this. Often in last passage percolation, `point-to-point' estimates are tractable, but corresponding `segment-to-segment' estimates are much harder.

  For instance, let us consider exponential LPP (see Section \ref{s:lppnotation} for important notation including space-time coordinates). Locally, for $n\in \ZZ$, we shall use $\mathbf{n}$ to denote $(n,n)$ and $\ell_n$ to denote the line segment $\{|x|\leq |n|^{2/3}, t=n\}$, where here we are using space-time coordinates. Now, it is trivial to see that the geodesic $\gamma_{-\mathbf{n},\mathbf{n}}^{\dis}$ intersects the line $\{t=0\}$ exactly once, where here, we are using space-time coordinates. However, obtaining control on the cardinality of the set
  \begin{equation}
    \label{eq:17}
    \{\gamma_{p,q}^{\dis}\cap \{t=0\}: p\in \ell_{-n}, q\in \ell_{n}\}
  \end{equation}
is not easy. Indeed, the work \cite[Theorem 3.10]{BHS22} obtained a stretched exponential tail estimate via a BK inequality argument; in particular, they established that with superpolynomially high probability in $n$, the above cardinality is at most $\mathrm{polylog}(n)$.

Using the volume accumulation result Proposition \ref{t:vaexp}, \cite[Section 4.2.3]{Bha24+} provides a general recipe using which, in many scenarios involving integrable last passage percolation models, one can directly upgrade a point-to-point tail estimate to a corresponding segment-to-segment one. Very roughly, the idea is that with some work, Proposition \ref{t:vaexp} can be used to show the following-- that for any fixed points $p\in \ell_{-n},q\in \ell_{n}$, with stretched exponentially high probability in $\epsilon$, the volume of the set of points $(p',q')\in (\ZZ^2)^2$ such that
\begin{equation}
  \label{eq:18}
  \gamma_{p',q'}^{\dis}\cap \{|t|\leq n/2\}=\gamma_{p,q}^{\dis}\cap \{|t|\leq n/2\}
\end{equation}
is at least $\epsilon n^{10/3}$. Thus, for an appropriate choice $\epsilon = n^{-o(1)}$, we can do a union bound over the polynomially many possible choices of $p\in \ell_{-n}, q\in \ell_{n}$ to obtain a uniform version of \eqref{eq:18}, where each corresponding volume is at least $n^{10/3-o(1)}$ with superpolynomially high probability in $n$. As a result, if we want a certain event to hold for all $\gamma_{p,q}^{\dis}\cap\{|t|\leq n/2\}$ for $p\in \ell_{-n},q\in \ell_{n}$, we can first demand that the event holds for all points $(p',q')$ sprinkled according to an independent Poisson process $\cP$ with a suitably chosen $\Theta(n^{-10/3+o(1)})$ intensity, then argue that it is very likely that for any $p\in \ell_{-n},q\in \ell_{n}$, there is at least one $(p',q')\in \cP$ satisfying \eqref{eq:18}. As a result, if we start with a high probability lower bound for the above event for a geodesic between fixed points, then we obtain a corresponding lower bound on the probability that the event holds simultaneously for all geodesics between $p\in \ell_{-n},q\in \ell_{n}$. We refer the reader to \cite[Section 4.2.3]{Bha24+} for a detailed description of the above approach.

  Besides the above-mentioned application, results of this work shall find application in a series of upcoming works of the second author, at the end of which it will be shown that the directed landscape $\cL$ is determined by the parametrised geodesic tree $\cT_\uparrow$, by which we mean the set $\cT_\uparrow$ along with the lengths of all geodesics contained in it. We now give a short description of this project which is a work-in-progress.
\begin{enumerate}
\item The upcoming work \cite{Bha25+} shall yield a conditional independence result for the bubbles created by the Peano curve $\eta$. To be specific, it will be shown that for any fixed $v_1<\dots <v_n\in \RR$, the objects $\cL_{\eta[v_i,v_{i+1}]}$ defined as the directed landscape restricted to the Peano bubbles $\eta[v_i,v_{i+1}]$, are mutually independent conditional on the Busemann function $\cB$ restricted to the union $\bigcup_{i=1}^n\partial_{v_i}\eta$, where $\partial_{v_i}\eta$ is the topological boundary of the set $\eta(-\infty,v_i]$. Here, $\cB$ refers to the Busemann function corresponding to geodesic tree $\cT_\uparrow$ and can be defined as $\cB(p,q)=\cL(p,z)-\cL(q,z)$, where $z$ is any point satisfying $z\in\Gamma_p\cap\Gamma_q$. 

\item Subsequently, in another upcoming work \cite{Bha25++}, it shall be established that the conditional variance of any directed landscape length $\cL(p;q)$ given the parametrised geodesic tree $\cT_\uparrow$ is almost surely equal to zero, thereby establishing that the directed landscape can a.s.\ be reconstructed from the parametrised geodesic tree. The proof of this shall use the stretched exponential tail estimates on the tails of $\eta_u(1), \eta_h(1)$ developed in this paper (Propositions \ref{p:utail}, \ref{p:htail}) along with the conditional independence result from the above-mentioned upcoming work \cite{Bha25+}. Further, we shall also require results from the work \cite{Bha23} on atypical stars on geodesics and the result from \cite{Bha24} on the metric removability of interfaces in the directed landscape.
\end{enumerate}

\paragraph{\textbf{Notational comments}} The cardinality of a finite set $A$ is often denoted by $\# A$. For $a<b\in \RR$, we define $[\![a,b]\!]=[a,b]\cap \ZZ$. Throughout the paper, we will use $\leb$ to denote the Lebesgue measure, both on $\RR$ and on $\RR^2$. Also, we will simply use $0$ to denote both the real number $0$ and the point $(0,0)\subseteq \RR^2$.

\paragraph{\textbf{Organization of the paper}}
The rest of the paper is organized as follows. In Section \ref{sec:outline}, we give an overview of the proofs of the main results. In Section \ref{sec:preliminaries}, we construct the Peano curve and prove Theorem \ref{thm:1}. Here, we also recall a number of results from the directed landscape and exponential LPP literature that will be useful for us in the remainder of the paper. Section \ref{sec:holder} completes the proof of Theorem \ref{thm:2} assuming Proposition \ref{t:va} and the proof of Proposition \ref{t:va} is completed in Section \ref{sec:lpp}. Section \ref{sec:frac} proves Theorem \ref{thm:4}. Section \ref{sec:var} contains the proofs of Theorems \ref{thm:5} and \ref{thm:3} and also the proof of Proposition \ref{lem:9}.

\paragraph{\textbf{Acknowledgments}} We thank B\'alint Vir\'ag for bringing to our attention the similarity with the T\'oth-Werner curve and Milind Hegde for  help with simulations. MB thanks Sky Cao, Scott Sheffield and Catherine Wolfram for the discussions. We also thank the International Centre for Theoretical Sciences, Bangalore for the hospitality during the conference `Topics in High Dimensional Probability' in January 2023 during which this work was initiated. RB is partially supported by a MATRICS grant (MTR/2021/000093) from SERB, Govt.\ of India, DAE project no.\ RTI4001
via ICTS, and the Infosys Foundation via the Infosys-Chandrasekharan Virtual Centre for Random Geometry of TIFR. MB acknowledges the partial support of the Institute for Advanced Study and the NSF grants DMS-1712862 and DMS-1953945.

\section{Outline of the proofs}
\label{sec:outline}

We now give a brief sketch of the proof of the main results while also pointing out the technically difficult and/or novel elements in the paper. 

\subsection{Construction of the Peano curve $\eta$}
\label{sec:constr-outline}

 The construction of the Peano curve is not very complicated. In the discrete set-up of the geodesic tree and its dual in exponential LPP, it is easy to see that one can construct the interface between the two trees in the following way. Starting from a given point, say 0, in the positive direction, we move along edges while always keeping the edges of the primal tree to the left (the choice of left and right here is arbitrary) and the edges of the dual tree to the right. The directions are reversed while moving in the negative direction. It is easy to see that coalescence of geodesics imply that this procedure leads to a ``space-filling" path. To make a similar construction in the continuum, we do the following. We define a corner to be a point $p$ in $\mathbb{R}^2$ together with a pair of primal and dual geodesics emanating from $p$. Owing to the existence of exceptional points with non-unique geodesics, we note that a point $p$ can correspond to multiple corners. The idea is show that any the pair of paths corresponding to any two distinct corners are ordered, i.e., one of them always lie to the left of the other (see Lemma \ref{lem:main:1}), and the Peano curve is defined to be a curve that traces out the points of $\mathbb{R}^2$ according to this total ordering (notice again that a point can be visited multiple times by the Peano curve if it corresponds to multiple corners). Since each point corresponds to at least one corner, such a curve is necessarily space-filling, and it remains to show the continuity and existence of volume parametrisation. For the latter result, it suffices to show that for any two distinct corners $\zeta_1$ and $\zeta_2$, the set of all points in $\mathbb{R}^2$ corresponding to corners in between $\zeta_1$ and $\zeta_2$ has positive Lebesgue measure; this is done in Lemma \ref{lem:1}. The continuity of the Peano curve follows from the convergence of geodesics (see Proposition \ref{prop:7}). This argument establishes part (1) of Theorem \ref{thm:1}. Parts (2) and (3) of this theorem are consequences of results in \cite{Bha23, Bha22, Dau23+} while parts (4)-(6) follow from well known symmetries of the directed landscape; we shall not elaborate for on these here. 

\subsection{The $1/5-$ H\"{o}lder regularity of $\eta$}
\label{sec:holder-outline}
By the scaling symmetry of the directed landscape and the definition of intrinsic metric, if suffices to show that $\eta_u$ is almost surely $3/5-$ H\"{o}lder continuous (Proposition \ref{t:holderu})  and $\eta_h$ is almost surely $2/5-$  H\"{o}lder continuous (Proposition \ref{t:holderh}) on any compact set. For this outline section, let us only focus on the first result. In view of the translation invariance of $\eta$ ((4) in Theorem \ref{thm:1}), the proof of Proposition \ref{t:holderu} is reminiscent of the standard proof of $1/2-$ H\"{o}lder continuity of Brownian motion and requires only a one point estimate which says that $\max_{v\in [0,w]}|\eta_u(v)|$ has a stretched exponential tail at the scale $w^{3/5}$ (Proposition \ref{p:utail}). By the scaling symmetry of the directed landscape, it suffices to show that $\PP(\max_{v\in [0,1]}\eta_u(v)>t)$ is stretched exponentially small in $t$. 

To understand why the above probability should decay fast, consider the following. By transversal fluctuation estimates, one would expect that when $\eta_u$ reaches the value $t$, $\eta_h$ should have approximately reached $t^{2/3}$, and hence the volume parameter of the Peano curve $\eta$ should be at least $t^{5/3}$ at this point. The event described above implies that this happens before volume $1$, which should be unlikely for $t$ large. 

To make this precise, recall the notion $\widetilde{V}_R(\Gamma_0;1)$ (we call it the volume accumulated to the right by a geodesic up to time $1$; this easily generalizes to volume accumulated to the right by a geodesic up to time $t$; see Definition \ref{d:va}) which measures the volume of the set of all points to the right of the  geodesic $\Gamma_0$ that have coalesced with it by time (height) $1$. It is not hard to see (Lemma \ref{l:vv}) that on the event $\{\max_{v\in [0,1]}\eta_u(v)>t\}$, one has 
$\widetilde{V}_R(\Gamma_0;t)\le 1$. By the scaling symmetry of the directed landscape $\widetilde{V}_R(\Gamma_0;t)$ has the same law as $t^{5/3}\widetilde{V}_R(\Gamma_0;1)$ and our required tail estimate follows from Proposition \ref{t:va}. 

We now provide a brief outline of the argument for establishing Proposition \ref{t:va}, which is one of the main technical estimates of this paper. For this, we work with the exponential LPP model on $\Z^2$ (Theorem \ref{t:vaexp}) which enables us to use tools like the FKG inequality. For the semi-infinite geodesic $\Gamma$ from the origin in the vertically upward direction (in the usual exponential LPP coordinates this corresponds to the direction $(1,1)$; see Section \ref{sec:preliminaries} for more details) we look at the set of all vertices to the right of $\Gamma$ such that the semi-infinite geodesics starting from these vertices coalesce with $\Gamma$ before time $n$. Denoting the number of such vertices by $V_R(\Gamma;n)$ we show in Proposition \ref{t:vaexp} that the probability that $V_R(\Gamma;n)$ is smaller than $\delta n^{5/3}$ is stretched exponentially small in $\delta^{-1}$ for small $\delta$ and invoke the convergence of exponential LPP to the directed landscape (Proposition \ref{prop:6}) to establish Proposition \ref{t:va}. 

The proof of Proposition is by what is often referred to as a \emph{percolation argument}. We show that a constant fraction of the locations along the geodesic (a location refers to a time interval of length $\epsilon n$ for small $\epsilon$) are \emph{good} with large probability (i.e., the probability of the complement is exponentially small in $\epsilon^{-1}$) where a location being good refers to certain local events occurring which attracts geodesics from nearby points (i.e., within $(\epsilon n)^{2/3}$ distance of $\Gamma$) to coalesce with $\Gamma$. On the event that there exists a constant fraction of good locations, it follows that $V_R(\Gamma;n)$ is at least of the order $\approx \epsilon^{2/3} n^{5/3}$ and hence the claimed result follows. To show that there is a positive density of good locations with the required probability; the argument goes roughly as follows. The good local events are divided into two parts, ones the are typical, and others that are hold only with probability bounded away from $0$ at a given location (typically these events asks for existence of a region which is costly for the geodesic to pass through, often referred to as a \emph{barrier}). A standard percolation argument shows that, with large probability, the likely events occur at most locations. One needs to define the atypical events carefully so that they are monotone decreasing. By the FKG inequality, conditioning on the geodesic makes the regions not on the geodesic stochastically smaller, and this allows us to show that the atypical events hold at a constant fraction of locations along the geodesics as well.

\subsection{The 5th variation of the Peano curve}
\label{sec:variation-outline}
We now discuss the proof of Theorem \ref{thm:3}-- the result on the $5$th variation of the Peano curve $\eta$. The statement of Theorem \ref{thm:3} uses a Poisson process $\Pi_n^1\subseteq \RR$ that is independent $\cL$, and in order to explain the idea and utility of introducing this, let us first consider a simpler case where this Poisson process is replaced by a deterministic mesh, as is, for example, typically done when considering the quadratic variation of Brownian motion.

Indeed, suppose that, for an interval $I\subseteq \RR$, we defined a deterministic mesh of points $\{w_i^n\}_i\subseteq I$ at spacing $1/n$ and considered the quantity $\sum_{i}\In^5_{w^n_i}(\eta, w_{i+1}^n-w_i^n)$ with the aim of showing that as $n\rightarrow \infty$, the above quantity converges in probability to $\EE \In_0^5(\eta,1)\leb(I)$. Intuitively, in order to obtain the above, we need two necessary ingredients. Firstly, we require that the law of the increment $\In^5_{w_i}(\eta, w_{i+1}^n-w_i^n)$ be the same for all $i$, and this is in fact true as the increment $\eta(w_{i+1}^n)-\eta(w_i^n)$ itself has the same law for all $i$ (Lemma \ref{lem:6}). However, apart from the above, one also requires sufficient decorrelation between the quantities $\In_0^5(\eta,1)\leb(I)$ as $i$ varies in order for a law of large numbers based heuristic to apply, and this is more challenging as the Peano curve $\eta$ is a non-trivial object which takes the global geometry of the landscape $\cL$ into account.

The utility of the Poisson process based construction in Theorem \ref{thm:3} is that it very naturally provides the above decorrelation. The central point here is that since the curve $\eta$ is equipped with the volume parametrization, even if we condition on the curve $\eta$, the point process $\Pi_n^2=\eta(\Pi_n^1)$ is a Poisson point process on $\RR^2$ with rate $n$. In other words the process $\Pi_n^2$ is independent (Lemma \ref{lem:8}) of the landscape $\cL$. Now, if for $p=(x,s)\in \RR^2$, we define the point $\tilde p_n=(\tilde x_n, \tilde s_n)\in \Pi_n^2$ as the one in $\Pi_n^2$ visited by $\eta$ next after visiting $p$, then it can be verified that instead of having to consider an interval $I\subseteq \RR$ and arguing ``sufficient'' decorrelation of the increments $\In_{v_i^n}(\eta, v_{i+1}^n-v_i^n)$ with $v_i^n\in \Pi_n^1\cap I$, we can instead work with a rectangle $R\subset \RR^2$ and show the corresponding decorrelation for the sum $\sum_{p\in \Pi_n^2\cap R}\nu_n(\{p\})$, where $\nu_n(\{p\})=|\tilde s_n-s|^{5/3}+ |\tilde x_n -x|^{5/2}$ is the variation increment accumulated as $\eta$ traverses between $p,\tilde p_n$.

Now, since $\Pi_n^2$ is just a Poisson point process of rate $n$ independent of $\cL$, conditional on the number of points $|\Pi_n^2\cap R|$, the set $\Pi_n^2\cap R$ is just a family of i.i.d.\ uniform points in $R$. Thus, with some work utilizing the Palm theory of Poisson processes, one can reduce the task to obtaining a weak decorrelation estimate for $\nu_n(\{p\})$ as the point $p$ is varied in the set $R$. The reason why this holds is that since $\Pi_n^2$ is of rate $n$, the Peano curve with high probability stays close (roughly at a $d_{\mathrm{in}}$ distance $\sim n^{-1/5}$) to $p$ as it journeys from $p$ to $\tilde{p}_n$. Thus, with high probability, the increment $\nu_n(\{p\})$ is determined by the geometry of the geodesic tree $\cT_\uparrow$ in a small (that is, $o_n(1)$) rectangle $R_{p,n}$ around $p$. Now, by exploiting the coalescence of geodesics in the directed landscape, it can be shown (Lemma \ref{lem:19}) that with high probability, the above geometry depends on the directed landscape $\cL$ restricted to a slightly larger but still small rectangle $R'_{p,n}$. Since the landscape $\cL$ restricted to disjoint boxes is independent, the above yields a decorrelation estimate for the quantity $\nu_n(\{p\})$ as $p$ varies, and this turns out to be sufficient to yield Theorem \ref{thm:3}.
\subsection{The curve $\eta$ is a.s.\ not $1/5$ H\"older regular}
\label{sec:not-holder-outline}
We now give a short discussion of the proof of Theorem \ref{thm:5}. As opposed to Theorem \ref{thm:2}, the task now is to show that $\eta$ is not H\"older 1/5. The strategy for doing so again utilizes the local dependency argument from the previous paragraph using which it can be shown that for $\delta>0$ and far away points $p,q\in \RR^2$, the increments $\In^5_{\eta^{-1}(p)}(\eta,\delta)$ and $\In^5_{\eta^{-1}(q)}(\eta,\delta)$ are approximately independent with the independence strengthening as $\delta\rightarrow 0$ and for larger values of $d_{\mathrm{in}}(p,q)$.

This approximate independence is then combined with a barrier argument, which we now very roughly describe. For a fixed point $p\in \RR^2$, we consider an event on which the point $p$ is sandwiched between two interfaces $\Upsilon_1, \Upsilon_2\subseteq \cI_\downarrow$ which do not coalesce with each other for a ``long time'' and which also stay ``atypically close'' to each other -- see Figure \ref{fig:11}. We show that for any specification of the phrases ``long time'' and ``atypically close'' above, the event in question still has a positive, albeit small probability-- this is done by first using the duality between interface portraits and geodesic trees (Proposition \ref{prop:4}) to convert to a question purely about geodesics and then placing appropriate barriers in terms of passage times to force these geodesics to take the desired routes.

The utility of the event discussed above is that since the curve $\eta$ cannot cross interfaces, on the above event, it is forced to undertake a rapid increase in height while covering a small amount of volume. In effect, we show that for any $M>0$ and $\delta>0$, with positive probability (say $c_M$), the increment $\In^5_{\eta^{-1}(p)}(\eta,\delta)$ is at least $M\delta$. Now, a consequence of this and the approximate independence from earlier is that that for any fixed collection of $K$ distinct points $p_1,\dots, p_K\in \RR^2$, we have
\begin{equation}
  \label{eq:15}
  \limsup_{\delta\rightarrow 0}\PP(\exists i\in [\![1,K]\!]: \In^5_{\eta^{-1}(p_i)}(\eta,\delta)\geq M\delta)\geq 1-(1-c_M)^K,
\end{equation}
and by simply sending $K\rightarrow \infty$, it can be concluded that a.s.\ there exists an infinite sequence of points $q_i$ lying in a compact set such that $\In^5_{\eta^{-1}(q_i)}(\eta,\delta)\geq M\delta$ for all $i$. Since $M$ is arbitrary, this shows that $\eta$ is a.s.\ not $1/5$ H\"older continuous.
\section{Preliminaries}
\label{sec:preliminaries}

\subsection{Construction of the Peano curve}
\label{sec:constr}
In this section, we construct the Peano curve $\eta$. To begin, define the set of corners $\cor$ by
\begin{displaymath}
  \cor= 
  \left\{
    (p,\Upsilon_p,\Gamma_{p}): p\in \RR^2
  \right\},
\end{displaymath}

Note that we have a natural  map $\pi\colon \cor\rightarrow \RR^2$ projecting onto the first coordinate and observe that the potential non-uniqueness of infinite geodesics and interfaces leads to some values of $p$ appearing in multiple corners. Further, we note that the set $\cor$ is naturally totally ordered because of the following lemma. 
\begin{lemma}
  \label{lem:main:1}
  Almost surely, for any two points $p_1, p_2$, one of $(\Upsilon_{p_1},\Gamma_{p_1}),(\Upsilon_{p_2},\Gamma_{p_2})$ lies to the left of the other.
\end{lemma}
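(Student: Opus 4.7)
The plan is to proceed by contradiction. Associate to each corner $i\in\{1,2\}$ the bi-infinite curve $\phi_{p_i}:\RR\to\RR$ parametrized by the time coordinate, obtained by concatenating $\Upsilon_{p_i}$ on $(-\infty,s_i]$ with $\Gamma_{p_i}$ on $[s_i,\infty)$, where $p_i=(x_i,s_i)$. The lemma is equivalent to showing that $f(t):=\phi_{p_2}(t)-\phi_{p_1}(t)$ has constant sign on $\RR$. Assuming without loss of generality that $s_1\le s_2$, I would show that $f$ has constant sign on each of the three intervals $(-\infty,s_1]$, $(s_1,s_2)$, $[s_2,\infty)$, and then that these three signs must agree.

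On the middle interval $(s_1,s_2)$, $\phi_{p_1}(t)$ is an interior point of the semi-infinite geodesic $\Gamma_{p_1}$ and hence lies in $\cT_\uparrow$, while $\phi_{p_2}(t)$ is an interior point of the interface $\Upsilon_{p_2}$ and hence lies in $\cI_\downarrow$. Since $\cT_\uparrow\cap\cI_\downarrow=\emptyset$ almost surely by Proposition~\ref{prop:4}, $f$ never vanishes on this interval and so has constant sign by continuity (this step is vacuous when $s_1=s_2$). On $[s_2,\infty)$, both $\phi_{p_1}$ and $\phi_{p_2}$ are semi-infinite upward geodesics, i.e.\ rays in the one-ended tree $\cT_\uparrow$; the standard planar non-crossing of geodesics in the directed landscape together with the fact that in a one-ended planar tree two rays through a common point must coincide beyond it forces $\phi_{p_1}$ and $\phi_{p_2}$ to be pointwise ordered on $[s_2,\infty)$. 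The interval $(-\infty,s_1]$ is handled by the analogous argument applied to downward geodesics in $\wcL$, using the duality $\cI_\downarrow=\cT_\downarrow(\wcL)$ from Proposition~\ref{prop:4}.

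The hard part will be matching the signs across the junctions at $s_1$ and $s_2$. Suppose the signs on $(-\infty,s_1)$ and $(s_1,s_2)$ differ; continuity then forces $f(s_1)=0$, i.e.\ $\Upsilon_{p_2}(s_1)=x_1$, so the interface $\Upsilon_{p_2}$ passes through the point $p_1$. Since $s_1<s_2$, $p_1$ is an interior point of $\Upsilon_{p_2}$, which gives $p_1\in\cI_\downarrow$. On the other hand, $\Upsilon_{p_2}\vert_{(-\infty,s_1]}$ is a downward interface from $p_1$ that must differ from the chosen $\Upsilon_{p_1}$, since the sign of $f$ is opposite on the two sides of $s_1$. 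Thus $p_1$ admits at least two distinct downward interfaces, so $p_1\in\NU_\downarrow^2(\wcL)=\cT_\uparrow$ by Proposition~\ref{prop:4}, giving $p_1\in\cT_\uparrow\cap\cI_\downarrow$ and contradicting the disjointness in the same proposition. A symmetric argument at $s_2$, using $\NU_\uparrow^2(\cL)=\cI_\downarrow$ to place the resulting branch point in both trees, rules out a sign flip across $s_2$. Hence $f$ has globally constant sign, which is precisely the claim.
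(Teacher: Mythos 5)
Your three-interval decomposition is sound where it applies, and it runs on exactly the same engine as the paper's proof: Proposition~\ref{prop:4}, i.e.\ the identifications $\NU_\uparrow^2(\cL)=\cI_\downarrow$, $\NU_\downarrow^2(\wcL)=\cT_\uparrow$, and the a.s.\ disjointness $\cT_\uparrow\cap\cI_\downarrow=\emptyset$. Two presentational caveats: since geodesics and interfaces coalesce, ``constant sign'' must be read weakly ($f\geq 0$ on $\RR$ or $f\leq 0$ on $\RR$), and your junction step should be phrased as ``$f$ is strictly positive somewhere on one interval and strictly negative somewhere on an adjacent one'' --- your clause ``the sign of $f$ is opposite on the two sides of $s_1$'' glosses over the possibility that $f\equiv 0$ on an outer interval, in which case $\Upsilon_{p_2}\vert_{(-\infty,s_1]}$ need not differ from $\Upsilon_{p_1}$; the case analysis still closes because the middle interval carries a strict sign, but this needs to be said. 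With these readings, your treatment of $s_1<s_2$ is a correct (and more detailed) version of what the paper dismisses in one sentence as ``the ordering present in the geodesic tree (and the interface portrait).''

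The genuine gap is the case $s_1=s_2$, and in particular $p_1=p_2$. Both of your junction arguments invoke ``since $s_1<s_2$'' to make the meeting point an \emph{interior} point of $\Upsilon_{p_2}$ (resp.\ of $\Gamma_{p_1}$), so as written they say nothing when the two corners share the same time coordinate, let alone the same base point. But comparing two corners $(\Upsilon_p,\Gamma_p)\neq(\Upsilon'_p,\Gamma'_p)$ with the same $p$ is precisely what the lemma must deliver: the set $\cor$ records a point once per choice of geodesic and interface, Definition~\ref{def:order} and Lemma~\ref{lem:1} are later applied with $p_1=p_2$, $\zeta_1\neq\zeta_2$, and indeed this coincident case is the only one the paper's own proof spends any words on. The fix is short and uses only tools you already deploy: if the orderings below and above $p$ were opposite and strict, then $\Upsilon_p\neq\Upsilon'_p$ forces $p\in\NU_\downarrow^2(\wcL)=\cT_\uparrow$ while $\Gamma_p\neq\Gamma'_p$ forces $p\in\NU_\uparrow^2(\cL)=\cI_\downarrow$, contradicting disjointness; and when $s_1=s_2$ but $p_1\neq p_2$, the value $f(s_1)=x_2-x_1\neq 0$ already pins the weak ordering on both half-lines, so no junction argument is needed. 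You should add these cases explicitly; without them the proof misses the heart of the statement.
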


\begin{proof}
  The statement follows by using the ordering present in the geodesic tree (and the interface portrait) for the case $p_1\neq p_2$. In the case $p_1=p_2=p$, without loss of generality, we need to rule out the case when there are two pairs $(\Upsilon_p,\Gamma_p),(\Upsilon_p',\Gamma_p')$ such that $\Upsilon_p$ is strictly to the left of $\Upsilon_p'$ but $\Gamma_p$ is strictly to the right of $\Gamma_p'$. However, by Proposition \ref{prop:4}, this would in particular imply that $p\in \cT_\uparrow\cap \cI_\downarrow$ but the latter set is almost surely empty. This completes the proof.
\end{proof}
\noindent The following definition uses the above lemma to formally define the ordering between corners.
\begin{definition}
  \label{def:order}
 For points $p_1,p_2\in \RR^2$, if $(\Upsilon_{p_1},\Gamma_{p_1})$ is to the left of $(\Upsilon_{p_2},\Gamma_{p_2})$, then for $\zeta_1=(p_1,\Upsilon_{p_1},\Gamma_{p_1})$ and $\zeta_2=(p_2,\Upsilon_{p_2},\Gamma_{p_2})$, we define $\zeta_1\leq \zeta_2$. As usual, if $\zeta_1\leq \zeta_2$ but $\zeta_1\neq \zeta_2$, we write $\zeta_1<\zeta_2$.%
\end{definition}
\noindent The following lemma will allows us to finally parametrize the Peano curve according to the total Lebesgue area traversed. %
\begin{figure}
  \centering
  \begin{subfigure}[b]{0.25\linewidth}
 \centering
    \includegraphics[width=\linewidth]{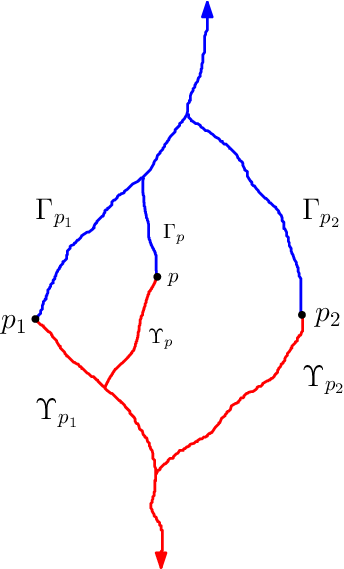}
  \end{subfigure}
  \hspace{3cm}
  \begin{subfigure}[b]{0.2\linewidth}
   \centering
    \includegraphics[width=\linewidth]{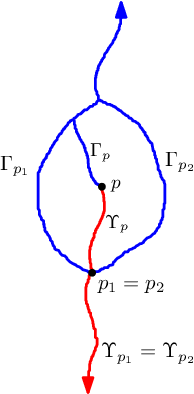}
  \end{subfigure}
  \caption{\textbf{Proof of Lemma \ref{lem:1}}: In the first panel, we have the case when $p_1\neq p_2$ and we see that for points $p$ enclosed between $(\Upsilon_{p_1},\Gamma_{p_1})$ and $(\Upsilon_{p_2},\Gamma_{p_2})$, the corner $\zeta=(p,\Upsilon_p,\Gamma_p)$ satisfies $\zeta_1\leq \zeta\leq \zeta_2$. In the panel to the right, we show an example of the case when $p_1=p_2$ but $\zeta_1\neq \zeta_2$. Again, we see that for points $p$ enclosed between $(\Upsilon_{p_1},\Gamma_{p_1})$ and $(\Upsilon_{p_2},\Gamma_{p_2})$ (or equivalently, between $\Gamma_{p_1},\Gamma_{p_2}$), we have $\zeta_1\leq \zeta\leq \zeta_2$.}
  \label{fig:enc}
\end{figure}
\begin{lemma}
  \label{lem:1}
  For any corners $\zeta_1\leq \zeta_2\in \cor$, define
  \begin{displaymath}
    [\zeta_1,\zeta_2]=
    \left\{
      \zeta\in \cor: \zeta_1\leq \zeta\leq \zeta_2
    \right\}.
  \end{displaymath}
  Almost surely, for any $\zeta_1<\zeta_2$, we have $\leb([\zeta_1,\zeta_2])\coloneqq\leb(\pi([\zeta_1,\zeta_2]))>0$. %
\end{lemma}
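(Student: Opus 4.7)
The plan is to identify an explicit open region $R \subseteq \RR^2$ lying strictly between the two corners, show that $R$ has positive Lebesgue measure, and then verify that every $p \in R$ furnishes a corner inside $[\zeta_1, \zeta_2]$, so that $R \subseteq \pi([\zeta_1,\zeta_2])$.

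First I would package each corner into a single bi-infinite continuous curve in time: writing $p_i = (x_i,s_i)$, let $C_i \colon \RR \to \RR$ be the graph of $\Upsilon_{p_i}$ on $(-\infty, s_i]$ concatenated with $\Gamma_{p_i}$ on $[s_i, \infty)$. The hypothesis $\zeta_1 < \zeta_2$ together with Lemma~\ref{lem:main:1} gives $C_1(t) \le C_2(t)$ for every $t$, and I would then argue that $C_1 \not\equiv C_2$: if $p_1 \ne p_2$ this uses that the two triples determine distinct bi-infinite curves through the two distinct base points in a planar tree, while if $p_1 = p_2$ but the triples differ, one of $\Upsilon$ or $\Gamma$ must differ, and the two curves then agree only on the half-line joining them to infinity on the non-bifurcating side. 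In both cases continuity of $C_1, C_2$ yields an open interval of times where $C_2 - C_1 > 0$, and hence the open region
\[
R \;=\; \{ (x,t)\in\RR^2 : C_1(t) < x < C_2(t) \}
\]
satisfies $\leb(R) > 0$. This matches the two panels of Figure~\ref{fig:enc}.

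Next I would verify that for every $p \in R$ the induced corner $\zeta := (p,\Upsilon_p,\Gamma_p)$ lies in $[\zeta_1,\zeta_2]$, i.e.\ that $C_1 \le C_p \le C_2$ pointwise, where $C_p$ denotes the bi-infinite curve associated to $\zeta$. The key input is the planarity of the mated trees: by construction $\cT_\uparrow$ is a one-ended tree of $0$-directed upward geodesics, so any two such geodesics are weakly ordered and do not cross transversally; the same holds for the interfaces constituting $\cI_\downarrow$; and by Proposition~\ref{prop:4} the two trees are disjoint, so a geodesic and an interface cannot cross either. Since $p$ lies strictly between $C_1$ and $C_2$ at time $t$, following $\Gamma_p$ forward in time (resp.\ $\Upsilon_p$ backward) forces it to remain weakly between the corresponding portions of $C_1$ and $C_2$, because the only alternative would be a transversal crossing of one of these four curves. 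This gives the desired sandwiching and hence $R \subseteq \pi([\zeta_1,\zeta_2])$, finishing the proof modulo the two inputs above.

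The main technical point is the non-crossing argument. The subtle case is when $C_1$ and $C_2$ coalesce and then separate (for example when $p_1 = p_2$ and only the upward half of the corner is split): there one must be careful to show that a geodesic $\Gamma_p$ emanating from a point $p$ strictly inside the enclosed lens cannot exit through either boundary, which reduces to the statement that two $0$-directed upward geodesics from the interior and from the boundary of the lens can only merge, not cross, and the analogous statement for downward interfaces. Both facts are standard consequences of the tree structure recalled around Proposition~\ref{prop:4}, so the proof amounts to assembling these ingredients and carrying out the elementary case analysis on the relative time order of $s_1$ and $s_2$.
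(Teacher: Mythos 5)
Your proposal is correct and follows essentially the same route as the paper: identify the open region enclosed between the two corner curves $(\Upsilon_{p_1},\Gamma_{p_1})$ and $(\Upsilon_{p_2},\Gamma_{p_2})$, note it has positive Lebesgue measure, and use planarity/non-crossing (ordering within each tree plus disjointness of $\cT_\uparrow$ and $\cI_\downarrow$ from Proposition~\ref{prop:4}) to see that every point of this region yields a corner in $[\zeta_1,\zeta_2]$. You simply spell out in more detail the steps the paper compresses into ``it is not difficult to see,'' so there is no substantive difference.
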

\begin{proof}
Let $\zeta_1=(p_1,\Upsilon_{p_1},\Gamma_{p_1})$ and $\zeta_2=(p_2,\Upsilon_{p_2},\Gamma_{p_2})$ be such that $\zeta_1< \zeta_2$. Thus, by definition, $(\Upsilon_{p_1},\Gamma_{p_1})$ lies strictly to the left of $(\Upsilon_{p_2},\Gamma_{p_2})$. Now, the region in between $(\Upsilon_{p_1},\Gamma_{p_1}),(\Upsilon_{p_2},\Gamma_{p_2})$ can be seen to be an open set and thus must have positive Lebesgue area, and it is not difficult to see that for any $p$ lying in this region (see Figure \ref{fig:enc}), and any $\zeta=(p,\Upsilon_p,\Gamma_p)$, we have $\zeta_1\leq \zeta\leq \zeta_2$.  %
\end{proof}
For ease of notation, in case $\zeta_2<\zeta_1$, we define $\leb([\zeta_1,\zeta_2])\coloneqq -\leb([\zeta_2,\zeta_1])$. We now define the augmented Peano curve $\widetilde{\eta}\colon \RR\rightarrow \cor$ by $\eta(0)=(0,\Upsilon_{0},\Gamma_{0})$ and
\begin{equation}
  \label{eq:57}
  \leb([\widetilde{\eta}(0),\widetilde{\eta}(v)])=v
\end{equation}
for all $v\in \RR$. Having defined the above, we define the Peano curve $\eta\colon \RR\rightarrow \RR^2$ by $\eta=\pi \circ \widetilde{\eta}$. Note that Lemma \ref{lem:1} ensures that $\widetilde{\eta},\eta$ are uniquely defined. We now show that as one would expect, $\eta$ is continuous and is space-filling.

\begin{lemma}
  \label{lem:2}
  Almost surely, $\eta$ is a continuous space-filling curve.
\end{lemma}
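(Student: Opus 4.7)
I would split the proof into space-filling and continuity.

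For space-filling, given any $q \in \RR^2$ I pick a corner $\zeta_q = (q,\Upsilon_q,\Gamma_q) \in \cor$ and set $v_q := \leb([\widetilde{\eta}(0),\zeta_q])$. Lemma \ref{lem:1} gives strict monotonicity of $F(\zeta) := \leb([\widetilde{\eta}(0),\zeta])$ along $\cor$, which together with the defining relation \eqref{eq:57} forces $\widetilde{\eta}(v_q)=\zeta_q$ and hence $\eta(v_q) = \pi(\zeta_q) = q$. (One also needs $F$ to be surjective onto $\RR$: no gap can exist because any two corners with distinct $F$-values enclose a planar region of positive area by Lemma \ref{lem:1}, hence contain a further corner of intermediate $F$-value; unboundedness is clear from taking corners whose base points lie arbitrarily far from $0$.)

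For continuity, fix $v_0 \in \RR$ with $v_n \to v_0$, and set $\zeta_n = \widetilde{\eta}(v_n)$, $p_n = \eta(v_n)$. First, $v_n$ being bounded forces $\zeta_n$ to lie in a fixed order-interval $[\widetilde{\eta}(a),\widetilde{\eta}(b)]$, and its planar projection $\pi([\widetilde{\eta}(a),\widetilde{\eta}(b)])$ is a.s.\ bounded (its boundary consists of two pairs of $\theta=0$ paths that coalesce both upward and downward almost surely). So the $p_n$ are bounded. Assume for contradiction that $p_n \not\to p_0$; by compactness extract a subsequence with $p_n \to p_* \neq p_0$. The H\"older regularity of $\theta=0$ geodesics and interfaces on compact time intervals, together with Arzel\`a--Ascoli, permits passing to a further subsequence along which $\Gamma_{p_n} \to \tilde{\Gamma}$ and $\Upsilon_{p_n} \to \tilde{\Upsilon}$ uniformly on compacts, where $\tilde{\Gamma}$ is a $\theta=0$ upward semi-infinite geodesic from $p_*$ and $\tilde{\Upsilon}$ is a $\theta=0$ downward interface from $p_*$. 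Setting $\zeta_* := (p_*,\tilde{\Upsilon},\tilde{\Gamma}) \in \cor$, the area functional $F$ is continuous under this corner-convergence (the Lebesgue area of the symmetric difference of the two enclosed planar regions shrinks to $0$ under uniform-on-compacts convergence of the bounding paths together with a.s.\ coalescence), yielding $F(\zeta_*) = \lim_n F(\zeta_n) = v_0 = F(\zeta_0)$. Strict monotonicity of $F$ from Lemma \ref{lem:1} then forces $\zeta_* = \zeta_0$, contradicting $p_* \neq p_0$.

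The substantive technical step will be the continuity of $F$ under the described corner-convergence, which hinges on the a.s.\ coalescence of $\theta=0$ geodesics and interfaces (implicit in the discussion preceding Proposition \ref{prop:4}) to guarantee that the enclosed planar regions are bounded and converge in $L^1$. The remaining ingredients (Arzel\`a--Ascoli precompactness of $\theta=0$ geodesics and interfaces, and strict monotonicity of $F$) are either standard or already established in Lemma \ref{lem:1}.
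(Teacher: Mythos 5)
Your proposal is correct and follows essentially the same route as the paper: continuity via precompactness of geodesics and interfaces, convergence of the enclosed Lebesgue areas under locally uniform (overlap) convergence, and Lemma \ref{lem:1} to rule out two distinct corners sharing the same parameter value, with space-filling coming from every point admitting a corner together with the area parametrization. The differences in packaging (running the contradiction through strict monotonicity of $F(\zeta)=\leb([\widetilde{\eta}(0),\zeta])$ and bounding the points $p_n$ via coalescence, instead of the paper's reduction to points at distance exactly $\epsilon$ from the purported discontinuity) are cosmetic.
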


\begin{proof}
  If $\eta$ were not continuous, then there would exist an $\epsilon>0$ and a corner $\zeta=(p,\Upsilon_p,\Gamma_{p})$ and corners $\zeta_n=(q_n,\Upsilon_{q_n},\Gamma_{q_n})$ such that $|q_n-p|\geq \epsilon$ and $\leb([\zeta_n,\zeta])\rightarrow 0$ as $n\rightarrow \infty$. In fact, it is not difficult to see that the set $\pi([\zeta_n,\zeta])$ must be connected for every $n$, and thus we can further assume that $|q_n-p|=\epsilon$ for all $n$. By the compactness of the circle of radius $\epsilon$ around $p$ along with the compactness property of geodesics, we know that the geodesics $\Gamma_{q_n}$ converge locally uniformly and in the overlap sense (see Proposition \ref{prop:7}) to a geodesic $\Gamma_q$ for some $q$ satisfying $|q-p|= \epsilon$. Also, by the duality between the interface portrait and the geodesic tree along with the same reasoning as above, we also know that the interfaces $\Upsilon_{q_n}$ converge subsequentially locally uniformly and in the overlap sense to an interface $\Upsilon_q$. Thus we have a point $q$ with $|q-p|=\epsilon$ and a  subsequence $\{n_i\}$, for which $\Gamma_{q_{n_i}},\Upsilon_{q_{n_i}}$ converge locally uniformly and in the overlap sense to $\Gamma_{q},\Upsilon_q$ respectively, and we define $\widetilde{\zeta}=(q,\Upsilon_q,\Gamma_{q})$.

  We now note that $\leb([\zeta_{n_i},\zeta])\rightarrow \leb([\widetilde{\zeta},\zeta])$ as $i\rightarrow\infty$. To see this, we first note that $|\leb([\zeta_{n_i},\zeta])-\leb([\widetilde{\zeta},\zeta])|\leq |\leb([\zeta_{n_i},\widetilde{\zeta}])|$. Now, since $\Gamma_{q_{n_i}},\Upsilon_{q_{n_i}}$ converge locally uniformly and in the overlap sense to $\Gamma_q,\Upsilon_q$ respectively, it is not difficult to see that the Lebesgue area enclosed between $(\Upsilon_{q_{n_i}},\Gamma_{q_{n_i}})$ and $(\Upsilon_q,\Gamma_q)$ must converge to zero as well, and this means that $\lim_{i\rightarrow \infty} \leb([\zeta_{n_i},\zeta])= \leb([\widetilde{\zeta},\zeta])$.

  Thus since $\lim_{i\rightarrow \infty}\leb([\zeta_{n_i},\zeta])= 0$, we obtain that $\leb ([\widetilde{\zeta},\zeta])=0$ but this contradicts Lemma \ref{lem:1} since $\widetilde{\zeta}\neq \zeta$. Thus the assumption that $\eta$ is not continuous is false. Finally, since every point $p$ has at least one choice of $\Gamma_p$ and $\Upsilon_p$, the curve $\eta$ has to be space-filling. 
\end{proof}

 The following lemma characterizes the set of points in $\RR^2$ which $\eta$ hits multiple times.
\begin{lemma}
  \label{lem:21}
  The set of points $p\in \RR^2$ for which there exist multiple $v$ with $\eta(v)=p$ is equal to $\cT_\uparrow\cup \cI_\downarrow$ and a.s.\ has Euclidean Hausdorff dimension $4/3$ and Lebesgue measure zero.
\end{lemma}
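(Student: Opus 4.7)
The plan is to directly leverage the construction of the Peano curve together with Proposition~\ref{prop:4}. I would first argue that the augmented curve $\widetilde{\eta}\colon\RR\to\cor$ is a bijection onto $\cor$. Uniqueness of $\widetilde{\eta}(v)$ for each $v$ is immediate from Lemma~\ref{lem:1}, since any two distinct corners $\zeta_1,\zeta_2$ enclose positive Lebesgue area and therefore cannot share the same cumulative-area coordinate. Surjectivity is built into the construction: given any corner $\zeta$, the value $v=\leb([\widetilde{\eta}(0),\zeta])$ satisfies $\widetilde{\eta}(v)=\zeta$ by the uniqueness just noted. Consequently, the number of $v\in\RR$ with $\eta(v)=p$ equals the cardinality of $\pi^{-1}(p)$, i.e., the product of the number of distinct $\Upsilon_p$ and the number of distinct $\Gamma_p$ at $p$.

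Next, I would translate this count into tree language via Proposition~\ref{prop:4}. A point $p$ admits multiple $\Gamma_p$ precisely when $p\in\NU_\uparrow^2(\cL)=\cI_\downarrow$, and it admits multiple $\Upsilon_p$ (which are downward semi-infinite geodesics of the dual landscape $\wcL$) precisely when $p\in\NU_\downarrow^2(\wcL)=\cT_\uparrow$. Combining the two, the product in the previous paragraph exceeds one if and only if $p\in\cT_\uparrow\cup\cI_\downarrow$, which identifies the multiply-visited locus exactly.

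For the fractal claim I would invoke the Hausdorff-dimension calculation of the geodesic tree that is already established in \cite{Bha23}. The upper bound $\dim\cT_\uparrow\leq 4/3$ stems from each semi-infinite geodesic being a $2/3-$Hölder graph in time (so each has dimension at most $2-2/3=4/3$), combined with a covering of the tree by such graphs plus a controlled set of branch points; the matching lower bound is saturated by any single semi-infinite geodesic. By Proposition~\ref{prop:4}, $\cI_\downarrow$ has the same law as a spatial reflection of $\cT_\uparrow$, so $\dim\cI_\downarrow=4/3$ as well. Finite unions preserve the maximum dimension, giving $\dim(\cT_\uparrow\cup\cI_\downarrow)=4/3$, and a Hausdorff dimension strictly less than two forces the two-dimensional Hausdorff measure, and hence the Lebesgue measure, to vanish.

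The main obstacle I would anticipate is purely organizational: verifying cleanly that the area map along the totally ordered collection $\cor$ is continuous and covers all of $\RR$, so that $\widetilde{\eta}$ is genuinely defined on all of $\RR$ and hits every corner at a single time. This follows from Lemma~\ref{lem:1}, the continuity of $\eta$ provided by Lemma~\ref{lem:2}, and the finiteness of corner multiplicities at any given base point, but it is worth writing out carefully. Beyond this bookkeeping, the only nontrivial external input is the dimension bound for the geodesic tree, which I import from \cite{Bha23} rather than reprove.
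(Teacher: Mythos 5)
Your proposal is correct and follows essentially the same route as the paper: the multiply-visited points are identified with points admitting multiple corners, hence with $\NU_\uparrow^2(\cL)\cup\NU_\downarrow^2(\wcL)=\cT_\uparrow\cup\cI_\downarrow$ via Proposition \ref{prop:4}, and the dimension $4/3$ (hence Lebesgue measure zero) is imported from \cite{Bha23}. The extra detail you give on the bijectivity of $\widetilde{\eta}$ and the sketch of the dimension computation is consistent with what the paper uses implicitly, so there is no substantive difference.
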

\begin{proof}
   By Proposition \ref{prop:4}, the set of points $p$ having multiple such choices of $\Gamma_p$ or $\Upsilon_p$ is equal to $\NU_\uparrow^2(\cL)\cup \NU_\downarrow^2(\wcL)=\cI_\downarrow\cup\cT_\uparrow$. The latter a.s.\ has Euclidean Hausdorff dimension $4/3$ as a consequence of \cite[Theorem 5]{Bha23} and thus in particular has measure zero.
\end{proof}
As mentioned earlier, we often write $\eta=(\eta_h,\eta_u)$, where we use $h$ for `horizontal' and $u$ for `upwards'. We now show that our definition of $\eta$ respects the KPZ scaling symmetry inherent in the landscape.
\begin{lemma}
  \label{lem:3}
  The Peano curve $\eta$ satisfies the following scale invariance. For any fixed $\beta>0$, we have 
  \begin{displaymath}
    \eta(\beta v)\stackrel{d}{=}(\beta^{2/5}\eta_h(v),\beta^{3/5}\eta_u(v)),
  \end{displaymath}
  where the distributional equality is as parametrized curves.
\end{lemma}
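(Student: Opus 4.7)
The plan is to push the KPZ scaling invariance of $\cL$ through every step of the construction of $\eta$ from Section \ref{sec:constr}. Fix $\beta>0$, and consider the rescaled field
\begin{equation*}
\cL'(x,s;y,t) = \beta^{-1/5}\, \cL\bigl(\beta^{2/5}x,\beta^{3/5}s;\,\beta^{2/5}y,\beta^{3/5}t\bigr).
\end{equation*}
By the well-known scaling symmetry of the directed landscape (this corresponds to the standard KPZ rescaling with parameter $\alpha=\beta^{3/5}$, which fixes the direction $\theta=0$ since $\alpha^{2/3}/\alpha = \beta^{-2/5}\cdot \beta^{2/5-3/5+3/5}\cdot\ldots$, and more simply, the map $(x,s)\mapsto (\beta^{-2/5}x,\beta^{-3/5}s)$ sends direction $0$ to direction $0$), we have $\cL'\stackrel{d}{=}\cL$ as random functions on $\RR^4_\uparrow$.

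Next I would track how all the geometric objects entering the construction of $\eta$ transform under the diffeomorphism $\Phi_\beta\colon(x,s)\mapsto(\beta^{-2/5}x,\beta^{-3/5}s)$. Since path weights rescale by $\beta^{-1/5}$ and $\Phi_\beta$ is a bijection sending direction $0$ to direction $0$, maximizers are preserved: for every $p\in\RR^2$, the upward semi-infinite geodesic from $\Phi_\beta(p)$ in $\cL'$ is exactly $\Phi_\beta(\Gamma_p)$, and likewise for the downward semi-infinite geodesics of the dual landscape, so the interface portrait of $\cL'$ is $\Phi_\beta(\cI_\downarrow(\cL))$. Consequently the corner space $\cor(\cL')$ is the image of $\cor(\cL)$ under $\Phi_\beta$ acting coordinatewise, and the total order from Lemma \ref{lem:main:1} is preserved since $\Phi_\beta$ is orientation-preserving in the horizontal direction.

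Under $\Phi_\beta$, planar Lebesgue area rescales by a factor of $\beta^{-2/5}\cdot\beta^{-3/5}=\beta^{-1}$. Hence for any corners $\zeta_1,\zeta_2\in\cor(\cL)$, writing $\zeta_i'=\Phi_\beta(\zeta_i)\in\cor(\cL')$, we have $\leb([\zeta_1',\zeta_2'])=\beta^{-1}\leb([\zeta_1,\zeta_2])$. The Peano curve $\eta'$ associated to $\cL'$ is the unique $\RR\to\RR^2$ curve parametrized by signed area with $\eta'(0)=0$, and combining the preceding observations,
\begin{equation*}
\eta'(v)=\Phi_\beta\bigl(\eta(\beta v)\bigr)=\bigl(\beta^{-2/5}\eta_h(\beta v),\,\beta^{-3/5}\eta_u(\beta v)\bigr).
\end{equation*}
Since $\cL'\stackrel{d}{=}\cL$ and the Peano curve is a measurable functional of the landscape by Theorem \ref{thm:1}, $\eta'\stackrel{d}{=}\eta$ as parametrized curves, which yields the claim after multiplying by $(\beta^{2/5},\beta^{3/5})$ coordinatewise.

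The only genuinely non-routine point is the second step: checking that the construction in Section \ref{sec:constr} really is equivariant under $\Phi_\beta$. This reduces to the fact that for a fixed direction the collections of geodesics $\{\Gamma_p\}$ and interfaces $\{\Upsilon_p\}$ are intrinsic to the landscape, which was already established in Proposition \ref{prop:4} and the surrounding discussion, and to the fact that the initial condition $\eta(0)=0$ is preserved because $\Phi_\beta(0)=0$. All other verifications (ordering of corners, continuity of the area functional along approximating sequences, uniqueness of the parametrization) transfer directly via $\Phi_\beta$ without any additional input.
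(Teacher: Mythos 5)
Your proposal is correct and follows essentially the same route as the paper: both use the KPZ scaling invariance of $\cL$ with $\alpha=\beta^{3/5}$, observe that the geodesic tree, interface portrait, and hence the corner construction of Section \ref{sec:constr} are equivariant under the corresponding spatial map, and then fix the parametrization via the Jacobian factor $\alpha^{5/3}=\beta$ of the rescaling. The only blemish is the garbled inline computation about fixing the direction $\theta=0$, but the simpler justification you give immediately afterwards is the right one, so nothing of substance is missing.
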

\begin{proof}
  This is a consequence of the KPZ scaling invariance of the directed landscape \cite[(5) in Lemma 10.2]{DOV18}. Indeed, we know that for any $\alpha>0$, the object
  \begin{equation}
    \label{eq:67}
    \cL_\alpha(x,s;y,t)=\alpha^{-1/3}\cL(\alpha^{2/3} x,\alpha s;\alpha^{2/3}y,\alpha t)
  \end{equation}
  itself has the law of a directed landscape. Also, it is not difficult to see that on transforming $\cL$ as above, the geodesic tree and interface portrait transform analogously as well and thus as a consequence, the trace of the curve $v\mapsto (\alpha^{2/3}\eta_h(v),\alpha \eta_u(v))$ has the same distribution (as a random closed set) as the trace of $\eta$, though the parametrization might be different. To correct for the parametrization, we note that the Jacobian of the linear map $(x,s)\mapsto (\alpha^{2/3}x,\alpha s)$ is simply $\alpha^{5/3}$ and we thus obtain that the curve $v\mapsto (\alpha^{2/3}\eta_h(\alpha^{-5/3}v),\alpha \eta_u(\alpha^{-5/3}v))$ in fact has the same law as $\eta$. We now replace $v$ by $\alpha^{5/3} v$ and $\alpha$ by $\beta^{3/5}$ to obtain the desired result.%
\end{proof}
In the definition of the Peano curve, we demanded that $\eta(0)=0$, but the choice of $0$ here is of course arbitrary. The following lemma makes this formal.
\begin{lemma}
  \label{lem:5}
  For any $p=(x_0,s_0)\in \RR^2$, let $v_p$ be the a.s.\ unique value for which $\eta(v_p)=p$. Then we have
  \begin{displaymath}
    (\eta(v+v_p)-p,\cL(x_0+x,s_0+s;x_0+y,s_0+t))\stackrel{d}{=}(\eta(v),\cL(x,s;y,t)),
  \end{displaymath}
   where the distributional equality is as processes with $v\in \RR, (x,s;y,t)\in \RR^4_\uparrow$.
\end{lemma}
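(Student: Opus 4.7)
The plan is to deduce the claim from the translation invariance of the directed landscape combined with the translation-covariance of the construction of $\eta$ carried out in Section \ref{sec:constr}. For $p = (x_0, s_0) \in \RR^2$ fixed, set
\[
\cL^p(x, s; y, t) := \cL(x_0 + x, s_0 + s; x_0 + y, s_0 + t).
\]
By the spatio-temporal translation invariance of the directed landscape, $\cL^p \stackrel{d}{=} \cL$ as random continuous functions on $\RR_\uparrow^4$. Let $\eta^p$ denote the Peano curve obtained by applying the construction of Section \ref{sec:constr} to $\cL^p$ (in direction $0$, starting from $0$). Since $\eta^p$ is the same deterministic measurable functional of $\cL^p$ as $\eta$ is of $\cL$, the joint distributional equality $(\eta^p, \cL^p) \stackrel{d}{=} (\eta, \cL)$ follows immediately, and it therefore suffices to identify $\eta^p(v) = \eta(v + v_p) - p$ almost surely.

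To establish this identification, I would verify that each ingredient in the construction transforms covariantly under the pointwise translation $q \mapsto q + p$. Directly from the definitions in Section \ref{sec:main-result}, one checks that upward semi-infinite geodesics transform according to $\Gamma_q^{\cL^p}(\cdot) = \Gamma_{q+p}^{\cL}(\cdot + s_0) - x_0$; the asymptotic direction $\theta = 0$ is preserved because $\Gamma_{q+p}^{\cL}(u)/u \to 0$ as $u \to \infty$ implies $(\Gamma_{q+p}^{\cL}(t+s_0)-x_0)/t \to 0$ as $t \to \infty$. Entirely analogous statements hold for the downward interfaces $\Upsilon_q$, invoking Proposition \ref{prop:4} to transfer the translation invariance from $\cL$ to $\wcL$. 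Consequently the sets $\cor(\cL^p)$ and $\cor(\cL)$ are in canonical bijection via $(q, \Upsilon_q^{\cL^p}, \Gamma_q^{\cL^p}) \leftrightarrow (q + p, \Upsilon_{q+p}^{\cL}, \Gamma_{q+p}^{\cL})$, and this bijection preserves the ordering $\leq$ of Definition \ref{def:order} (since being ``to the left of'' is translation-invariant) as well as the signed quantity $\leb([\zeta_1, \zeta_2])$ introduced in Lemma \ref{lem:1} (since planar Lebesgue measure is translation-invariant).

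For a fixed $p$, there is almost surely a unique semi-infinite geodesic and a unique interface emanating out of $p$, so $p \notin \cT_\uparrow \cup \cI_\downarrow$ and Lemma \ref{lem:21} guarantees that $v_p$ is almost surely well-defined. Under the corner bijection above, the starting corner $(0, \Upsilon_0^{\cL^p}, \Gamma_0^{\cL^p})$ of $\widetilde{\eta}^p$ is identified with $(p, \Upsilon_p^{\cL}, \Gamma_p^{\cL}) = \widetilde{\eta}(v_p)$. The defining equation \eqref{eq:57}, applied in both parametrizations together with the area invariance above, then forces $\widetilde{\eta}^p(v)$ to be the image of $\widetilde{\eta}(v + v_p)$ under the bijection for every $v \in \RR$, and projecting to the base points yields $\eta^p(v) = \eta(v + v_p) - p$ almost surely. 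I do not anticipate a serious obstacle; the only care needed is to ensure that the almost sure statements involved (uniqueness of $\Gamma_p, \Upsilon_p$; well-definedness of the bijection; the parametrization identity) all hold on a single null-free event, which follows from a standard countable intersection argument together with the continuity of $\eta$ established in Lemma \ref{lem:2}.
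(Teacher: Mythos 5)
Your proposal is correct and takes exactly the route the paper intends: the paper's own proof is just the one-line observation that the claim is an immediate consequence of translation invariance of the directed landscape, and your argument is a careful unpacking of that statement via the translation-covariance of geodesics, interfaces, corners, ordering and Lebesgue area in the construction of Section \ref{sec:constr}. No gaps.
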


\begin{proof}
 This is an immediate consequence of the translation invariance of the directed landscape.
\end{proof}

In fact, using an abstract measure theoretic argument, we can convert the above symmetry into a translation invariance purely in terms of the volume parametrization.
\begin{lemma}
  \label{lem:6}
  For any fixed $v_0\in \RR$, we have
  \begin{displaymath}
    (\eta(v+v_0)-\eta(v_0),\cL( x+ \eta_h(v_0),s +\eta_u(v_0); y+ \eta_h(v_0),t +\eta_u(v_0) ) \stackrel{d}{=}(\eta(v), \cL(x,s;y,t) ),
  \end{displaymath}
  where the distributional equality is as processes with $v\in \RR, (x,s;y,t)\in \RR^4_\uparrow$. As a consequence, $\Gamma_{\eta(v_0)},\Upsilon_{\eta(v_0)}$ are a.s.\ unique for any fixed $v_0\in \RR$.
\end{lemma}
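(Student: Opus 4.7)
The plan is to reduce the distributional equality to the real-valued statement that for every bounded continuous test functional $\Psi$ on the (curve, landscape) Polish space,
\[
f(v_0) := \EE\,\Psi\bigl(\eta(\cdot+v_0)-\eta(v_0),\,\cL(\cdot+\eta(v_0);\cdot+\eta(v_0))\bigr)
\]
is constant in $v_0$. Since $\eta$ is continuous (Lemma \ref{lem:2}) and $\cL$ is continuous on $\RR_\uparrow^4$, $f$ is continuous in $v_0$ by dominated convergence, so it suffices to verify the averaged identity $\int_a^b f(v_0)\,dv_0 = (b-a)f(0)$ for every $a<b$. The a.s.\ uniqueness of $\Gamma_{\eta(v_0)}, \Upsilon_{\eta(v_0)}$ at a fixed $v_0$ then follows by applying the resulting distributional equality to the a.s.\ event that $\Gamma_0, \Upsilon_0$ are unique.

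To establish the averaged identity for $[a,b]=[0,1]$, I would Fubini the expectation inside the integral and then change variables $v_0 \mapsto p = \eta(v_0)$. This is legitimate because $\eta|_{[0,1]}$ is essentially injective (Lemma \ref{lem:21} identifies the multiply-hit set with $\cT_\uparrow \cup \cI_\downarrow$, which has Lebesgue measure zero) and $\leb(\eta([c,d]))=d-c$ ensures that the pushforward of Lebesgue on $[0,1]$ under $\eta$ is Lebesgue restricted to $\eta([0,1])$. Writing $T_p\omega := (\eta(\cdot+v_p)-p,\,\cL(\cdot+p;\cdot+p))$ with $v_p$ the a.s.\ unique preimage of $p$, this gives
\[
\int_0^1 f(v_0)\,dv_0 = \int_{\RR^2} \EE\bigl[\Psi(T_p\omega)\,\mathbf{1}_{v_p\in[0,1]}\bigr]\,dp.
\]
Although the indicator depends on the un-shifted $\omega$, it can be re-encoded as a function of $T_p\omega$ alone: under the shift $T_p$, the original origin of $\eta$ lies at shifted spatial position $-p$, reached at shifted time $-v_p$, so $\{v_p\in[0,1\}\}$ coincides with $\{v'_{-p}(T_p\omega)\in[-1,0]\}$, where $v'_{-p}$ is the a.s.\ unique preimage of $-p$ under the shifted curve. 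Applying Lemma \ref{lem:5} pointwise in $p$ converts the integrand to $\EE[\Psi(\omega)\,\mathbf{1}_{v_{-p}\in[-1,0]}]$, and a second Fubini together with the substitution $p\mapsto-p$ yields
\[
\int_0^1 f(v_0)\,dv_0 = \EE\Bigl[\Psi(\omega)\int_{\RR^2}\mathbf{1}_{v_p\in[-1,0]}\,dp\Bigr] = \EE\,\Psi(\omega) = f(0),
\]
since the inner integral equals $\leb(\eta([-1,0])) = 1$ a.s. The identical argument for arbitrary $[a,b]$ gives $\int_a^b f = (b-a)f(0)$, which together with the continuity of $f$ forces $f \equiv f(0)$.

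The delicate step is the re-encoding of $\mathbf{1}_{v_p\in[0,1]}$ as a function of $T_p\omega$, which is what allows the deterministic shift in Lemma \ref{lem:5} to absorb the random indicator. This hinges on the fact that once $p$ is fixed $T_p$ is an invertible deterministic operation on the (curve, landscape) pair, so the position of the original origin is measurable with respect to $T_p\omega$, namely it is $-p$ together with the corresponding shifted-preimage time. Modulo this bookkeeping, the remaining manipulations are routine applications of Fubini and the area-preserving nature of the Peano parametrization.
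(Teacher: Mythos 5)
Your proof is correct, and it rests on the same pillars as the paper's argument -- Lemma \ref{lem:5}, the area parametrization, and the essential injectivity from Lemma \ref{lem:21} -- but the execution is genuinely different. The paper works at the level of measures: it forms $\leb_\RR\times\mu_{\eta\times\cL}$, pushes it forward by the augmentation map $f$ of \eqref{eq:20}, identifies the pushforward as $\leb_{\RR^2}\times\mu_{\eta\times\cL}$ via the disintegration theorem and Lemma \ref{lem:5}, and then uses the a.e.\ invertibility of $f$ (space-filling plus Lemma \ref{lem:21}) to invert this identity both for $\mu_{\eta\times\cL}$ and for the shifted law $\mu^{v_0}_{\eta\times\cL}$ (cf.\ \eqref{eq:77}), obtaining $\mu^{v_0}_{\eta\times\cL}=\mu_{\eta\times\cL}$ for each fixed $v_0$ directly. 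You instead integrate a bounded continuous test functional over $v_0$, Fubini, change variables $v_0\mapsto p=\eta(v_0)$, and absorb the random indicator by re-encoding $\{v_p\in[a,b]\}$ as an event of the shifted configuration before invoking Lemma \ref{lem:5} pointwise in $p$; this re-encoding (recovering the shift amount $-v_p$ as the preimage time of $-p$ under the shifted curve, using that $0$ is a.s.\ hit only once) is exactly the test-function counterpart of the paper's a.e.\ invertibility of $f$. The genuine divergence is in how the fixed $v_0$ is recovered: the paper gets every $v_0$ at once from the inversion and translation invariance of $\leb_\RR$, whereas you only obtain the averaged identity $\int_a^b f=(b-a)f(0)$ and then upgrade via the a.s.\ continuity of $v_0\mapsto(\eta(\cdot+v_0)-\eta(v_0),\cL(\cdot+\eta(v_0);\cdot+\eta(v_0)))$ in the locally uniform topology, which does hold since $\eta$ is continuous (Lemma \ref{lem:2}) and $\cL$ is continuous on the open set $\RR^4_\uparrow$. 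Your route avoids the disintegration machinery at the price of this extra continuity step; the final uniqueness claim for $\Gamma_{\eta(v_0)},\Upsilon_{\eta(v_0)}$ is handled identically in both arguments, by transporting the a.s.\ uniqueness of $\Gamma_0,\Upsilon_0$ through the distributional identity.
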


\begin{proof}
In this proof, we will often use the Lebesgue measure, both on $\RR$ and $\RR^2$, and we locally use $\leb_\RR$ and $\leb_{\RR^2}$ to denote these respectively. Let $\cS$ denote the space of curves from $\RR$ to $\RR^2$ which are $0$ at $0$, with the locally uniform topology and the corresponding Borel $\sigma$-algebra and use $\cH$ to denote the space in which $\cL$ takes it values. We use $\mu_{\eta\times \cL}$ to denote the law of $(\eta,\cL)$, where $\mu_{\eta\times \cL}$ is a measure on $\cS\times \cH$. %
 Consider the measure on $\RR\times \cS\times \cH$ given by the product measure $\leb_\RR\times \mu_{\eta\times \cL}$, and consider the map $f\colon \RR\times \cS\times \cH \rightarrow \RR^2\times \cS\times \cH$ given by
  \begin{equation}
    \label{eq:20}
    f(v,\psi(\cdot),h(\cdot;\cdot))=(\psi(v),\psi(v+\cdot)-\psi(v),h(\psi(v)+\cdot;\psi(v)+\cdot)).
  \end{equation}
We now endow the space $\RR\times \cS\times \cH$ by the measure $\leb_{\RR}\times \mu_{\eta\times \cL}$ and consider the push-forward $f^*(\leb_\RR\times \mu_{\eta\times \cL})$. Since $\eta$ is volume parametrized, we note that the marginal of the above push-forward on $\RR^2$ is just $\leb_{\RR^2}$. Further, as a consequence of Lemma \ref{lem:5}, we obtain that
  \begin{equation}
    \label{eq:21}
    f^*(\leb_\RR\times \mu_{\eta\times \cL})=\leb_{\RR^2}\times \mu_{\eta\times \cL}.
  \end{equation}
 Indeed, by using the disintegration theorem for measures, there are measures $\lambda_p$ on $\cS\times \cH$ defined for a.e.\ $p\in \RR^2$ such that for any measurable set $E\times F\subseteq \RR^2\times (\cS\times \cH)$, we have
  \begin{equation}
    \label{eq:78}
    f^*(\leb_\RR\times \mu_{\eta\times \cL}) (E\times F)=\int_{E}\lambda_p(F) d\leb_{\RR^2}(p).
  \end{equation}
  However, we note that Lemma \ref{lem:5} is equivalent to the statement that the measures $\lambda_p$, which are a-priori only defined for a.e.\ $p$, in fact makes sense for all $p\in \RR^2$ and are equal to $\mu_{\eta\times \cL}$ for all $p$. This establishes \eqref{eq:21}.

  We now note that $f$ is in fact a.e.\ invertible, in the sense that $f^{-1}$ is uniquely defined except on a set of measure zero for $\leb_{\RR^2}\times \mu_{\eta\times \cL}$. Indeed, this is a consequence of the fact that $\eta$ is a.s.\ space-filling and that the set of $p\in \RR^2$ admitting multiple $v$ satisfying $\eta(v)=p$ almost surely has measure zero (Lemma \ref{lem:21}). As a consequence of this and \eqref{eq:21}, we can write $\leb_\RR\times \mu_{\eta\times \cL}=(f^{-1})^*(\leb_{\RR^2} \times \mu_{\eta \times \cL})$.

  We now consider the measure $\mu_{\eta\times \cL}^{v_0}$ on $\cS\times \cH$ which is defined as the law of $(\eta(v_0+\cdot)-\eta(v_0),\cL( \eta(v_0)+\cdot;\eta(v_0)+\cdot))$. By using that $\leb_\RR$ when translated by $v_0$ stays invariant, we again obtain that
  \begin{equation}
    \label{eq:77}
  f^*(\leb_\RR\times \mu_{\eta\times \cL}^{v_0})=\leb_{\RR^2} \times \mu_{\eta\times \cL}.
\end{equation}
Further, if we run the same a.e.\ invertibility argument from above, we obtain that $\leb_\RR\times \mu_{\eta\times \cL}^{v_0}=(f^{-1})^*(\leb_{\RR^2}\times \mu_{\eta\times \cL})$. Thus we have in fact obtained that
\begin{equation}
  \label{eq:76}
  \leb_\RR\times \mu_{\eta\times \cL}^{v_0}=(f^{-1})^*(\leb_{\RR^2}\times \mu_{\eta\times \cL})=\leb_\RR\times \mu_{\eta\times \cL},
\end{equation}
and as a consequence, we obtain that $\mu^{v_0}_{\eta\times \cL}=\mu_{\eta\times \cL}$, and this completes the proof of the distributional equality. To obtain the final statement regarding $\Gamma_{\eta(v_0)},\Upsilon_{\eta(v_0)}$, we just use the distributional equality along with the fact that $\Gamma_0,\Upsilon_0$ are a.s.\ unique.
\end{proof}

\begin{proof}[Proof of Theorem \ref{thm:1}]
  The space filling nature and the continuity of $\eta$ were established in Lemma \ref{lem:2}. Item (1) is a consequence of \eqref{eq:57} while (2) was shown in Lemma \ref{lem:21}. %
  
  For (3), we first note that there a.s.\ does not exist any point $p\in \RR^2$ having both multiple choices of $\Gamma_p$ and multiple choices of $\Upsilon_p$ since we know that $\cT_\uparrow$ and $\cI_\downarrow$ are disjoint. As a consequence, a point $p\in \RR^2$ being hit by $\eta$ at least $k$ times is equivalent to the condition $p\in \NU_\uparrow^k(\cL)\cup \NU_\downarrow^k(\wcL)$. For the case $k=3$, we know that the latter set is non-empty and is countable by \cite[Theorem 5]{Bha23}. Further, for any $k\geq 4$, we can show that the above set is a.s.\ empty. Indeed, to show that $\NU_\uparrow^k(\cL)\cup \NU_\downarrow^k(\wcL)$ is a.s.\ empty, it suffices to show that the set $\NU_\uparrow^k(\cL)$ is a.s.\ empty. We now note that every $p\in \NU_\uparrow^k(\cL)$ is a forward $k$-star in the language of \cite{Dau23+,Bha22} and we know that these do not exist for $k\geq 4$ by \cite[Theorem 1.5]{Dau23+}. Next, we note that (4), (5) were shown in Lemma \ref{lem:6} and Lemma \ref{lem:3} respectively. For the final statement, we observe that $v\mapsto -\eta(-v)$ is the Peano curve for the reflected dual landscape defined by $\wcL^\mathrm{ref}(x,s;y,t)=\wcL(-y,-t; -x,-s)$, where we are using the reflection symmetry of the directed landscape (see (3) in \cite[Lemma 10.2]{DOV18}). This completes the proof.
\end{proof}
Before ending this section, we comment that the Peano curve $\eta$, defined using the trees $\cI_\downarrow,\cT_\uparrow$, can in fact be used to recover the above trees. Indeed, it is not difficult to see that for all $v\in \RR$, $\partial \eta(( -\infty, v])=\Upsilon_p\cup \Gamma_p$, where $\widetilde{\eta}(v)=(p,\Upsilon_p,\Gamma_p)$.
\subsection{Required results from the directed landscape literature}
\label{sec:import}
In this section, we go over some results from the literature which we shall require. The following result from \cite{GZ22} uniformly controls the transversal fluctuation of geodesics in the directed landscape and will be useful to us.
\begin{proposition}[{\cite[Lemma 3.11]{GZ22}}]
  \label{prop:5}
  There exist positive constants $C,c$ and a random variable $S>0$ with the tail bound $\PP(S>M)<Ce^{-cM^{9/4} (\log M)^{-4}}$ for which we have the following. Simultaneously for all $u=(x,s;y,t)\in \RR^4_\uparrow$ and all geodesics $\gamma_{(x,s)}^{(y,t)}$ and $r$ satisfying $(s+t)/2\leq r< t$, 
  \begin{displaymath}
    \left|
      \gamma_{(x,s)}^{(y,t)}(r)-\frac{x(t-r)+y(r-s)}{t-s}
    \right|< S(t-r)^{2/3}\log^3
    \left(
      1+\frac{\|u\|}{t-r}
    \right),
  \end{displaymath}
where $\|u\|$ denotes the usual $L^2$ norm.  A similar bound holds when $s<r<(s+t)/2$ by symmetry.
\end{proposition}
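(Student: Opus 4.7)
The plan is to reduce the problem to a single-quadruple, one-point tail estimate via the KPZ scaling symmetry of $\cL$ and then upgrade it to the claimed uniform-in-$u$ bound through a multi-scale chaining argument on $\RR_\uparrow^4$, with the $\log^3(1+\|u\|/(t-r))$ correction emerging as the cost of the union bound. Using the scaling invariance recalled in \eqref{eq:67}, a geodesic $\gamma_{(x,s)}^{(y,t)}$ can be put by an affine change of variables into the form $\gamma_{(0,0)}^{(\widetilde{y},1)}$ with $|\widetilde{y}|$ dyadically controlled by $\|u\|$, so it suffices to prove a tail estimate at unit time scale and then sum over dyadic shells in $u$.

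The per-endpoint-pair bound I would obtain from the standard ``geodesic constraint'' comparison. If $\gamma_{(0,0)}^{(\widetilde{y},1)}(r)$ deviates from its linear interpolant at time $r \in [1/2,1)$ by more than $M(1-r)^{2/3}$, then there is a candidate midpoint $(z,r)$ with $|z-r\widetilde{y}|>M(1-r)^{2/3}$ for which $\cL(0,0;z,r)+\cL(z,r;\widetilde{y},1)\ge \cL(0,0;\widetilde{y},1)$. Each of the three landscape quantities here admits parabolic-decay one-point tail bounds of Tracy--Widom type, and computing the resulting parabolic deficit together with a union bound over a net of candidate $z$'s (at the natural scale $(1-r)^{2/3}$) produces a stretched-exponential tail in $M$. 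Summing the analogous contribution over a dyadic family of $\widetilde{y}$'s with $|\widetilde{y}|$ comparable to $\|u\|$ yields a bound of the shape $\exp(-cM^{\alpha})$ with a polylogarithmic loss; careful accounting of these parameters is what converts the bare per-pair exponent into the $M^{9/4}(\log M)^{-4}$ form stated for $S$.

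To obtain the simultaneous statement, I would dyadically decompose $\RR_\uparrow^4$ in the variables $s$, $t-s$, $y-x$, and the intermediate scale $t-r$, place a grid of quadruples inside each dyadic cell at a spacing dictated by the interpolation quality available, apply the per-pair bound at each grid point with an inflated parameter of order $M \cdot \log(1+\|u\|/(t-r))$, and interpolate to arbitrary $u$ in the cell using near-continuity of geodesics in their endpoints as provided by the overlap and locally uniform compactness property referenced in Proposition \ref{prop:7}. The main obstacle I anticipate is the delicate calibration of net density against one-point tail strength: the grid must be fine enough that geodesic continuity controls the interpolation error, yet coarse enough that the enumeration cost is absorbed by the stretched exponential one-point bound. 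It is precisely this tight optimization that produces the exponent $9/4$ together with the $\log^4$ denominator in the tail on $S$ and the $\log^3$ factor in the displayed inequality; a secondary delicate point is the regime $r$ very close to $t$, where $(t-r)^{2/3}$ shrinks and one must exploit the fact that the local geodesic increment behaves like a rescaled independent copy so that the same per-pair bound continues to apply at arbitrarily small scales.
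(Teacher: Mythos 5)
This statement is not proved in the paper at all: it is imported verbatim from \cite[Lemma 3.11]{GZ22}, so there is no internal proof to compare your attempt against. Judged on its own, your outline follows the same general strategy as the original proof in the literature (reduce by the KPZ scaling/shear symmetries, rule out a large deviation of the geodesic at an intermediate time by noting it forces a near-equality of passage times through a far-off midpoint, pay a parabolic deficit against one-point Tracy--Widom-type tails, and union bound over nets and dyadic scales), and that skeleton is sound.

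However, two gaps keep this from being a proof of the stated result. First, your mechanism for passing from a grid of quadruples to all $u\in\RR^4_\uparrow$ relies on ``near-continuity of geodesics in their endpoints'' via Proposition \ref{prop:7}; that proposition is purely qualitative (subsequential limits and overlap convergence) and gives no quantitative modulus, so it cannot transfer a bound with explicit constants from netted endpoints to arbitrary ones --- indeed geodesics genuinely jump between distinct limits at non-uniqueness points. The standard fix is to never interpolate geodesics at all: one formulates the deviation event purely through passage-time inequalities and transfers it across the net using quantitative two-point (modulus-of-continuity) tail estimates for $\cL$ itself, together with monotonicity/ordering of geodesics; your write-up is missing this ingredient. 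Second, the entire quantitative content of the statement --- the exponent $9/4$ and the $(\log M)^{-4}$ and $\log^3$ corrections --- is exactly what you defer to ``careful accounting'' and ``tight optimization'' without carrying it out. As written, your argument would at best yield a stretched-exponential tail of unspecified shape, which is weaker than the cited bound; since the specific form of the tail is the substance of the lemma (and is what gets used downstream), this is a genuine missing step rather than a presentational omission.
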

We will also often use the following compactness property of geodesics.
\begin{proposition}[{\cite[Lemma 3.1]{DSV22},\cite[Lemma B.12]{BSS22},\cite[Lemma 13]{Bha23}}]
  \label{prop:7}
  Almost surely, for any $u=(x,s;y,t)$ and any sequence of points $u_n=(x_n,s_n;y_n,t_n)$ converging to u, any sequence of geodesics $\gamma_{(x_n,s_n)}^{(y_n,t_n)}$ admits subsequential limits in the uniform topology, and every such subsequential limit $\gamma_{(x,s)}^{(y,t)}$ is a geodesic from $(x,s)$ to $(y,t)$. Similarly, for any sequence of points $p_n\rightarrow p\in \RR^2$, any sequence $\{\Gamma_{p_n}\}$ is precompact in the locally uniform topology and any subsequential limit is a geodesic $\Gamma_p$.
  Further, the converge above happens in the overlap sense, which means that the set
  \begin{displaymath}
    \{s'\in [s_n,t_n]\cap [s,t]:\gamma_{(x_n,s_n)}^{(y_n,t_n)}(s')=\gamma_{(x,s)}^{(y,t)}(s')\}
  \end{displaymath}
  is an interval whose endpoints converge to $s$ and $t$ along the subsequence from above. A corresponding overlap sense convergence holds for the case of infinite geodesics as well. 
\end{proposition}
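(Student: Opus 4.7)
The plan is to prove Proposition \ref{prop:7} in three conceptual steps: first establish precompactness of the sequences via an Arzel\`a--Ascoli argument grounded in the transversal fluctuation bounds of Proposition \ref{prop:5}; second, verify that any subsequential uniform limit is itself a geodesic by combining the continuity of $\cL$ with the partition definition \eqref{eq:56} of path weight; and third, upgrade uniform convergence to overlap-sense convergence using the planar ordering of geodesics in the directed landscape.

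For precompactness in the finite-geodesic case, I would fix a compact $K \subseteq \RR^4_\uparrow$ containing $u$ and $u_n$ for large $n$ and work first on a compact subinterval $[s+\delta, t-\delta]$. Equiboundedness of $\{\gamma_{(x_n,s_n)}^{(y_n,t_n)}\}$ is immediate from Proposition \ref{prop:5}, whose random constant $S$ uniformly controls deviation from the straight-line interpolant for all geodesics with parameters in $K$. Equicontinuity on $[s+\delta, t-\delta]$ follows by applying the same estimate to restrictions $\gamma_n|_{[r_1,r_2]}$ for interior $r_1 < r_2$, which are themselves geodesics and therefore have fluctuations of order $|r_2-r_1|^{2/3}$ up to log factors. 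A diagonal argument in $\delta \to 0$ then extracts a locally uniform limit on $(s,t)$, and convergence up to the endpoints follows from $\gamma_n(s_n) = x_n$, $\gamma_n(t_n) = y_n$ together with the boundary transversal bound. The infinite-geodesic case reduces to this one by applying the finite statement to $\Gamma_{p_n}|_{[s_n, T]}$ for a diverging sequence of times $T$ and diagonalizing.

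Next I would show that any uniform subsequential limit $\gamma$ of $\gamma_n \coloneqq \gamma_{(x_n,s_n)}^{(y_n,t_n)}$ is a geodesic from $(x,s)$ to $(y,t)$. By optimality of the $\gamma_n$ and continuity of $\cL$, we have $\ell(\gamma_n) = \cL(x_n,s_n;y_n,t_n) \to \cL(x,s;y,t)$. For any finite partition $s = r_0 < r_1 < \cdots < r_k = t$ with distinct times, continuity of $\cL$ at points with distinct time coordinates gives
\begin{equation*}
\sum_{i=1}^k \cL(\gamma(r_{i-1}), r_{i-1}; \gamma(r_i), r_i) = \lim_{n \to \infty} \sum_{i=1}^k \cL(\gamma_n(r_{i-1}), r_{i-1}; \gamma_n(r_i), r_i) \geq \lim_{n \to \infty} \ell(\gamma_n) = \cL(x,s;y,t),
\end{equation*}
and taking the infimum over partitions yields $\ell(\gamma) \geq \cL(x,s;y,t)$; the reverse bound is immediate from \eqref{eq:56}. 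Hence $\gamma$ is a geodesic.

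The main obstacle I expect is the overlap-sense upgrade, since uniform convergence alone permits $\gamma_n$ to shadow $\gamma$ without ever coinciding on an interval. The plan is to exploit the planar ordering of geodesics: two finite geodesics with ordered endpoints cannot cross transversally, and whenever they agree at a single interior time they must in fact coincide on a maximal interval containing it. Given $\gamma_n \to \gamma$ uniformly on compacts and the a.s.\ uniqueness of the geodesic between interior points $(\gamma(r_1), r_1)$ and $(\gamma(r_2), r_2)$ for generic $r_1 < r_2$, a short perturbation argument produces, for each $\delta > 0$ and large enough $n$, a time in $[s+\delta, t-\delta]$ at which $\gamma_n$ and $\gamma$ agree; invoking the interval-coincidence property then forces the overlap interval to eventually cover $[s+\delta, t-\delta]$, and letting $\delta \to 0$ gives the claimed endpoint-convergence of overlap intervals. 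The delicate point is carrying out this perturbation uniformly along the sequence and in a way compatible with the a.s.\ statement across uncountably many choices of endpoint sequence, for which I would first reduce to countable dense $u_n, u$ and then transfer using the equicontinuity produced in the first step.
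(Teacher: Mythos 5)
A preliminary remark: this paper does not prove Proposition \ref{prop:7} at all; it is imported verbatim from the cited works, so your attempt can only be compared with the arguments there, not with anything internal to the paper. Your first two steps are essentially the standard route and are sound. Precompactness and equicontinuity do follow from the simultaneous transversal-fluctuation bound of Proposition \ref{prop:5} via Arzel\`a--Ascoli, and the ``subsequential limit is a geodesic'' step via partition sums, continuity of $\cL$, the definition \eqref{eq:56}, and $\ell(\gamma_n)=\cL(x_n,s_n;y_n,t_n)\to\cL(x,s;y,t)$ is correct, modulo the minor care needed because partition times near $s$ and $t$ may fall outside $[s_n,t_n]$. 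One caveat in the semi-infinite case: applying the finite statement to $\Gamma_{p_n}\lvert_{[s_n,T]}$ and diagonalizing only produces a limit path all of whose segments are geodesics; it does not show that the limit is $0$-directed, which is part of what ``a geodesic $\Gamma_p$'' means here, and this requires uniform control of transversal fluctuations of semi-infinite geodesics (compare Proposition \ref{p:tfstrong} together with Proposition \ref{prop:6}), not just the finite-segment argument.

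The genuine gap is your third step, the overlap upgrade, which is precisely the nontrivial content of the cited results. First, the structural fact you lean on --- that two geodesics agreeing at a single interior time must coincide on an interval containing it --- is unjustified and false as a deterministic statement: two geodesics can cross at an isolated time, and even agreement at two times only forces both restrictions to be geodesics between the two common points, which need not be unique, so they need not coincide in between; in particular even the assertion that the overlap set is an interval is something to be proved, not assumed. Second, the ``short perturbation argument'' producing an exact agreement time in $[s+\delta,t-\delta]$ is exactly the crux, and no mechanism is offered: uniform convergence gives closeness, never equality, and the a.s.\ uniqueness of geodesics between \emph{fixed} pairs of points cannot be invoked at the random points $(\gamma(r_1),r_1),(\gamma(r_2),r_2)$ of a subsequential-limit geodesic, because the proposition must hold on a single almost-sure event simultaneously for all $u$, all sequences $u_n\to u$, and all choices of geodesics, so no countable family of fixed-pair uniqueness statements covers the points that arise. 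Third, your proposed repair --- reduce to countable dense endpoint data and ``transfer using equicontinuity'' --- cannot work for an exact-coincidence statement: the overlap set is highly discontinuous under perturbation of endpoints, and equicontinuity only controls uniform distance, which is precisely the information that fails to yield overlap. The proofs in the cited references close this gap by genuinely different means (monotonicity/ordering of extremal geodesics between rational endpoints, coalescence-type arguments, and prelimit lattice facts passed through the convergence of exponential LPP), rather than by a local ``touch implies shared interval'' principle; as written, your sketch establishes uniform subsequential convergence to a geodesic but not the overlap-sense convergence claimed in the proposition.
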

For the proofs of Theorems \ref{thm:3} and \ref{thm:5}, we will need to use that the restrictions of the landscape to neighbourhoods of distinct points are roughly independent as long as the neighbourhoods are small. For this, we use the following lemma from \cite{Dau22}.

\begin{proposition}[{\cite[Proposition 2.6]{Dau22}}]
  \label{prop:3}
  For a fixed $k\in \NN$ and pairwise distinct points $z_1,z_2,\dots z_k$, there is a coupling of directed landscapes $\cL,\cL^\mathrm{re}_{z_1},\dots,\cL^\mathrm{re}_{z_k}$ such that $\{\cL^\mathrm{re}_{z_1},\dots,\cL^\mathrm{re}_{z_k}\}$ are mutually independent %
and almost surely, for all small enough $\epsilon$, we have $\cL\lvert_{z_i+[-\epsilon,\epsilon]^2}=\cL^\mathrm{re}_i\lvert_{z_i+[-\epsilon,\epsilon]^2}$ for all $i\in [\![1,k]\!]$.
\end{proposition}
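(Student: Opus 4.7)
The plan is to exploit two complementary structural features of the directed landscape: the metric composition law, which decomposes $\cL$ into independent landscapes across disjoint time strips, and the transversal fluctuation bound (Proposition~\ref{prop:5}), which localizes the landscape spatially on short time scales. Combining these will allow one to construct, for each $z_i$, an independent copy $\cL^{\mathrm{re}}_i$ of the landscape that agrees with $\cL$ near $z_i$ for all small enough $\epsilon$.

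First I would record a ``slab decomposition'': for any partition $s_0<s_1<\cdots<s_m$ of $\RR$, there exist independent directed landscapes $\cL^{(1)},\ldots,\cL^{(m)}$ with $\cL^{(j)}$ living on the time strip $\RR\times[s_{j-1},s_j]$, such that $\cL(x,s;y,t)$ equals $\cL^{(j)}(x,s;y,t)$ whenever $[s,t]\subseteq[s_{j-1},s_j]$, with crossings of multiple slabs reconstructed by metric composition $\cL(x,s;y,t)=\sup\sum_j\cL^{(j)}(z_{j-1},s\vee s_{j-1};z_j,t\wedge s_j)$. This is a known consequence of the construction of $\cL$ via composition of independent Airy sheets. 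Writing $z_i=(a_i,b_i)$, assume first that the $b_i$ are pairwise distinct; for $\epsilon$ small enough the intervals $[b_i-\epsilon,b_i+\epsilon]$ are pairwise disjoint, so take the partition containing the $2k$ endpoints $b_i\pm\epsilon$. Then $\cL\lvert_{z_i+[-\epsilon,\epsilon]^2}$ depends only on a single slab landscape $\cL^{(j(i))}$, and the slabs corresponding to distinct $i$ are independent. Define $\cL^{\mathrm{re}}_i$ by keeping that one slab and independently resampling all other slabs with fresh draws, then reassembling a full landscape by metric composition; this yields mutually independent directed landscapes agreeing with $\cL$ on $z_i+[-\epsilon,\epsilon]^2$.

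The main obstacle is the coincident-time case, $b_i=b_j$ with $a_i\neq a_j$, since a single thin time slab now contains several $z_i$ and the pure time-slab argument fails. Here one must add spatial localization: by Proposition~\ref{prop:5}, any geodesic between two points in $z_i+[-\epsilon,\epsilon]^2$ stays in a spatial window of diameter $O(\epsilon^{2/3}\log^3(\epsilon^{-1}))$ about $a_i$, which for $\epsilon$ small is much smaller than $\min_{i\neq j}|a_i-a_j|$. On this high-probability event, $\cL\lvert_{z_i+[-\epsilon,\epsilon]^2}$ is a function of $\cL$ restricted to a space-time window $[a_i-\delta,a_i+\delta]\times[b_i-\epsilon,b_i+\epsilon]$ with these windows pairwise disjoint, so one can perform a spatial surgery inside the thin time slab, replacing the landscape outside each spatial window by an independent resample drawn from the correct conditional law, and then glue through metric composition to obtain independent full landscapes $\cL^{\mathrm{re}}_i$. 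The superpolynomial tail in Proposition~\ref{prop:5}, combined with Borel--Cantelli, then upgrades the identity on $z_i+[-\epsilon,\epsilon]^2$ from ``with probability tending to one as $\epsilon\to 0$'' to ``almost surely for all sufficiently small $\epsilon$''. The genuinely hard step is constructing the spatial resample consistently with the global directed-landscape law at fixed time; this is carried out in~\cite{Dau22} via a careful surgery that leans on the explicit description of $\cL$ through the Airy line ensemble, and it is what distinguishes this coupling from the comparatively soft independence across disjoint time strips.
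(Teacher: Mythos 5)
First, note that the paper does not actually prove Proposition \ref{prop:3}: it is imported wholesale from \cite{Dau22}, with only the remark that the result there is stated for points lying on a common horizontal line and that the argument extends to the general configuration. Your slab decomposition for the case of pairwise distinct time coordinates is correct and is essentially the content of that remark: the directed landscape has independent increments across disjoint time strips, so keeping the slab around $b_i$ and refreshing the others does produce mutually independent landscapes agreeing with $\cL$ on $z_i+[-\epsilon,\epsilon]^2$ for all $\epsilon$ below the slab half-width. So far this is a legitimate reduction of the general statement to the equal-time case.

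The genuine gap is precisely that equal-time case, which is the actual content of \cite[Proposition 2.6]{Dau22}, and your sketch of it does not work as stated. The transversal fluctuation bound of Proposition \ref{prop:5} only says that, with high probability, $\cL\lvert_{z_i+[-\epsilon,\epsilon]^2}$ is determined by the restriction of $\cL$ to a small space-time window around $z_i$; it gives no independence between the restrictions of $\cL$ to disjoint spatial windows at the same times, and in fact these restrictions are \emph{not} independent (the landscape at a fixed pair of times is an Airy-sheet-type object with nontrivial spatial correlations). Consequently, ``replacing the landscape outside each spatial window by an independent resample drawn from the correct conditional law'' does not yield mutually independent copies $\cL^{\mathrm{re}}_{z_i}$ each agreeing with $\cL$ near its own $z_i$: conditional resampling around one window says nothing about decoupling the windows from each other. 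Moreover, ``glue through metric composition'' is not an available operation here, since metric composition concatenates the landscape across time, not across space; there is no exact spatial gluing of this kind. The construction that actually achieves the coupling in \cite{Dau22} is a different and substantially harder argument, and your proposal ultimately defers to it, so the essential step of the proposition is not proved but cited. (A smaller point: the quantifier in the statement requires a single coupling for which the agreement holds almost surely for all small $\epsilon$; a construction that succeeds with probability tending to one for each fixed $\epsilon$ does not automatically upgrade to this via Borel--Cantelli unless the coupling is built once and the exceptional events are controlled along a sequence of scales.)
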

We note that in \cite{Dau22}, the above result is stated for the case when all the points $z_i$ lie on the same horizontal line, but the argument therein does go forth to yield the above result. In order to obtain the dimension of the topological boundaries of segments of $\eta$ (Lemma \ref{lem:12}), we will need to know the Euclidean dimension of infinite geodesics, and we now import this from \cite{Bha23}.
\begin{proposition}[{\cite[Lemma 41]{Bha23}}]
  \label{prop:8}
  Almost surely, simultaneously for all $p\in \RR^2$ and all geodesics $\Gamma_p$, any segment of $\Gamma_p$ has Euclidean Hausdorff dimension $4/3$.
\end{proposition}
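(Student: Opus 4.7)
The plan is to establish matching upper and lower bounds $\dim\Gamma_p\lvert_{[a,b]}\le 4/3$ and $\ge 4/3$ for any fixed compact segment, and then to remove the fixed-$p$ restriction via an overlap-sense approximation argument based on Proposition \ref{prop:7}.

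For the upper bound I would use the uniform transversal fluctuation estimate from Proposition \ref{prop:5}. Fix a segment $\Gamma_p\lvert_{[a,b]}$ and a small $\delta>0$; partition $[a,b]$ into $O((b-a)/\delta)$ sub-intervals of length $\delta$. Applying Proposition \ref{prop:5} to the finite geodesic obtained as each such restriction, the transversal range over each sub-interval is bounded by $S\,\delta^{2/3}\log^3(1/\delta)$, uniformly in $p$ and in the choice of geodesic, where $S$ has the stretched exponential tail from that proposition. Hence each graph-piece lies in a rectangle of size $\delta\times O(\delta^{2/3}\log^3(1/\delta))$ and is covered by $O(\delta^{-1/3}\log^3(1/\delta))$ axis-aligned squares of side $\delta$. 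Summing, the full segment is covered by $N(\delta)=O(\delta^{-4/3}\log^3(1/\delta))$ squares of side $\delta$; absorbing the polylog into an arbitrarily small power of $\delta$, the $(4/3+\epsilon)$-Hausdorff sum $N(\delta)\cdot\delta^{4/3+\epsilon}$ vanishes as $\delta\to 0$, yielding $\dim\le 4/3$ simultaneously in $p$ because $S$ does not depend on $p$.

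For the lower bound I would use a slicing argument. The aim is to show that for Lebesgue almost every $x$ in the range of $\Gamma_p\lvert_{[a,b]}$, the time-level-set $\{t\in[a,b]:\Gamma_p(t)=x\}$ has Hausdorff dimension at least $1/3$; the Marstrand slicing inequality for Hausdorff dimension then forces $\dim\Gamma_p\lvert_{[a,b]}\ge 1+1/3=4/3$. The exponent $1/3=1-2/3$ reflects KPZ scaling: $\Gamma_p$ is H\"older-$2/3^-$ by Proposition \ref{prop:5}, so its level sets should generically be $1/3$-dimensional. To realise this rigorously I would transfer anti-concentration estimates on the one-point marginal and on two-point correlations of the geodesic position from exponential LPP (via the local coupling of Proposition \ref{prop:3}, or through a scaling-limit argument) to the directed landscape, and combine them with a standard second-moment / Frostman construction of a measure supported on the level set. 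This is the main obstacle of the proof: the direct Frostman energy applied to the push-forward of Lebesgue on the graph only yields the crude lower bound $\dim\ge 1$, so the slicing step, or some equivalent dimension-bootstrapping input, is essential to reach $4/3$.

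Finally, I would upgrade the fixed-$p$ conclusion to the simultaneous statement by fixing a countable dense $D\subset\RR^2$, securing both bounds almost surely for every $p\in D$ on a single event, and then invoking Proposition \ref{prop:7}. For an arbitrary $p$ and geodesic $\Gamma_p$, a sequence $p_n\in D$ with $p_n\to p$ yields a subsequence with $\Gamma_{p_n}\to\Gamma_p$ in the overlap sense, so on any compact sub-interval strictly interior to the common domain the graphs $\Gamma_{p_n}$ and $\Gamma_p$ eventually coincide, transferring the dimension equality $4/3$ from $\Gamma_{p_n}$ to the corresponding interior segment of $\Gamma_p$. A countable exhaustion of $(a,b)$ by such interior intervals then gives the dimension on the full segment, completing the proof.
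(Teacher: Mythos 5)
The decisive issue is your lower bound; everything else is either routine or coincides with what the paper actually does. The paper does not prove this proposition at all: it imports the fixed-$p$ statement from \cite[Lemma 36]{Bha23} and remarks that, combined with the compactness/overlap-convergence input of \cite[Lemma 14]{Bha23} (quoted here inside Proposition \ref{prop:7}), the simultaneous statement follows. Your step (3) is thus essentially the paper's one-line remark, with one caveat: Proposition \ref{prop:7} only guarantees that a subsequential limit of $\Gamma_{p_n}$ is \emph{some} geodesic from $p$, not the particular geodesic $\Gamma_p$ you fixed, so for exceptional $p$ admitting several geodesics your approximation step needs an extra argument (the paper defers exactly this to \cite[Lemma 14]{Bha23}). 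Your covering argument for the upper bound via Proposition \ref{prop:5} is fine and indeed gives the bound simultaneously in $p$.

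The genuine gap is the lower bound $\dim\ge 4/3$, which is the entire mathematical content of the statement. Your plan rests on the claim that for a.e.\ level $x$ the set $\{t:\Gamma_p(t)=x\}$ has Hausdorff dimension at least $1/3$, to be obtained from ``anti-concentration estimates on the one-point marginal and on two-point correlations'' transferred from exponential LPP, plus a second-moment/Frostman construction. None of the quoted tools supplies this: Proposition \ref{prop:5} is an upper bound on fluctuations and gives no anticoncentration, and Proposition \ref{prop:3} gives independence of the landscape near distinct deterministic points but says nothing about the conditional law of a geodesic's future given its past (the geodesic is a global, non-Markovian functional of $\cL$). A Frostman construction on the level set requires a two-point estimate of the shape $\PP\bigl(\Gamma(s)\in[x,x+\epsilon],\,\Gamma(t)\in[x,x+\epsilon]\bigr)\lesssim \epsilon^{2}\,s^{-2/3}(t-s)^{-2/3}$ together with a matching one-point lower bound, i.e.\ quantitative decorrelation along a single geodesic; producing such estimates is exactly the kind of difficulty this paper itself emphasizes (it is why Proposition \ref{t:va} is proved through LPP barrier/FKG arguments rather than in the continuum), and it is neither available off the shelf nor carried out in your outline. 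Moreover the assertion that the level sets are $1/3$-dimensional at a.e.\ level is itself an unproved theorem, not a citable fact. You flag this as ``the main obstacle,'' but as written the proof of the $4/3$ lower bound --- precisely the content for which the paper cites \cite[Lemma 36]{Bha23} --- is missing.
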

We note that the proof of \cite[Lemma 41]{Bha23} only shows the above for a fixed $p$, but this in conjunction with \cite[Lemma 20]{Bha23} immediately implies the full result.

At multiple points in the paper, we will crucially use the FKG inequality. While the availability of the FKG inequality is clear for pre-limiting models, it is a priori not so direct in the continuum. For this reason, we will instead execute the FKG arguments in the pre-limiting model of exponential LPP and will then use the recently established \cite{DV21} convergence of exponential LPP to the directed landscape to obtain the corresponding results in the continuum. We now introduce exponential LPP and state the convergence result.

Throughout this discussion, will use $\mathbf{v},\mathbf{w}$ to denote the vectors $(1,1)$ and $(1,-1)$ respectively. Let $X^n=X^n_{(i,j)}$ denote a family of i.i.d.\ $\mathrm{exp}(1)$ random variables indexed by $(i,j)\in \ZZ^2$, whose coupling across different $n$ will be specified later in Proposition \ref{prop:6}. %
The variables $X^n_{(i,j)}$ above will play the role of the weights of the pre-limiting exponential LPP model which converges to the directed landscape. For any two points $p=(x_1,y_1)\leq q=(x_2,y_2)\in \ZZ^2$ in the sense that $x_1\leq x_2$ and $y_1\leq y_2$, we define the passage time
\begin{equation}
  \label{eq:58}
  T^n(p,q)=\max_{\pi: p \rightarrow q}\ell(\pi),
\end{equation}
where the maximum above is over all up-right lattice paths $\pi$ going from $p$ to $q$ and the weight of such a path $\pi$ is simply defined as $\ell(\pi)=\sum_{r\in \pi\setminus \{p\}}X^n_r$ \footnote{Here, we do not add the weight of the first vertex $X^n_p$ for notational simplicity later on. Indeed, this ensures that $T^n(p,q), T^n(q,r)$ are independent for every $p<q<r$. We note that this convention is different from the one usually used, which is $\ell(\pi)=\sum_{r\in \pi} X^n_r$. However, in practice, both these conventions are equivalent and in particular, all the upcoming results in this section hold for both these conventions.}. The a.s.\ unique path achieving the above maximum is called the geodesic from $p$ to $q$ and is denoted by $\gamma_{p,q}\{X^n\}$. In fact, each point $p\in \ZZ^2$ also has \cite{FMP09,FP05}  an a.s.\ unique $\mathbf{v}$-directed semi-infinite geodesic $\Gamma^{n,\dis}_p$ which is an up-right path starting at $p$ whose every finite segment is a geodesic, and further, any two $\Gamma^{n,\dis}_{p_1},\Gamma^{n,\dis}_{p_2}$ with $p_1\neq p_2$ a.s. coalesce \cite{FP05}. We will often think of $\Gamma^{n,\dis}_p$ as a discrete path such that $m\mathbf{v}+\Gamma_{p}^{n,\dis}(m)\mathbf{w}\in \Gamma^{n,\dis}_p$ for all $m\in (1/2)\ZZ$ satisfying $m\geq (x_1+y_1)/2$, where we recall that $p=(x_1,y_1)$. In this notation, the geodesic $\Gamma_p^{n,\dis}$ being $\mathbf{v}$-directed just means that $\Gamma ^{n,\dis}_p(m)/m\rightarrow 0$ a.s.\ as $m\rightarrow \infty$.

To introduce the prelimiting directed landscapes $\cL^n$, we need some additional notation. For a given $p\in \ZZ^2$, we define the set $\boxx(p)$ by
\begin{equation}
  \label{eq:8}
  \boxx(p)=p+\{s\mathbf{v}+x\mathbf{w}: s\in (-1/4,1/4], x\in (-1/2,1/2]\},
\end{equation}
and note that $\RR^2$ is equal to the disjoint union $\bigsqcup_{p\in \ZZ^2}\boxx(p)$. For $q\in \RR^2$, we use the notation $\mathfrak{r}(q)$ to denote the unique $p\in \ZZ^2$ for which $q\in \boxx(p)$. We now define the pre-limiting directed landscape $\cL^n$ for $n\in \NN$ by
\begin{equation}
  \label{eq:59}
  \cL^n(x,s;y,t)=2^{-4/3}n^{-1/3}\left(T^n(\mathfrak{r}(sn\mathbf{v}+ 2^{2/3}xn^{2/3} \mathbf{w}), \mathfrak{r}( tn\mathbf{v}+ 2^{2/3}yn^{2/3} \mathbf{w}))-4(t-s)n\right),
\end{equation}
where $(x,s;y,t)\in \RR^4_\uparrow$. Similarly, we define the rescaled upward semi-infinite geodesics $\Gamma^n_p$ by
\begin{equation}
  \label{eq:60}
  \Gamma^n_p(t)=2^{-2/3}n^{-2/3}\Gamma^{n,\dis}_{\mathfrak{r}( 2^{2/3}xn^{2/3}\mathbf{w}+sn\mathbf{v})} (nt)
\end{equation}
for $nt\in (1/2)\ZZ$ and interpolate linearly in between.
We can now finally state the convergence result of prelimiting exponential LPP to the directed landscape.

\begin{proposition}[{\cite[Theorem 1.7, Remark 1.10]{DV21},\cite[Proposition 35]{Bha23}}]
  \label{prop:6}
  There is a coupling between the $X^n$ such that $\cL^n\rightarrow \cL$ almost surely with respect to the locally uniform topology on $\RR^4_\uparrow$. Further, in this coupling, almost surely, for any points $p_n\rightarrow p\in \RR^2$, the geodesics $\Gamma^n_{p_n}$ are precompact in the locally uniform topology, and any subsequential limit is an upward semi-infinite geodesic $\Gamma_p$.
\end{proposition}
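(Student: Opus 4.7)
The plan is to handle the two statements separately, with the first being essentially an invocation of \cite{DV21} and the second following by combining this with an argument from \cite{Bha23}.

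For the distributional convergence $\cL^n \Rightarrow \cL$ in the locally uniform topology on $\RR^4_\uparrow$, I would cite \cite[Theorem 1.7]{DV21}. To upgrade this to almost sure convergence in a single coupling, apply the Skorokhod representation theorem on the Polish space of continuous functions on $\RR^4_\uparrow$, producing a joint realization of $\{\cL^n\}$ with $\cL$ in which $\cL^n \to \cL$ almost surely; the underlying weight fields $X^n$ are then recovered by inverting the construction \eqref{eq:59}. Some care is needed because the map from $X^n$ to $\cL^n$ is not invertible on a null set, but this is easy to handle by enlarging the probability space.

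For the geodesic statement, the main input is precompactness of $\{\Gamma^n_{p_n}\}$ in the locally uniform topology. This follows from a uniform-in-$n$ transversal fluctuation estimate for pre-limiting $\mathbf{v}$-directed semi-infinite geodesics, the discrete counterpart of Proposition \ref{prop:5}, which is a standard output of the Busemann function machinery in exponential LPP. Such a bound yields that on any compact time interval $[s, T]$ the restrictions $\Gamma^n_{p_n}|_{[s,T]}$ are uniformly bounded in space and equicontinuous, so Arzelà-Ascoli gives local precompactness. Any subsequential limit $\Gamma$ is a continuous path through $p$, and using the almost sure convergence $\cL^n \to \cL$ together with semicontinuity of the weight functional $\ell(\cdot)$ defined in \eqref{eq:56}, each finite restriction of $\Gamma$ is a geodesic for $\cL$.

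The main obstacle is certifying that the subsequential limit remains $0$-directed at infinity and therefore qualifies as an upward semi-infinite geodesic $\Gamma_p$ in the sense of Section \ref{sec:main-result}. The pre-limiting paths are $\mathbf{v}$-directed, which after rescaling reads $\Gamma^n_{p_n}(t)/t \to 0$, but this sub-linearity must survive the limit uniformly in $n$, and a-priori a compact-set convergence argument loses control of the geodesics at infinity. Here I would invoke quantitative tail bounds on $\sup_{t \in [T, 2T]} |\Gamma^n_{p_n}(t)|/T^{2/3}$ uniform in $n$, which follow from stationarity of exponential LPP Busemann functions and standard concentration, to conclude $\Gamma(t)/t \to 0$ as $t \to \infty$. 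This joint precompactness argument is carried out in \cite[Proposition 28]{Bha23}, and combining it with the Skorokhod coupling from the first step yields the proposition.
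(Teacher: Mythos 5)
This proposition is imported in the paper without proof, by citing exactly the two sources you rely on (\cite[Theorem 1.7, Remark 1.10]{DV21} for the coupling in which $\cL^n\rightarrow\cL$ almost surely, and \cite[Proposition 28]{Bha23} for the precompactness and convergence of the rescaled semi-infinite geodesics in that coupling), so your reconstruction matches the paper's treatment. Your sketch of how these inputs are assembled --- Skorokhod representation to upgrade distributional convergence to an almost sure coupling, recovery of the weight fields from $\cL^n$, and uniform-in-$n$ transversal-fluctuation estimates to obtain precompactness and to preserve $0$-directedness of subsequential limits --- is consistent with how the cited results are established, and I see no substantive gap.
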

An important part of the barrier arguments used to prove Propositions \ref{lem:9} and \ref{t:va} is the usage of induced passage times, which we now define. Given points $p\leq q\in \ZZ^2$ and a set $S\subseteq \RR^2$ containing $p,q$, we define the induced passage time
\begin{equation}
  \label{eq:63}
  T^n(p,q\vert S)= \max_{\pi: p\rightarrow q, \pi\subseteq S}\ell(\pi).
\end{equation}
Now for a rectangle $R=[x_1,y_1]\times [s_1,t_1]$, we define a corresponding rectangle $R^\dagger$ by
\begin{equation}
  \label{eq:65}
R^\dagger= \{\mathfrak{r}( s'n\mathbf{v}+ 2^{2/3} x' n^{2/3} \mathbf{w}): (x',s')\in R\}.  
\end{equation}
Finally, for $(x,s;y,t)\in \RR^4_\uparrow$ and a rectangle $R=[x_1,y_1]\times [s_1,t_1]$ containing  both $(x,s)$ and $(y,t)$, we define the rescaled induced passage time $\cL^n(x,s;y,t\vert R)$ by
\begin{equation}
  \label{eq:64}
  \cL^n(x,s;y,t\vert R)=2^{-4/3}n^{-1/3}\left(T^n(\mathfrak{r}( sn\mathbf{v}+ 2^{2/3}xn^{2/3} \mathbf{w}), \mathfrak{r}( tn\mathbf{v}+ 2^{2/3}yn^{2/3} \mathbf{w})\vert R^\dagger)-4(t-s)n\right).
\end{equation}
Finally, before concluding this section, we quickly introduce Busemann functions, a notion from geometry \cite{Bus12}, originally introduced to first passage percolation in \cite{New95, Hof05}. With $z\in \ZZ^2$ being the point at which the geodesics $\Gamma^{n,\dis}_p$ and $\Gamma^{n,\dis}_q$ first meet, we define the discrete Busemann function
\begin{equation}
  \label{eq:62}
  \cB^{n,\dis}(p,q)=T^n(p,z)-T^n(q,z).
\end{equation}
Similarly, for $p,q\in \RR^2$, we define the rescaled Busemann function $\cB^n$ by
\begin{equation}
  \label{eq:66}
  \cB^n(p,q)= \cL^n(p;z)-\cL^n(q;z).
\end{equation}
\subsubsection{\textbf{Notation relevant for Sections \ref{sec:basic},  \ref{sec:lpp}}}
\label{s:lppnotation}
In Sections \ref{sec:basic}, \ref{sec:lpp}, we will work with LPP without being concerned with the coupling from Proposition \ref{prop:6}. In this case, we will use slightly modified notation from the above and will omit the $n$. That is, we will have a field of i.i.d.\ variables $X_{i,j}$, passage times $T_{p,q}=T(p,q)$, induced passage times $T_{p,q}^S=T(p,q\vert S)$, geodesics $\gamma^\dis_{p,q}$, %
$\mathbf{v}$-directed semi-infinite geodesics $\Gamma^\dis_{p}$, and Busemann functions $\cB^\dis$. We will simply define $\Gamma=\Gamma^\dis_{0}$ for convenience. Often, for $q=(q_1,q_2)\in \ZZ^2$, we shall use the space-time coordinates defined by $t(q)=\frac{q_1+q_2}{2}$ and $x(q)=\frac{q_1-q_2}{2}$.

\subsection{Some basic estimates in the LPP setting}
\label{sec:basic}
We shall record some widely used estimates about the passage times between vertices in a parallelogram here; these will be crucial for the arguments in Section \ref{sec:lpp}. These were first proved for Poissonian LPP in \cite{BSS14}, and we quote the results in the exponential LPP setting from \cite{BGZ19}.

Let $U$ be a parallelogram whose one pair of parallel sides is along the lines $\{q=(q_1,q_2)\in \ZZ^2: q_1+q_2=0\}$ and $\{q=(q_1,q_2)\in \ZZ^2: q_1+q_2=2n\}$, each of length $2n^{2/3}$ with midpoints $(mn^{2/3},-mn^{2/3})$ and $(n,n)$ respectively. Using the space-time notation from Section \ref{s:lppnotation}. Let $U_1$ and $U_2$ denote the parallelograms obtained by taking the intersection of $U$ with the strips $\{t\in [0,n/3]\}$ and $\{t\in [2n/3,n]\}$ respectively. We have the following result. 

\begin{proposition}[{\cite[Theorem 4.2]{BGZ19}}]
    \label{p:para}
    For any fixed $\Psi\in (0,1)$, there exist constants $C,c$ depending only on $\Psi$ such that for all $n\ge 1$ and $|m|<\Psi n^{1/3}$ we have the following for all $s>0$,
    \begin{enumerate}
        \item[(i)] $\PP(\inf_{p\in U_1,q\in U_2} T_{p,q}-\EE T_{p,q}\le-sn^{1/3})\le Ce^{-cs^3}$, 
        \item[(ii)] $\PP(\sup_{p\in U_1,q\in U_2} T_{p,q}-\EE T_{p,q}\ge sn^{1/3})\le Ce^{-c\min\{s^{3/2},sn^{1/3}\}}$,
        \item[(iii)] $\PP(\inf_{p\in U_1,q\in U_2} T^{U}_{p,q}-\EE T_{p,q}\le-sn^{1/3})\le Ce^{-cs}$.
    \end{enumerate}
\end{proposition}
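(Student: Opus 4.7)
The plan is to derive all three bounds from the classical one-point moderate deviation estimates for point-to-point passage times in exponential LPP, combined with a discretization argument to promote these to uniform bounds over $U_1\times U_2$ and, for part (iii), a chaining construction through interior anchor points of $U$. The starting input is the following well-known estimate, coming from the determinantal structure of exponential LPP via RSK and Laguerre ensemble asymptotics: for any fixed $p,q$ in the geometric setup of the proposition,
\begin{equation*}
\PP\bigl(T_{p,q}-\EE T_{p,q}\le -sn^{1/3}\bigr)\le Ce^{-cs^3},\qquad \PP\bigl(T_{p,q}-\EE T_{p,q}\ge sn^{1/3}\bigr)\le Ce^{-c\min\{s^{3/2},sn^{1/3}\}},
\end{equation*}
with $\EE T_{p,q}$ varying by at most $O(n^{1/3})$ as $p$ ranges over $U_1$ and $q$ over $U_2$ under the constraint $|m|<\Psi n^{1/3}$.

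For part (ii), I would use superadditivity: if $p_\flat,q_\sharp$ denote the extreme lattice corners just outside $U_1$ (to the bottom-left) and $U_2$ (to the top-right), then $T_{p,q}\le T_{p_\flat,q_\sharp}$ for every $p\in U_1$, $q\in U_2$, so the supremum collapses to a single passage time, to which the one-point upper tail applies directly after absorbing the $O(n^{1/3})$ mean-mismatch into the constants. For part (i) monotonicity is reversed, so I would lay down a net $\{p_i\}\subseteq U_1$, $\{q_j\}\subseteq U_2$ at transversal spacing $\delta n^{2/3}$, apply the lower tail at each of the $O(\delta^{-2})$ pairs, and then control the residual fluctuations $|T_{p,q}-T_{p_i,q_j}|$ via Busemann-increment estimates, which inherit cube-exponential tails from the one-point bounds. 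A union bound then yields the claimed $e^{-cs^3}$ decay.

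Part (iii) is the delicate one and is the step where I expect the main obstacle. The restricted passage time $T^U_{p,q}$ can be substantially smaller than $T_{p,q}$ whenever the unrestricted geodesic exits $U$, so one cannot directly feed the lower tail for $T_{p,q}$ into the inequality. The strategy is to lower-bound $T^U_{p,q}$ manifestly by concatenations of geodesics through anchor points inside $U$: insert $z_0=p, z_1,\ldots, z_k=q$ along the diagonal of $U$, at time spacing $n/k$ and transversal coordinates lying well inside $U$. Superadditivity then gives $T^U_{p,q}\ge \sum_{\ell=0}^{k-1} T^U_{z_\ell,z_{\ell+1}}$, and each summand is in turn bounded below by the unrestricted $T_{z_\ell,z_{\ell+1}}$ on the high-probability event that the corresponding geodesic stays in $U$, which holds by transversal fluctuation estimates. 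Accumulating the $k$ independent lower-tail contributions (together with the cost of restricting the intermediate geodesics), and optimizing over $k$, produces the $e^{-cs}$ decay; the constant-exponential rather than cube-exponential rate reflects precisely the price of forcing every contributing path to remain inside $U$. A final union bound over a net of endpoints, as in part (i), completes the argument.
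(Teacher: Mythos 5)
First, note that the paper does not prove this proposition at all: it is imported verbatim from \cite{BGZ19} (Theorem 4.2 there, with antecedents for Poissonian LPP in \cite{BSS14}), so there is no in-paper argument to compare against; your proposal has to be judged on its own. Its general direction (one-point moderate-deviation inputs, plus discretization/chaining to get uniformity over $U_1\times U_2$, plus an anchor-point construction inside $U$ for the constrained bound) is indeed the route taken in \cite{BSS14,BGZ19}, but there is a genuine error at the heart of parts (i) and (ii). Your premise that ``$\EE T_{p,q}$ varies by at most $O(n^{1/3})$ as $p$ ranges over $U_1$ and $q$ over $U_2$'' is false: since $U_1$ and $U_2$ are the intersections of $U$ with $\{t\in[0,n/3]\}$ and $\{t\in[2n/3,n]\}$, the time separation $t(q)-t(p)$ ranges over $[n/3,n]$ and $\EE T_{p,q}\approx 4(t(q)-t(p))$ varies by order $n$. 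Consequently the collapse in (ii) of the centered supremum onto the single passage time $T_{p_\flat,q_\sharp}$ loses an amount of order $n$ in the centering and proves nothing. The correct treatment replaces this global collapse by a local step-back argument: one covers $U_1\times U_2$ by cells whose time extent is small enough that the mean mismatch is genuinely $O(sn^{1/3})$, bounds the local supremum via $T_{p,q}\le T_{p_\flat,q_\sharp}-T_{p_\flat,p}-T_{q,q_\sharp}$ with backed-up corner points of the \emph{cell}, and controls the subtracted terms by infimum bounds at the smaller scale, iterating across dyadic scales. The same issue undermines your part (i): keeping the mean mismatch $O(sn^{1/3})$ forces cells of time extent $O(sn^{1/3})$, hence polynomially many (in $n$) net points, and a plain union bound with $e^{-cs^3}$ tails is then only nontrivial for $s\gtrsim (\log n)^{1/3}$; a multiscale chaining argument (as in \cite{BSS14}) is what rescues the cubic exponent uniformly in $n$. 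Also, ``Busemann-increment estimates'' are statements about semi-infinite geodesics and are not the tool for the residuals $|T_{p,q}-T_{p_i,q_j}|$; the comparisons used in the literature are the monotone/superadditive ones just described.

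Your part (iii) is the closest to the actual argument: restricted superadditivity through anchor points in disjoint time strips, transversal-fluctuation control to compare $T^U$ with $T$, and the observation that confinement is what degrades the exponent to $e^{-cs}$ are all correct ideas. But as written it does not yield a bound valid for all $s$: with a fixed number $k$ of segments, the event ``every segment geodesic stays in $U$'' fails with probability of order $e^{-ck^2}$, which caps the achievable tail; one must let $k$ grow with $s$ (so that both the steering cost, of order $kn^{1/3}$ when an endpoint sits at the transversal boundary of $U$, and the exit probability are compatible with a deficit $sn^{1/3}$ and a bound $e^{-cs}$), and one must handle the first and last segments, whose starting points may lie on $\partial U$, by working with restricted passage times (or explicit interior corridors) rather than with the unconstrained geodesic staying inside $U$. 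Finally, uniformity over $p\in U_1$, $q\in U_2$ in (iii) again needs the cell-by-cell/step-back treatment, not just ``a net of endpoints as in part (i)''.
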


Using Proposition \ref{p:para}, one can control the transversal fluctuation of finite geodesics. With $p_{m}$ denoting the point such that $t(p_{m})=n$ and $x(p_m)=mn^{2/3}$, we have the following result. %

\begin{proposition}[{\cite[Proposition C.9]{BGZ19}}]
    \label{l:tffin}
    For any fixed $\Psi\in (0,1)$, there exist constants $C,c$ depending only on $\Psi$ such that for all $n\ge 1$ and $|m|<\Psi n^{1/3}$, we have for all $h>0$, 
    $$\PP(\sup_{s\in \llbracket 0,2n \rrbracket/2}|\gamma^\dis_{0,p_{m}}(s)|\ge hn^{2/3})\le Ce^{-ch^{3}}.$$
\end{proposition}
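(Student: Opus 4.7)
The plan is to execute the standard optimality-based argument for transversal fluctuation in exponential LPP, originated in \cite{BSS14} in the Poissonian setting. I would first reduce to bounding the deviation $\delta(s) := \gamma^\dis_{0, p_m}(s) - (s/n) m n^{2/3}$ of the geodesic from its straight-line interpolation. Since $|m| < \Psi n^{1/3}$, the event $\sup_s |\gamma^\dis_{0, p_m}(s)| \geq h n^{2/3}$ implies $\sup_s |\delta(s)| \geq (h/2) n^{2/3}$ once $h \geq h_1 = h_1(\Psi)$; for smaller $h$ the conclusion $C e^{-c h^3} \geq 1$ is trivial by enlarging $C$. Thus I fix $h \geq h_1$ and focus on the deviation. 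If $|\delta(s)| \geq (h/2) n^{2/3}$ at some $s$, the geodesic passes through some lattice point $q$ with $t(q) = s$ and transversal distance $\geq (h/2) n^{2/3}$ from the straight line, and by the optimality of the geodesic we obtain the identity $T_{0, p_m} = T_{0, q} + T_{q, p_m}$.

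The key geometric input is the parabolic curvature of the shape function $\EE T_{0, (a, b)} = (\sqrt a + \sqrt b)^2 + O((a+b)^{1/3})$: for $q$ as above with $s \in [n/3, 2n/3]$, a Taylor expansion yields
\begin{displaymath}
\EE T_{0, p_m} - \EE T_{0, q} - \EE T_{q, p_m} \geq c' h^2 n^{1/3}
\end{displaymath}
uniformly in $q$, for some $c' = c'(\Psi) > 0$. Combined with the optimality identity, this forces at least one of (a) $T_{0, p_m} \leq \EE T_{0, p_m} - (c'/3) h^2 n^{1/3}$, (b) $T_{0, q} \geq \EE T_{0, q} + (c'/3) h^2 n^{1/3}$, or (c) $T_{q, p_m} \geq \EE T_{q, p_m} + (c'/3) h^2 n^{1/3}$. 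Event (a) has probability $\leq C e^{-c h^3}$ by Proposition \ref{p:para}(i); events (b), (c) each satisfy the same bound by Proposition \ref{p:para}(ii) applied in the appropriate parallelogram, in the regime $h^{2} \lesssim n^{1/3}$ where the $s^{3/2}$ clause is the binding one (outside this regime the target event is swallowed by a crude a priori bound on transversal fluctuations). A union bound over the $O(n^{5/3})$ candidate lattice points $q$ is absorbed into the stretched exponential $\exp(-c h^3)$ once $h$ exceeds a constant depending on $\Psi$.

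The main obstacle is handling $s$ near the endpoints $0$ and $n$, where the bulk parabolic estimate degrades. I would resolve this via a dyadic decomposition: for each $k \geq 1$, apply the bulk argument to the portion of the geodesic with $s \in [2^{-k-1} n, 2^{-k} n]$ (and symmetrically near $n$), rescaled to scale $n_k = 2^{-k} n$. On this scale the transversal fluctuation threshold $h n^{2/3}$ equals $(h \cdot 2^{2k/3}) n_k^{2/3}$, so the tail improves to $\exp(-c h^3 2^{2k})$; summing over $k$ up to $\log_2 n$ yields an overall bound of the form $C e^{-c h^3}$. Implementing this cleanly requires controlling the random endpoints of the sub-geodesics at times $n_k$, which I would do inductively: first establish a crude transversal fluctuation bound at the dyadic time $n_k$, then apply the bulk argument on the sub-interval using restricted passage times via Proposition \ref{p:para}(iii) inside an appropriately enlarged parallelogram to decouple the random endpoint from the event.
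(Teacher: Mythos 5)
The paper does not actually prove this statement -- it is imported verbatim from \cite[Proposition C.9]{BGZ19} -- so there is no internal proof to compare against; your outline follows the standard curvature-plus-tail-bound route with a dyadic treatment of the endpoints, which is indeed how such bounds are proved in the cited literature. However, two steps as you have written them would fail. First, the union bound. You bound the probability of each bad point $q$ by $e^{-ch^3}$ and then claim that a union over the $O(n^{5/3})$ lattice candidates ``is absorbed into the stretched exponential once $h$ exceeds a constant depending on $\Psi$.'' This is false: $n^{5/3}e^{-ch^3}\le Ce^{-c'h^3}$ uniformly in $n$ would require $h^3\gtrsim\log n$, whereas the proposition must hold for all $n$ with fixed $h$. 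The poly$(n)$ entropy has to be avoided, not absorbed: this is exactly why Proposition \ref{p:para} is stated uniformly over parallelograms. The correct bookkeeping covers the possible exit locations by on-scale $\epsilon n\times(\epsilon n)^{2/3}$ parallelograms, indexed by the (dyadic) size of the transversal deviation, so that the number of events at deviation $2^jh n^{2/3}$ is polynomial in $2^jh$ (not in $n$) while each carries probability $\exp(-c\,4^jh^3)$-type decay; the sum then converges to $Ce^{-ch^3}$.

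Second, your opening reduction from $\sup_s|\gamma^\dis_{0,p_m}(s)|$ to the deviation $\delta(s)$ from the chord, ``once $h\ge h_1(\Psi)$'' with $h_1$ a constant, is not valid under the stated hypothesis $|m|<\Psi n^{1/3}$: the chord itself sits at distance up to $|m|n^{2/3}\sim\Psi n$ from the diagonal, so $\{\sup_s|\gamma(s)|\ge hn^{2/3}\}$ does not imply $\{\sup_s|\delta(s)|\ge (h/2)n^{2/3}\}$ unless $h\gtrsim|m|$, which is not a constant threshold. (Indeed, for $1\ll h<|m|$ the event $\sup_s|\gamma(s)|\ge hn^{2/3}$ is certain because $\gamma(n)=mn^{2/3}$; the result should be read, as in the cited reference and as it is used in this paper where the endpoint offset is $O(1)$ in $n^{2/3}$ units, as a bound on the deviation from the straight line joining the endpoints.) Relatedly, your treatment of very large deviations (``swallowed by a crude a priori bound on transversal fluctuations'') needs an actual input -- e.g.\ an $e^{-cn}$ passage-time large-deviation bound for the regime where the deviation is comparable to the elapsed time, where the slope condition of Proposition \ref{p:para} fails -- since there is no a priori fluctuation bound to invoke. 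With the chord-deviation reading, the parallelogram-chaining in place of the pointwise union bound, and these boundary regimes handled, your dyadic scheme does give the claimed $Ce^{-ch^3}$ bound.
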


We can also similarly bound transversal fluctuations for semi-infinite geodesics. %
See e.g.\ \cite[Theorem 3, Remark 1.3, Proposition 6.2]{BSS19}) for the following result.\footnote{These results only upper bound $\Gamma(n)$, but combining this with Proposition \ref{l:tffin} gives Proposition \ref{p:tf}.} 

\begin{proposition}
    \label{p:tf}
    There exist constants $C,c>0$ such that for $n\ge 1$ and $h>0$, we have 
    $$\PP(\sup_{s\in \llbracket 0,2n \rrbracket/2}|\Gamma(s)|\ge hn^{2/3})\le Ce^{-ch^{3}}.$$
\end{proposition}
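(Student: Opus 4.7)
The plan is to bootstrap the endpoint tail bound on $|\Gamma(n)|$ from \cite{BSS19} (which the footnote cites as the only ingredient we lack) into a supremum bound via the no-crossing property of planar LPP geodesics together with Proposition \ref{l:tffin}. Throughout I assume $h$ is large enough; the statement is trivial for bounded $h$ on enlarging $C$. Moreover, since $\Gamma$ is an up-right lattice path started at $0$, for any $s\in\llbracket 0,2n\rrbracket/2$ we have the deterministic bound $|\Gamma(s)|\le s\le n$; thus the event whose probability we are estimating is empty whenever $hn^{2/3}>n$, i.e.\ whenever $h>n^{1/3}$. So it suffices to handle $h\le n^{1/3}$.

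The first step is to apply the \cite{BSS19} endpoint bound (as stated in the cited Theorem 3 and subsequent remarks): there exist $C_{0},c_{0}>0$ with $\PP(|\Gamma(n)|\ge h'n^{2/3})\le C_{0}e^{-c_{0}(h')^{3}}$ for every $h'>0$. Applied with $h'=h/4$ this shows that the event
\begin{displaymath}
E=\bigl\{|\Gamma(n)|\le (h/4)n^{2/3}\bigr\}
\end{displaymath}
satisfies $\PP(E^{c})\le C_{0}e^{-c_{0}h^{3}/64}$. The second and main step is a sandwich argument. Fix some $\Psi\in(1/4,1)$ (so that $|\pm h/4|<\Psi n^{1/3}$ in the regime $h\le n^{1/3}$) and choose lattice points $p_{L},p_{R}$ on the anti-diagonal $\{t=n\}$ with $x(p_{R})=-x(p_{L})$ equal to $\lceil (h/4)n^{2/3}\rceil$; in the notation of Proposition \ref{l:tffin} these correspond to $m_{L/R}=\mp h/4$ (after trivial rounding). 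Since the weights are i.i.d.\ $\mathrm{Exp}(1)$, finite geodesics are almost surely unique, and the standard planarity principle for exponential LPP guarantees that any two finite geodesics whose endpoint pairs are ordered coordinatewise are themselves ordered pointwise at every intermediate time. On $E$ the endpoint $\Gamma(n)$ lies between $p_{L}$ and $p_{R}$ on the anti-diagonal, and since $\Gamma|_{[0,n]}$ is by definition the finite geodesic from $0$ to $\Gamma(n)$, no-crossing forces
\begin{displaymath}
\gamma^{\dis}_{0,p_{L}}(s)\le \Gamma(s)\le \gamma^{\dis}_{0,p_{R}}(s)\qquad\text{for every }s\in\llbracket 0,2n\rrbracket/2.
\end{displaymath}

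It remains to control the transversal fluctuations of the two finite geodesics $\gamma^{\dis}_{0,p_{L}}$ and $\gamma^{\dis}_{0,p_{R}}$. A direct application of Proposition \ref{l:tffin} (with the chosen $\Psi$, which is admissible since $h\le n^{1/3}$) and threshold $hn^{2/3}$ gives that each has supremum at most $hn^{2/3}$ with exceptional probability $C_{1}e^{-c_{1}h^{3}}$. A two-term union bound, combined with the bound on $\PP(E^{c})$ from the first step, yields $\PP(\sup_{s\in\llbracket 0,2n\rrbracket/2}|\Gamma(s)|\ge hn^{2/3})\le Ce^{-ch^{3}}$ in the regime $h\le n^{1/3}$; the regime $h>n^{1/3}$ is empty as noted above, so the bound extends to all $h>0$ after adjusting constants. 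The step I expect to be the most delicate is the planar sandwiching: one must verify that the no-crossing conclusion, which is usually stated for pairs of finite geodesics between fixed ordered endpoints, transfers to the pair $(\Gamma|_{[0,n]},\gamma^{\dis}_{0,p_{R}})$ (and analogously on the left) where one endpoint is random. This is standard once one invokes a.s.\ uniqueness of the finite geodesics and the fact that $\Gamma|_{[0,n]}$ itself is a finite geodesic, so no real new input is needed beyond the already-cited ingredients.
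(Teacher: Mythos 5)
Your proposal is correct and is essentially the argument the paper intends: the paper only sketches this proof in a footnote ("these results only upper bound $\Gamma(n)$, but combining this with Proposition \ref{l:tffin} gives Proposition \ref{p:tf}"), and your write-up fills in exactly that combination — the endpoint tail from \cite{BSS19}, planar ordering to sandwich $\Gamma\lvert_{[0,n]}$ between the finite geodesics $\gamma^{\dis}_{0,p_L}$ and $\gamma^{\dis}_{0,p_R}$, and Proposition \ref{l:tffin} plus a union bound. The handling of the trivial regime $h>n^{1/3}$ and the remark that the ordering applies despite the random endpoint (by a.s.\ uniqueness and the fact that ordering holds simultaneously for all lattice endpoint pairs) are both fine.
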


In fact, it is easy to see that the following stronger version also holds. Let $I_n$ be the line segment in the diagonal line given by $\{x=0\}\cap \{t\in [-n,0]\}$.

\begin{proposition}
    \label{p:tfstrong}
    There exist constants $C,c>0$ such that for $n\ge 1$ and $h>0$, we have 
    $$\PP(\sup_{p\in I_{n}, s\in \llbracket 0,2n \rrbracket/2}|\Gamma^{\rm{dis}}_{p}(s)|\ge hn^{2/3})\le Ce^{-ch^{3}}.$$
\end{proposition}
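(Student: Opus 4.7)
The plan is to reduce Proposition~\ref{p:tfstrong} to Proposition~\ref{p:tf} by combining the translation invariance of the i.i.d.\ environment $X$ with the coalescence and uniqueness of $\mathbf{v}$-directed semi-infinite geodesics. First, for each fixed starting point $p=(t_0,0)\in I_n$ with $t_0\in[-n,0]$, translation invariance of the environment by $(-t_0,0)$ in the $(t,x)$-coordinates yields $\{\Gamma^{\dis}_p(s)\}_{s\ge t_0}\stackrel{d}{=}\{\Gamma(s-t_0)\}_{s\ge t_0}$, and hence
\[
\sup_{s\in \llbracket 0,2n\rrbracket/2}|\Gamma^{\dis}_p(s)|\ \stackrel{d}{=}\ \sup_{s'\in \llbracket |t_0|,2n+|t_0|\rrbracket/2}|\Gamma(s')|\ \le\ \sup_{s'\in \llbracket 0,3n\rrbracket/2}|\Gamma(s')|.
\]
Invoking Proposition~\ref{p:tf} with $n$ replaced by $3n/2$ yields the tail $Ce^{-ch^3}$ for deviations of order $hn^{2/3}$, handling each fixed $p$.

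To upgrade to the supremum over $p\in I_n$, I would introduce $y_p := \Gamma^{\dis}_p(0)$, the hitting point of $\Gamma^{\dis}_p$ with the horizontal line $\{t=0\}$. By the almost sure uniqueness of the $\mathbf{v}$-directed semi-infinite geodesic through each lattice point, $\Gamma^{\dis}_p|_{[0,\infty)} = \Gamma^{\dis}_{(0,y_p)}|_{[0,\infty)}$, so
\[
\sup_{p\in I_n,\,s\in \llbracket 0,2n\rrbracket/2}|\Gamma^{\dis}_p(s)|\ \le\ \max_{p\in I_n}|y_p|\ +\ \sup_{y\in Y_n,\,s\in \llbracket 0,2n\rrbracket/2}|\Gamma^{\dis}_{(0,y)}(s)-y|,
\]
where $Y_n := \{y_p : p\in I_n\}$. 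On the event $\{\max_{p\in I_n}|y_p|\le H\}$, we have $Y_n\subseteq[-H,H]$, and the non-crossing property of up-right geodesics with a common starting time, applied in the $x$-argument, gives
\[
\sup_{y\in Y_n,\,s}|\Gamma^{\dis}_{(0,y)}(s)|\ \le\ \max\Bigl(\sup_{s}|\Gamma^{\dis}_{(0,H)}(s)|,\ \sup_{s}|\Gamma^{\dis}_{(0,-H)}(s)|\Bigr).
\]
Applying Proposition~\ref{p:tf} together with translation invariance in the $x$-coordinate to the two extreme starting points $(0,\pm H)$ (only two events in the union bound) then controls the second term by $O(H + hn^{2/3})$ with tail $Ce^{-ch^3}$.

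The main remaining step, and the chief technical obstacle, is the tail estimate
\[
\PP\Bigl(\max_{p\in I_n}|y_p|\ge hn^{2/3}\Bigr)\le Ce^{-ch^3}.
\]
A naive union bound over the $\Theta(n)$ lattice points of $I_n$ introduces a spurious factor of $n$ in the tail that cannot be absorbed for $h$ smaller than a constant multiple of $(\log n)^{1/3}$. To remove this, I would exploit the coalescence of $\mathbf{v}$-directed semi-infinite geodesics: starting points of $I_n$ lying sufficiently close together in the time direction share the same $y_p$ with high probability, so the effective number of distinct values of $y_p$ is much smaller than $|I_n|$. Combining a dyadic decomposition of $I_n$ according to $|t_0|$ with coalescence and transversal-fluctuation estimates at each scale, and using the single-point bound together with the sandwich from the non-crossing property within each dyadic block, should yield the required super-polynomial tail with constants independent of $n$.
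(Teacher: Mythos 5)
Your reduction in the first two steps is fine (the translation-invariance bound for a fixed $p$, the identity $\Gamma^\dis_p\lvert_{[0,\infty)}=\Gamma^\dis_{(y_p,0)}\lvert_{[0,\infty)}$ from uniqueness, and the sandwich between the geodesics from $(\pm H,0)$; the ``$3n/2$'' should be $2n$, but constants absorb that). The genuine gap is your ``main remaining step'': the bound $\PP(\max_{p\in I_n}|y_p|\ge hn^{2/3})\le Ce^{-ch^3}$ is essentially the proposition itself at $s=0$ (note $|y_p|=|\Gamma^\dis_p(0)|$ is already part of the supremum being bounded), and it is not proved. You correctly identify that the naive union bound over the $\Theta(n)$ points fails in the regime $h\asymp(\log n)^{1/3}$, but the proposed fix is only a heuristic (``should yield''), and as described it has concrete problems: quantitative coalescence bounds for $\mathbf{v}$-directed semi-infinite geodesics in exponential LPP have only polynomial tails, so the failure probabilities of the block-coalescence events, summed over the $\asymp n/\ell$ blocks at the deepest scales, are not $o(e^{-ch^3})$ in exactly the regime where the union bound breaks down; moreover one cannot simply union bound over the random set of post-coalescence representatives — that requires an additional conditioning argument you do not supply. (Coalescence estimates are also not among the tools the paper imports in Section \ref{sec:basic}.) So the crux of the statement remains unestablished.

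There is a much softer route, which is presumably what the paper means by ``it is easy to see'' (it gives no proof): sandwich once and for all at the bottom of $I_n$ rather than on the line $t=0$. Take anchor points $a_\pm$ at $x=\pm\frac{h}{2}n^{2/3}$, $t=-n$ (rounded to lattice points; small $h$ is trivial by adjusting $C$), and let $G$ be the event that each of $\Gamma^\dis_{a_\pm}$ stays within distance $\frac{h}{2}n^{2/3}$ of its starting abscissa for all times in $[-n,n]$; by translation invariance and Proposition \ref{p:tf} applied with $2n$ in place of $n$, $\PP(G^c)\le Ce^{-ch^3}$. On $G$, for every $p=(0,t_0)\in I_n$ the two anchored geodesics lie strictly on either side of $p$ at time $t_0$, and since the restriction of each to times $\ge t_0$ is the a.s.\ unique semi-infinite geodesic from the lattice point it occupies there, the non-crossing of coalescing directed geodesics gives $\Gamma^\dis_{a_-}(s)\le\Gamma^\dis_p(s)\le\Gamma^\dis_{a_+}(s)$ for all $s\ge t_0$, hence $|\Gamma^\dis_p(s)|\le hn^{2/3}$ simultaneously for all $p\in I_n$ and $s\in\llbracket 0,2n\rrbracket/2$. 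This uses only two single-geodesic events, with no union bound over $I_n$, no control of $\max_p|y_p|$ as a separate object, and no coalescence-rate input — the entropy problem you ran into never arises.
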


Finally, we need the result that barrier events occur with positive probability. Let $U_{\Delta}$ denote the rectangle whose one pair of parallel sides are along the lines $\{t=0\}$ and $\{t=n\}$ with lengths $2\Delta n^{2/3}$ and midpoints  on the line $\{x=0\}$. %

\begin{proposition}[{\cite[Lemma 4.11]{BGZ19}}]
    \label{p:barrier}
    For every $\Delta, L>0$ there exists $\beta=\beta(\Delta,L)>0$ such that for all $n$ sufficiently large, we have 
    $$\PP\left(\sup_{p,q\in U_{\Delta}: t(q)-t(p)\ge L^{-1}n}  T^{U_{\Delta}}_{p,q}-\EE T_{p,q} \le -Ln^{1/3}\right)\ge \beta.$$
\end{proposition}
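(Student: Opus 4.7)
The strategy is to combine a single-pair moderate-deviation lower bound on the lower tail of exponential LPP passage times with the Harris--FKG inequality, exploiting the fact that the barrier event is a decreasing function of the i.i.d.\ weights $\{X_{i,j}\}$ restricted to $U_\Delta$.

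The first step is to establish, for each pair $(p,q)\in U_\Delta$ with $t(q)-t(p)\ge L^{-1}n$, a lower bound of the form
\[
\PP\bigl(T^{U_\Delta}_{p,q}\le \EE T_{p,q}-s n^{1/3}\bigr)\ge c\exp(-Cs^3)
\]
valid for $s$ in a moderate range depending on $L$. Since $T^{U_\Delta}_{p,q}\le T_{p,q}$ pointwise, this reduces to the analogous bound for the unrestricted passage time, which is known in the exponential LPP setting via its determinantal (RSK / Laguerre unitary ensemble) representation and the cubic lower-tail asymptotics of the Tracy--Widom GUE distribution, together with moderate deviation extensions to finite $n$.

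Next, I would reduce the supremum over all admissible pairs to a supremum over a finite net. Partition $U_\Delta$ into sub-parallelograms of parabolic scale $\epsilon^{2/3}n^{2/3}\times \epsilon n$, where $\epsilon=\epsilon(L,\Delta)$ is a small constant, producing a net $\{(p_i,q_i)\}_{i\in \mathcal{I}}$ of cardinality $|\mathcal{I}|=O(\epsilon^{-2})$. Using the upper-tail sup fluctuation bound in Proposition \ref{p:para}(ii) applied (with rescaling) on each sub-parallelogram, together with the transversal fluctuation estimate in Proposition \ref{l:tffin} and a union bound, one shows that on an event of probability at least $1-\beta'$, the supremum of $T^{U_\Delta}_{p,q}-\EE T_{p,q}$ over all admissible pairs differs from the maximum over the net by at most $n^{1/3}$; it therefore suffices to control the net.

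Finally, each event $E_i:=\{T^{U_\Delta}_{p_i,q_i}\le \EE T_{p_i,q_i}-(L+1)n^{1/3}\}$ is a decreasing function of the independent exponential weights in $U_\Delta$, so Harris--FKG gives
\[
\PP\Bigl(\bigcap_{i\in \mathcal{I}} E_i\Bigr)\ge \prod_{i\in\mathcal{I}}\PP(E_i)\ge \bigl(c\exp(-C(L+1)^3)\bigr)^{|\mathcal{I}|},
\]
which is a positive constant depending only on $L$ and $\Delta$; combined with the net reduction this supplies the required $\beta$. The main obstacle is the first step: producing a sharp $\exp(-Cs^3)$ lower bound on the lower tail at moderate deviation scales, which is where the exponential distribution of the weights is genuinely used. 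The remaining balancing of the net parameter $\epsilon$ against the fluctuation corrections and the factor $(L+1)^3$ inside the FKG product is routine once these inputs are in hand.
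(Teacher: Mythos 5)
First, note that the paper does not prove Proposition \ref{p:barrier} at all: it is imported verbatim from \cite[Lemma 4.11]{BGZ19}, so there is no internal proof to compare against, and your proposal has to stand on its own. Its skeleton (one-pair lower-tail lower bound $\PP(T_{p,q}\le \EE T_{p,q}-sn^{1/3})\ge c e^{-Cs^3}$, plus Harris--FKG over a constant-size family of decreasing events $E_i$) is a sensible start, and the FKG step itself is correct: the $E_i$ are decreasing, so $\PP(\bigcap_i E_i)\ge\prod_i\PP(E_i)$, a positive constant depending only on $L,\Delta$.

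The genuine gap is the bridge from the net to the full supremum, which is exactly the hard content of the lemma. Controlling a non-net pair $(p,q)$ by a net pair $(p_i,q_i)$ with $p_i\le p\le q\le q_i$ cannot be done by monotonicity alone (outward net points raise the mean by order $\epsilon n\gg n^{1/3}$); you must recover $4\cdot(\text{connector time})$ via \emph{lower} bounds on the connector passage times $T_{p_i,p},\,T_{q,q_i}$, i.e.\ via Proposition \ref{p:para}(i)/(iii)-type events, not the upper-tail bound \ref{p:para}(ii) or the transversal estimate \ref{l:tffin} that you invoke. Those connector events are \emph{increasing}, so they cannot be folded into the FKG product of the decreasing $E_i$ (increasing and decreasing events are negatively associated), and combining them by a union bound fails quantitatively: you would need the failure probability of the ``net reduction'' event, which you can only push down to $e^{-cs^3}$ with constants you do not control and at the price of demanding a deviation $(L+s)n^{1/3}$ in each $E_i$, to be smaller than the already tiny product $\bigl(c e^{-C(L+s)^3}\bigr)^{|\mathcal I|}$ with $|\mathcal I|$ of size $\Theta(\Delta^2\epsilon^{-10/3})$; strengthening the typicality demand feeds back into the $E_i$ and the inequality between the non-universal constants can never be guaranteed. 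This is precisely the obstruction that forces the known proofs (and the paper's own related arguments, e.g.\ Lemma \ref{l:barlb} and Lemma \ref{l:bstrong}) to be architected so that \emph{all} simultaneously required events are decreasing, or live on disjoint weight regions, or gain from curvature of the mean rather than from lower-tail deviations; reducing instead to blockwise events $\{\sup_{p\in C,q\in C'}(T^{U_\Delta}_{p,q}-\EE T_{p,q})\le -Ln^{1/3}\}$ keeps everything decreasing but then each block probability is again a simultaneous-lowness statement, not the one-point lower tail. As written, your argument does not establish that simultaneous lowness, so the key step is missing rather than routine.
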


 \section{Tail Estimates and H\"{o}lder continuity}
 \label{sec:holder}
The purpose of this section is to prove Theorem \ref{thm:2}, the H\"{o}lder continuity result for $\eta$, assuming the volume accumulation result Proposition \ref{t:va}. For this, we shall need tail estimates for the co-ordinate functions $\eta_{u}$ and $\eta_{h}$.

\begin{proposition}
    \label{p:utail}
    There exist constants $c,t_0>0$ such that for all $w>0,t>t_0$,
    $$\PP(\max_{v\in [-w,w]}|\eta_{u}(v)|\ge w^{3/5}t)\le \exp(-ct^{5/11}).$$
\end{proposition}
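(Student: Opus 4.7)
My plan is to deduce the bound from the area lower tail estimate of Proposition \ref{t:va} via a geometric inclusion argument combined with the KPZ scaling and symmetries of $\eta$.

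First I would apply the KPZ scaling of Lemma \ref{lem:3}, which yields the distributional identity $\max_{v\in[-w,w]}|\eta_u(v)|\stackrel{d}{=} w^{3/5}\max_{v\in[-1,1]}|\eta_u(v)|$, reducing the problem to scale $w=1$. The time-reversal symmetry $-\eta(-\cdot)\stackrel{d}{=}\eta(\cdot)$ from Theorem \ref{thm:1}(6), together with the left-right reflection symmetry of $\cL$ (which exchanges the roles of $\cT_\uparrow$ and $\cI_\downarrow$ up to reflection, and hence exchanges the corresponding excursions of $\eta$), then reduces, by a union bound, to the one-sided estimate $\PP(\max_{v\in[0,1]}\eta_u(v)\ge t)\le \exp(-ct^{5/11})$.

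The central step is the following geometric inclusion: on the above event and up to an auxiliary event of superpolynomially small failure probability, $\widetilde{V}_R(\Gamma_0;t)\le C$ for some absolute constant $C$. To see this, set $v^*=\inf\{v\ge 0:\eta_u(v)=t\}\le 1$ and $p^*=\eta(v^*)=(y^*,t)$, and restrict to the sub-event $y^*>\Gamma_0(t)$ (the other case, where the Peano curve has made an excursion to the left of $\Gamma_0$, being analogous after a reflection). Setting $\zeta^*=(p^*,\Upsilon_{p^*},\Gamma_{p^*})$, the trace $\eta([0,v^*])$ has area $v^*\le 1$ and, by the construction of $\eta$ via the corner order, contains every $q$ whose corner $\zeta_q$ satisfies $\zeta_0\le\zeta_q\le\zeta^*$. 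For $q=(y',s)\in\wcV_R(\Gamma_0;t)$, the coalescence $\Gamma_q=\Gamma_0$ on $[t,\infty)$ and the condition $y^*>\Gamma_0(t)$ immediately give $\zeta_q>\zeta_0$ and $\Gamma_q(r)\le\Gamma_{p^*}(r)$ for $r\ge t$; the remaining requirement for $\zeta_q\le\zeta^*$ is that $\Gamma_q$ on $[s,t]$ and $\Upsilon_q$ on $(-\infty,s]$ both stay to the left of $\Upsilon_{p^*}$. Using the duality of Proposition \ref{prop:4} to transfer the transversal fluctuation estimates of Proposition \ref{prop:5} from geodesics to interfaces, a discretization of the possible locations of $p^*$ on $\{u=t\}$ combined with a union bound shows that, on a high-probability event, $\Upsilon_{p^*}$ encloses all of $\wcV_R(\Gamma_0;t)$ apart from an $O(1)$ residue.

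Granting the above inclusion, the KPZ scaling of $\cL$ yields $\widetilde{V}_R(\Gamma_0;t)\stackrel{d}{=} t^{5/3}\widetilde{V}_R(\Gamma_0;1)$, so Proposition \ref{t:va} gives
\[
\PP\bigl(\widetilde{V}_R(\Gamma_0;t)\le C\bigr)=\PP\bigl(\widetilde{V}_R(\Gamma_0;1)\le Ct^{-5/3}\bigr)\le \exp\bigl(-c(Ct^{-5/3})^{-3/11}\bigr)\le \exp(-c't^{5/11}),
\]
producing the target exponent $5/11=\tfrac{3}{11}\cdot\tfrac{5}{3}$; combining with the auxiliary high-probability event completes the proof. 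The main obstacle I anticipate is establishing transversal-fluctuation control for the random interface $\Upsilon_{p^*}$, which requires upgrading the fixed-direction estimates to a uniform-in-$p^*$ statement via discretization and the duality of Proposition \ref{prop:4}, while maintaining compatibility with the stretched-exponential tail.
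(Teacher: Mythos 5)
Your overall skeleton matches the paper's: reduce to $w=1$ by KPZ scaling and to the one-sided event $\{\max_{v\in[0,1]}\eta_u(v)\ge t\}$ by time reversal, show that this event forces $\widetilde{V}_R(\Gamma_0;t)$ to be of order one, and then conclude via the scaling $\widetilde{V}_R(\Gamma_0;t)\stackrel{d}{=}t^{5/3}\widetilde{V}_R(\Gamma_0;1)$ and Proposition \ref{t:va}, which is exactly where the exponent $5/11=\tfrac{3}{11}\cdot\tfrac{5}{3}$ comes from. However, your central step has a genuine gap. The point you miss is that, by planarity of the corner order, every corner at height at least $t$ lying after $(0,\Upsilon_0,\Gamma_0)$ is at least as large as the corner $\zeta_p=\bigl(p,\Upsilon_p,\Gamma_0\lvert_{[t,\infty)}\bigr)$ at $p=(\Gamma_0(t),t)$ (with $\Upsilon_p$ the rightmost interface at $p$); since the augmented curve $\widetilde\eta$ traverses corners monotonically, the \emph{first} visit of $\eta$ to height $t$ occurs exactly at the point $(\Gamma_0(t),t)$, so some $v_0\in(0,1]$ satisfies $\eta(v_0)=(\Gamma_0(t),t)$. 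Your sub-event $y^*>\Gamma_0(t)$ is therefore vacuous, the "excursion to the left of $\Gamma_0$" case does not arise, and the inclusion $\wcV_R(\Gamma_0;t)\subseteq\eta([0,v_0])$ is then a purely deterministic planarity statement (this is Lemma \ref{l:vv} in the paper, proved by noting that any $q$ to the right of $\Gamma_0$ whose geodesic coalesces with $\Gamma_0$ below height $t$ has its corner sandwiched between $\zeta_0$ and $\zeta_p$); it needs no residue term and no probabilistic input beyond Proposition \ref{t:va}.

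Because you do not see this, you replace the deterministic step with an auxiliary "high-probability" event built from transversal-fluctuation control of the random interface $\Upsilon_{p^*}$, uniformly over possible locations of $p^*$, and you only obtain the inclusion "apart from an $O(1)$ residue". As proposed this does not close: (a) the auxiliary event would have to fail with probability at most $\exp(-ct^{5/11})$, but your union bound over locations of $p^*$ on the line at height $t$ requires an a priori horizontal localization of $\eta$ on $[0,1]$, i.e.\ a tail bound for $\eta_h$ --- and in the paper Proposition \ref{p:htail} is \emph{derived from} Proposition \ref{p:utail}, so this route is circular unless you supply an independent localization; (b) the claimed $O(1)$ residue is never quantified, and nothing in your sketch bounds the area that $\Upsilon_{p^*}$ could cut off from $\wcV_R(\Gamma_0;t)$; (c) in the genuinely relevant configuration the worry is moot anyway, since interfaces and geodesics belong to the disjoint trees $\cI_\downarrow$ and $\cT_\uparrow$ (Proposition \ref{prop:4}) and cannot cross, which already forces every $q\in\wcV_R(\Gamma_0;t)$, its geodesic, and its interface to lie weakly to the left of $\Upsilon_p\cup\Gamma_0\lvert_{[t,\infty)}$. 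Replacing your probabilistic patch by the planarity argument above (or simply by invoking Lemma \ref{l:vv} after identifying the hitting point as $(\Gamma_0(t),t)$) repairs the proof and recovers the paper's argument.
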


\begin{proposition}
    \label{p:htail}
    There exist constants $c,y_0>0$ such that for all $w>0,y>y_0$, 
    $$\PP(\max_{v\in [-w,w]}|\eta_{h}(v)|\ge w^{2/5}y)\le \exp(-cy^{30/77}).$$
\end{proposition}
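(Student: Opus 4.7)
}
The strategy is to derive the horizontal tail from the vertical tail (Proposition \ref{p:utail}) combined with the transversal fluctuation estimate (Proposition \ref{prop:5}). First, by the KPZ scaling invariance of the Peano curve (Lemma \ref{lem:3}), the substitution $v' = v/w$ together with $\eta_h(wv') \stackrel{d}{=} w^{2/5}\eta_h(v')$ reduces the problem to $w = 1$, i.e.\ to showing $\PP(\max_{v\in[-1,1]}|\eta_h(v)| \ge y) \le \exp(-cy^{30/77})$ for $y > y_0$.

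On the event $\max_{v\in[-1,1]}|\eta_h(v)|\ge y$, the Peano region $R = \eta([-1,1])$ (which is compact since $\eta$ is continuous and $[-1,1]$ is compact) has horizontal extent $\ge y$ and contains $\eta(0)=0$. Its topological boundary is made up of portions of the four semi-infinite paths $\Gamma_{\eta(\pm 1)},\Upsilon_{\eta(\pm 1)}$, which extend upward and downward from the base points $\eta(\pm 1)$ until they coalesce pairwise. For $R$ to reach horizontal distance $y$ from the origin, at least one of these four boundary paths must deviate horizontally by $\Omega(y)$ from its base point. I would then decompose: introduce a threshold $T$ (to be optimized) and split on whether $\max_{v\in[-1,1]}|\eta_u(v)| > T$ or not. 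In the first case, Proposition \ref{p:utail} gives probability at most $\exp(-cT^{5/11})$. In the second case the four boundary paths are confined to a vertical strip of height $O(T)$, and one of them must exhibit horizontal transversal fluctuation $\Omega(y)$ over vertical distance $\le T$, which by Proposition \ref{prop:5} has probability at most $\exp\bigl(-c(y/T^{2/3})^{9/4}\bigr)$ up to sub-polynomial factors. Optimizing $T$ to balance these two contributions, together with a union bound over the (controllable) base points $\eta(\pm 1)$, should yield the bound.

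The main obstacle is producing the precise exponent $30/77$: a naive optimization of the two stretched exponentials above actually yields a somewhat stronger exponent, so obtaining exactly $30/77$ likely requires carefully accounting for the loss incurred when uniformizing the tails over the random base points $\eta(\pm 1)$, the quantitative relationship between ``$R$ has horizontal extent $y$'' and ``a single boundary path attains transversal fluctuation $\Omega(y)$'' (in particular allowing the extremal $v^{*}$ to lie in the interior of $[-1,1]$), and possibly logarithmic corrections from Proposition \ref{prop:5}. If a direct optimization is insufficient to reach $30/77$, then a barrier-style FKG argument in exponential LPP in the spirit of the proof of Proposition \ref{t:va} — tailored to horizontal rather than vertical volume accumulation and transferred back via the LPP--landscape coupling of Proposition \ref{prop:6} — should serve as the fallback.
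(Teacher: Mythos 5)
Your reduction to $w=1$ by KPZ scaling and the split at a vertical threshold are exactly the paper's first moves, but the core estimate you invoke would fail as written. Proposition \ref{prop:5} is a finite-geodesic statement: it bounds the deviation of $\gamma_{(x,s)}^{(y,t)}$ from the straight chord joining its two endpoints. The boundary paths of $\eta([-1,1])$ are semi-infinite geodesics $\Gamma_{\eta(\pm1)}$ and interfaces $\Upsilon_{\eta(\pm1)}$ (downward geodesics of the dual landscape), and Proposition \ref{prop:5} by itself gives no quantitative tail for their horizontal displacement over a window of height $O(T)$: the chord depends on where the path sits at the top of the segment, the directedness $\Gamma_p(t)/t\to 0$ is only qualitative, and if you push the upper endpoint to infinity the factor $\log^3(1+\|u\|/(t-r))$ diverges; for the interfaces you would additionally need the duality of Proposition \ref{prop:4}. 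The paper instead imports transversal fluctuation tails for semi-infinite geodesics, uniform over starting points, from exponential LPP (Propositions \ref{p:tf} and \ref{p:tfstrong}) through the coupling of Proposition \ref{prop:6}; these have tails $\exp(-ch^3)$ rather than the $9/4$ exponent you quote, which also changes the optimization. A second gap is your union bound over the base points $\eta(\pm 1)$: on the event in question their horizontal coordinates are not confined to any deterministic compact set, so they are not ``controllable'' without an extra localization step.

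The paper's actual argument avoids both issues by a cleaner geometric reduction: take $v_0$ with $\eta_h(v_0)=y$, let $z$ be the coalescence point of $\Gamma_{\eta(v_0)}$ with $\Gamma_0$; by planarity $z\in\eta([0,v_0])$, so on the event $\max_{v\in[0,1]}|\eta_u(v)|\le y^{3/2-\delta}$ its height is at most $y^{3/2-\delta}$. Then either $\Gamma_0$ reaches $x\ge y/2$ below that height (Proposition \ref{p:tf} plus Proposition \ref{prop:6}), or $\Gamma_{\eta(v_0)}$, which starts on the deterministic segment $\{x=y\}\times[-y^{3/2-\delta},y^{3/2-\delta}]$, drops to $x\le y/2$ there; a union bound over the $2y^{3/2-\delta}$ unit subsegments of that segment with Proposition \ref{p:tfstrong} handles the random starting point, and no interfaces enter at all. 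The exponent $30/77$ then comes from balancing the resulting terms, $2\delta$ against $15/22-3\delta/2$, at $\delta=15/77$; no barrier/FKG fallback is needed.
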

Note that the exponents $5/11, 30/77$ are not expected to be optimal, since the exponent in the key ingredient for the proofs of these results, Proposition \ref{t:va}, is already sub-optimal, as mentioned before. Before moving on to the proofs, we note that the above imply the following estimate on the probability that $\eta$ quickly exits an intrinsic metric ball, 
\begin{proposition}
  \label{prop:tails}
  There exist constants $c,\ell_0$ such that for all $w>0,\ell>\ell_0$,
  \begin{displaymath}
    \PP
    \left(
      \eta([-w,w])\not\subseteq B^\mathrm{in}_{w^{1/5}\ell}(0)
    \right)\leq \exp(-c\ell^{60/77}).
  \end{displaymath}
\end{proposition}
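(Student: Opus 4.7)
The plan is to reduce the statement to Propositions \ref{p:utail} and \ref{p:htail} by a direct union bound over the two coordinate functions of $\eta$, using the product-like structure of the intrinsic metric to cut the intrinsic ball event into a ``horizontal'' event and a ``vertical'' event.

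First, I would unpack the definition of $B^\mathrm{in}_r(0)$. By \eqref{eq:54}, a point $(x,s)\in \RR^2$ lies in $B^\mathrm{in}_r(0)$ precisely when $|x|^{1/2}+|s|^{1/3}\le r$. In particular, if $(x,s)\notin B^\mathrm{in}_r(0)$ then at least one of $|x|>r^2/4$ or $|s|>r^3/8$ must hold, since otherwise $|x|^{1/2}+|s|^{1/3}\le r/2+r/2=r$. Taking $r=w^{1/5}\ell$ and a supremum over $v\in [-w,w]$, this yields the containment of events
\begin{displaymath}
\left\{\eta([-w,w])\not\subseteq B^\mathrm{in}_{w^{1/5}\ell}(0)\right\}\subseteq \left\{\max_{v\in[-w,w]}|\eta_h(v)|>\tfrac{1}{4}w^{2/5}\ell^2\right\}\cup \left\{\max_{v\in[-w,w]}|\eta_u(v)|>\tfrac{1}{8}w^{3/5}\ell^3\right\}.
\end{displaymath}

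Next I would apply Proposition \ref{p:htail} with $y=\ell^2/4$ to bound the first event by $\exp\bigl(-c(\ell^2/4)^{30/77}\bigr)$, and Proposition \ref{p:utail} with $t=\ell^3/8$ to bound the second event by $\exp\bigl(-c(\ell^3/8)^{5/11}\bigr)$, both valid once $\ell$ exceeds a threshold $\ell_0$ chosen so that $\ell_0^2/4 > y_0$ and $\ell_0^3/8 > t_0$. Absorbing the harmless numerical factors into the exponential decay rates gives bounds of the form $\exp(-c_1\ell^{60/77})$ and $\exp(-c_2\ell^{15/11})$ respectively. Since $15/11=105/77>60/77$, the second bound is dominated by the first once $\ell$ is large enough (specifically once $\ell\ge (c_1/c_2)^{77/45}$). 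A final union bound and a minor adjustment of the constant $c$ yield the claimed estimate $\exp(-c\ell^{60/77})$.

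There is no genuine obstacle here, as the proof is essentially bookkeeping: it combines the two one-dimensional tail estimates already established for the coordinate processes into a single joint intrinsic-ball estimate. The exponent $60/77$ in the statement is inherited from Proposition \ref{p:htail}, which is the binding constraint; in particular, any future improvement in the horizontal tail bound would immediately improve the exponent in Proposition \ref{prop:tails} as well.
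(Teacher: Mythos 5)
Your argument is correct and is exactly the derivation the paper intends: the paper states Proposition \ref{prop:tails} as an immediate consequence of Propositions \ref{p:utail} and \ref{p:htail}, and your splitting of the intrinsic ball into the horizontal event $\{|x|>r^2/4\}$ and vertical event $\{|s|>r^3/8\}$ followed by a union bound is precisely that bookkeeping, with the exponent $60/77$ correctly identified as coming from the (weaker) horizontal tail since $15/11>60/77$.
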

We spend the rest of the section proving Propositions \ref{p:utail}, \ref{p:htail}. We begin by defining the important notion of the volume accumulated to the right.

\begin{definition}
\label{d:va}
Let $p=(x,s)\in \RR^2$ be fixed and let $\Gamma_p$ be the a.s.\ unique upward semi-infinite geodesic emanating from $p$. %
Now define $\wcV_R(\Gamma_p;\ell)$ as the set of points $q=(y,t)$ such that $t\in [s,s+\ell]$ and $y\ge \Gamma_{p}(t)$ (i.e., $q$ lies to the right of $\Gamma_{p}$) and there exists an upward semi-infinite geodesic $\Gamma_q$ such that $\Gamma_q$ and $\Gamma_{p}$ coalesce below $t=s+\ell$. Notice that $\wcV_R(\Gamma_p;\ell)$ is a closed set and hence measurable. We define $\widetilde{V}_{R}(\Gamma_{p}; \ell)=\leb(\wcV_R(\Gamma_p;\ell))$ and refer to it as the volume accumulated to the right by $\Gamma_{p}$ in time $\ell$. %
\end{definition}
We note that the above notation agrees with the one used in Proposition \ref{t:va}. The importance of Definition \ref{d:va} in the proof of Theorem \ref{thm:2} lies in the following deterministic lemma. 

\begin{lemma}
    \label{l:vv} Let $w\in \RR$ be fixed. Consider the a.s.\ unique (by Lemma \ref{lem:6}) upwards infinite geodesic $\Gamma_{\eta(w)}$ emanating from $\eta(w)=(x_w,t_w)$. Then for any $v>w$ with the property that $\eta(v)=(\Gamma_{\eta(w)}(t),t)$ for some $t>t_w$, we have, $v> w+ \widetilde{V}_{R}(\Gamma_{\eta(w)}; t-t_w)$.      
\end{lemma}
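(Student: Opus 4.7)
My plan is to identify $\wcV_R(\Gamma_{\eta(w)}; t - t_w)$ as a subset of $\pi([\widetilde{\eta}(w), \widetilde{\eta}(v)]) = \eta([w,v])$, and then to exhibit an additional positive-area region in $\pi([\widetilde{\eta}(w), \widetilde{\eta}(v)])$ disjoint from $\wcV_R(\Gamma_{\eta(w)}; t - t_w)$. Since $\leb(\pi([\widetilde{\eta}(w), \widetilde{\eta}(v)])) = v - w$ by the volume parametrization \eqref{eq:57}, this will give the claimed strict inequality.

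For the containment step, pick $q = (y, t') \in \wcV_R(\Gamma_{\eta(w)}; t - t_w)$, select the specific upward geodesic $\Gamma_q$ that coalesces with $\Gamma_{\eta(w)}$ at a point of $t$-coordinate $t_r \leq t$, and set $\zeta_q = (q, \Upsilon_q, \Gamma_q)$. I will verify $\widetilde{\eta}(w) \leq \zeta_q \leq \widetilde{\eta}(v)$ in the order of Definition \ref{def:order}. The lower bound follows because $q$ lies weakly to the right of $\Gamma_{\eta(w)}$ and $\Gamma_q$ agrees with $\Gamma_{\eta(w)}$ for all heights $\geq t_r$, so the bi-infinite path $\Upsilon_q \cup \Gamma_q$ is weakly right of $\Upsilon_{\eta(w)} \cup \Gamma_{\eta(w)}$. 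For the upper bound, Lemma \ref{lem:6} identifies $\Gamma_{\eta(v)}$ with $\Gamma_{\eta(w)}|_{[t,\infty)}$, so $\Gamma_q$ and $\Gamma_{\eta(v)}$ agree on $[t, \infty)$; what remains is to show $\Gamma_q(s) \leq \Upsilon_{\eta(v)}(s)$ on $[t', t]$. Since $\Gamma_q(t) = \Gamma_{\eta(w)}(t) = \eta_h(v) = \Upsilon_{\eta(v)}(t)$, any violation on $[t', t)$ would force $\Gamma_q$ to cross the downward interface $\Upsilon_{\eta(v)}$; by the planarity of the mated trees $\cT_\uparrow, \cI_\downarrow$ from Proposition \ref{prop:4} together with the total order of corners in Lemma \ref{lem:main:1}, this is impossible.

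For the strict inequality, I will produce a positive-area open strip $S \subseteq \pi([\widetilde{\eta}(w), \widetilde{\eta}(v)])$ disjoint from $\wcV_R(\Gamma_{\eta(w)}; t - t_w)$. By Lemma \ref{lem:6}, both $\Upsilon_{\eta(w)}$ and $\Upsilon_{\eta(v)}$ are a.s.\ unique. I claim $\Upsilon_{\eta(v)}(t_w) > x_w$ strictly: otherwise the point $(x_w, t_w) = \eta(w)$ would lie in the interior of $\Upsilon_{\eta(v)}$, placing $\eta(w) \in \cI_\downarrow$, contradicting $\eta(w) \in \cT_\uparrow$ and the disjointness $\cT_\uparrow \cap \cI_\downarrow = \emptyset$ from Proposition \ref{prop:4}. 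By continuity of both interfaces, there exists $\delta > 0$ such that the open strip
\begin{displaymath}
S = \{(y, s) : s \in (t_w - \delta, t_w), \; \Upsilon_{\eta(w)}(s) < y < \Upsilon_{\eta(v)}(s)\}
\end{displaymath}
has positive Lebesgue area. A corner-ordering argument parallel to the one in the containment step (now using the strict position of each point between the two downward interfaces) shows $\widetilde{\eta}(w) < \zeta < \widetilde{\eta}(v)$ for the corner $\zeta$ over each point of $S$, so $S \subseteq \pi([\widetilde{\eta}(w), \widetilde{\eta}(v)])$. Since every point in $S$ has $t$-coordinate strictly below $t_w$, $S \cap \wcV_R(\Gamma_{\eta(w)}; t - t_w) = \emptyset$. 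Combining with the containment yields $v - w \geq \widetilde{V}_R(\Gamma_{\eta(w)}; t - t_w) + \leb(S) > \widetilde{V}_R(\Gamma_{\eta(w)}; t - t_w)$.

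The technical crux, and the main obstacle, is the non-crossing assertion used for the upper bound in the containment step, namely that the upward geodesic $\Gamma_q$ cannot cross the downward interface $\Upsilon_{\eta(v)}$ on $[t', t]$. I expect this to follow from the mated tree structure of Proposition \ref{prop:4} and the total order on corners from Lemma \ref{lem:main:1}, but writing the argument cleanly will take some planar-topology bookkeeping; the rest of the proof is straightforward given the construction of $\eta$ in Section \ref{sec:constr}.
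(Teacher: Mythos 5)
Your overall strategy (sandwiching the corners of points of $\wcV_R(\Gamma_{\eta(w)};t-t_w)$ between $\widetilde{\eta}(w)$ and $\widetilde{\eta}(v)$ and invoking the volume parametrization) is the same as the paper's, which disposes of the sandwiching in one line ``by planarity'' and only records the non-strict inequality. However, your write-up has a genuine gap precisely at the step you defer as routine bookkeeping. You assert, citing Lemma \ref{lem:6}, that $\Upsilon_{\eta(v)}$ is a.s.\ unique. This is false on two counts: Lemma \ref{lem:6} applies only to a \emph{fixed} parameter value, while here $v$ is random (it is determined by the condition $\eta(v)\in\Gamma_{\eta(w)}$); and, more substantively, $\eta(v)=(\Gamma_{\eta(w)}(t),t)$ with $t>t_w$ lies in $\inte\,\Gamma_{\eta(w)}\subseteq\cT_\uparrow=\NU_\downarrow^2(\wcL)$, so by Proposition \ref{prop:4} it carries at least \emph{two} downward interfaces -- this multiplicity is exactly why $\eta$ visits the point more than once (cf.\ Lemma \ref{lem:21}, and the proof of Proposition \ref{p:utail}, which explicitly selects ``the rightmost one'').

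This is not a cosmetic slip, because your key deduction -- ``any violation of $\Gamma_q(s)\leq\Upsilon_{\eta(v)}(s)$ on $[t',t)$ would force $\Gamma_q$ to cross $\Upsilon_{\eta(v)}$'' -- is invalid unless one first identifies \emph{which} interface the corner $\widetilde{\eta}(v)$ carries. If it were the interface emanating on the left side of the incoming geodesic branch, then for every $q$ strictly to the right of $\Gamma_{\eta(w)}$ one has $\Gamma_q>\Gamma_{\eta(w)}>\Upsilon_{\eta(v)}$ below the merge height, a ``violation'' that produces no crossing at all: the geodesic and the interface are permitted to meet at the interface's top endpoint $\eta(v)$, so nothing contradicts the disjointness of $\cT_\uparrow$ and $\cI_\downarrow$. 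The missing ingredient is to use the hypothesis $v>w$ to pin down the interface: the corner at $\eta(v)$ built from the left interface lies (by the corner order) weakly to the left of $\widetilde{\eta}(w)$, hence corresponds to a parameter $\leq w$, so $\widetilde{\eta}(v)$ must carry the interface lying weakly to the right of $\Gamma_{\eta(w)}$ on $[t_w,t]$; only then does the planarity argument (a geodesic from a point of $\wcV_R$ merging strictly below $t$ cannot pass strictly to the right of that interface) go through. Your additional strip $S$ below height $t_w$, intended to upgrade the inequality to a strict one (the paper in fact only proves and only ever uses $v\geq w+\widetilde V_R$), is a reasonable supplement, but as written it rests on the same erroneous uniqueness claim and would need the same identification of the right interface to be repaired.
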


This lemma says that starting from $\eta(w)$, when the Peano curve $\eta$ reaches some point on the geodesic $\Gamma_{\eta(w)}$, the volume parametrization must have increased by at least the volume accumulated by $\Gamma_{\eta(w)}$ to the right, up to that height. 

\begin{proof}
   Let $\zeta_1=(\eta(w),\Upsilon_{\eta(w)},\Gamma_{\eta(w)})$ and let $\zeta_2=\widetilde{\eta}(v)=(\eta(v),\Upsilon_{\eta(v)},\Gamma_{\eta(v)})$, where $\widetilde{\eta}$ denotes the augmented Peano curve from \eqref{eq:57}, and the choices $\Upsilon_{\eta(v)},\Gamma_{\eta(v)}$ are made so that the above equality holds. By planarity, we note that any $p\in \wcV_R(\Gamma_{\eta(w)};t-t_w)$, we must have a corner $\zeta_p=(p,\Upsilon_p,\Gamma_p)$ satisfying that $\zeta_1\leq \zeta\leq \zeta_2$, where $\Gamma_p$ coalesces with $\Gamma_{\eta(w)}$ before the time $t$. Thus $\leb ([\zeta_1,\zeta_2])\geq \leb( \wcV_R(\Gamma_{\eta(w)};t-t_w))= \widetilde{V}_{R}(\Gamma_{\eta(w)}; t-t_w)$, and this completes the proof.
\end{proof}

We now provide the proofs of Propositions \ref{p:utail}, \ref{p:htail} assuming Proposition \ref{t:va}.

\begin{proof}[Proof of Proposition \ref{p:utail}]
  First note that since $\eta_u(\beta\cdot)\stackrel{d}{=}\beta^{3/5}\eta_u(\cdot)$ (Theorem \ref{thm:1} (5)), it suffices to obtain the needed estimate after fixing $w=1$. Combining the above with the time reversal symmetry (Theorem \ref{thm:1} (6)) of $\eta$, we need only show that there exist constants $c,t_0>0$ such that for all $t>t_0$, we have
  \begin{equation}
    \label{eq:74}
    \PP(\max_{v\in [0,1]} \eta_u(v)\ge t)\le \exp(-ct^{5/11}).
  \end{equation}
  The aim of the rest of the proof is to show \eqref{eq:74}. We consider the point $p=(\Gamma_0(t),t)$, and note that there are two choices of the interface $\Upsilon_p$ by Proposition \ref{prop:4}, and we use $\Upsilon_p$ to denote the rightmost one. Realizing that $\Gamma_p=\Gamma_0\lvert_{[t,\infty)}$, we consider the corner $\zeta_p=(p,\Upsilon_p,\Gamma_p)$ and note by planarity that $\zeta_p\leq \zeta$ for any corner $\zeta=( (x,s),\Upsilon_{(x,s)},\Gamma_{(x,s)})$ satisfying $\zeta\geq (0,\Upsilon_0,\Gamma_0)$ and $s\geq t$. %
 As a consequence, on the event $\{\max_{v\in [0,1]}\eta_u(v)\geq t\}$, there must exist $v_0\in (0,1)$ satisfying $\eta(v_0)=(\Gamma_0(t),t)$. By Lemma \ref{l:vv}, it follows that $\widetilde{V}_{R}(\Gamma_0;t)\le 1$ on the above event. Now, by the scaling of the directed landscape (Lemma \ref{lem:3}), it follows that $\widetilde{V}_{R}(\Gamma;t)\stackrel{d}{=} t^{5/3}\widetilde{V}_{R}(\Gamma;1)$ and hence we have 
$$\PP(\max_{v\in[0,1]}\eta_{u}(v)\ge t)\le \PP(\widetilde{V}_{R}(\Gamma;1)\le t^{-5/3})\le \exp(-ct^{5/11})$$
by Proposition \ref{t:va}. This completes the proof. 
\end{proof}

\begin{figure}[htbp!]
     \centering
         \centering
         \includegraphics[width=0.7\textwidth]{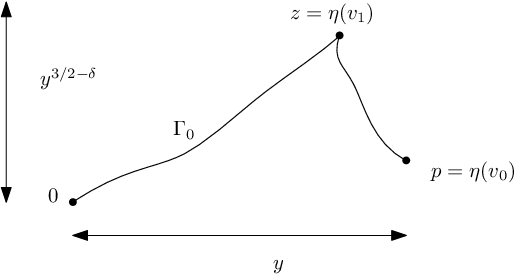}
     \caption{\textbf{Proof of Proposition \ref{p:htail}, case 1}: In this case, there exist $0\le v_1<v_0\le 1$ such that $x(\eta(v_0))=y$, $z=\eta(v_1)\in \Gamma_0$ and $t(z) \le y^{3/2-\delta}, x(z)\ge y/2$. This implies that the geodesic $\Gamma_0$ has atypically large transversal fluctuation at or below height $y^{3/2}-\delta$ and hence this case is unlikely.} 
     \label{fig:volacc}
     \end{figure}

\begin{proof}[Proof of Proposition \ref{p:htail}]
  Again, by using the KPZ scaling  and the time reversal symmetry of $\eta$, we need only show that there exist constants $c,y_0>0$ such that for all $y>y_0$,
  \begin{equation}
    \label{eq:75}
    \PP(\max_{v\in [0,1]}|\eta_{h}(v)|\ge y)\le \exp(-cy^{30/77}).
  \end{equation}
  In the rest of the proof, we prove \eqref{eq:75}. Let $\delta>0$ be fixed, to be chosen appropriately later. We write 
    \begin{equation}
        \label{e:split}
        \PP(\max_{v\in [0,1]} \eta_h(v)\ge y)\le \PP(\max_{v\in [0,1]} \eta_h(v)\ge y, \max_{v\in[0,1]}|\eta_{u}(v)|\le y^{3/2-\delta})+\PP(\max_{v\in[0,1]}|\eta_{u}(v)|\ge y^{3/2-\delta}).
    \end{equation}
   The second term is bounded above by $\exp(-cy^{15/22-3\delta/2})$ by Proposition \ref{p:utail} and thus we need only bound the first term. 
   For this, let $v_0\in [0,1]$ be such that $\eta_h(v_0)= y$ (such a $v_0$ must exist on the first event above by continuity). With $\widetilde{\eta}$ denoting the augmented Peano curve from \eqref{eq:57}, we define $(p,\Upsilon_p,\Gamma_p)\coloneqq \widetilde{\eta}(v_0)$ and let $z$ be denote the coalescence point corresponding to the geodesics $\Gamma_p, \Gamma_0$. By planarity, there must exist $\Upsilon_z,\Gamma_z$ such that $(0,\Upsilon_0,\Gamma_0)\leq (z,\Upsilon_z,\Gamma_z)\leq (p,\Upsilon_p,\Gamma_p)$, and thus there exists a $v_1\in [0,v_0]$ such that $z=\eta(v_1)$. %
 Writing $z=(\Gamma_0(\ell), \ell)$ we consider two cases. Note also that on $\{\max_{v\in[0,1]}|\eta_{u}(v)|\le y^{3/2-\delta})\}$ we also have $\ell \le y^{3/2-\delta}$.

   \textbf{Case 1:} $\Gamma_0(\ell)\ge \frac{y}{2}$. Since $\ell \le y^{3/2-\delta}$, the probability of this case is upper bounded by that (see Figure \ref{fig:volacc}) of the event $\{(y^{3/2-\delta})^{-2/3}\sup_{t\in [0,y^{3/2-\delta}]} |\Gamma_0(t)|\ge \frac{1}{2}y^{2\delta/3}\}$, which by the scaling of the directed landscape (Lemma \ref{lem:3})  has the same probability as 
   the event $\{\sup_{t\in [0,1]} |\Gamma_0(t)|\ge \frac{1}{2}y^{2\delta/3}\}$. By Proposition \ref{p:tf} (which gives the bound for exponential LPP) and Proposition \ref{prop:6} (which gives the convergence of geodesics in exponential LPP to the geodesics in directed landscape), the above probability is upper bounded by $\exp(-cy^{2\delta})$. 

   \textbf{Case 2:} $\Gamma_0(\ell)\le \frac{y}{2}$. In this case, the geodesic $\Gamma_p$ has large transversal fluctuations, and we take a union bound over all possible locations of $p$. Specifically, we divide the line segment  $\{(y,t):|t|\le y^{3/2-\delta}\}$ into $2y^{3/2-\delta}$ many line segments of length $1$ each (see Figure \ref{fig:volacc2}). Call these intervals $I_{i}$. Let $\mathcal{A}_{i}$ denote the event that there exists a point $r\in I_{i}$ and a semi-infinite geodesic $\Gamma_{r}$ started from $r$ such that $\Gamma_{r}(\ell)\le y/2$ for some $\ell \le y^{3/2-\delta}$. It follows from {Proposition \ref{p:tfstrong} and Proposition \ref{prop:6}} that for each $i$, $\PP(\mathcal{A}_{i})\le \exp(-cy^{2\delta})$, so by a union bound the probability of this case is upper bounded by $2y^{3/2-\delta}\exp(-cy^{2\delta})$. 

 \begin{figure}[t]
     \centering
         \centering
         \includegraphics[width=0.6\textwidth]{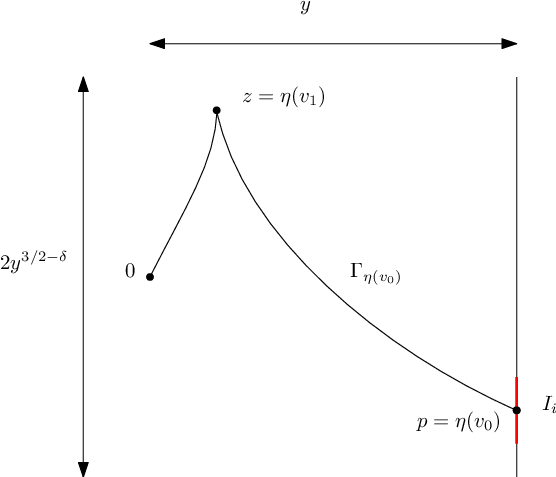}
     \caption{\textbf{Proof of Proposition \ref{p:htail}, case 2}: In this case, there exist $0\le v_1<v_0\le 1$ such that $x(\eta(v_0))=y$, $z=\eta(v_1)\in \Gamma_0$ and $t(z) \le y^{3/2-\delta}, x(z)\le y/2$. 
     This implies that the geodesic $\Gamma_{\eta(v_0)}$ has atypically large 
     transversal fluctuation at or below height $y^{3/2}-\delta$. To handle this we divide the line segment $\{y\}\times [-y^{3/2-\delta},y^{3/2-\delta}]$ into segments of length $1$ each and call these segments $I_i$. For each $I_{i}$, the event that there exists a point $p\in I_{i}$ such that $\Gamma_{p}$ has atypically large transversal fluctuations is unlikely and we get the result by a union bound over $i$.} 
     \label{fig:volacc2}
     \end{figure}

   Let us now choose $\delta=15/77$ such that $15/22-3\delta/2=2\delta=30/77$. For this choice of $\delta$, by taking a union bound over the two cases above and using \eqref{e:split}, we get the desired result. 
\end{proof}

We can now complete the proof of Theorem \ref{thm:2}. Notice that by the definition of intrinsic metric $d_\mathrm{in}$ from \eqref{eq:54}, it suffices to prove the following two results.

\begin{proposition}
    \label{t:holderu}
    For any $\delta>0$ and $M>0$. $\eta_{u}$ is almost surely $3/5-\delta$ H\"{o}lder continuous on $[-M,M]$. 
\end{proposition}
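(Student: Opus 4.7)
The plan is to execute a standard Kolmogorov-type multiscale chaining argument driven by the stretched exponential tail estimate of Proposition \ref{p:utail}. The only real input beyond that proposition is the translation invariance of $\eta$ from Theorem \ref{thm:1} (4); the rest is classical.

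First I would upgrade Proposition \ref{p:utail} to a uniform increment bound. By Theorem \ref{thm:1} (4), the process $v \mapsto \eta_u(v + a) - \eta_u(a)$ has the same law as $\eta_u(\cdot)$ for every fixed $a \in \RR$. Combined with Proposition \ref{p:utail}, this gives, for every $a \in \RR$, $w > 0$, and $t > t_0$,
\begin{equation*}
\PP\Big(\sup_{v \in [a, a+w]}|\eta_u(v) - \eta_u(a)| \ge w^{3/5} t\Big) \le \exp(-c t^{5/11}).
\end{equation*}

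Next, fix $\delta \in (0, 3/5)$ and set up the dyadic partition of $[-M, M]$ at scale $2^{-n}$, with at most $C(M) 2^n$ consecutive intervals $[k 2^{-n}, (k+1) 2^{-n}]$. Apply the display above to each such interval with $w = 2^{-n}$ and $t = 2^{n \delta}$, and take a union bound: the event
\begin{equation*}
\cE_n = \Big\{ \max_{k} \sup_{v \in [k 2^{-n}, (k+1) 2^{-n}]} |\eta_u(v) - \eta_u(k 2^{-n})| \ge 2^{-n(3/5 - \delta)} \Big\}
\end{equation*}
has probability at most $C(M) 2^n \exp(-c\, 2^{n \delta \cdot 5/11})$, which is summable in $n$ because the stretched exponential swamps the polynomial factor $2^n$. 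Borel-Cantelli then furnishes a random $N(\omega) < \infty$ such that $\cE_n$ fails for all $n \ge N$.

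Finally, a standard dyadic chaining converts this uniform oscillation control into Hölder regularity. Given $v < w$ in $[-M, M]$ with $|v - w| < 2^{-N}$, choose $n$ with $2^{-n-1} < |v - w| \le 2^{-n}$; the interval $[v, w]$ is covered by a bounded number of level-$n$ dyadic intervals, and each endpoint is linked to the enclosing dyadic corners through a telescoping sum of increments at successively finer dyadic scales. Summing the resulting geometric series $\sum_{m \ge n} 2^{-m(3/5 - \delta)}$ yields $|\eta_u(v) - \eta_u(w)| \le C(\omega) |v - w|^{3/5 - \delta}$. For $|v - w| \ge 2^{-N}$ there are only finitely many scales to worry about, and the bound is trivial from the almost sure continuity of $\eta_u$ on the compact interval $[-M, M]$ together with the $n < N$ versions of Proposition \ref{p:utail}.

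There is no substantive obstacle at this stage; the whole content of the proposition is already encoded in Proposition \ref{p:utail}. The exponent $3/5 - \delta$ is dictated by the KPZ scaling exponent $3/5$ in that estimate, and the slack $\delta$ is needed only to convert the per-interval tail into a summable series over dyadic scales. The genuine work lies upstream in Proposition \ref{t:va}.
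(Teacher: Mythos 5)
Your proposal is correct and follows essentially the same route as the paper: both reduce Proposition \ref{t:holderu} to the uniform increment tail of Proposition \ref{p:utail} via translation invariance (Theorem \ref{thm:1}(4)), apply a union bound over dyadic intervals with the stretched exponential beating the polynomial entropy factor, and conclude by Borel--Cantelli plus standard dyadic chaining, exactly as in the classical proof of H\"older continuity of Brownian motion. The only cosmetic difference is that the paper first rescales to $M=1$ using Theorem \ref{thm:1}(5), while you keep general $M$ and absorb it into the constant $C(M)$.
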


\begin{proposition}
    \label{t:holderh}
    For any $\delta>0$ and $M>0$. $\eta_{h}$ is almost surely $2/5-\delta$ H\"{o}lder continuous on $[-M,M]$. 
\end{proposition}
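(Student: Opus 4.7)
The plan is to deduce Proposition \ref{t:holderh} from Proposition \ref{p:htail} by a standard Kolmogorov-type chaining argument, in exact parallel with how Proposition \ref{t:holderu} would be deduced from Proposition \ref{p:utail}. Fix $M > 0$ and $\delta \in (0, 2/5)$. For each $k \ge 1$ and each $i \in \ZZ$ with $|i| \le \lceil M 2^k \rceil$, introduce the dyadic oscillation event
\begin{equation*}
E_{k,i} = \Big\{\sup_{v \in [i 2^{-k},\, (i+1) 2^{-k}]} |\eta_h(v) - \eta_h(i 2^{-k})| > 2^{-k(2/5 - \delta)}\Big\}.
\end{equation*}
By the translation invariance of $\eta$ from Theorem \ref{thm:1} (4), the process $\eta_h(\cdot + i 2^{-k}) - \eta_h(i 2^{-k})$ has the same law as $\eta_h(\cdot)$, so $\PP(E_{k,i}) = \PP(E_{k,0})$ for all admissible $i$.

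Applying Proposition \ref{p:htail} with $w = 2^{-k}$ and $y = 2^{k\delta}$, together with the inclusion $[0, 2^{-k}] \subseteq [-2^{-k}, 2^{-k}]$, yields
\begin{equation*}
\PP(E_{k,0}) \le \PP\Big(\sup_{v \in [-2^{-k},\, 2^{-k}]} |\eta_h(v)| \ge 2^{-k \cdot 2/5} \cdot 2^{k\delta}\Big) \le \exp\big(-c\, 2^{k \delta \cdot 30/77}\big)
\end{equation*}
for all $k$ sufficiently large (so that $2^{k\delta} \ge y_0$). Taking a union bound over the $O(M 2^k)$ indices $i$ gives $\PP\big(\bigcup_i E_{k,i}\big) \le C M 2^k \exp(-c\, 2^{k\delta \cdot 30/77})$, which is summable in $k$. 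Borel--Cantelli then furnishes an almost surely finite random $K$ such that $E_{k,i}^c$ holds for all $k \ge K$ and all admissible $i$.

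On this full-probability event, for any $v, v' \in [-M, M]$ with $|v - v'| < 2^{-K}$, one uses the dyadic approximations $v_k = \lfloor 2^k v \rfloor / 2^k$ (which converge to $v$ by continuity of $\eta_h$, established in Lemma \ref{lem:2}) and telescopes along scales $k \ge k_0 \coloneqq \lceil -\log_2|v - v'|\rceil$ to obtain
\begin{equation*}
|\eta_h(v) - \eta_h(v')| \le C \sum_{k \ge k_0} 2^{-k(2/5 - \delta)} \le C' |v - v'|^{2/5 - \delta},
\end{equation*}
where the finitely many pairs with $|v - v'| \ge 2^{-K}$ are absorbed using the a.s.\ boundedness of $\eta_h$ on $[-M, M]$. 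This is the desired $(2/5 - \delta)$-H\"older continuity. There is no substantive obstacle in this step: the stretched-exponential tail from Proposition \ref{p:htail} is comfortably stronger than polynomial, which is all that Kolmogorov-type chaining needs, and the translation invariance plus KPZ scaling provide exactly the uniformity across dyadic intervals required. The genuine difficulty lies upstream, in Proposition \ref{p:htail} itself, whose proof in turn relies on Proposition \ref{p:utail} and the volume-accumulation estimate Proposition \ref{t:va}.
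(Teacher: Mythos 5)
Your argument is correct and is essentially the paper's own proof: the paper deduces Proposition \ref{t:holderh} by running the same dyadic Borel--Cantelli/chaining scheme as for Proposition \ref{t:holderu}, with Proposition \ref{p:htail} supplying the tail bound and Theorem \ref{thm:1}(4) supplying the translation invariance, exactly as you do. The only cosmetic differences (the paper first reduces to $M=1$ by KPZ scaling and controls increments between consecutive dyadic points rather than the sup over each dyadic interval) do not change the substance.
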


Using Propositions \ref{p:utail}, \ref{p:htail} together with the fact that for every fixed $v$ and $\epsilon$, the law of $\eta(v+\epsilon)-\eta(v)$ does not depend on $v$ ((4) in Theorem \ref{thm:1}) the proofs of the above theorems are essentially the same as the standard proof that Brownian motion is $1/2-$ H\"{o}lder continuous on compact sets almost surely. We shall give the details for Proposition \ref{t:holderu}. 

\begin{proof}[Proof of Proposition \ref{t:holderu}]
Note first that by the scaling from Theorem \ref{thm:1} (5), it suffices to prove the result for $M=1$. For $N\in \NN$, let $H_{N}:=\{2^{-N}i: i=-2^{N}, -2^{N}+1, \ldots, -1,0,1,\ldots, 2^{N}\}$.
and let $\mathbb{Q}_2=\cup_{N} H_{N}$ denote the set of all dyadic rationals in $[-1,1]$. We shall show that $\eta_{u}$ is almost surely $3/5-\delta$ H\"{o}lder continuous on $\mathbb{Q}_2$. Clearly this suffices since dyadic rationals are dense in $[-1,1]$ and by continuity we have that if $v_n\to v$ and $w_{n}\to w$ in $[-1,1]$ with $v\ne w$ then $\frac{\eta_{u}(v_n)-\eta_{u}(w_n)}{|v_{n}-w_n|^{3/5-\delta}}\to \frac{\eta_u(v)-\eta_{u}(w)}{|v-w|^{3/5-\delta}}$. 

To prove H\"{o}lder continuity on $\mathbb{Q}_2$, we let $G_{N}$ be the event that 
$$ {|\eta_{u}(2^{-N}i)-\eta_{u}(2^{-N}(i-1))|}\le 2^{-(3/5-\delta)N}$$
for all $\in H_{N}$. It follows from Proposition \ref{p:utail} that 
$$\PP(G^{c}_{N})\le 2^{N}\exp(-c2^{5\delta N/11}).$$
By the Borel-Cantelli Lemma, we obtain that $\PP(\liminf_{N\rightarrow \infty} G_N)=1$ and thus there exists a random $N_*$ such that $G_N$ occurs for all $N\geq N^*$. %
It follows from a standard argument that there exists an absolute constant $C$ (depending only on $\delta$) such that we almost surely have %
$$ |\eta_{u}(v)-\eta_{u}(w)| \le C|v-w|^{3/5-\delta}$$
for all $v,w\in \mathbb{Q}_2$ with $|v-w|< 2^{-N_*}$. It further follows by the triangle inequality that almost surely, there exists a $C'$ (random but finite) such that 
$$ |\eta_{u}(v)-\eta_{u}(w)| \le C'|v-w|^{3/5-\delta}$$
for all $v,w\in \mathbb{Q}_2$, as required. This completes the proof. 
\end{proof}

The proof of Proposition \ref{t:holderh} is identical to that of Proposition \ref{t:holderu} except that we use Proposition \ref{p:htail} instead of Proposition \ref{p:utail}; we omit the details. This completes the proof of Theorem \ref{thm:2} assuming Proposition \ref{t:va}, and the goal of the next section is to prove the latter.

\section{One-sided volume accumulation: The proof of Proposition \ref{t:va}}
\label{sec:lpp}

In this section, we provide the proof of Proposition \ref{t:va}. Since the arguments here are naturally suited to the discrete setting, we will first work with LPP and prove an analogous volume accumulation result in LPP (Proposition \ref{t:vaexp}), and then finally use the convergence (Proposition \ref{prop:6}) of exponential LPP to the directed landscape to transfer the result. We recall the discussion on LPP from Section \ref{sec:import}, particularly the notation introduced in Section \ref{s:lppnotation}.
Importantly, \textbf{only in this section}, we shall use space-time coordinates by default, that is, whenever we write $q=(x,t)$, we mean that $t(q)=t$ and $x(q)=x$. Let $\mathbb{L}_{n}$ denote the anti-diagonal line $\{t=n\}$, and for any (up/right) path $\gamma$, define $\gamma(t)=x(q)$ where $q$ is the point (necessarily unique if there is one) where $\gamma$ intersects $\mathbb{L}_{t}$.

Let $\mathcal{V}_R(\Gamma;n)$ denote the set of all $q$ such that $t(q)\ge 0$ and $x(q)\ge \Gamma(t(q))$ such that 
$\Gamma \cap \Gamma^\dis_{q}\cap \mathbb{L}_{n}\neq \emptyset$. We define the \textbf{volume accumulated by $\Gamma$ to the right until time $n$}, denoted $V_{R}(\Gamma;n)$ by the size of $\mathcal{V}_R(\Gamma;n)$, i.e., $V_{R}(\Gamma;n)=\#\mathcal{V}_R(\Gamma;n)$. The purpose of this section is to prove the following discrete analogue of Proposition \ref{t:va}.
\begin{proposition}
    \label{t:vaexp}
    There exist constants $C,c,K'>0$ such that for all $\delta>0,n\in \NN$ satisfying $\delta^{12/11} n \geq K'$, we have $\PP(V_{R}(\Gamma;n)\le \delta n^{5/3})\le C\exp(-c\delta^{-3/11})$.
\end{proposition}

The above result, which we shall often refer to as the `volume accumulation result' is the key result of this section. The argument for the proof of the above is reminiscent of an argument from \cite{BSS14}. The idea is to show that at different locations along the geodesic $\Gamma$, there is a positive probability that the geodesics started from close by points will coalesce with $\Gamma$; and these events are approximately independent. We now move on to the proof of the above. Before moving on, we note that the same proof, with some modifications, can be used to prove slight variants of Proposition \ref{t:vaexp} in slightly different settings e.g.\ for point-to-point geodesics instead of semi-infinite geodesics. At the very end of this section, we shall give a short discussion of this point (see Propositions \ref{prop:9}, \ref{prop:1}).

We now prepare for the proof of Proposition \ref{t:vaexp}. To this end, we shall define a number of parameters. Our first task is to prove certain regularity properties of $\Gamma$ and the environment around it.

\subsection{The regularity of the geodesic}
We shall prove two results in this section: the first one showing that transversal fluctuations are on-scale at most locations along the geodesic $\Gamma$, and the second showing that typical events happen at most locations along the geodesic.

\subsubsection{Transversal fluctuations along the geodesic}
We shall work with $\epsilon_1>0$ which should be thought of as small; let us also assume without loss of generality that $\epsilon_1^{-1}$ is an integer that divides $n$. For $i\in [\![0,\epsilon_{1}^{-1}]\!]$, let $\tau_i=\lfloor(\epsilon_1n)^{-2/3}(\Gamma(i\epsilon_1n))\rfloor$. Then we have the following lemma. 

\begin{proposition}
    \label{l:geodfluc}
    There exist positive absolute constants $T,c,C$ such that for all $\epsilon_1,n$ satisfying $\epsilon_1^2n\geq 1$, we have %
    $$\PP\left(\sum_{i=0}^{\epsilon_1^{-1}-1}|\tau_{i+1}-\tau_{i}|\ge T\epsilon_1^{-1}\right)\le C\exp(-c\epsilon_1^{-1}).$$
\end{proposition}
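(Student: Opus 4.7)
Write $\xi_j := |\tau_{j+1} - \tau_j|$ for $j \in \llbracket 0, \epsilon_1^{-1}-1\rrbracket$ and set $N := \epsilon_1^{-1}$. The plan is to establish a uniform conditional stretched-exponential tail bound for each $\xi_j$ with respect to a natural filtration coming from the Busemann function, and then apply a Chernoff-type argument to obtain the required exponential concentration for $\sum_j \xi_j$.

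First I would set up the filtration. For each $j$, let $\cH_j$ denote the $\sigma$-algebra generated by the weights at levels $\le j\epsilon_1 n$ together with the values of the direction-$\mathbf{v}$ Busemann function $\cB^{\dis}$ (see \eqref{eq:62}) along the line $\mathbb{L}_{j\epsilon_1 n}$. Standard Markov and stationarity properties of the Busemann field for exponential LPP imply that $p_j := (\Gamma(j\epsilon_1 n), j\epsilon_1 n)$ is $\cH_j$-measurable, and that conditionally on $\cH_j$, the restriction of $\Gamma$ to $[j\epsilon_1 n, \infty)$ is distributed as the direction-$\mathbf{v}$ semi-infinite geodesic emanating from $p_j$ in a fresh environment of i.i.d.\ exponential weights above $\mathbb{L}_{j\epsilon_1 n}$ (independent of $\cH_j$) with boundary data $\cB^{\dis}|_{\mathbb{L}_{j\epsilon_1 n}}$. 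Consequently, by the translation stationarity of the Busemann field and Proposition \ref{p:tf} applied with $n$ replaced by $\epsilon_1 n$,
\begin{equation*}
  \PP(\xi_j \ge h+1 \mid \cH_j) \le C\exp(-ch^3) \quad \text{a.s.},
\end{equation*}
with constants independent of $j$, $n$, and $\epsilon_1$. Integrating, we can choose absolute constants $\lambda > 0$ and $K < \infty$ with $\EE[e^{\lambda \xi_j} \mid \cH_j] \le K$ a.s.\ for every $j$.

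A Chernoff bound then concludes the argument: since each $\xi_j$ is $\cH_{j+1}$-measurable, iterating the tower property yields $\EE[e^{\lambda \sum_j \xi_j}] \le K^N$, and Markov's inequality gives
\begin{equation*}
  \PP\Big( \sum_j \xi_j \ge TN \Big) \le K^N e^{-\lambda T N} = \exp\big(-(\lambda T - \log K) N\big).
\end{equation*}
Choosing $T$ large enough that $c := \lambda T - \log K > 0$ delivers the desired bound $\exp(-c\epsilon_1^{-1})$.

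The hard part will be the second paragraph, in particular the precise formulation and verification of the Markov/decoupling property of the direction-$\mathbf{v}$ Busemann field along the line $\mathbb{L}_{j\epsilon_1 n}$ and the upgrade from an unconditional to a (uniform) conditional tail bound on $\xi_j$. Although this Markov structure is by now standard for exponential LPP in direction $\mathbf{v} = (1,1)$, some care is required to isolate the precise decoupling statement underlying the conditional tail bound used above.
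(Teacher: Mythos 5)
Your Chernoff/tower-property reduction would be fine if the claimed conditional decoupling held, but that decoupling is backwards, and this is a genuine gap rather than a technicality to be checked. For upward $\mathbf{v}$-directed geodesics, the Busemann increments along $\mathbb{L}_{j\epsilon_1 n}$ are a measurable function of the weights strictly \emph{above} that line (every path from a point of $\mathbb{L}_{j\epsilon_1 n}$ to a far up-right point stays above the line). Hence conditioning on $\cB^{\dis}\lvert_{\mathbb{L}_{j\epsilon_1 n}}$ cannot leave the weights above the line ``fresh i.i.d.\ and independent of $\cH_j$'': you are conditioning on a nondegenerate functional of exactly those weights. The true queueing/Burke-type Markov structure runs in the \emph{downward} direction — given the Busemann profile on a line, it is the profiles and geodesic portions \emph{below} that are obtained from that profile together with fresh weights below the line — which is not the direction you need, since $\xi_j$ concerns the continuation of $\Gamma$ above $\mathbb{L}_{j\epsilon_1 n}$. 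Consequently the uniform almost-sure bound $\PP(\xi_j\ge h+1\mid \cH_j)\le Ce^{-ch^3}$ does not follow from Proposition \ref{p:tf}; in fact, uniformly over realizations of the Busemann profile it is false (a profile with a strong local drift drags the conditioned geodesic sideways), so one cannot integrate to get $\EE[e^{\lambda\xi_j}\mid\cH_j]\le K$ a.s., and the tower-property step collapses. Restricting to ``good'' profiles does not rescue the scheme as written, because the Chernoff iteration needs the conditional moment bound at \emph{every} step, and controlling a boundary-LPP geodesic uniformly over an arbitrary conditioned profile is precisely the kind of quantitative statement about the environment seen from geodesic/Busemann data that is not available (this is the obstruction the paper flags in the introduction).

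The paper's proof avoids any such conditional decoupling: it first shows $T_{0,(\Gamma(n),n)}\ge 4n-\epsilon_1^{-2/3}n^{1/3}$ and $\max_i|\tau_i-\tau_{i-1}|\le \tfrac1{10}\epsilon_1^{-1/3}$ off events of probability $e^{-c\epsilon_1^{-1}}$, and then performs a union bound over all coarse-grained trajectories $J=\{j_0,\dots,j_{\epsilon_1^{-1}}\}$ with $\sum_i|j_{i+1}-j_i|\asymp 2^kK\epsilon_1^{-1}$: for a \emph{fixed} shape $J$, the passage time is dominated by a sum of independent block-to-block passage times whose means carry a curvature penalty of order $\sum_i(j_i-j_{i-1})^2(\epsilon_1 n)^{1/3}$, which is incompatible with the lower bound on $T_{0,(\Gamma(n),n)}$ except with probability $e^{-c2^kK\epsilon_1^{-1}}$, beating the entropy $(2^{k+3}K)^{\epsilon_1^{-1}}$ of shapes. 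If you want to salvage your approach, you would have to either prove a genuinely new quantitative conditional fluctuation bound given the Busemann profile (with uniformity strong enough for the exponential moment), or switch to this fixed-shape/union-bound mechanism, which only ever uses unconditional estimates such as Proposition \ref{p:para}.
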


Note that this proposition says that the average on-scale transversal displacement of the geodesic at scale $\epsilon_1$ is constant (on-scale) with high probability, which in particular implies, by Markov's inequality, that with high probability, most locations along the geodesic show on-scale transversal fluctuations. The proof of this proposition will be by a coarse version of what we call the \emph{percolation argument}, the argument used to show the existence of a sub-critical phase in Bernoulli percolation. This technique will be important for us  later in this section as well. This proof essentially follows the arguments in \cite{BSS14}, where a similar result was shown for point-to-point geodesics in Poissonian LPP. We need a couple of easy lemmas. 

\begin{lemma}
There exist positive constants $c,C$ such that for all $\epsilon_1>0$, we have
  \begin{equation}
    \label{eq:lb1}
    \PP(T_{0, (\Gamma(n),n)}\le 4n-\epsilon_1^{-2/3}n^{1/3})\le C\exp(-c\epsilon_1^{-1}).
\end{equation}
\end{lemma}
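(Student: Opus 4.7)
The plan is to exploit that $T_{0,(\Gamma(n),n)}$ equals the weight of $\Gamma|_{[0,n]}$ and is therefore lower bounded by $\inf_{|x_0|\le R}T_{0,(x_0,n)}$ on the event $\{|\Gamma(n)|\le R\}$, with $R=\epsilon_1^{-1/3}n^{2/3}$ chosen at the natural transversal scale. By Proposition \ref{p:tf} applied with $h=\epsilon_1^{-1/3}$, the event
\begin{equation*}
\mathcal{F}=\{|\Gamma(n)|\le \epsilon_1^{-1/3}n^{2/3}\}
\end{equation*}
holds except on a set of probability at most $C\exp(-c\epsilon_1^{-1})$, which absorbs the transversal-fluctuation cost into the target bound.

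On $\mathcal{F}$, it suffices to lower bound $T_{0,(x_0,n)}$ uniformly over $x_0\in[-\epsilon_1^{-1/3}n^{2/3},\epsilon_1^{-1/3}n^{2/3}]$. I partition this $x$-window into $O(\epsilon_1^{-1/3})$ sub-intervals $I_j$ of length $n^{2/3}$ with centers $x_j$, and for each $j$ apply Proposition \ref{p:para}(i) to the parallelogram obtained from the canonical one by an antidiagonal lattice translation by $(x_j,-x_j)$ together with slant parameter $m=-x_j/n^{2/3}$. The resulting $U^{(j)}$ has its bottom slab containing the origin and its top slab covering $I_j\times\{n\}$; since $|m|\le \epsilon_1^{-1/3}$ is uniformly bounded in $n$, the hypothesis $|m|<\Psi n^{1/3}$ of Proposition \ref{p:para} is satisfied for $n$ large. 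The shape function then gives $\mathbb{E}\,T_{0,(x_j,n)}\ge 4n-C_0 x_j^2/n\ge 4n-C_0\epsilon_1^{-2/3}n^{1/3}$ for such $x_j$, and taking $s=\epsilon_1^{-2/3}$ in the proposition yields per-interval failure probability $C\exp(-c\epsilon_1^{-2})$. Union bounding over the $O(\epsilon_1^{-1/3})$ intervals leaves a total failure probability $\le \exp(-c'\epsilon_1^{-1})$ for $\epsilon_1$ small.

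Intersecting these two good events gives $T_{0,(\Gamma(n),n)}\ge 4n-C_1\epsilon_1^{-2/3}n^{1/3}$ with probability $\ge 1-\exp(-c''\epsilon_1^{-1})$, and the constant $C_1$ is removed by rescaling $\epsilon_1\mapsto \epsilon_1/C_1^{3/2}$, at the cost of modifying the rate constant $c$. The main point to watch in executing the plan is the exponent bookkeeping: the transversal window of width $\epsilon_1^{-1/3}n^{2/3}$ forces a shape deficit of order exactly $\epsilon_1^{-2/3}n^{1/3}$, matching the statement, so the union bound must exploit the cubic exponent $-cs^3$ of Proposition \ref{p:para}(i); with $s=\epsilon_1^{-2/3}$ one has $s^3=\epsilon_1^{-2}\gg \epsilon_1^{-1}$, so the $O(\epsilon_1^{-1/3})$ union cost is comfortably dominated and no finer input (such as Busemann-function concentration along $\Gamma$) is needed.
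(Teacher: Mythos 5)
Your argument is correct and is essentially the paper's own proof: control $|\Gamma(n)|$ (indeed $\max_{0\le t\le n}|\Gamma(t)|$ in the paper) at scale $\epsilon_1^{-1/3}n^{2/3}$ via Proposition \ref{p:tf} at cost $\exp(-c\epsilon_1^{-1})$, then discretize the antidiagonal window into $O(\epsilon_1^{-1/3})$ intervals of width $n^{2/3}$ and apply Proposition \ref{p:para}(i) with the shape-function deficit, finishing with a union bound whose cost is dominated by the cubic tail $\exp(-c\epsilon_1^{-2})$. The only cosmetic difference is that the paper takes the window with a factor $\tfrac{1}{10}$ so that the mean deficit is at most $\tfrac12\epsilon_1^{-2/3}n^{1/3}$ and no constant needs absorbing, whereas your constant-absorption step is fine but the substitution should read $\epsilon_1\mapsto C_1^{3/2}\epsilon_1$ rather than $\epsilon_1/C_1^{3/2}$.
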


\begin{proof}
    Note first that by Proposition \ref{p:tf}, for some constant $C$, we have
    $$\PP(\max_{0\le t\le n}|\Gamma(t)|\ge \frac{1}{10}\epsilon_1^{-1/3}n^{2/3}) \le C\exp(-c\epsilon_1^{-1}).$$
Therefore it suffices to show that for a possibly different constant $C$, we have
$$\PP(\inf_{|x|\le \frac{1}{10}\epsilon_1^{-1/3}n^{2/3}} T_{0,(x,n)} \le 4n-\epsilon_1^{-2/3}n^{1/3})\le C\exp(-c\epsilon_1^{-1}).$$
For $|j|\le \frac{1}{10}\epsilon_1^{-1/3}$, it follows from Proposition \ref{p:para} (and noting that $\EE T_{0,(x,n)}\ge 4n-\frac{1}{2}\epsilon_1^{-2/3}n^{1/3}$ for $|x|\le \frac{1}{10}\epsilon_1^{-1/3}n^{2/3}$)  that 
$$\PP(\inf_{x\in [jn^{2/3}, (j+1)n^{2/3}]} T_{0,(x,n)} \le 4n-\epsilon_1^{-2/3}n^{1/3})\le C\exp(-c\epsilon_1^{-1}).$$
The proof of \eqref{eq:lb1} is completed by a union bound over $j$.
\end{proof}

\begin{lemma}
  \label{l:maxj}
There exist positive constants $c,C$ such that for all $\epsilon_1,n$ satisfying $\epsilon_1 n\geq 1$, we have
    $$\PP(\exists i\in [\![1,\epsilon_1^{-1}]\!]: |\tau_{i}-\tau_{i-1}|\ge \frac{1}{10}\epsilon_1^{-1/3})\le C\exp(-c\epsilon_1^{-1}).$$
\end{lemma}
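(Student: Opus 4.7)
The event $|\tau_i - \tau_{i-1}| \geq \frac{1}{10}\epsilon_1^{-1/3}$ forces the semi-infinite geodesic $\Gamma$ to undergo a transversal displacement of at least $(\frac{1}{10}\epsilon_1^{-1/3} - 1)(\epsilon_1 n)^{2/3}$ across the window $[(i-1)\epsilon_1 n, i\epsilon_1 n]$ of length $\epsilon_1 n$. The typical transversal fluctuation on such a window is on scale $(\epsilon_1 n)^{2/3}$, so this is a deviation by factor $\Theta(\epsilon_1^{-1/3})$ above typical, and the cubic exponential tail of Proposition \ref{p:tf} should assign it probability of order $\exp(-c \epsilon_1^{-1})$. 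The lemma then follows by a union bound over the $\epsilon_1^{-1}$ choices of $i$, with the polynomial-in-$\epsilon_1^{-1}$ prefactor being absorbed into the exponent.

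The only subtlety is that Proposition \ref{p:tf} controls the semi-infinite geodesic from a \emph{fixed} starting point, whereas the relevant segment of $\Gamma$ starts at the random point $q_i := ((i-1)\epsilon_1 n, \Gamma((i-1)\epsilon_1 n))$. I would resolve this with a standard grid-sandwich argument. First, apply Proposition \ref{p:tf} on scale $n$ to restrict to the event $E_0$ that $|\Gamma(t)| \leq H n^{2/3}$ throughout $[0, n]$, with $H = c_0 \epsilon_1^{-1/3}$ chosen so that $\PP(E_0^c) \leq \exp(-c \epsilon_1^{-1})$. On $E_0$ the point $q_i$ lies on an antidiagonal segment of length $2Hn^{2/3}$, and one covers this segment by a grid $\{p_j\}$ of $O(\epsilon_1^{-1})$ lattice points on $\mathbb{L}_{(i-1)\epsilon_1 n}$ spaced by $(\epsilon_1 n)^{2/3}$. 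Applying Proposition \ref{p:tf} to each $\Gamma^{\dis}_{p_j}$ on scale $\epsilon_1 n$ with $h = \frac{1}{50}\epsilon_1^{-1/3}$ (legal by translation invariance of the i.i.d.\ LPP environment) gives
\begin{displaymath}
  \PP\Bigl(\sup_{s \in [(i-1)\epsilon_1 n,\, i\epsilon_1 n]} |\Gamma^{\dis}_{p_j}(s) - x(p_j)| \geq h (\epsilon_1 n)^{2/3}\Bigr) \leq C \exp(-c h^3) \leq C \exp(-c' \epsilon_1^{-1}),
\end{displaymath}
and a union bound leaves a total failure probability (for this $i$) of order $\exp(-c'' \epsilon_1^{-1})$.

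On the complement, the planarity of semi-infinite geodesics in exponential LPP sandwiches $\Gamma^{\dis}_{q_i}$ between $\Gamma^{\dis}_{p_j}$ and $\Gamma^{\dis}_{p_{j+1}}$ on $[(i-1)\epsilon_1 n, i\epsilon_1 n]$ for the pair of grid points straddling $x(q_i)$. Combining this sandwich with the fluctuation bounds at both grid endpoints and the fact $|x(q_i) - x(p_j)| \leq (\epsilon_1 n)^{2/3}$ yields $|\Gamma(s) - \Gamma((i-1)\epsilon_1 n)| \leq (h+1)(\epsilon_1 n)^{2/3}$ throughout the window, which is strictly less than $\frac{1}{10}\epsilon_1^{-1/3}(\epsilon_1 n)^{2/3}$ for $\epsilon_1$ small, contradicting $|\tau_i - \tau_{i-1}| \geq \frac{1}{10}\epsilon_1^{-1/3}$. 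A final union bound over $i$ absorbs another $\epsilon_1^{-1}$ prefactor into the exponent. I do not anticipate any serious technical obstacles beyond careful bookkeeping of the constants $c_0$ and the $\tfrac{1}{50}$ coefficient so that all polynomial factors are dominated by the exponential tail; the probabilistic input reduces entirely to Proposition \ref{p:tf} together with the non-crossing of semi-infinite LPP geodesics.
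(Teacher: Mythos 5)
Your proposal is correct and is essentially the paper's own argument: the paper likewise first restricts to the event that all $|\tau_i|\le \frac{1}{10}\epsilon_1^{-1}$ via the global transversal fluctuation bound, then notes that a large increment $|\tau_i-\tau_{i-1}|$ forces (by the ordering of semi-infinite geodesics) some grid-point geodesic $\Gamma^{\dis}_{(j(\epsilon_1 n)^{2/3},(i-1)\epsilon_1 n)}$ to fluctuate by $\gtrsim \epsilon_1^{-1/3}(\epsilon_1 n)^{2/3}$, and concludes by Proposition \ref{p:tf} and a union bound over $i,j$. Your contrapositive phrasing (bounding all grid-point geodesics and sandwiching $\Gamma$) is the same mechanism with only cosmetic differences in constants.
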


\begin{proof}
    As in the proof of the previous lemma, it suffices to restrict ourselves to the case $|\tau_i| \le \frac{1}{10}\epsilon_1^{-1}$ for all $i$, as the complement of this has probability at most $C\exp(-c\epsilon_1^{-1})$ by transversal fluctuation estimates.
    Now if $|\tau_{i}-\tau_{i-1}|\ge \frac{1}{10}\epsilon_1^{-1/3}$ it follows that there exists $j\in \Z$, $|j|\le \epsilon_1^{-1}$ such that the geodesic $\Gamma^\dis_{(j(\epsilon_1 n)^{2/3}, (i-1)\epsilon_1n)}$ satisfies 
$$|\Gamma^\dis_{(j(\epsilon_1 n)^{2/3}, (i-1)\epsilon_1n)}(i\epsilon_1 n)-j(\epsilon_1 n)^{2/3}|\ge (\frac{1}{10}\epsilon_1^{-1/3}-2)(\epsilon_1 n)^{2/3}.$$ Now, by Proposition \ref{p:tf}, for fixed $i$ and $j$ this event has probability at most $C\exp(-c\epsilon_1^{-1})$, and here we are using that $\epsilon_1n \geq 1$. The proof is then completed by a union bound over all $i$ and $j$.
\end{proof}

We can now complete the proof of Proposition \ref{l:geodfluc}. 

\begin{proof}[Proof of Proposition \ref{l:geodfluc}]
    Let $K$ be sufficiently large to be fixed later. For $k\ge 1$, let us consider the set $\mathcal{N}_{k,K}$ of all sequences $\{j_0,j_1,\ldots, j_{\epsilon_1^{-1}}\}$ such that $j_0=0$, $\max_{i} |j_{i+1}-j_{i}|\le \frac{1}{10}\epsilon_1^{-1/3}$ and $\sum_{i}|j_{i+1}-j_{i}| \in  [2^{k}K,2^{k+1}K]\epsilon_1^{-1}$. Let us set $\mathscr{T}:=\{\tau_0,\ldots, \tau_{\epsilon_1^{-1}}\}$. By the two preceding lemmas, it suffices to show that for each $k$,
    \begin{equation}
        \label{e:jbound}
        \PP(\mathscr{T}\in \mathcal{N}_{k,K}, T_{0, (\Gamma(n),n)}\ge 4n-\epsilon_1^{-2/3}n^{1/3})\le C\exp(-c(2^{k}K)\epsilon_1^{-1}).
    \end{equation}
    
Fix $k\ge 1$ and $J=\{j_0,j_1,\ldots, j_{\epsilon_1^{-1}}\}\in \mathcal{N}_{k,K}$. Let $I_{i}$ denote the interval $[j_i(\epsilon_1 n)^{2/3}, (j_i+1)(\epsilon_1 n)^{2/3}]\times \{i\epsilon_1 n\}$. Clearly, on the event $\{\mathscr{T}=J\}$, we have 
$$T_{0, (\Gamma(n),n)}\le \sum_{i} \sup_{p\in I_{i-1},q\in I_{i}} T_{p,q}:=X_{J}.$$ 
Clearly, 
$$\PP(\mathscr{T}\in \mathcal{N}_{k,K}, T_{0, (\Gamma(n),n)}\ge 4n-\epsilon_1^{-2/3}n^{1/3})\le \sum_{J\in \mathcal{N}_{k,K}}\PP(X_{J}\ge 4n-\epsilon_1^{-2/3}n^{1/3}).$$

Notice that the random variables $\sup_{p\in I_{i-1},q\in I_{i}} T_{p,q}$ are independent across $i$. Further, notice that our hypothesis that $\max_{i} |j_{i+1}-j_{i}|\le \frac{1}{10}\epsilon_1^{-1/3}$, where we note that the latter does not depend on $n$. Now, this combined with the assumption $\epsilon_1^2 n\geq 1$ implies that the endpoints $p,q$ above satisfy the slope condition in Lemma \ref{p:para}\footnote{When we say certain endpoints/parallelograms satisfy the slope condition in Proposition \ref{p:para}, we shall always mean that there exists a $\Psi$ independent of $n$ and a parallelogram such that the hypothesis of the proposition is valid for the same.}. Indeed, for the slope condition to hold, we require that the quantity $(\frac{1}{10}\epsilon_1^{-1/3})/(\epsilon_1 n)^{1/3}$ stay bounded, and this is true since we have $\epsilon_1^2 n\geq 1$.

Thus using Proposition \ref{p:para} along with the fact that $\EE \sup_{p\in I_{i-1},q\in I_{i}} \EE T_{p,q} \le 4\epsilon_1 n- \frac{1}{4}(j_i-j_{i-1})^{2}(\epsilon_1 n)^{1/3}$ if $|j_i-j_{i-1}|\ge H_0$ for some $H_0>1$ sufficiently large (see (12) in \cite{BGZ19}), we obtain that there exist $c,C>0$ such that
\begin{equation}
  \label{eq:19}
  \PP\left(\dfrac{\sup_{p\in I_{i-1},q\in I_{i}} T_{p,q}-4\epsilon_1 n}{(\epsilon_1 n)^{1/3}}+\frac{1}{4}(j_i-j_{i-1})^21_{|j_i-j_{i-1}|\ge H_0}\ge s\right)\le Ce^{-cs}
\end{equation}
for all large $s$. Notice now that for $J\in \mathcal{N}_{k,K}$,
\begin{align*}
  \frac{1}{4}\sum_{i} (j_i-j_{i-1})^21_{|j_i-j_{i-1}|\ge H_0} &\ge \frac{H_0}{4}\sum_{i} |j_{i}-j_{i-1}|1_{|j_{i}-j_{i-1}|\ge H_0}\nonumber\\
  &\ge \frac{H_02^{k}K\epsilon_1^{-1}}{4}-\frac{H_0}{4}\sum_{i}|j_i-j_{i-1}|1_{|j_{i}-j_{i-1}|\le H_0}\nonumber\\
  &\ge \frac{H_02^{k}K\epsilon_1^{-1}}{4}-\frac{H_0^2\epsilon_1^{-1}}{4}.
\end{align*}
Therefore for $K$ large enough compared to $H_0$ and $H_0>2$, and any $J\in \cN_{k,K}$,
$$\frac{1}{4}\sum_{i} (j_i-j_{i-1})^21_{|j_i-j_{i-1}|\ge H_0} \ge 2^{k-2}K\epsilon_1^{-1}.$$
By using the above along with \eqref{eq:19} and the definition of $X_{J}$, we now obtain that for all $J\in\cN_{k,K}$,
$$X_{J}\le 4n- 2^{k-2}K \epsilon_1^{-2/3}n^{1/3} + (\epsilon_1 n)^{1/3}\sum_{i} Z_{i}$$
where the $Z_{i}$s are independent random variables with uniform exponential tails. 
It therefore follows that for $K$ sufficiently large and any $J\in \cN_{k,K}$,
$$\PP(X_{J}\ge 4n-\epsilon_1^{-2/3}n^{1/3})\le \PP(\sum_{i} Z_{i} \ge 2^{k-3}K\epsilon_1^{-1})\le Ce^{-c2^{k}K\epsilon_1^{-1}}.$$
where the final inequality follows from standard concentration results for sums of independent sub-exponential random variables (see e.g.\ \cite[Theorem 2.8.2]{Ver18}). Some standard counting results show that $|\mathcal{N}_{k,K}|\le (2^{k+3}K)^{\epsilon_1^{-1}}$ and hence \eqref{e:jbound} follows by taking a union bound. This completes the proof of the proposition.  
\end{proof}

\subsubsection{Typical events along the geodesic}
\label{ss:typ}
For $i\in [\![0,\epsilon_1^{-1}-1]\!]$ and $j\in \Z$, let us consider events $G_{i,j}$ with the following properties:

\begin{enumerate}
    \item[(i)] $G_{i,j}$ is measurable with the respect to the weights restricted to the strip $\{t\in [i\epsilon_1n,(i+1)\epsilon_1 n)\}$. In particular, $G_{i,j}$ and $G_{i',j'}$ are independent if $i\ne i'$. 
    \item[(ii)] $G_{i,j}$ is the translate of $G_{0,0}$ under the translation of $\RR^2$ that takes $(0,0)$ to $(j(\epsilon_1 n)^{2/3}, i\epsilon_1 n)$. 
\end{enumerate}
One should interpret $G_{i,j}$ as some local event depending on the weights near the point $(j(\epsilon_1 n)^{2/3}, i\epsilon_1 n)$. Our next result will show that if $G_{i,j}$'s are large probability events, then with high probability, $G_{i,j}$ holds at most locations along the geodesic; i.e., $G_{i,\tau_i}$ holds for most $i$. 

\begin{proposition}
    \label{p:percn}
There exist positive constants $c,C$ such that the following is true: For any $\eta>0$, there exists $\delta>0$ such that if $\PP(G_{i,j})\ge 1-\delta$, then for all $\epsilon_1,n$ satisfying $\epsilon_1^2 n\geq 1$, we have
$$\PP(\#\{i: G_{i,\tau_i} ~\mathrm{occurs}\}\le (1-\eta)\epsilon_1^{-1})\le C\exp(-c\epsilon_1^{-1}).$$
\end{proposition}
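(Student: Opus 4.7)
The plan is to carry out the so-called percolation argument, analogous to the proof of Proposition \ref{l:geodfluc}, by combining a regularity event for the geodesic with a Chernoff bound applied to independent events along a fixed trajectory. Writing $N=\epsilon_1^{-1}$ and $\mathscr{T}=(\tau_0,\ldots,\tau_N)$, I would first introduce the event $E$ on which
\begin{enumerate}
\item[(a)] $\max_i |\tau_i|\le \tfrac{1}{10}\epsilon_1^{-1/3}$;
\item[(b)] $\max_i |\tau_{i+1}-\tau_i|\le \tfrac{1}{10}\epsilon_1^{-1/3}$;
\item[(c)] $\sum_{i=0}^{N-1}|\tau_{i+1}-\tau_i|\le TN$ for the absolute constant $T$ from Proposition \ref{l:geodfluc}.
\end{enumerate}
By Proposition \ref{p:tf}, Lemma \ref{l:maxj} and Proposition \ref{l:geodfluc} respectively, $\PP(E^c)\le \exp(-c\epsilon_1^{-1})$.

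Next, let $\cN$ denote the set of integer sequences $(j_0,\ldots,j_N)$ with $j_0=0$ satisfying (a)--(c). On $E$, $\mathscr{T}\in \cN$, so
\begin{equation*}
\PP\bigl(\#\{i: G_{i,\tau_i}\text{ occurs}\}\le (1-\eta)N\bigr) \le \PP(E^c)+\sum_{J\in \cN}\PP\bigl(\#\{i:G_{i,j_i}\text{ occurs}\}\le (1-\eta)N\bigr),
\end{equation*}
where we have used the trivial bound $\PP(\mathscr{T}=J, A)\le \PP(A)$. A standard stars-and-bars computation using only condition (c) gives $|\cN|\le C_T^{\, N}$ for a constant $C_T$ depending only on $T$; the key point is that the total variation bound (c) is essential, as using only (b) would give the much weaker bound $|\cN|\le (\epsilon_1^{-1/3})^N$, which is too lossy.

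For each fixed $J=(j_0,\ldots,j_N)\in \cN$, the events $\{G_{i,j_i}\}_{i=0}^{N-1}$ are mutually independent by property (i) in Section \ref{ss:typ}, and each has probability at least $1-\delta$ by hypothesis. Hence by a standard Chernoff bound for sums of independent Bernoullis,
\begin{equation*}
\PP\bigl(\#\{i:G_{i,j_i}\text{ occurs}\}\le (1-\eta)N\bigr)\le \exp\bigl(-c'(\eta,\delta)\, N\bigr),
\end{equation*}
where $c'(\eta,\delta)\to \infty$ as $\delta\to 0$ with $\eta$ fixed. Putting everything together,
\begin{equation*}
\PP\bigl(\#\{i: G_{i,\tau_i}\text{ occurs}\}\le (1-\eta)N\bigr)\le \exp(-c\epsilon_1^{-1})+\bigl(C_T\, e^{-c'(\eta,\delta)}\bigr)^{\epsilon_1^{-1}},
\end{equation*}
and choosing $\delta=\delta(\eta)$ small enough that $c'(\eta,\delta)>\log C_T$ (with a definite margin) yields the claimed bound $\exp(-c\epsilon_1^{-1})$. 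The main obstacle is the tension between the entropy of the set $\cN$ of candidate trajectories and the Chernoff gain: without the total variation control from Proposition \ref{l:geodfluc}, the trajectory entropy would swamp the concentration bound, which is why that input is used essentially here.
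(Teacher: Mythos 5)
Your proposal is correct and follows essentially the same route as the paper: a union bound over candidate trajectories $J$ whose entropy is controlled by the total-variation bound of Proposition \ref{l:geodfluc}, combined with the strip-wise independence of the $G_{i,j_i}$ along a fixed trajectory and a binomial/Chernoff estimate, with $\delta$ chosen small enough to beat the $C_T^{\epsilon_1^{-1}}$ entropy factor. The extra conditions (a) and (b) you impose in defining $E$ are harmless but not needed; the paper's argument uses only the total-variation constraint (c) to define the trajectory set.
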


\begin{proof}
    Let $T$ be as in the statement of Proposition \ref{l:geodfluc}. Consider the set $\mathscr{N}$ of all sequences $\{j_0,j_1,\ldots, j_{\epsilon_1^{-1}}\}$ such that $j_0=0$ and $\sum_{i}|j_{i+1}-j_{i}|\le T\epsilon_1^{-1}$. Clearly, $|\mathscr{N}|\le (8T)^{\epsilon^{-1}_1}$. %
    Since, by definition, the $\{G_{i_k,j_k}\}_{k\ge 1}$ are mutually independent if the $i_{k}$s are all distinct, it follows that for each 
    $\{j_0,j_1,\ldots, j_{\epsilon_1^{-1}}\}\in \mathscr{N}$, we have    
    $$\PP(\#\{i: G^c_{i,j_i} \textrm{ occurs}\}\ge \eta \epsilon_{1}^{-1}) \le 2^{\epsilon^{-1}} \delta^{\eta \epsilon_{1}^{-1}}.$$
Taking a union bound over all sequences in $\mathscr{N}$, it follows that 
$$\PP(\#\{i: G_{i,\tau_i} \textrm{ occurs}\}\le (1-\eta)\epsilon_1^{-1})\le \PP(\mathscr{T}\notin \mathscr{N})+ (16T)^{\epsilon_1^{-1}}\delta^{\eta \epsilon_1^{-1}}.$$
The result follows by applying Proposition \ref{l:geodfluc} and choosing $\delta$ sufficiently small depending on $\eta$ (and $T$). 
\end{proof}

\subsection{Typical events for volume accumulation}
\label{sec:good}
We shall now define specific events $G_{i,j}$ in the setup described above which will be useful for the proof of the volume accumulation result. We shall first need to make some geometric definitions. In what follows, $M$ will be a parameter whose specific value will be fixed later and sufficiently large (depending on $T$).

\begin{figure}
    \centering
    \includegraphics[width=0.8\textwidth]{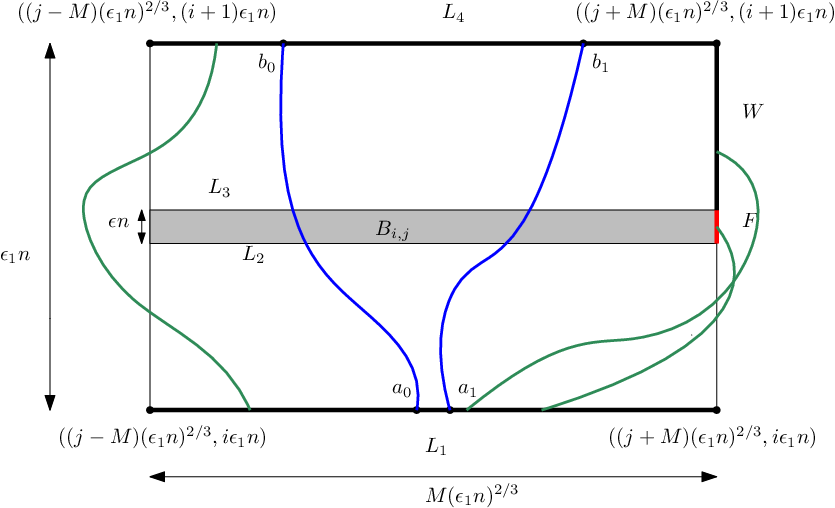}
    \caption{The rectangle $\mbox{Rect}_{i,j}$ and associated geometric constructions in the definition of the event $G_{i,j}$ in Section \ref{sec:good}. We ask for passage times between $L_1$ and $L_2$ and between $L_3$ and $W\cup L_4$ to be typical, and passage times from $L_1$ to $F\cup W\cup L_4$ avoiding $B_{i,j}$ (representative green paths in the figure) to be not too bad compared to typical. We also want the geodesic between $a_0$ and $b_0$ as well as the geodesic between $a_1$ and $b_1$ to have typical transversal fluctuation. %
    }
    \label{fig:good}
\end{figure}

\textbf{Geometric definitions:}
Let $\mbox{Rect}_{i,j}$ denote the rectangle
$$\mbox{Rect}_{i,j}=\{(x,t): t\in [i\epsilon_1n, (i+1)\epsilon_1n], |x-j(\epsilon_1n)^{2/3}|\le M(\epsilon_1 n)^{2/3}\}.$$ For the remainder of the definitions, we shall mostly drop the subscript $i,j$ to reduce notational overhead, but we shall keep the subscript for just $B_{i,j}$ to avoid confusion. Let $L_1$ and $L_4$ denote the bottom and top boundaries of $\mbox{Rect}_{i,j}$ respectively. Let $L_2$ (resp. $L_3$) denote the line segment $\{t=(i+2^{-1})\epsilon_1n\}\cap \mbox{Rect}_{i,j}$ (resp.\ $\{t=(i+2^{-1})\epsilon_1n+\epsilon n\}\cap \mbox{Rect}_{i,j}$) where $\epsilon=L^{-1}\epsilon_1$ and $L$ is some large parameter whose value would be fixed later (depending on all other parameters, but not on $\epsilon_1$). Let the sub-rectangle of $\mbox{Rect}_{i,j}$ enclosed between $L_2$ and $L_3$ be denoted $B_{i,j}$. Let the right boundary of $B_{i,j}$ be denoted by $F$, and the part of the right boundary of $\mbox{Rect}_{i,j}$ above $F$ be denoted $W$. Finally, we define
\begin{align}
  \label{eq:71}
  a_0&= (j(\epsilon_1 n)^{2/3}, i\epsilon_1 n), a_1=((j+1)(\epsilon_1 n)^{2/3}, i\epsilon_1 n),\nonumber\\
   b_0&= ( (j-\frac{M}{4})(\epsilon_1 n)^{2/3}, (i+1)\epsilon_1n), b_1=((j+\frac{M}{4})(\epsilon_1n)^{2/3}, (i+1)\epsilon_1n),
\end{align}
and we refer the reader to Figure \ref{fig:good} for an illustration. We now define $G_{i,j}$ to be the event that the following conditions are satisfied (the parameter $C_0$ will be chosen large depending on $M$). 

\begin{enumerate}
    \item[(i)] For all $p\in L_1$, for all $q\in L_2$, $|T_{p,q}-2\epsilon_1 n|\le C_0(\epsilon_1 n)^{1/3}$. 
    \item[(ii)] For all $p\in L_1$ and all $q\in F\cup W\cup L_4$, 
    $T^{{B_{i,j}^c}}_{p,q}-4(t(q)-t(p)) \geq -C_0(\epsilon_1 n)^{1/3}$;
    recall that $T^{B_{i,j}^{c}}_{p,q}$ denotes the weight of the highest weight path joining $p$ and $q$ that does not intersect the interior of $B_{i,j}$. 
    \item[(iii)] For all $p\in L_3$ and all $q\in L_4\cup W$, $T_{p,q}-4(t(q)-t(p))\le C_0(\epsilon_1 n)^{1/3}$.
    \item[(iv)]  $\sup_{t\in [i\epsilon_1 n, (i+1)\epsilon_1n]} |\gamma^\dis_{a_0,b_0}(t)-j(\epsilon_1 n)^{2/3}|\le \frac{M}{2}(\epsilon_1 n)^{2/3}$,
      $\sup_{t\in [i\epsilon_1 n, (i+1)\epsilon_1 n]} |\gamma^\dis_{a_1,b_1}(t)-j(\epsilon_1 n)^{2/3}|\le \frac{M}{2}(\epsilon_1 n)^{2/3}$, where $\gamma^\dis_{a_i,b_i}$ denotes the geodesic between $a_{i}$ and $b_i$ for $i=0,1$.
\end{enumerate}

It is easy to see that the $G_{i,j}$ defined above satisfy the two conditions mentioned in Section \ref{ss:typ}. Indeed, these events are translates of one another and are determined locally, that is, $G_{i,j}$ depends only on the weights in the strip $\{i\epsilon_1 n \le t \le (i+1)\epsilon_1 n\}$. However, note that $G_{i,j}$ is not measurable with respect to the weights restricted to $\mbox{Rect}_{i,j}$; this will not be important for us. 

The next result says that the $G_{i,j}$s can be made arbitrarily likely by choosing the parameters appropriately. 

\begin{lemma}
    \label{l:gbound}
    Let $\delta>0$ be fixed. There exists $M$ sufficiently large depending on $\delta$ and $C_0$ sufficiently large depending on $\delta$ and $M$ such that for all $\epsilon_1,n$ satisfying $\epsilon_1 n\geq 1$, we have $\PP(G_{i,j})\ge 1-\delta$. 
\end{lemma}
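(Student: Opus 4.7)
By the translation invariance of the i.i.d.\ weight field, it suffices to bound $\PP(G_{0,0}^c)$, and I plan to show that each of the four defining conditions fails with probability at most $\delta/4$, by first fixing $M$ large depending on $\delta$ and then choosing $C_0$ large depending on $\delta$ and $M$.

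Condition (iv) is the cleanest: the straight line from $a_0$ to $b_0$ stays within horizontal distance $\tfrac{M}{4}(\epsilon_1 n)^{2/3}$ of the $t$-axis, so (iv) can fail only if $\gamma^\dis_{a_0,b_0}$ deviates from this line by another $\tfrac{M}{4}(\epsilon_1 n)^{2/3}$; Proposition \ref{l:tffin} applied at time-scale $\epsilon_1 n$ with slope $M/4$ (a bounded constant in $n$) gives probability at most $Ce^{-cM^3}$, and the same holds for $\gamma^\dis_{a_1,b_1}$. For conditions (i) and (iii), I would cover each of $L_1,\ldots,L_4$ with $O(M)$ on-scale subintervals of transversal length $(\epsilon_1 n)^{2/3}$; each sub-parallelogram between a pair of subintervals at opposite ends of a strip of time extent $\epsilon_1 n/2$ has KPZ slope at most $O(M)$, so by the parabolic curvature of the limit shape $|\EE T_{p,q}-2\epsilon_1 n|\le CM^2(\epsilon_1 n)^{1/3}$. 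Proposition \ref{p:para}(i) and (ii) applied to each sub-parallelogram gives concentration of $T_{p,q}$ around $\EE T_{p,q}$ up to $s(\epsilon_1 n)^{1/3}$ with stretched-exponential failure in $s$, and a union bound over the $O(M^2)$ pairs with $C_0=C(M^2+s)$ handles (i) (and analogously (iii), using only Proposition \ref{p:para}(ii)).

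The principal obstacle is condition (ii), since $B_{0,0}$ spans the full transversal width of $\mbox{Rect}_{0,0}$, so any admissible path from $L_1$ to $F\cup W\cup L_4$ must either hug the boundary $F$ (whose weight per unit time is only $2$, far short of the typical $4$) or detour out of $\mbox{Rect}_{0,0}$. My plan is to introduce an auxiliary \emph{channel} $\Phi$: a thin on-scale parallelogram of transversal width $(\epsilon n)^{2/3}$ placed just outside the right boundary of $\mbox{Rect}_{0,0}$ and spanning a time slab slightly larger than $[t(L_2),t(L_3)]$. By construction $\Phi\subset B_{0,0}^c$. Fixing anchor points $r_1\in L_2$ and $r_2\in L_3$ at the right edge, one has $T^{B_{0,0}^c}_{p,q}\ge T_{p,r_1}+T^{\Phi}_{r_1,r_2}+T_{r_2,q}$ for $q\in W\cup L_4$ (and a truncated-channel variant covers $q\in F$). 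I would lower bound the endpoint terms by Proposition \ref{p:para}(i) with slope $O(M)$, and the channel term by Proposition \ref{p:para}(iii) at scale $\epsilon n=L^{-1}\epsilon_1 n$, giving $T^{\Phi}_{r_1,r_2}\ge 4\epsilon n - s(\epsilon n)^{1/3}$. Summing and noting $(\epsilon n)^{1/3}=L^{-1/3}(\epsilon_1 n)^{1/3}$ yields $T^{B_{0,0}^c}_{p,q}\ge 4(t(q)-t(p)) - C(M^2+s)(\epsilon_1 n)^{1/3}$ with failure probability exponentially small in $s$; choosing $C_0=C(M^2+s)$ and union bounding over the discretized $(p,q)$ gives (ii).

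The main difficulty is the geometric construction for (ii): the bottleneck $B_{0,0}$ forces any admissible path to detour through an external channel, and isolating the cost of that detour into a single application of Proposition \ref{p:para}(iii) at the smaller scale $\epsilon n$ is what allows the deficit to fit within the $(\epsilon_1 n)^{1/3}$ fluctuation budget. The remaining conditions are fairly routine applications of the parallelogram toolbox of \cite{BGZ19}.
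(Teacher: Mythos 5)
Your handling of (i) and (iv) matches the paper's proof (discretization of $L_1,L_2$ into $(\epsilon_1 n)^{2/3}$-intervals plus Proposition \ref{p:para} with $C_0\gg M^2$, and Proposition \ref{l:tffin} with failure $e^{-cM^3}$), and your detour-to-the-right idea for (ii) is the correct mechanism. The genuine gap is in how you invoke the fixed-scale parallelogram estimates for (ii) and (iii). By anchoring at $r_1\in L_2$ and $r_2\in L_3$, your endpoint terms $T_{r_2,q}$ for $q\in W$ just above $L_3$, and your truncated-channel terms for $q\in F$ just above $L_2$, involve pairs whose time separation ranges continuously from a few lattice units up to $\epsilon_1 n/2$; Proposition \ref{p:para}(i)/(iii) are statements about endpoints sitting in the bottom/top thirds of a single on-scale parallelogram, so one application at scale $\epsilon_1 n$ (or $\epsilon n$) does not cover these near pairs, and for them the allowed deficit $C_0(\epsilon_1 n)^{1/3}$ has to be compared to the fluctuation scale of the \emph{variable} gap. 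This is repairable by a dyadic union bound over gap scales, but that is an extra step you neither state nor budget for. The paper avoids it entirely by placing the anchors a macroscopic distance from $B_{i,j}$: $a_3=((j+M)(\epsilon_1 n)^{2/3},(i+\tfrac14)\epsilon_1 n)$ and $a_4$ at height $(i+\tfrac34)\epsilon_1 n$, so that the gap from $a_3$ to every $q\in F\cup W$, and from $a_4$ to every $q\in L_4$, is at least $\epsilon_1 n/4$, and boundedly many single-scale applications of Proposition \ref{p:para}(iii) and (i) suffice.

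The same issue bites harder in your treatment of (iii): the condition ranges over all $q\in L_4\cup W$, and pairs $p\in L_3$, $q\in W$ near the top-right corner of $B_{i,j}$ have arbitrarily small separation, so ``analogously (iii), using only Proposition \ref{p:para}(ii)'' does not go through as stated (your sketch in fact never mentions the $W$ targets at all). The paper's proof handles this with a step-back/superadditivity argument: $T_{p,q}\le T_{a_0,q}-T_{a_0,p}$ with $a_0\in L_1$, which reduces the supremum over all such pairs to a single upper bound on $\sup_{q\in L_4\cup W}\bigl(T_{a_0,q}-4(t(q)-t(a_0))\bigr)$ and a single lower bound on $\inf_{p\in L_3}\bigl(T_{a_0,p}-4(t(p)-t(a_0))\bigr)$, both at macroscopic scale where Proposition \ref{p:para} applies directly. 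To complete your proposal you should either adopt that device (and a macroscopic anchoring such as $a_3,a_4$ for the detour in (ii)), or supply an explicit multiscale union bound over near-diagonal pairs; as written, those pairs are a real hole in the argument, even though the overall route is otherwise the same as the paper's.
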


\begin{proof}
 Throughout this proof, $C,c$ shall denote large enough positive constants.  We shall handle each of the four events in the definition of $G_{i,j}$ separately. By Proposition \ref{p:para} and a union bound obtained after discretising $L_1$ and $L_2$ into intervals of size $(\epsilon_1 n)^{2/3}$ and choosing $C_0\gg M^2$, we get that for all $\epsilon_1n\geq 1$,
    $$\PP(\sup_{p\in L_1,q\in L_2} |T_{p,q}-4\epsilon_1 n|\ge C_0(\epsilon_1n)^{1/3})\le CM^2e^{-cC_0^{3/2}},$$
    and we note that the above can be made smaller than $\delta/4$ by choosing $C_0$ sufficiently large depending on $\delta$ and $M$. 

    \begin{figure}
        \centering
        \includegraphics[width=0.8\textwidth]{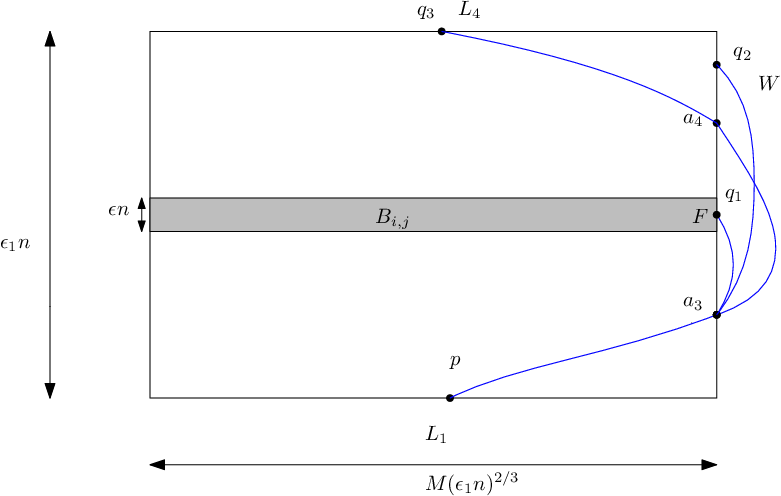}
        \caption{\textbf{Proof of Lemma \ref{l:gbound}, the second event}: We need to show that with good probability, for every $p\in L_1$ and every $q\in F\cup W\cup L_4$ there exist paths from $p$ to $q$ avoiding $B_{i,j}$ such that the weights of these paths are not much smaller than typical. To do this, we construct points $a_3$ and $a_4$ along the right boundary of $\mbox{Rect}_{i,j}$ as shown in the figure and consider paths (marked in blue) going through $a_3$ (when $q\in F\cup W$, representative points $q_1,q_2$ shown in the figure) or both $a_3$ and $a_4$ (if $q\in L_4$, representative point $q_3$ in the figure). By Proposition \ref{p:para} (iii), on a large probability event all the passage times {$T^{B_{i,j}^c}_{a_3,q}$} are not too small for $q\in F\cup W$. Whereas by Proposition \ref{p:para} (i), with good probability, all the passage times $T_{p,a_3}$ and $T_{a_4,q}$ are not too small for $p\in L_1, q\in L_4$. Combining these yields the desired result.}
        \label{fig:a3a4}
    \end{figure}

For the second event (see Figure \ref{fig:a3a4}), consider the points $a_3=((j+M)(\epsilon_1 n)^{2/3},(i+\frac{1}{4})\epsilon_1 n)$ and $a_{4}=((j+M)(\epsilon_1 n)^{2/3},(i+\frac{3}{4})\epsilon_1 n)$. 
   Consider the events
   $$H_1=\{\inf_{p\in L_1} T_{p,a_3}\ge \epsilon_ 1n- \frac{C_0}{4}(\epsilon_1 n)^{1/3}\},$$ 
   $$H_2=\{\inf_{q\in F\cup W} T^{ B_{i,j}^c}_{a_3,q}\ge 4(t(q)-t(a_3))- \frac{C_0}{4}(\epsilon_1 n)^{1/3}\},$$
   $$H_3=\{\inf_{q\in L_4} T_{a_4,q}\ge \epsilon_1 n- \frac{C_0}{4}(\epsilon_1 n)^{1/3}\}.$$
   It is clear that $H_1\cap H_2\cap H_3$ is contained in the second event in the definition of $G_{i,j}$ (indeed, consider the path obtained by concatenating the geodesic from $p$ to $a_3$ with the path attaining $T^{B_{i,j}^c}_{a_3,q}$ if $q\in F\cup W$, or the paths obtained by concatenating the geodesic from $p$ to $a_3$, the path attaining $T^{B_{i,j}^c}_{a_3,a_4}$ and the geodesic between $a_4$ and $q$ if $q\in L_4$) and therefore it suffices to lower bound the probability of the former event. By the same discretization argument as in the previous case we get $\PP(H_1^{c}\cup H_3^{c})\le CMe^{-cC_0^{3/2}}$, and further, by Proposition \ref{p:para} (iii), we obtain $\PP(H_2^{c})\le Ce^{-cC_0}$.
   Therefore, by appropriate choice of parameters, we get $\PP(H_1\cap H_2\cap H_3)\ge 1-\delta/4$.

  The bound for the third event involves a step-back argument. Similar arguments have been used in the literature before \cite{BSS14,BGZ19, BBF22}.  Notice that, for $q\in L_4\cup W$,
  $$\sup_{p\in L_3} T_{p,q}\le T_{a_0,q}-\inf_{p\in L_3} T_{a_0,p}.$$
  Therefore, 
  $$\sup_{p\in L_3, q\in L_4\cup W} T_{p,q}-4(t(q)-t(p))\le 
  \sup_{q\in L_4\cup W}(T_{a_0,q}-4(t(q)-t(a_0))) -\inf_{p\in L_3} (T_{a_0,p}-4(t(p)-t(a_0)).$$
  By Proposition \ref{p:para},
  $$\PP(\sup_{q\in L_4\cup W}(T_{a_0,q}-4(t(q)-t(a_0)))\ge \frac{C_0 (\epsilon_1 n)^{1/3}}{2})\le CMe^{-cC_0^{3/2}}$$
 and 
  $$\PP(\inf_{p\in L_3} (T_{a_0,p}-4(t(p)-t(a_0))\le -\frac{C_0 (\epsilon_1 n)^{1/3}}{2})\le CMe^{-cC_0^{3}}.$$
  Therefore, by choosing the parameters appropriately, we get that the probability that the third condition in the definition of $G_{i,j}$ fails is at most $\delta/4$. 
   
Finally, by Proposition \ref{l:tffin}, it follows that 
$$\PP(\sup_{t\in [i\epsilon_1n, (i+1)\epsilon_1 n]} |\gamma^\dis_{a_0,b_0}(t)-j(\epsilon_1 n)^{2/3}|\ge \frac{M}{2}(\epsilon_1 n)^{2/3})\le Ce^{-cM^3},$$
which can again be made smaller than $\delta/8$ by choosing $M$ sufficiently large depending on $\delta$. The case of $\gamma^\dis_{a_1,b_1}$ is handled similarly, and by taking a union bound over all these events, we get the desired result. 
\end{proof}

Let $S_{i}$ denote the event that $|\Gamma(i\epsilon_1 n)-\Gamma((i+1)\epsilon_1 n)|\le \frac{M}{10}(\epsilon_1 n)^{2/3}$ and recall the constant $T$ as defined in Proposition \ref{l:geodfluc}. For any $\eta>0$, it follows from Proposition \ref{l:geodfluc}, Proposition \ref{p:percn} and Lemma \ref{l:gbound}, that for $M$ sufficiently large depending on $\eta$ and $T$ and $C_0$ sufficiently large depending on $\eta$ and $M$, we have the following for some positive constants $C,c$ and all $\epsilon_1^2n\geq 1$:
$$\PP(\#\{i: G_{i,\tau_i}\cap S_{i}\textrm{ occurs}\}\le (1-\eta)\epsilon_1^{-1})\le \exp(-c\epsilon_1^{-1}).$$

Notice now that $G_{i,\tau_i}\cap S_{i}$ implies $\widetilde{G}_{i,\tau_i}\cap \widetilde{S}_{i}$ where $\widetilde{G}_{i,j}$ is the same as $G_{i,j}$ except the fourth condition (the transversal fluctuation condition) removed and $\widetilde{S_{i}}$ denotes the event that 
$$\sup_{t\in [i\epsilon_1 n, (i+1)\epsilon_1 n]} |\Gamma(t)-\tau_i (\epsilon_1 n)^{2/3}|\le \frac{M}{2}(\epsilon_1 n)^{2/3}.$$ Define $G^*_i=\widetilde{G}_{i,\tau_i}\cap \widetilde{S}_{i}$. It follows from the above discussion that we have the following proposition. 

\begin{proposition}
    \label{p:manygood}
   There exist positive constants $C,c$ such that the following holds: For any $\eta>0$, there exist appropriate choices of parameters $M$ and $C_0$, such that for all $\epsilon_1,n$ satisfying $\epsilon_1^2 n\geq 1$, we have
    $$\PP(\#\{i: G^*_{i}~\mathrm{occurs}\}\le (1-\eta)\epsilon_1^{-1})\le C\exp(-c\epsilon_1^{-1}).$$
\end{proposition}

We now define the region $B^*_{i}$ to be the part of $B_{i,\tau_i}$ to the right of $\Gamma$. The reason we switched to $G^*_{i}$ from $G_{i,\tau_i}\cap S_{i}$ is the following lemma whose proof is clear from the definition. 

\begin{lemma}
    \label{l:mble}
    The events $G^*_{i}$ are measurable with respect to the $\sigma$-algebra generated by $\Gamma$ and the weights in the region $\Z^2\setminus (\cup_{i} B^*_{i})$. 
\end{lemma}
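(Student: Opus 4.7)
The plan is to verify the measurability claim by unpacking each piece of the definition of $G^*_i = \widetilde{G}_{i,\tau_i}\cap \widetilde{S}_i$ and checking that no weight strictly inside $B^*_i$ is ever probed. First I would observe that $\tau_i = \lfloor(\epsilon_1 n)^{-2/3}\Gamma(i\epsilon_1 n)\rfloor$ is a deterministic function of $\Gamma$, so every geometric object produced by the constructions in Section \ref{sec:good}---the rectangle $\mbox{Rect}_{i,\tau_i}$, the sub-rectangle $B_{i,\tau_i}$, the boundary segments $L_1,L_2,L_3,L_4,F,W$, and the subregion $B^*_i\subseteq B_{i,\tau_i}$---is $\sigma(\Gamma)$-measurable. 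In particular the event $\widetilde{S}_i$, being a condition purely on $\Gamma$, is $\sigma(\Gamma)$-measurable from the outset.

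Next I would go through the three surviving conditions in $\widetilde{G}_{i,\tau_i}$ (conditions (i), (ii), (iii) from the definition of $G_{i,j}$, with the transversal fluctuation condition (iv) dropped) and note that each of them only involves passage times of up-right paths that do not enter the interior of $B_{i,\tau_i}$. For condition (i), any up-right path from $L_1$ (at time $i\epsilon_1 n$) to $L_2$ (at time $(i+1/2)\epsilon_1 n$) lies entirely in the time strip at or below $L_2$, hence disjoint from the interior of $B_{i,\tau_i}$, which occupies the strictly higher time interval $((i+1/2)\epsilon_1 n,\,(i+1/2)\epsilon_1 n+\epsilon n)$. The mirror argument shows that any path from $L_3$ to $L_4\cup W$ lies at or above $L_3$ and so cannot enter the interior of $B_{i,\tau_i}$, taking care of (iii). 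Condition (ii) is by construction phrased using the constrained passage time $T^{B_{i,\tau_i}^c}_{p,q}$, so the paths it refers to avoid the interior of $B_{i,\tau_i}$ tautologically.

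Combining, the values of all passage times appearing in (i)--(iii) are determined by the weights at vertices outside the interior of $B_{i,\tau_i}$. Since $B^*_i\subseteq B_{i,\tau_i}$, these weights all lie in $\ZZ^2\setminus \bigcup_{i'} B^*_{i'}$, and together with the $\sigma(\Gamma)$-measurability of the geometric data and of $\widetilde{S}_i$, this gives exactly the claimed measurability of $G^*_i$. I do not expect any real obstacle in this argument; in fact, the particular design of $\widetilde{G}_{i,j}$---splitting $\mbox{Rect}_{i,j}$ into a lower half, a narrow middle band $B_{i,j}$, and an upper half, and phrasing condition (ii) as a $B_{i,j}$-avoiding passage time---was engineered precisely so that this measurability becomes a transparent check. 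The only minor subtlety is that the region $\bigcup_i B^*_i$ is itself random and $\Gamma$-dependent, but this is harmless because $\Gamma$ is already part of the conditioning $\sigma$-algebra.
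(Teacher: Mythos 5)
Your proposal is correct and is exactly the verification the paper has in mind: the paper omits the proof entirely (``whose proof is clear from the definition''), and your unpacking — $\tau_i$ and hence all the geometric data and $\widetilde S_i$ are $\sigma(\Gamma)$-measurable, while conditions (i)--(iii) of $\widetilde G_{i,\tau_i}$ only involve passage times of paths avoiding the interior of $B_{i,\tau_i}\supseteq B^*_i$ (by the time-strip location of $L_1,L_2$ and $L_3,L_4,W$, and tautologically for the constrained times $T^{B_{i,\tau_i}^c}$) — is precisely the intended check. Nothing further is needed.
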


\subsection{Barrier events}
We shall now condition on $\Gamma$ and the weight configuration on $\Z^2\setminus (\cup_{i} B^*_{i})$; denote this conditioning by $\mathcal{F}$. By Lemma \ref{l:mble}, the set 
$N=\{i: G^*_i\textrm{ occurs}\}$ is measurable with respect to $\mathcal{F}$. Let $\mathcal{G}$ denote the $\sigma$-algebra generated by $\mathcal{F}$, $N$, and the weights on the vertices lying in $B^*_{i}$ for $i\notin N$. 

Let us now define events $\cR_{i}$ measurable with respect to the weight configuration in $B^*_{i}$. Let $p_i$ denote the point where $\Gamma$ intersects the bottom boundary of $B^*_{i}$. We shall again mostly drop the subscript $i$ in the following definitions, and for example, we shall use $p$ to refer to the point $p_i$. Similar to the case of $B_{i,j}$s, we always keep the subscript for $B^*_{i}$ in order to avoid confusion.

\begin{figure}
    \centering
    \includegraphics[width=0.7\textwidth]{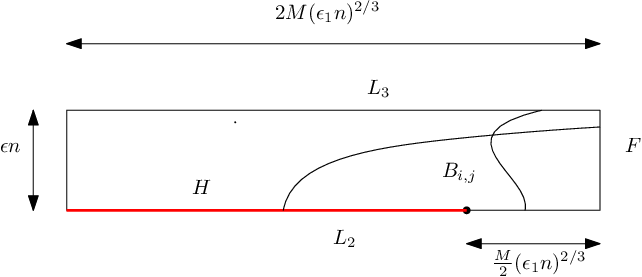}
    \includegraphics[width=0.7\textwidth]{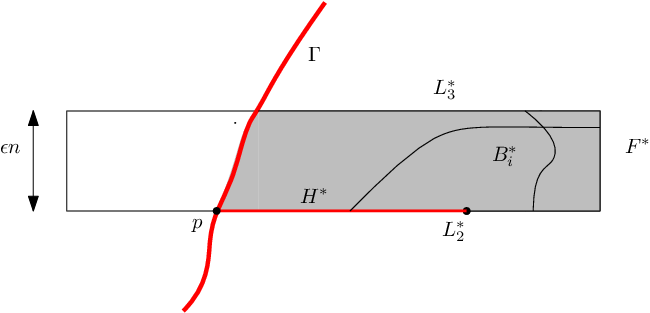}
    \caption{\textbf{The barrier events $\widetilde{\mathcal{R}}_{i,j}$ and $\mathcal{R}_{i}$}: the top panel shows the event $\widetilde{\mathcal{R}}_{i,j}$ in the rectangle $B_{i,j}$ (the indices are all suppressed in the figure). It asks that the paths crossing $B_{i,j}$ or the paths starting at $H$ (i.e., not too close to the right boundary) and exiting via the right boundary $F$ are worse than typical. The bottom panel illustrates the events $\mathcal{R}_{i}$ in the rectangle $B_{i,\tau_i}$ through which the semi-infinite geodesic $\Gamma$ passes, $B_i^*$ denotes the part of $B_{i,\tau_i}$ to the right of $\Gamma$. This event asks that any crossing of $B_i^*$ without hitting $\Gamma$ has weight much worse than typical and the same holds for any path started at $H^*$, exiting via $F^*$ and not meeting $\Gamma$.}
    \label{fig:barrier}
\end{figure}

Let $L^*_2$ (resp.\ $L^*_3$) denote the bottom and the top boundary of $B_i^*$ and let $F^*$ denote its right boundary. Let $H^*$ denote the interval of length $(\epsilon n)^{2/3}$ (recall that $\epsilon=\epsilon_1/L$ is the height of $B_i^*$) of $L^*_2$ whose left end point is $p$.

Let $\cR_i$ denote the event (see Figure \ref{fig:barrier}) that the following two conditions are satisfied:
\begin{enumerate}
    \item[(i)] $\sup_{q\in L^*_2, r\in L_3^*} T^{B^*_i\setminus \Gamma}_{q,r}-4\epsilon n \le -L(\epsilon n)^{1/3}$;
    \item[(ii)] $\sup_{q\in H^*, r\in F^*} T^{B^*_i\setminus \Gamma}_{q,r}-4(t(r)-t(q)) \le -L(\epsilon n)^{1/3}.$
\end{enumerate}
Recall here that $T^{B^*_i\setminus \Gamma}$ denotes the induced passage time where only paths contained in $B^*_i$ and disjoint from $\Gamma$ are considered. Notice also that since we have conditioned on $\mathcal{G}$, and are working on the event $G^*_{i}$, it follows (from the transversal fluctuation condition in the definition of $G^*_{i}$) that $H^*$ is to the left of the vertical line $x=(\tau_{i}+\frac{M}{2}+1)(\epsilon_1 n)^{2/3}$. This implies that paths that are considered in the second condition above all have large transversal fluctuations. 

The event $\cR_i$ is decreasing in the environment in $B_i^*$ and we shall use the FKG inequality to control its probability. For any path $\gamma$, the conditioning $\{\Gamma=\gamma\}$ is a negative conditioning on the weights in $\Z^2\setminus \gamma$. Therefore, by the {FKG inequality}, conditioning on $\Gamma$ makes the weights stochastically smaller than typical on subsets of $\Z^2\setminus \Gamma$, and thus any decreasing event (measurable with respect to the weights in such a region) becomes conditionally more likely than typical. To make things formal, we make the following definitions. 

Recall the rectangle $B_{i,j}$ (see Figure \ref{fig:good}) in the definition of $G_{i,j}$. Using $H$ to denote the sub-interval of $L_2$ to the left of the vertical line $\{x=(j+\frac{M}{2}+1)(\epsilon_1 n)^{2/3}\}$, we consider the event $\widetilde{\cR}_{i,j}$ that the following conditions (see Figure \ref{fig:barrier}) are satisfied. 
\begin{enumerate}
    \item[(i)] $\sup_{q\in L_2, r\in L_3} T^{B_{i,j}}_{q,r} -4\epsilon n \le -L(\epsilon n)^{1/3}$;
    \item[(ii)] $\sup_{q\in H, r\in F} T^{B_{i,j}}_{q,r}-4(t(r)-t(q)) \le -L(\epsilon n)^{1/3}$.
\end{enumerate}
 We note that all of $L_2,L_3,H,F$ implicitly depend on $i,j$ but this dependency has been suppressed. Since the probability $\widetilde{\cR}_{i,j}$ does not depend on $i,j$, we will often just work with $\widetilde{\cR}_{0,0}$. Clearly, the defining conditions above are stronger than those defining $\cR_i$, and therefore, as discussed above, by the FKG inequality, we get that for all $i\in N$, 
 \begin{equation}
   \label{eq:22}
  \PP(\cR_{i}\mid \mathcal{G})\ge \PP(\widetilde{\cR}_{0,0}).
 \end{equation}
 Further, since the regions $B^*_{i}$ are disjoint we get the following lemma. 

\begin{lemma}
    \label{l:dom}
In the above setup, the conditional law of 
$\{1_{\cR_{i}}:i\in N\}$ given $\mathcal{G}$, stochastically dominates the law of $\{X_{i}:i\in N\}$ where $X_{i}$s are independent realizations of $1_{\widetilde{\cR}_{0,0}}$. 
\end{lemma}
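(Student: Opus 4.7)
The plan is to upgrade the marginal FKG argument sketched immediately before the lemma to a joint stochastic domination, by combining it with the spatial disjointness of the regions $B_i^*$.

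The heart of the argument is the following joint FKG statement: conditional on $\mathcal{G}$, the law of the weights inside $\cup_{i\in N} B_i^*$ is stochastically dominated by the iid $\exp(1)$ product measure on the same vertex set. To see this, fix a path $\gamma$ and a weight configuration outside $\cup_{i\in N} B_i^*$ consistent with $\mathcal{G}$. The residual randomness lies in the weights inside $\cup_{i\in N} B_i^*$, which a priori are iid $\exp(1)$, but subject to the constraint that $\Gamma=\gamma$ in the full field. Since the weights on $\gamma$ and on every vertex outside $\cup_{i\in N} B_i^*$ are already frozen by $\mathcal{G}$, the condition $\{\Gamma=\gamma\}$ translates into an intersection, over all $\mathbf{v}$-directed up-right semi-infinite paths $\pi$ from the origin, of events of the form
\[
\sum_{v\in \pi\cap (\cup_{i\in N} B_i^*)} X_v \ \le\ c_{\gamma,\pi},
\]
where $c_{\gamma,\pi}$ equals the weight of $\gamma$ minus the now-deterministic weight of $\pi$ outside $\cup_{i\in N} B_i^*$. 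Each such constraint is decreasing in the weights in $\cup_{i\in N} B_i^*$, so their intersection is decreasing as well. Invoking the FKG inequality for the iid $\exp(1)$ measure then yields, for every coordinatewise-decreasing function $F$ of the weights in $\cup_{i\in N} B_i^*$,
\[
\EE\bigl[F\,\bigm|\,\mathcal{G}\bigr]\ \ge\ \EE_{\mathrm{iid}}[F].
\]

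Next, I use that each $1_{\cR_i}$ is a decreasing function of the weights in $B_i^*$ (decreasing a weight only decreases the induced passage times $T^{B_i^*\setminus \Gamma}_{q,r}$, which can only make the defining inequalities of $\cR_i$ easier to satisfy), and depends on no weights outside $B_i^*$. Hence for any increasing $f\colon \{0,1\}^N\to \RR$, the map $\omega\mapsto f((1_{\cR_i}(\omega))_{i\in N})$ is a decreasing function of the weights in $\cup_{i\in N} B_i^*$. Applying the joint FKG bound above,
\[
\EE\bigl[f((1_{\cR_i})_{i\in N})\,\bigm|\,\mathcal{G}\bigr]\ \ge\ \EE_{\mathrm{iid}}\bigl[f((1_{\cR_i})_{i\in N})\bigr].
\]
Under the iid measure the indicators $(1_{\cR_i})_{i\in N}$ are mutually independent, since they are measurable with respect to the pairwise disjoint coordinate blocks $(\omega\vert_{B_i^*})_{i\in N}$. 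Moreover, the pointwise inclusion $\widetilde{\cR}_{i,\tau_i}\subseteq \cR_i$ under iid weights (since $B_i^*\setminus\Gamma\subseteq B_{i,\tau_i}$ forces $T^{B_i^*\setminus\Gamma}_{q,r}\le T^{B_{i,\tau_i}}_{q,r}$), combined with translation invariance $\PP(\widetilde{\cR}_{i,\tau_i})=\PP(\widetilde{\cR}_{0,0})$, gives $\PP_{\mathrm{iid}}(\cR_i)\ge \PP(\widetilde{\cR}_{0,0})$ for each $i\in N$. A routine comparison for independent Bernoullis (iid Bernoullis with higher parameter dominate iid Bernoullis with lower parameter) then yields $\EE_{\mathrm{iid}}[f((1_{\cR_i})_{i\in N})]\ge \EE[f((X_i)_{i\in N})]$, which completes the proof.

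The main obstacle lies in the first step: carefully showing that after the combined conditioning defining $\mathcal{G}$, the remaining randomness in $\cup_{i\in N} B_i^*$ is exactly the iid law conditioned on a genuinely coordinatewise-decreasing event, so that FKG applies jointly across all the regions $B_i^*$ with $i\in N$ simultaneously. The subtlety is that $\mathcal{G}$ conditions not only on $\Gamma$ and on the weights outside $\cup_i B_i^*$, but also on the weights inside $B_i^*$ for $i\notin N$, and one must verify that freezing those additional coordinates preserves both the iid product structure on the remaining coordinates and the decreasing character of the geodesic constraint; the display above makes this transparent, but it is the step that requires careful bookkeeping.
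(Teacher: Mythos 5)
Your proof is correct and follows essentially the same route as the paper — the negative conditioning of $\{\Gamma=\gamma\}$ on the weights off the geodesic, the FKG inequality applied to decreasing events of the weights in $\cup_{i\in N}B_i^*$, the pointwise inclusion $\widetilde{\cR}_{i,\tau_i}\subseteq\cR_i$ together with translation invariance, and the disjointness of the regions $B_i^*$ — with your write-up simply making explicit the joint-domination step that the paper compresses into a single sentence. The only detail worth adding is that the inclusion $\widetilde{\cR}_{i,\tau_i}\subseteq\cR_i$ uses not just $B_i^*\setminus\Gamma\subseteq B_{i,\tau_i}$ (so induced passage times only decrease) but also the endpoint containments $H^*\subseteq H$ and $F^*\subseteq F$, and the first of these is exactly where the transversal-fluctuation part of $G^*_i$ for $i\in N$ enters, as the paper points out in the setup preceding the lemma.
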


We now need to lower bound the probability of $\widetilde{\cR}_{0,0}$, and this is done in the following lemma. 

\begin{lemma}
\label{l:barlb}
  There exists a constant $K'>1$ such that for $L$ sufficiently large depending on $M$ and as long as $\epsilon n\geq K'$, we have $\PP(\widetilde{\cR}_{0,0})=\rho>0$ for a constant $\rho$ depending only on $M$ and $L$.
\end{lemma}

\begin{proof}
    Since the two events defining $\widetilde{\cR}_{0,0}$ are both decreasing in the weights in $B_{0,0}$, they are positively correlated by the FKG inequality. Therefore, it suffices to show separately that each of the events have probability bounded away from $0$. For the first event, this follows from Proposition \ref{p:barrier}. For the second event, we shall discretize and partition $H$ into $ML^{2/3}$ many intervals of size $(\epsilon n)^{2/3}$ each. Fix such an interval $I$. We shall show that 
    \begin{equation}
        \label{eq:111}
        \PP(\sup_{q\in I, r\in F} T^{B_{0,0}}_{q,r}-4(t(r)-t(q)) \le -L(\epsilon n)^{1/3})\ge \rho_*>0
    \end{equation}
    where $\rho_{*}$ depends only on $M$ and $L$ (and not on $I$ or $\epsilon$). For this, we shall use another step-back argument. Let $p'$ denote the left end-point of $I$. Consider the point $q'$ with $x(p')=x(q')$ and $t(q')=t(p')-\epsilon n$. Notice that the pairs of points $(q',r)$ where $r\in F$ satisfies the slope condition %
    in Proposition \ref{p:para}, and therefore with probability at least $0.9$, 
    $$\sup_{r\in F} (T_{q',r}-\EE T_{q',r}) \le C(\epsilon n)^{1/3}$$ and 
    $$\inf_{q\in I} (T_{q',q}-\EE T_{q',q}) \ge -C(\epsilon n)^{1/3}$$
    for some absolute positive constant $C$. Observe that
    \begin{itemize}
    \item $\sup_{q\in I, r\in F} T_{q,r} \le \sup_{r\in F} T_{q',r}- \inf_{q\in I} T_{q',q}$
    \item $\EE T_{q',r}-4(t(r)-t(q')) \le -\frac{1}{10}M^{2}L^{4/3}(\epsilon n)^{1/3}$, where here, we fix $K'$ to be a large enough constant and recall the assumption $\epsilon n\geq K'$.
    \item $|\EE T_{q',q}-4(t(q)-t(q'))|\le C(\epsilon n)^{1/3}$ for some $C$ not depending on $M$ or $L$.
    \end{itemize}
  In view of the above, \eqref{eq:111} follows by choosing $L$ sufficiently large. By the FKG inequality, the probability that the second event in the definition of $\widetilde{\cR}_{0,0}$ holds is therefore at least $\rho_{*}^{ML^{2/3}}>0$. This completes the proof of the lemma. 
\end{proof}

We digress now for a moment to show another application of the above argument which will be used later in Lemma \ref{lem:13}. Recall the rectangle $U_{\Delta}$ from Proposition \ref{p:barrier}; this is a rectangle with height $n$ and width $2\Delta n^{2/3}$. Let $L_{U}$ (resp.\ $R_{U}$) denote its left (resp.\ right) side. We have the following lemma.

\begin{lemma}
    \label{l:bstrong}
    For any $\Delta,M>0$, there exists $\beta=\beta(\Delta,M)>0$ such that for all $n$ large, we have 
    $$\PP\left(\sup_{p\in L_{U},q\in R_{U}} T_{p,q}-4|t(q)-t(p)|\le -Mn^{2/3}\right)\ge \beta.$$
\end{lemma}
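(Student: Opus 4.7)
The plan is to adapt the argument from the proof of Lemma \ref{l:barlb} to the present setup, combining Proposition \ref{p:barrier} with a step-back argument, and concluding via the FKG inequality.

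First, I would reduce the problem to controlling induced passage times inside an enlarged rectangle. Using Proposition \ref{l:tffin} (together with a discretization of $L_U$ and $R_U$ into $O(n^{1/3})$ intervals of length $\sim n^{2/3}$ and a union bound), one can show that for $C=C(\Delta,M)$ chosen large enough, on an event of probability at least $1-e^{-cC^3}$, every finite geodesic $\gamma^{\rm dis}_{p,q}$ from $p\in L_U$ to $q\in R_U$ stays within the enlarged rectangle $U_{\Delta+C}$. On this event $T_{p,q}=T^{U_{\Delta+C}}_{p,q}$, so it suffices to bound the induced passage times.

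Second, I would create a barrier event by combining two decreasing sub-events: (i) the event from Proposition \ref{p:barrier} applied to $U_{\Delta+C}$ with an appropriate parameter $L$, which controls induced passage times for pairs with vertical distance at least $L^{-1}n$; and (ii) a step-back event as in the proof of Lemma \ref{l:barlb}, which controls the induced passage times for pairs with smaller vertical distance. For these shorter pairs, the geometry (a steep forced slope between $L_U$ and $R_U$) already makes $\mathbb{E} T_{p,q}-4|t(q)-t(p)|$ very negative, and the step-back argument combined with the concentration in Proposition \ref{p:para} gives uniform control after discretizing each side into a constant (depending on $\Delta, L, M$) number of intervals.

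Since both sub-events are decreasing in the environment within $U_{\Delta+C}$, the FKG inequality gives that their intersection has probability at least some $\beta=\beta(\Delta,M)>0$. The main technical obstacle is the calibration of the parameters $C$ and $L$ so that, after combining the mean asymptotic $\mathbb{E} T_{p,q}\approx 4|t(q)-t(p)|-c(x(q)-x(p))^2/|t(q)-t(p)|$ with the barrier and concentration estimates, the full supremum can be pushed below $-Mn^{2/3}$ uniformly; this requires a careful trade-off between the transversal scale, the mean correction, and the barrier strength, and is the delicate heart of the argument.
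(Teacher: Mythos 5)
Your overall skeleton matches the paper's: split the pairs $(p,q)$ according to their vertical separation, handle the well-separated pairs with Proposition \ref{p:barrier}, handle the nearly horizontal pairs by a step-back argument with Proposition \ref{p:para} after cutting $L_U$ and $R_U$ into a bounded number of segments, and combine decreasing events with the FKG inequality. The genuine gap is your preliminary reduction to induced passage times, i.e.\ the step ``on the containment event $T_{p,q}=T^{U_{\Delta+C}}_{p,q}$, so it suffices to bound the induced passage times.'' The event that all geodesics from $L_U$ to $R_U$ stay inside $U_{\Delta+C}$ is not monotone in the weights, so it cannot be intersected with your (rare, decreasing) barrier event via FKG — and indeed your final FKG step only intersects the two decreasing sub-events, leaving the passage back from induced to unrestricted passage times unaddressed. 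A union bound does not rescue it: it gives $\PP(\mathrm{barrier})-\PP(\mathrm{containment\ fails})\ge \beta(\Delta+C,L)-o(1)$-type bounds in which Proposition \ref{p:barrier} provides no quantitative control of $\beta$ as a function of the width $\Delta+C$, so the calibration ``$C$ large depending on $\beta$, $\beta$ depending on $C$'' is circular (and your claimed $1-e^{-cC^3}$ itself needs geodesic ordering rather than a naive union bound over $O(n^{2/3})$ discretized pairs). The issue is not merely technical: conditionally on the barrier event the environment in the box is atypically bad and geodesics between boundary points are pushed out of any enlarged box, so containment should not be expected to survive the conditioning.

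The paper avoids this entirely by never passing to induced quantities: for the nearly horizontal pairs it bounds the unrestricted $T_{p,q}$ directly through superadditivity, $T_{p,q}\le T_{p_1,q}-T_{p_1,p}$ with a stepped-back point $p_1$ two segment-heights below, so that only decreasing events (point-to-interval suprema being small and infima not too small, via Proposition \ref{p:para}) together with Proposition \ref{p:barrier} enter, and FKG applies throughout. If you want to keep your reduction, the monotone fix is to replace geodesic containment by the decreasing event that every path from $L_U$ to $R_U$ which exits $U_{\Delta+C}$ has weight at most $4|t(q)-t(p)|-Mn^{1/3}$ (uncompetitiveness of large-transversal-fluctuation paths, provable by the same step-back technique); this event has probability close to $1$, is decreasing, and combines with the barrier by FKG with no circularity. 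Finally, note that the deficit must be read at scale $n^{1/3}$, as in the paper's concluding estimates and as needed for Lemma \ref{lem:13}: for pairs with $|t(q)-t(p)|$ of order $n$, a deficit of order $n^{2/3}$ is a lower deviation of probability $e^{-cn}$, so the ``$-Mn^{2/3}$'' in the statement (and at the end of your plan) cannot be achieved uniformly and should be $-Mn^{1/3}$.
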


\begin{proof}
    Clearly it suffices to prove the result for $M$ sufficiently large and this is what we shall do. Let us fix $K$ such that $K\gg M$. By Proposition \ref{p:barrier} and the FKG inequality it suffices to show that for some $\beta'>0$,
    $$\PP\left(\sup_{p\in L_{U},q\in R_{U}: |t(q)-t(p)|\le K^{-1}n} T_{p,q}-4|t(q)-t(p)|\le -Mn^{2/3}\right)\ge \beta'.$$
    Now divide the line segments $L_{U}$ and $R_{U}$ into segments of length $K^{-1}n$, denote them by $L^{i}_{U}$ (resp.\ $R^{i}_{U}$) for $i=1,2,\ldots, K$ increasing along the time direction. 
    Let $A_{i}$ denote the event 
    $$\bigcup_{j=i,i+1}\left\{ \sup_{p\in L^{i}_{U},q\in R^{j}_{U}:t(q)>t(p)} T_{p,q}-4(t(q)-t(p))\le -Mn^{2/3}\right\}$$
    and let $B_{i}$ denote the event 
    $$\bigcup_{j=i,i+1}\left\{ \sup_{p\in L^{j}_{U},q\in R^{i}_{U}:t(p)>t(q)} T_{q,p}-4(t(p)-t(q))\le -Mn^{2/3}\right\}.$$
    It clearly suffices to show that $\PP(\cap_{i} A_{i}\cap B_{i})$ is bounded away from $0$. Noting that $\PP(A_{i})=\PP(B_{i})$ by symmetry and using the FKG inequality again we are finally reduced to showing $\PP(A_{1})$ is bounded away from $0$. 

    Let $p_0$ denote the bottom endpoint of $L_U^1$ and let $p_1$ be the point $p_0-(0,2K^{-1}n)$ %
    (i.e., the point $2K^{-1}n$ distance below $p_0$ in the time direction). Arguing is the the proof of Lemma \ref{l:barlb}, on the intersection of the  events 
    $\{\sup_{q\in R^{1}_{U}\cup R^{2}_U} T_{p_1,q}-4(t(q)-t(p_1)) \le -2Mn^{1/3}\}$ and $\{\inf_{q\in L^{1}_{U}} T_{p_1,q}-4(t(q)-t(p_1)) \ge -Mn^{1/3}\}$, $A_1$ holds. By noticing that $\sup_{q\in R^{1}_{U}\cup R^{2}_U} \EE T_{p_1,q}-4(t(q)-t(p_1))\le -M'n^{1/3}$ for some $M'\gg M$ if $K\gg M$, we obtain by using Proposition \ref{p:para} that both the above events have probability at least 0.9 if $M$ is sufficiently large. The lower bound for $\PP(A_1)$ follows by a union bound and this completes the proof. 
\end{proof}

We are now ready to prove the main technical result of this section. 

\begin{proposition}
    \label{p:density}
   There exist positive constants $C,c$ and $K>1$ such that the following holds. For appropriate choices of the parameters $M, L$ and $C_0$, there exists $\kappa>0$ such that as long as we have $\epsilon^2 n\geq K$, we have
    $$\PP(\#\{i: G^*_i\cap \cR_{i}~\mathrm{occurs}\}\le \kappa \epsilon_1^{-1}) \le C\exp(-c\epsilon_1^{-1}).$$ 
\end{proposition}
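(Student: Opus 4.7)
The plan is to combine Proposition \ref{p:manygood} (which gives many $i$ with $G^*_i$) with Lemma \ref{l:dom} (stochastic domination by i.i.d.\ Bernoullis) and Lemma \ref{l:barlb} (the Bernoulli parameter $\rho$ is positive), and then conclude via a standard large deviation estimate for Binomial random variables.

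First, I would fix $\eta > 0$ small (say $\eta = 1/2$) and apply Proposition \ref{p:manygood} to choose $M$ and $C_0$ so that the event
\begin{equation*}
  \mathcal{E} = \{\#\{i: G^*_i \text{ occurs}\} \geq (1-\eta)\epsilon_1^{-1}\}
\end{equation*}
satisfies $\PP(\mathcal{E}^c) \leq \exp(-c \epsilon_1^{-1})$. Next, I would choose $L$ large enough (depending on $M$) so that Lemma \ref{l:barlb} yields $\PP(\widetilde{\cR}_{0,0}) = \rho > 0$; crucially, $\rho$ depends only on $M, L$ and not on $\epsilon_1$. Note that the set $N = \{i: G^*_i \text{ occurs}\}$ is $\mathcal{G}$-measurable, and $\mathcal{E} \in \mathcal{G}$.

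Now I would condition on $\mathcal{G}$. By Lemma \ref{l:dom}, the conditional law of the random variable
\begin{equation*}
  Z \coloneqq \#\{i \in N: \cR_i \text{ occurs}\} = \#\{i: G^*_i \cap \cR_i \text{ occurs}\}
\end{equation*}
stochastically dominates the distribution of $\sum_{i \in N} X_i$, where the $X_i$ are i.i.d.\ Bernoulli$(\rho)$. On the event $\mathcal{E}$, we have $|N| \geq (1-\eta)\epsilon_1^{-1}$, so by a standard Chernoff/Hoeffding bound for binomial random variables, for any $\kappa < (1-\eta)\rho$,
\begin{equation*}
  \PP\bigl(Z \leq \kappa \epsilon_1^{-1} \,\big|\, \mathcal{G}\bigr) \1_{\mathcal{E}} \leq \exp(-c'(\rho, \eta, \kappa) \epsilon_1^{-1}).
\end{equation*}
Choosing for instance $\kappa = (1-\eta)\rho/2$ makes this exponent a positive constant.

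The conclusion follows by taking the total expectation and applying the union bound:
\begin{equation*}
  \PP\bigl(Z \leq \kappa \epsilon_1^{-1}\bigr) \leq \PP(\mathcal{E}^c) + \EE\bigl[\PP(Z \leq \kappa \epsilon_1^{-1} \mid \mathcal{G}) \1_{\mathcal{E}}\bigr] \leq 2\exp(-c'' \epsilon_1^{-1}),
\end{equation*}
which is the desired bound after adjusting the constant. There is no substantial obstacle here since all the hard work has been done in Propositions \ref{l:geodfluc}, \ref{p:percn}, Lemmas \ref{l:gbound}, \ref{l:dom}, \ref{l:barlb}; the only delicate point is to make sure the parameter choices are consistent (choose $\eta$ first, then $M, C_0$ from Proposition \ref{p:manygood}, then $L$ from Lemma \ref{l:barlb} depending on $M$, which then determines $\rho$ and hence the admissible $\kappa$), and this ordering is exactly what the statement permits.
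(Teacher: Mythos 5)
Your proposal is correct and follows essentially the same route as the paper: apply Proposition \ref{p:manygood} to get $\#N\ge(1-\eta)\epsilon_1^{-1}$ with stretched-exponentially small failure probability, use Lemma \ref{l:dom} together with Lemma \ref{l:barlb} to stochastically dominate $\#\{i:G^*_i\cap\cR_i\}$ by a $\mathrm{Bin}((1-\eta)\epsilon_1^{-1},\rho)$ variable conditionally on $\mathcal{G}$, apply a standard binomial lower-tail bound, and conclude by a union bound. Your explicit bookkeeping of the order in which $\eta$, $M$, $C_0$, $L$, and hence $\rho$ and $\kappa$ are fixed matches the paper's (implicit) parameter hierarchy.
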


\begin{proof}
  Let the parameters be fixed such that the conclusions of all the previous lemmas hold; also, we choose $K=K'^2$, where $K'$ is as in the statement of Lemma \ref{l:barlb}. Now, since $\epsilon_1\geq \epsilon$ and since $K>1$, the condition $\epsilon^2n \geq K$ implies that $\epsilon_1^2n\geq 1$. As a result, it follows from Proposition \ref{p:manygood} that $\PP(\# N\le (1-\eta)\epsilon_1^{-1}) \le C\exp(-c\epsilon_1^{-1})$.
  
Further, since $\epsilon^2 n \geq K>1$, we also have $\epsilon n \geq \sqrt{K}=K'$ and thus we can invoke Lemma \ref{l:barlb}. Indeed, on the event that $\{\# N\ge (1-\eta)\epsilon_1^{-1}\}$, it follows from Lemma \ref{l:dom} and Lemma \ref{l:barlb} that 
    $\#\{i: G^*_i\cap \cR_{i}~\mathrm{occurs}\}$ is stochastically larger than a $\mbox{Bin}((1-\eta)\epsilon_1^{-1}),\rho)$ random variable. From a standard lower tail large deviation estimate for the binomial random variable, it follows that for $\kappa$ sufficiently small (depending on $\rho$) we get 
    $$\PP(\#\{i: G^*_i\cap \cR_{i}~\mathrm{occurs}\} \le \kappa \epsilon_1^{-1} \big\lvert \mathcal{G}, \# N\ge (1-\eta)\epsilon_1)\le C\exp(-c\epsilon_1^{-1}).$$
    The proof of the proposition is completed by a union bound. 
\end{proof}

\subsection{Volume accumulation on the good event}
Recalling the notation of the previous subsection, let $p_i$ denote the point where $\Gamma$ intersects the line $\{t=(i+2^{-1})\epsilon_1 n\}$, and let $q_i$ denote the point where $\Gamma$ intersects the line $t=i\epsilon_1 n$. Further, let $p'$ be a point on the line $\{t=(i+2^{-1})\epsilon_1 n\}$ to the right of $p_i$ with $|p_i-p'|\le (\epsilon n)^{2/3}$. We shall show that if $p'$ is close to $p_i$, then, on the event $G^*_i\cap \cR_{i}$ it is very likely that the upward semi-infinite geodesic from $p'$ will coalesce with $\Gamma$ below the line $\{t=(i+1)\epsilon_1 n\}$. We now move towards a formal statement. 

Recall from Section \ref{s:lppnotation} that for $p,q\in \Z^2$, the Busemann function $\cB^\dis(p,q)$ is defined by $$\cB^\dis(p,q)=T_{p,r}-T_{q,r},$$ where $r$ is a point in $\Gamma^\dis_{p}\cap \Gamma^\dis_{q}$. Since we are on the probability one set where all upward semi-infinite geodesics are unique and coalesce, this is well defined. Consider the process $\{S^{n}(x)\}_{x\in \Z}$ defined by $S^{n}(x)=\cB^\dis((x,n), (0,n))$, i.e., the process of Busemann increments along the anti-diagonal line $\{t=n\}$. The following result is well known, see e.g.\ \cite[Theorem 4.2]{Sep17}.

\begin{proposition}
\label{p:Busemann}
    In the above notation, $S^{n}(\cdot)$ is a two sided random walk with i.i.d.\ increments with mean $0$ that has the increment distribution same as $X-Y$ where $X$ and $Y$ are independent $\exp(1/2)$ random variables.  
\end{proposition}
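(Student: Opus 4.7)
The plan is to prove Proposition \ref{p:Busemann} by identifying $S^n(\cdot)$ as the stationary boundary profile of exponential LPP in the $(1,1)$ direction, which is a well-studied object. Since the statement is already attributed to \cite{Sep17}, the proposal is really to sketch the standard derivation underlying that reference.

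First, I would establish that $S^n(\cdot)$ is stationary. Translation invariance in $x$ follows immediately from the fact that the weight field $\{X_{i,j}\}$ is i.i.d., the collection of $(1,1)$-directed semi-infinite geodesics is translation covariant under $(x,y) \mapsto (x+k, y+k)$, and the Busemann function $\cB^{\dis}((x,n),(0,n))$ is defined only through passage time differences, which inherit that covariance. Thus $\{S^n(x+x_0) - S^n(x_0)\}_{x \in \Z} \stackrel{d}{=} \{S^n(x)\}_{x \in \Z}$, so it suffices to identify the one-step increment distribution and show independence of increments.

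Second, I would compute the distribution of the single increment $S^n(1) - S^n(0) = \cB^{\dis}((1,n), (0,n))$. Here one uses the Busemann identity: if $z$ is the coalescence point of $\Gamma^{\dis}_{(1,n)}$ and $\Gamma^{\dis}_{(0,n)}$ with, say, $\Gamma^{\dis}_{(0,n)}$ passing through $(0, n+1)$ at its first step (and otherwise $(1,n)$), the recursion for $T$ gives $\cB^{\dis}((1,n),(0,n)) = X_{(1,n+1)} - X_{(0,n+1)}$ with the appropriate identifications coming from the last-passage recursion $T_{p,z} = \max(T_{p', z}, T_{p'', z}) + X_p$. Combining this with the well-known fact that these "north/east" increments in the stationary profile are marginally $\exp(1/2) - \exp(1/2)$ (obtained from Burke's property of the stationary M/M/1 queue encoded in exponential LPP) gives the claimed marginal.

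Third, and most delicately, I would establish independence across $x$. The standard route is Seppäläinen's coupling: build exponential LPP with i.i.d.\ $\exp(1/2)$ weights on the boundary anti-diagonal $\{t = n\}$ (north and east increments independent), and observe that by Burke's theorem the induced Busemann profile along any parallel anti-diagonal line remains a random walk with i.i.d.\ $\exp(1/2) - \exp(1/2)$ increments. The $(1,1)$-directed Busemann function on the whole lattice is the almost sure limit of such finite-$N$ profiles, and one transfers independence to the limit using uniqueness of the stationary measure for $(1,1)$-directed Busemann functions (equivalently, via the explicit Burke-type swap). The main obstacle is precisely this last transfer — rigorously identifying the limit of the finite-volume stationary profile with the (intrinsic) Busemann function defined via coalescing semi-infinite geodesics, which is where \cite[Theorem 4.2]{Sep17} does the heavy lifting and which I would simply invoke.
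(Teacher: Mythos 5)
The paper offers no proof of this proposition: it is imported as a known result with the citation \cite[Theorem 4.2]{Sep17}, which is exactly the reference your argument ultimately leans on, so your proposal (a sketch of the standard Burke-property/stationary-LPP derivation followed by invoking that theorem for the identification of the limiting profile with the Busemann function) is essentially the same approach as the paper's. One small caution within your sketch: the displayed identity $\cB^{\dis}((1,n),(0,n))=X_{(1,n+1)}-X_{(0,n+1)}$ is not literally correct — in these coordinates a unit step in $x$ along $\{t=n\}$ is the lattice step $(1,-1)$, so the increment is a difference of a horizontal and a vertical Busemann increment (independent $\exp(1/2)$ variables by Burke's property), not a difference of two single-site weights.
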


We now have the following result for coalescence. Recall that $\epsilon=L^{-1}\epsilon_1$, where $L$ is always taken to be sufficiently large.

\begin{figure}[htbp!]
     \centering
         \centering
         \includegraphics[width=0.5\textwidth]{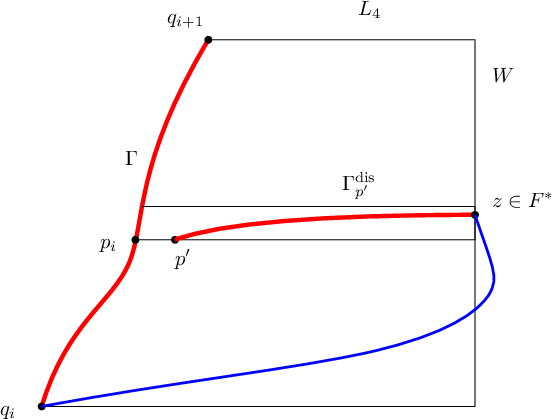}
     \caption{\textbf{Proof of Proposition \ref{l:contra}, case 1}: we show that on $G^{*}_{i}\cap \mathcal{R}_{i}$, if the geodesic $\Gamma^\dis_{p'}$ does not coalesce with $\Gamma$ before $q_{i+1}$, then the Busemann increment $|\cB^{\rm dis}(p_i,p')|$  must be large. This figure depicts Case 1 where the first exit point of $\Gamma^\dis_{p'}$ from $B_i^{*}$ is at the right boundary $F^*$. By the linearity of Busemann functions, it suffices to show that $|\cB^\dis(q_i,p')-2\epsilon_1 n|$ is large and this is achieved by comparing $\Gamma^{\rm dis}_{p'}$ with the path which goes from $q_{i}$ to $z$ along the geodesic $\gamma^\dis_{q_i,z}$ and then follows $\Gamma^\dis_{p'}$. %
     }
     \label{fig:vol1}
     \end{figure}

\begin{proposition}
    \label{l:contra}
    Let $i$ be such that $G^*_i\cap \cR_{i}$ holds. Suppose further that $p'$ is a point as above on the line $\{t=(i+2^{-1})\epsilon_1 n\}$ to the right of $p_i$ with $|p_i-p'|\le (\epsilon n)^{2/3}$. Then either, (a) $\Gamma^\dis_{p'}$ coalesces with $\Gamma$ before $q_{i+1}$, or (b) $|\cB^\dis(p_i,p')|\ge \frac{L}{8}(\epsilon n)^{1/3}$. 
\end{proposition}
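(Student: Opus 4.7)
My plan is to suppose that $\Gamma^\dis_{p'}$ does not coalesce with $\Gamma$ before $q_{i+1}$ and deduce $|\cB^\dis(p_i,p')|\ge (L/8)(\epsilon n)^{1/3}$. The driving identity is the cocycle relation $\cB^\dis(p_i,p')=\cB^\dis(q_i,p')-\cB^\dis(q_i,p_i)$: condition (i) in the definition of $G_{i,\tau_i}$ already pins down $\cB^\dis(q_i,p_i)=T_{q_i,p_i}$ within $C_0(\epsilon_1 n)^{1/3}$ of $2\epsilon_1 n$, so it suffices to show that $\cB^\dis(q_i,p')$ exceeds $2\epsilon_1 n$ by at least a constant multiple of $L(\epsilon n)^{1/3}$. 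I would bound the latter from below via the general inequality $\cB^\dis(q_i,p')\ge T_{q_i,w}-T_{p',w}$, valid for any $w$ on $\Gamma^\dis_{p'}$ lying below the coalescence point of $\Gamma^\dis_{p'}$ with $\Gamma$; this follows by concatenating $\gamma^\dis_{q_i,w}$ with the tail of $\Gamma^\dis_{p'}$ from $w$ onwards and noting $T_{p',w}+T_{w,r}=T_{p',r}$ on the geodesic.

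Since the no-coalescence hypothesis forces $\Gamma^\dis_{p'}$ to avoid $\Gamma$ throughout $B_i^*\subset\{t\le(i+1)\epsilon_1 n\}$, the geodesic must exit $B_i^*$ through either $F^*$ (Case 1) or $L_3^*$ (Case 2), and I would take $w$ to be that exit point in Case 1, and the first exit of $\Gamma^\dis_{p'}$ from $\mbox{Rect}_{i,\tau_i}$ past $z\in L_3^*$ in Case 2. In Case 1, $p'\in H^*$ (since $|p_i-p'|\le(\epsilon n)^{2/3}$) and $w\in F^*$, so condition (ii) of $\cR_i$ gives $T_{p',w}\le 4(t(w)-t(p'))-L(\epsilon n)^{1/3}$, while condition (ii) of $G_{i,\tau_i}$ applied with $q_i\in L_1$ and $w\in F\subset F\cup W\cup L_4$ gives $T_{q_i,w}\ge T^{B_{i,\tau_i}^c}_{q_i,w}\ge 4(t(w)-t(q_i))-C_0(\epsilon_1 n)^{1/3}$. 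In Case 2, condition (ii) of $G_{i,\tau_i}$ does not directly apply at $z\in L_3^*$, so I would route through $w\in L_4\cup W$: condition (i) of $\cR_i$ gives $T_{p',z}\le 4\epsilon n-L(\epsilon n)^{1/3}$, and condition (iii) of $G_{i,\tau_i}$ gives $T_{z,w}\le 4(t(w)-t(z))+C_0(\epsilon_1 n)^{1/3}$, so summing along $\Gamma^\dis_{p'}$ yields $T_{p',w}=T_{p',z}+T_{z,w}\le 4(t(w)-t(p'))-(L-C_0 L^{1/3})(\epsilon n)^{1/3}$. Condition (ii) of $G_{i,\tau_i}$ then gives the matching lower bound $T_{q_i,w}\ge 4(t(w)-t(q_i))-C_0(\epsilon_1 n)^{1/3}$.

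Subtracting in either case (and using $C_0(\epsilon_1 n)^{1/3}=C_0 L^{1/3}(\epsilon n)^{1/3}$) produces $\cB^\dis(q_i,p')\ge 2\epsilon_1 n+(L-2C_0 L^{1/3})(\epsilon n)^{1/3}$, and then the cocycle identity together with $\cB^\dis(q_i,p_i)\le 2\epsilon_1 n+C_0(\epsilon_1 n)^{1/3}$ gives $\cB^\dis(p_i,p')\ge (L-3C_0 L^{1/3})(\epsilon n)^{1/3}\ge (L/8)(\epsilon n)^{1/3}$ for $L$ sufficiently large relative to $C_0$. The only mildly delicate point is the detour through $w\in L_4\cup W$ in Case 2, needed because the lower bound in condition (ii) of $G_{i,\tau_i}$ is stated for endpoints on $F\cup W\cup L_4$ and not on $L_3$; the two associated losses of size $C_0 L^{1/3}(\epsilon n)^{1/3}$ are comfortably absorbed by the gain of $L(\epsilon n)^{1/3}$ from $\cR_i$ once $L$ dominates $C_0$.
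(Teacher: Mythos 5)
Your argument is correct and is essentially the paper's own proof: the same cocycle reduction to bounding $\cB^\dis(q_i,p')$, the same two cases according to whether $\Gamma^\dis_{p'}$ first exits $B_i^*$ through $F^*$ or $L_3^*$, the same use of conditions (i)--(iii) of $G_{i,\tau_i}$ and (i)--(ii) of $\cR_i$, and the same absorption of the $C_0L^{1/3}$ losses by taking $L$ large relative to $C_0$. The only cosmetic gloss is that $\cR_i$ bounds the restricted passage times $T^{B_i^*\setminus\Gamma}$ rather than $T_{p',w}$ itself, but as you note the relevant geodesic segment lies in $B_i^*$ and avoids $\Gamma$ under the no-coalescence hypothesis, so the two coincide.
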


 \begin{proof}
We shall assume that $\Gamma^\dis_{p'}$ does not coalesce with $\Gamma$ before $q_{i+1}$ and show that this implies $|\cB^\dis(p_i,p')|\ge \frac{L}{8}(\epsilon n)^{1/3}$. Notice that the Busemann function satisfies the following linear relation 
$$\cB^\dis(p_{i},p')=\cB^\dis(p_{i},q_{i})+\cB^\dis(q_{i},p').$$
Also, since $q_i,p_{i}\in \Gamma$,
$$\cB^\dis(p_{i},q_{i})=-T_{p_{i},q_{i}},$$ and therefore, by (i) in the conditions listed after \eqref{eq:71}, $|\cB^\dis(p_{i},q_{i})-2\epsilon_1 n|\le C_0(\epsilon_1 n)^{1/3}=C_0L^{1/3}(\epsilon n)^{1/3}$. Since $L$ is large enough compared to $C_0$, it suffices to show that $|\cB^\dis(q_i,p')-2\epsilon_1 n|\ge \frac{L}{4}(\epsilon n)^{1/3}$. 

Let $r$ be the point where $\Gamma^\dis_{q_i}$ and $\Gamma^\dis_{p'}$ meet. There are two cases to consider. %

     \textbf{Case 1:} The first exit point of $\Gamma^\dis_{p'}$ from $B_i^*$ is at  $z\in F^*$. In this case (see Figure \ref{fig:vol1}), considering the path from $q_i$ to $r$ obtained by concatenating the best path from $q_{i}$ to $z$ avoiding $B_i^*$ and the restriction of $\Gamma^\dis_{p'}$ between $z$ and $r$, it follows that 
    $$\cB^\dis(q_i,p')\ge T^{(B_i^*)^c}_{q_{i},z}-T^{B^*_{i}\setminus \Gamma}_{p',z} \ge 2\epsilon_1 n+ (-C_0L^{1/3}+L)(\epsilon n)^{1/3} \ge 2\epsilon_1 n+\frac{1}{4}L(\epsilon n)^{1/3},$$
as required, where the penultimate inequality above follows from the definitions of $G^*_i\cap \cR_{i}$ whereas the final inequality is obtained by choosing $L$ sufficiently large compared to $C_0$.

\begin{figure}[htbp!]
     \centering
         \centering
         \includegraphics[width=0.5\textwidth]{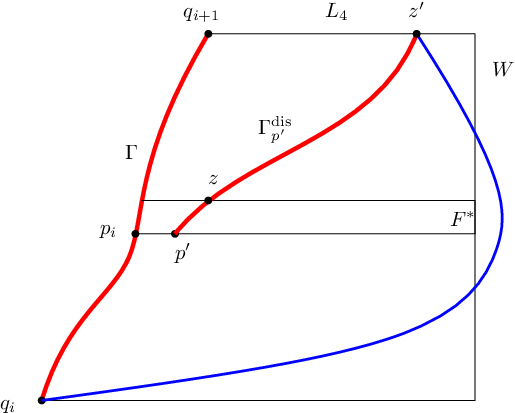}
     \caption{\textbf{Proof of Proposition \ref{l:contra}, case 2}: in this case first exit point $z$ of $\Gamma^\dis_{p'}$ from $B_i^{*}$ is at the top boundary of $B^*$. Denoting by $z'$, the intersection point of $\Gamma^\dis_{p'}$ with $W\cup L_4$, as before we show that $|\cB^\dis(q_i,p')-2\epsilon_1 n|$ is large by comparing $\Gamma^\dis_{p'}$ with the path which goes from $q_{i}$ to $z'$ along the geodesic $\gamma^\dis _{q_{i},z'}$ and then follows the geodesic $\Gamma^\dis_{p'}$.}
     \label{fig:vol2}
     \end{figure}

\textbf{Case 2:}  The first exit point of $\Gamma^\dis_{p'}$ from $B_i^*$ is at  $z\in L^*_3$. In this case (see Figure \ref{fig:vol2}), let $z'$ be the first point where $\Gamma^\dis_{p'}$ intersects $L_4\cup W$, where $L_4$ and $W$ denote the corresponding line segments in the event $G_{i,\tau_i}$. As before, following the best path from $q_i$ to $z'$ avoiding $B_i^*$ and then following $\Gamma^\dis_{p'}$ to $r$, we get
$$\cB^\dis(q_i,p')\ge T^{(B_i^*)^c}_{q_i,z'}-T^{B_i^*\setminus \Gamma}_{p',z}-T_{z,z'}\ge 2\epsilon_1 n+(-2C_0L^{1/3}+L)(\epsilon n)^{1/3}\ge 2\epsilon_1 n+\frac{1}{4}L(\epsilon n)^{1/3},$$
as required, where the penultimate inequality above follows from the definitions of $G^*_i\cap \cR_{i}$, whereas the final inequality is obtained by choosing $L$ sufficiently large compared to $C_0$. This completes the proof of the proposition. 
 \end{proof}   

To ensure that coalescence actually happens at a positive density of locations along the geodesic, our next task is to control the Busemann increments on the line $t=(i+\frac{1}{2})\epsilon_1 n$ around the point $\Gamma((i+\frac{1}{2})\epsilon_1n)$. To this end, we have the following global bound. 

\begin{lemma}
    \label{l:bussebd}
  There exist constants $c,C$ and $K>1$ such that the following holds. Let $\mathcal{A}_{\epsilon}$ denote the event that for all $i\in [\![0,\epsilon_1^{-1}]\!]$ and for all $|x_1|, |x_2| \le \epsilon^{-1/3}n^{2/3}$ with $|x_1-x_2|\le 2\epsilon^{5/3}n^{2/3}$ we have 
    $|\cB^\dis(p_1,p_2)|< \frac{L}{8}(\epsilon n)^{1/3}$ where $p_{j}=(x_j, (i+\frac{1}{2})\epsilon_1n)$. Then for all $\epsilon,n$ satisfying $\epsilon^4 n\geq K$, we have $\PP(\mathcal{A}^{c}_{\epsilon})\le CL^{-1}e^{-c\epsilon^{-1}}$. %
\end{lemma}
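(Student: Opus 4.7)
The strategy is to reduce the lemma, via Proposition \ref{p:Busemann}, to a concentration estimate for a two-sided random walk followed by a union bound. First I would fix $i \in [\![0,\epsilon_1^{-1}]\!]$ and set $t_i := (i+\tfrac{1}{2})\epsilon_1 n$; by Proposition \ref{p:Busemann}, the process $x \mapsto S^{(i)}(x) := \cB^\dis((x,t_i),(0,t_i))$ is a two-sided random walk with i.i.d.\ centered increments distributed as $X-Y$ where $X,Y$ are independent $\exp(1/2)$ variables. These increments are sub-exponential of $O(1)$ scale, so Bernstein's inequality yields
\begin{equation*}
\PP\bigl(|S^{(i)}(a+N) - S^{(i)}(a)| \geq t\bigr) \leq 2\exp\bigl(-c\min(t^2/N,\, t)\bigr)
\end{equation*}
uniformly in $a \in \Z$, $N \geq 1$, and $t > 0$.

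For each fixed $i$, I would partition the range $[-\epsilon^{-1/3}n^{2/3}, \epsilon^{-1/3}n^{2/3}]$ into $K = O(\epsilon^{-2})$ sub-intervals of length $Y := 2\epsilon^{5/3}n^{2/3}$. Any pair $(x_1,x_2)$ in the range with $|x_1-x_2| \leq 2\epsilon^{5/3}n^{2/3}$ is contained in a union $J$ of two adjacent sub-intervals, and by the triangle inequality
\begin{equation*}
|S^{(i)}(x_1) - S^{(i)}(x_2)| \leq 2\max_{x \in J} |S^{(i)}(x) - S^{(i)}(a_J)|,
\end{equation*}
where $a_J$ denotes the left endpoint of $J$. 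A standard maximal inequality (Etemadi or Kolmogorov) reduces this maximum to the two-point bound up to an absolute constant, so applying Bernstein's inequality with target $t = \tfrac{L}{16}(\epsilon n)^{1/3}$ and step count $N \asymp \epsilon^{5/3}n^{2/3}$ yields a bound of $\exp(-c L^2 \epsilon^{-1})$ per block: indeed $t^2/N \asymp L^2 \epsilon^{-1}$ while $t \asymp L(\epsilon n)^{1/3}$, so for $n$ large (depending on $\epsilon$ and $L$) we have $t^2/N \ll t$, placing us in the sub-Gaussian regime of Bernstein.

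Finally, I would take a union bound over the $O(\epsilon^{-2})$ blocks and the $\epsilon_1^{-1} = O(\epsilon^{-1})$ values of $i$ (recall $\epsilon_1 = L\epsilon$); the polynomial prefactor $O(\epsilon^{-3})$ is absorbed into $\exp(-cL^2\epsilon^{-1})$ for $\epsilon$ sufficiently small, giving $\PP(\mathcal{A}_\epsilon^c) \leq \exp(-c'\epsilon^{-1})$, as desired. I do not anticipate a serious obstacle; the main thing to verify carefully is the scale matching, namely that the choices of target threshold $(\epsilon n)^{1/3}$ and discretization scale $\epsilon^{5/3}n^{2/3}$ place us in the sub-Gaussian rather than sub-exponential tail regime for $n$ large, yielding a rate of exactly $\epsilon^{-1}$ which matches (up to constants) the size $\epsilon_1^{-1}$ of the index set over which we union-bound.
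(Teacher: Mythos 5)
Your proposal is correct and follows essentially the same route as the paper: reduce to the Busemann random walk via Proposition \ref{p:Busemann}, discretize the spatial range into blocks of width of order $\epsilon^{5/3}n^{2/3}$, control the oscillation within each block by a maximal inequality combined with an exponential tail bound at scale $(\epsilon n)^{1/3}$ (giving $e^{-c\epsilon^{-1}}$ per block), and union bound over the $O(\epsilon^{-2})$ blocks and the $O(\epsilon^{-1})$ values of $i$. The only difference is cosmetic: you invoke Bernstein plus Etemadi/Kolmogorov, while the paper uses Doob's inequality for the exponentiated submartingale, and both yield the same sub-Gaussian regime since the scale matching $t^2/N \asymp \epsilon^{-1} \ll t$ holds for $n$ large.
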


\begin{proof}
  Let $S(\cdot)$ denote a two sided random walk as in Proposition \ref{p:Busemann} with $S(0)=0$. %
 Now, as we shall outline how to obtain soon, it follows by basic arguments that for any positive $\delta, T$ such that $T/\sqrt{\delta} n^{1/3}$ is small enough, we have
  \begin{equation}
    \label{eq:1}
    \PP(\sup_{x\in [0,\delta n^{2/3}]} S(x)-\inf_{x\in [0,\delta n^{2/3}]} S(x) \ge T\sqrt{\delta} n^{1/3}) \le 2\PP(\sup_{x\in [0,\delta n^{2/3}]} S(x) \ge \frac{T}{2}\sqrt{\delta}n^{1/3}) \le 2e^{-cT^2}.
  \end{equation}
We now give a quick overview of the above. First, for $m\in \NN$, we write $S(m)=\sum_{j=1}^m Z_j$, where i.i.d.\ random variables $Z_j$ have the explicit description as described in Proposition \ref{p:Busemann}. Now, since $S$ is a martingale with respect to its natural filtration and since the $Z_j$s are sub-exponential random variables, $\exp(\lambda S)$ is a submartingale for all small enough values of $\lambda>0$. Thus by Doob's maximal inequality, to obtain \eqref{eq:1}, we need only show that for a well-chosen choice of $\lambda$, we have $\PP(e^{\lambda S(m)}\geq e^{\lambda T\sqrt{m}})\leq e^{-cT^2}$ as long as $T/\sqrt{m}$ is small enough. However, this can be obtained by a simple calculation-- indeed, $\EE\exp(\lambda S(m))=(\EE \exp(\lambda Z_1))^m=(1-4\lambda^2)^{-m}$, and it can be checked that choosing $\lambda = T/(8\sqrt{m})$ suffices since $T/\sqrt{m}$ is taken to be small enough.
    
    We now use \eqref{eq:1} to obtain the needed result. We choose $\delta=4\epsilon^{5/3}$, $T=\Theta(\epsilon^{-1/2})$ and note that the condition $T/(\sqrt{\delta} n^{1/3})$ being small enough is now equivalent to $\epsilon^4 n$ being large enough and we now choose and fix $K$ to assure that this holds. Now, we cover the interval $[-\epsilon^{-1/3}n^{2/3}, \epsilon^{-1/3}n^{2/3}]$ by $\epsilon^{-2}$ many overlapping intervals of the form $[2j\epsilon^{5/3}n^{2/3}, (2j+4)\epsilon^{5/3}n^{2/3}]$. This ensures that any two points within $2\epsilon^{5/3}$ distance of each other are contained in such an interval. By now invoking \eqref{eq:1} and taking a union bound over these intervals as well as all possible values of $i$ ($\epsilon_1^{-1}=L^{-1}\epsilon^{-1}$ many of them), we obtain
    $$\PP(\mathcal{A}^{c}_{\epsilon})\le 2L^{-1}\epsilon^{-1}e^{-c\epsilon^{-1}},$$
    and this completes the proof.
\end{proof}

The final task is to ensure that the geodesic $\Gamma$ must accumulate some amount of volume if the coalescence described in Proposition \ref{l:contra} happens at many locations along the geodesic. To this end, we need some uniform control on the transversal fluctuations of semi-infinite geodesics started from various points. We need the following lemma where the choice of $\epsilon^{5/3}$ is guided by Lemma \ref{l:bussebd}.

\begin{lemma}
    \label{l:tfglobal}
   There exist constants $c,C>0$ such that the following holds. For $0\le i \le \epsilon_1^{-1}$ and $-\epsilon^{-2}\le j \le \epsilon^{-2}$, %
 consider the point $q_{i,j}:=(j\epsilon^{5/3}n^{2/3}, (i+\frac{1}{2})\epsilon_1n)$. Let $\mathcal{H}_\epsilon$ denote the event that for all $i,j$ as above, $\sup_{s\in [0,1]}|\Gamma^\dis_{q_{i,j}}(((i+\frac{1}{2})\epsilon_1+s\epsilon^{3})n)-j\epsilon^{5/3}n^{2/3}|\le \frac{1}{3}\epsilon^{5/3}n^{2/3}$. Then for all $\epsilon, n $ satisfying $\epsilon^3 n\geq 1$, we have $\PP(\mathcal{H}_\epsilon^{c})\le Ce^{-c\epsilon^{-1}}$.
\end{lemma}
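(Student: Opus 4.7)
The plan is to estimate the failure probability for a single geodesic $\Gamma^\dis_{q_{i,j}}$ using the transversal fluctuation estimate of Proposition \ref{p:tf}, and then take a union bound over all $(i,j)$. The point is simply that the ``short'' time window $\epsilon^3 n$ has natural transversal scale $(\epsilon^3 n)^{2/3} = \epsilon^{2}n^{2/3}$, so asking the fluctuation to stay below $\tfrac{1}{3}\epsilon^{5/3}n^{2/3}$ corresponds to asking for fluctuation below $h$ times the natural scale with $h = \tfrac{1}{3}\epsilon^{-1/3}$, which is much larger than the typical order of fluctuations.

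First, by translation invariance of the i.i.d.\ exponential field $\{X_{i,j}\}$, the law of $(\Gamma^\dis_{q_{i,j}}(\cdot) - x(q_{i,j}))$, as a path started at time $t(q_{i,j})$, is the same as that of $\Gamma^\dis_0(\cdot)$ started at time $0$ (modulo the harmless rounding of $q_{i,j}$ to a lattice point, since $q_{i,j}$ need not be in $\Z^2$). Applying Proposition \ref{p:tf} with $\tilde n = \epsilon^3 n$ and $h = \tfrac13\epsilon^{-1/3}$ yields
\begin{equation*}
\PP\Bigl(\sup_{s\in[0,1]}\bigl|\Gamma^\dis_{q_{i,j}}\bigl(((i+\tfrac12)\epsilon_1+s\epsilon^3)n\bigr)-j\epsilon^{5/3}n^{2/3}\bigr|> \tfrac{1}{3}\epsilon^{5/3}n^{2/3}\Bigr)\le C\exp(-c\epsilon^{-1}/27),
\end{equation*}
for every individual $(i,j)$, provided $\epsilon$ is small enough so that $\epsilon^3 n$ is large (which is fine for $n \ge n_0(\epsilon)$).

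Second, the number of pairs $(i,j)$ involved is at most $(\epsilon_1^{-1}+1)(2\epsilon^{-2}+1) = O(L\epsilon^{-3})$, since $\epsilon_1 = L\epsilon$. A union bound therefore gives
\begin{equation*}
\PP(\mathcal H_\epsilon^c)\ \le\ C' L\epsilon^{-3}\exp(-c\epsilon^{-1}/27).
\end{equation*}
For $\epsilon$ sufficiently small the polynomial prefactor $\epsilon^{-3}$ is dominated by (say) the bound $\exp(c\epsilon^{-1}/54)$, and we obtain $\PP(\mathcal H_\epsilon^c)\le \exp(-c'\epsilon^{-1})$ for some $c'>0$, which is the desired conclusion.

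There is essentially no serious obstacle: the estimate is a straightforward union bound over a polynomial-in-$\epsilon^{-1}$ family of events, each of stretched-exponential rarity $\exp(-c\epsilon^{-1})$. The only small care needed is to make sure one invokes a bound that is uniform over the whole interval $[0,\epsilon^3 n]$ rather than only at the endpoint, but this is exactly what Proposition \ref{p:tf} provides (the supremum over $s \in \llbracket 0,2n\rrbracket/2$ is built into its statement).
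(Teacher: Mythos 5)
Your proposal is correct and follows essentially the same route as the paper: for each fixed $(i,j)$ you apply Proposition \ref{p:tf} on the time window $\epsilon^3 n$ with $h=\tfrac13\epsilon^{-1/3}$ (giving a bound $e^{-c\epsilon^{-1}}$), and then conclude by a union bound over the polynomially many pairs $(i,j)$, absorbing the prefactor. The remarks about translation invariance and uniformity over the whole window are sensible but are exactly the ingredients the paper's one-line proof implicitly uses.
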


\begin{proof}
    Fix $i$ and $j$. It follows from Proposition \ref{p:tf} that $$\PP(\sup_{s\in [0,1]}|\Gamma^\dis_{q_{i,j}}(((i+\frac{1}{2})\epsilon_1+s\epsilon^{3})n)-j\epsilon^{5/3}n^{2/3}|\ge \frac{1}{3}\epsilon^{5/3}n^{2/3})\le Ce^{-c\epsilon^{-1}}.$$
    The proof follows by a union bound.
\end{proof}

We shall now obtain a volume accumulation lower bound on a high probability event. Recall the definition of $V_{R}(\Gamma;n)$, the \textbf{volume accumulated by $\Gamma$ to the right until time $n$}. Let $\widetilde{\mathcal{H}}_\epsilon$ denote the event that $\sup_{t\in [0,n]}|\Gamma(t)|\le \epsilon^{-1/3}n^{2/3}$. 

\begin{proposition}
    \label{p:va}
    Suppose that we have $\epsilon^4 n\geq K$, where $K$ is the constant from Lemma \ref{l:bussebd}. Then, with $\kappa$ is as in the statement of Proposition \ref{p:density}, on the event
    \begin{displaymath}
    \mathcal{A}_{\epsilon}\cap \mathcal{H}_\epsilon\cap \widetilde{\mathcal{H}}_\epsilon \cap \{\#\{i: G^*_i\cap \cR_{i}~\mathrm{occurs}\}\ge \kappa \epsilon_1^{-1}\},  
    \end{displaymath}
we have $V_R(\Gamma;n)\ge \frac{\kappa L^{-1}}{3}\epsilon^{11/3}n^{5/3}$. 
\end{proposition}

\begin{figure}
    \centering
    \includegraphics[width=0.6\textwidth]{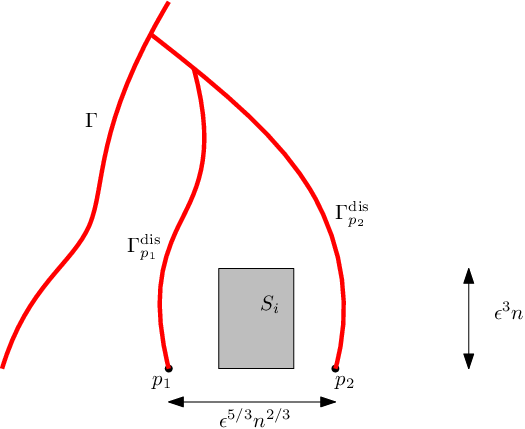}
    \caption{\textbf{Proof of Proposition \ref{p:va}}: on the large probability events, we have that the geodesics started from both $p_1$ and $p_2$ meet with the geodesic $\Gamma$ before time $n$. Now, we construct the rectangle $S_{i}$ such that using transversal fluctuation estimates, on a large probability event, the rectangle $S_i$ lies between the geodesics $\Gamma^{\rm dis}_{p_1}$ and $\Gamma^{\rm dis}_{p_2}$, hence all vertices of $S_{i}$ contribute to $V_{R}(\Gamma;n)$. The rectangles $S_{i}$ are disjoint by construction and have area $\Theta(\epsilon^{14/3}n^{5/3})$; the proof is completed by summing over the indices $i$ where $G^*_{i}\cap \mathcal{R}_{i}$ occur.}
    \label{fig:volume}
\end{figure}

\begin{proof}
We refer the reader to Figure \ref{fig:volume} for an illustration of the proof. Fix $i$ such that $G^*_i\cap \cR_{i}$ holds. Let $j$ be such that $\Gamma((i+\frac{1}{2})\epsilon_1n)< j\epsilon^{5/3} n^{2/3}\le \Gamma((i+\frac{1}{2})\epsilon_1 n)+\epsilon^{5/3}n^{2/3}$. Consider the points $p_1=(j\epsilon^{5/3}n^{2/3}, (i+\frac{1}{2})\epsilon_1n)$ and $p_2=p_1+(\epsilon^{5/3}n^{2/3},0)$. Clearly, by the definitions of $\mathcal{A}_{\epsilon}$, $\mathcal{H}_{\epsilon}$ along with Proposition \ref{l:contra}, the geodesics $\Gamma^\dis_{p_1}$ and $\Gamma^\dis_{p_2}$ coalesce with $\Gamma$ below $t=n$. Let us now consider the rectangle $S_{i}$ with the bottom side having 
    endpoints $p_1+(\frac{1}{3}\epsilon^{5/3}n^{2/3},0)$ and $p_2-(\frac{1}{3}\epsilon^{5/3}n^{2/3},0)$ and with height $\epsilon^{3}n$. By the definition of $\mathcal{H}_\epsilon$, (and also using the fact that we are on $\widetilde{\mathcal{H}}_\epsilon$) it follows that the geodesic $\Gamma^\dis_{p_1}$ lies to the left of $S_{i}$, while the geodesic $\Gamma^\dis_{p_2}$ lies to the right of $S_{i}$. It now follows by planarity that for all $q\in S_i$, the geodesic $\Gamma^\dis_{q}$ coalesces with $\Gamma$ below $t=1$. Since the area of $S_i$ is $\frac{1}{3}\epsilon^{14/3}n^{5/3}$ and since the  $S_{i}$ are disjoint for distinct $i$, summing over different values of $i$ (at least $\kappa \epsilon_1^{-1}=\kappa L^{-1}\epsilon^{-1}$ many by hypothesis), we get the desired result.  
\end{proof}
The proof of Proposition \ref{t:vaexp} is now immediate. 
\begin{proof}[Proof of Proposition \ref{t:vaexp}]
  In view of Proposition \ref{p:va}, we set $\delta=\kappa L^{-1}\epsilon^{11/3}/3$, and thus the condition $\delta^{12/11}n\geq K'$ is equivalent to the condition $\epsilon^4n\geq K'(\kappa L^{-1}/3)^{-12/11}$. Now, we fix a large enough choice of $K'$ such that that $\epsilon,n$ are guaranteed to satisfy the conditions stated in Propositions \ref{p:density}, \ref{p:va}. Now, by using Proposition \ref{p:va} and combining the probability bounds from Proposition \ref{p:density}, Lemma \ref{l:bussebd}, Proposition \ref{p:tf} and Lemma \ref{l:tfglobal} by a union bound, the result follows.    
\end{proof}

\subsection{Proof of Proposition \ref{t:va}}

Finally, we use the convergence of exponential LPP to the directed landscape to complete the proof of Proposition \ref{t:va}.
\begin{proof}[Proof of Proposition \ref{t:va}]
  We work with the coupling from Proposition \ref{prop:6}. Analogously to Definition \ref{d:va}, for $p=(x,s)\in \RR^2$ and $\ell>0$, let $\wcV_R(\Gamma^n_p;\ell)$ denote the set of points $q=(y,t)$ with $t\in [s,s+\ell]$ which are to the right of $\Gamma^n_p$ in the sense that $y>\Gamma^n_p(t)$, with the additional property that the semi-infinite geodesic $\Gamma^n_q$ from $q$ coalesces with $\Gamma_0^n$ below $t=s+\ell$. Denote the Lebesgue area of $\wcV_R(\Gamma^n_p;\ell)$ by $\widetilde{V}_R(\Gamma^n_p;\ell)$. As a consequence of Proposition \ref{t:vaexp}, we know that for any fixed $\epsilon>0$ and for all $n$ large enough depending on $\epsilon$, we have $\PP(\widetilde{V}_R(\Gamma^n_0;1)\leq \epsilon)\leq C\exp(-c\epsilon^{-3/11})$, and this implies that
  \begin{equation}
    \label{eq:72}
    \PP(\limsup_{n\rightarrow\infty}\{\widetilde{V}_R(\Gamma^n_0;1)\geq \epsilon\})\geq 1-C\exp(-c\epsilon^{-3/11}),
  \end{equation}
  and we note that we are considering a $\limsup$ of events in the above equation as opposed to considering a $\limsup$ of the corresponding probabilities. In other words, there is an event $\cE_\epsilon$ with $\PP(\cE_\epsilon)\geq 1-C\exp(-c\epsilon^{-3/11})$ on which there is a subsequence $\{n_i\}$ for which we have $\widetilde{V}_R(\Gamma_0^{n_i};1)\geq \epsilon$ for all $i$. By using the geodesic convergence in Proposition \ref{prop:6}, it can be obtained that there is a random compact set $K$ such that a.s.\ for all $n$ large enough, $\wcV_R(\Gamma_0^n;1)\subseteq K$. We now work with this compact set $K$ and the subsequence $\{n_i\}$ from above. Let the set $\wcV$ be defined by
  \begin{equation}
    \label{eq:73}
    \wcV=
    \left\{
      q\in K:q\in \wcV(\Gamma^{n_i}_0;1)
     \textrm{ for infinitely many } i \right\}.
 \end{equation}
 We note that on the event $\cE_\epsilon$, since $\widetilde{V}_R(\Gamma_0^{n_i};1)\geq \epsilon$ for all $i$, we have $\leb(\wcV)\geq \epsilon$ as well, and this uses the compactness of the set $K$.

 Finally, we claim that on the event $\cE_\epsilon$, we have $\wcV\subseteq \wcV(\Gamma_0;1)$, and this can be seen as a consequence of the convergence of discrete infinite geodesics to the continuum ones, as stated in Proposition \ref{prop:6}. This completes the proof since it implies that on $\cE_\epsilon$, we have $\widetilde{V}_R(\Gamma_0;1)\geq \epsilon$ and we know that $\PP(\cE_\epsilon^c)\leq C\exp(-c\epsilon^{-3/11})$.
\end{proof}

\subsection{Some variants of the volume accumulation result}
\label{sec:variants}
Before closing this section, we now discuss a few variations of Proposition \ref{t:vaexp} that can be obtained by the same argument. We begin by noting that in Figure \ref{fig:volume}, the rectangles $S_i\subseteq \mathcal{V}_R(\Gamma;n)$ were at an $O(\epsilon^{5/3}n^{2/3})$ space distance from the geodesic $\Gamma$. That is, using $B_r(\Gamma)$ to denote lattice points at a spatial distance of at most $r$ from the semi-infinite geodesic $\Gamma$, for some positive constant $\beta$, the rectangles $S_i$ therein satisfied $S_i\subseteq B_{\beta \epsilon^{5/3} n^{2/3}}(\Gamma)$.  Using this observation, we can obtain the following slightly stronger version of Proposition \ref{t:vaexp}.
    \begin{proposition}
      \label{prop:9}
   There exist constants $C,c,K',\beta>0$ such that for all $\delta,n$ satisfying $\delta^{12/11} n\geq K'$, we have
    \begin{equation}
      \label{eq:13}
      \PP(
    \#(\mathcal{V}_R(\Gamma;n)\cap B_{\beta\delta^{5/11} n^{2/3}}(\Gamma)) \leq \delta n^{5/3}))\leq C\exp(-c\delta^{-3/11}),      
    \end{equation}  
    \end{proposition}
    To clarify the reasoning behind the appearance of the $\delta^{5/11}n^{2/3}$ term in the above, note that, just as in the completion of the proof of Proposition \ref{t:vaexp}, we set $\delta\sim \epsilon^{11/3}$ and thus $\epsilon^{5/3}n^{2/3}\sim \delta^{5/11}n^{2/3}$.

    One can also obtain a version of Proposition \ref{t:vaexp} for point-to-point geodesics instead of infinite geodesics. Indeed, if we use $\gamma^{\dis}_{0,\mathbf{n}}$ to denote the exponential LPP geodesic from $(0,0)$ to $\mathbf{n}=(n,n)$, then for any fixed $\alpha \in (0,2)$, we can define $\mathcal{V}_R(\gamma^{\dis}_{0,\mathbf{n}};\mathbb{L}_{\alpha n})$ to be the set of all lattice points $q$ such that $t(q)\geq 0$, $x(q)\geq \gamma^\dis_{0,\mathbf{n}}(t(q))$ and such that $\gamma^\dis_{0,\mathbf{n}}\cap \gamma^\dis_{q,\mathbf{n}}\cap \mathbb{L}_{\alpha n}\neq \emptyset$. One now has the following analogue of Proposition \ref{t:vaexp}.
      \begin{proposition}
        \label{prop:1}
 For any fixed $\alpha\in (0,2)$-- there exist constants $C,c,K',\beta>0$ such that for all $\delta,n$ satisfying $\delta^{12/11}n \geq K'$, we have
    \begin{equation}
      \label{eq:14}
      \PP(\#(\mathcal{V}_R(\gamma^\dis_{0,\mathbf{n}};\mathbb{L}_{\alpha n})\cap B_{\beta\delta^{5/11} n^{2/3}}(\gamma^\dis_{0,\mathbf{n}}))\leq \delta n^{5/3})\leq C\exp(-c\delta^{-2/33}).
    \end{equation}  
      \end{proposition}
      \begin{proof}[Proof sketch]
        Except for one difference, the proof of the above is exactly the same. The only difference is that the random walk $S$ defined by $S(x)=\cB^{\mathrm{dis}}( (x,n),(0,n))$ and appearing in Proposition \ref{l:contra} and Lemma \ref{l:bussebd} is now replaced by the function $S'(x)=T( (x,\alpha n), (n,n))$. That is, intuitively, Busemann functions or distances from infinity are simply replaced by distances to the point $\mathbf{n}$. Note that $S'$ is no longer a random walk; thus the argument in Lemma \ref{l:bussebd} used to obtain a stretched exponential decay estimate for the tails of its increments does not work-- however, such an estimate has already been shown indeed, the statement and proof of \cite[Theorem 3]{BG21} yield a version of \eqref{eq:1} with the term $e^{-cT^2}$ replaced by the weaker bound $e^{-cT^{4/9}}$. Thus, due to this change of exponent from $2$ to $4/9$, the $3/11$ exponent in Proposition \ref{t:vaexp} changes to the exponent $(3/11)\times (4/9)/2=2/33$. We note that the $4/9$ exponent from \cite[Theorem 3]{BG21} is not optimal, and thus the exponent in Proposition \ref{prop:1} is not optimal as well.
      \end{proof}

    Finally, though we work in the context of exponential LPP in this paper, the techniques used to obtain the above results can also be use to obtain corresponding volume accumulation results for semi-infinite and point-to-point geodesics in other integrable models of last passage percolation. We do not provide precise statements for these in this paper; for a statement for point-to-point geodesics in Brownian last passage percolation, we refer the reader to \cite[Proposition 63]{Bha24+}.

\section{Fractal geometry in the volume parametrization}
\label{sec:frac}
In this short section, we prove Theorem \ref{thm:4}, which discusses the way in which the fractal geometry of a set $\cA\subseteq \RR^2$ is reflected in that of the set $\eta^{-1}(\cA)$.

 \begin{proof}[Proof of Theorem \ref{thm:4}]
   To obtain the first statement, simply note that the map $\eta$ and thus $\eta\lvert_{\eta^{-1}(\cA)}$ is locally $1/5-$ H\"older continuous if we endow $\RR^2$ with the intrinsic metric. %
   Since $\eta(\eta^{-1}(\cA))=\cA$, a standard argument (see e.g.\ \cite[Lemma 4.3]{Dau23+}) shows $\dim (\eta^{-1}(\cA))\geq \dim_{\mathrm{in}}(\cA)/5$. %

   We now come to the proof of the second statement.  Since $\cA$ is measurable with respect to $\cL$, there exists a measurable function from the space in which $\cL$ takes its values to the space of closed subsets of $\RR^2$, both with their usual $\sigma$-algebras, %
   for which
 $f(\cL)=\cA$ holds almost surely. For $z\in \RR^2$, we use the notation $\cL^\mathrm{tr}_z$ to denote the translated landscape defined by $\cL^\mathrm{tr}_z(\cdot;\cdot)=\cL(z+\cdot;z+\cdot)$. Let $\Pi$ be a rate $1$ Poisson process on $\RR^2$ independent of $\cL$ and define the augmented fractal $\cA^\dagger=\bigcup_{z\in \Pi} f(\cL^\mathrm{tr}_z)$. We note that $\cA^\dagger$ has the property that for any $\epsilon>0$, the probability $\PP(B^\mathrm{in}_\epsilon(z)\cap \cA^\dagger\neq \emptyset)$ %
 does not depend on $z$. We also note that it suffices to show that $\dim \eta^{-1} (\cA^\dagger)\leq d/5$ a.s.\ since that would imply that $\dim \eta^{-1}(\cA)\leq d/5$ almost surely.

 Though we had only assumed that there exists a constant $C>0$ such that $\EE\leb(B^\mathrm{in}_\epsilon(\cA))\leq C\epsilon^{5-d}$, by using that $\cA\subseteq [-M,M]^2$, it can in fact be obtained that for some constant $C_1>0$ and for all $\epsilon\in (0,1)$, $\EE\leb(B^\mathrm{in}_\epsilon(\cA^\dagger)\cap [-M,M]^2)\leq C_1\epsilon^{5-d}$. %
To see this, we note that if we define $N=\# (\Pi\cap [-2M,2M]^2)$, then conditional on $\{N=n\}$, $X^\epsilon\coloneqq \leb(B^\mathrm{in}_\epsilon(\cA^\dagger)\cap [-M,M]^2)/\epsilon^{5-d}\leq X^\epsilon_1+\cdots+X^\epsilon_n$, where the $X^\epsilon_i$ are all marginally distributed as $\leb(B^\mathrm{in}_\epsilon(\cA)/\epsilon^{5-d}$. As a consequence, we obtain that
 \begin{equation}
   \label{eq:85}
   \EE\leb(B^\mathrm{in}_\epsilon(\cA^\dagger)\cap [-M,M]^2)\leq (\EE N )\EE\leb(B^\mathrm{in}_\epsilon(\cA))\leq C(\EE N) \epsilon^{5-d},
 \end{equation}
 and thus we have the desired estimate with $C_1=C\EE N$.
Now, by using that
   \begin{equation}
     \label{eq:52}
    M^2\PP(B^\mathrm{in}_\epsilon(0)\cap \cA^\dagger\neq \emptyset)= \int_{z\in [-M/2,M/2]^2}\PP(B^\mathrm{in}_\epsilon(z)\cap \cA^\dagger\neq \emptyset)dz\leq \EE\leb(B^\mathrm{in}_\epsilon(\cA^\dagger)\cap [-M,M]^2)
  \end{equation}
  holds for all $\epsilon<M/2$, along with the estimate obtained just above, we obtain that there exists a constant $C$ depending on $M$ such that we have $\PP(B^\mathrm{in}_\epsilon(0)\cap \cA^\dagger\neq \emptyset)\leq C\epsilon^{5-d}$ for all $\epsilon\in (0,1)$.

  Consider an interval $[v_0,v_0+\epsilon]\subseteq \RR$. By (4) in Theorem \ref{thm:1}, we know that $\cL^\mathrm{tr}_{\eta(v_0)}\stackrel{d}{=}\cL$ and this implies that for any fixed $\theta>0$, we have for some positive constants $C,c_1$,
  \begin{align}
    \label{eq:53}
    \PP([v_0,v_0+\epsilon]\cap \eta^{-1}(\cA^\dagger)\neq \emptyset)&= \PP(\eta([0,\epsilon])\cap \cA^\dagger\neq \emptyset)\nonumber\\
                                                                    &\leq \PP(B^\mathrm{in}_{\epsilon^{1/5-\theta}}(0)\cap \cA^\dagger\neq \emptyset)+\PP(\eta([0,\epsilon])\not\subseteq B^\mathrm{in}_{\epsilon^{1/5-\theta}}(0))\nonumber\\
    &\leq C\epsilon^{1-d/5-\theta(5-d)}+e^{-c_1\epsilon^{-60\theta/77}},
   \end{align}
   where the second term above is obtained by applying Proposition \ref{prop:tails}. Now, by a standard argument, we obtain $\dim \eta^{-1} (\cA^\dagger)\leq d/5+\theta(5-d)$. Since $\theta$ was arbitrary, we obtain $\dim \eta^{-1} (\cA^\dagger)\leq d/5$ and this completes the proof.
\end{proof}

\begin{remark}
  \label{rem:1}
  The usual way to upper bound the Hausdorff dimension of a random fractal $\cA\subseteq [-M,M]^2$ by $d$ is to instead bound the upper Minkowski dimension, which is a priori larger, by showing that for some constant $C$ and for all $z\in [-M,M]^2$ and $\epsilon$ small enough, we have $\PP(B^\mathrm{in}_\epsilon(z)\cap \cA\neq \emptyset)\leq C\epsilon^{5-d}$ and this in particular implies that $\EE\leb (B_\epsilon(\cA))\leq 4CM^2\epsilon^{5-d}$ for all small $\epsilon$. %
  As a result, though the second statement in Theorem \ref{thm:4} does not say that $\dim \eta^{-1}(\cA) \leq (\dim_\mathrm{in}\cA)/5$, it does yield the above in practice, since the notions of Hausdorff dimension and Minkowski dimension typically coincide for random fractals. We also note that in the typical case, when $\cA$ is not a bounded, we can apply Theorem \ref{thm:4} with $\cA\cap [-M,M]^2$ instead of $\cA$. As an example, if $\cA=\Gamma_0$, then it is not difficult to check that $\EE \leb(B^\mathrm{in}_\epsilon(\cA)\cap [-M,M]^2)/\epsilon^{5-d}$ is bounded for any fixed $M>0$, all $\epsilon\in (0,1)$ and any $d>3$. As a consequence, Theorem \ref{thm:4} would imply that $\dim \eta^{-1}(\cA\cap [-M,M]^2)\leq 3/5$ and thereby $\dim \eta^{-1}(\cA)\leq 3/5$ by a countable union argument. Similarly, for $\cA=\RR\times \{0\}$ and $\cA=\{0\}\times \RR$, one can obtain that $\dim\eta^{-1}(\cA)$ is a.s.\ equal to $2/5$ and $3/5$ respectively.%
\end{remark}
Before concluding this section, we give a short computation of the fractal dimension of the topological boundaries of segments of the Peano curve $\eta$. Though not difficult to prove, this result finds application in justifying the use of the Portmanteau lemma in the measure theoretic arguments used in the proof of Theorem \ref{thm:3}.
\begin{lemma}
   \label{lem:12}
   Almost surely, simultaneously for every $v<w$, the Euclidean Hausdorff dimension of $\partial \eta([v,w])$ as a subset of the plane is $4/3$.
 \end{lemma}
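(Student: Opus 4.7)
The plan is to first reduce the problem to controlling the Hausdorff dimensions of segments of the two upward geodesics and two interfaces associated with the corners $\widetilde{\eta}(v)$ and $\widetilde{\eta}(w)$, and then to extract matching upper and lower bounds from Proposition \ref{prop:8} together with the $\cL$-$\wcL$ duality of Proposition \ref{prop:4}. I would work on the a.s.\ event on which Proposition \ref{prop:8} applies simultaneously to every geodesic of $\cL$ and of $\wcL$; via Proposition \ref{prop:4} this gives the analogous statement for every interface $\Upsilon_p$ as well. Fix $v<w$ and write $\widetilde{\eta}(v)=(p_v,\Upsilon_{p_v},\Gamma_{p_v})$ and $\widetilde{\eta}(w)=(p_w,\Upsilon_{p_w},\Gamma_{p_w})$ with $p_v=(x_v,s_v)$ and $p_w=(x_w,s_w)$. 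The remark at the end of Section \ref{sec:constr} yields $\partial \eta((-\infty,v])=\Upsilon_{p_v}\cup\Gamma_{p_v}$ and $\partial \eta([w,\infty))=\Upsilon_{p_w}\cup\Gamma_{p_w}$. Setting $R:=\eta([v,w])$, which is compact and hence closed, the identity $\partial R=R\cap\overline{R^c}$ combined with the space-filling property $R^c\subseteq \eta((-\infty,v))\cup \eta((w,\infty))$ and a short topological argument invoking the corner ordering of Definition \ref{def:order} yields
\begin{equation*}
\partial R\subseteq \Gamma_{p_v}\cup \Upsilon_{p_v}\cup \Gamma_{p_w}\cup \Upsilon_{p_w}.
\end{equation*}

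The upper bound $\dim \partial R\leq 4/3$ is immediate from the inclusion, since each of the four sets on the right has Euclidean Hausdorff dimension exactly $4/3$ by Proposition \ref{prop:8} (applied to $\cL$ for the geodesics and to $\wcL$ for the interfaces). For the matching lower bound, let $h_u\geq \max(s_v,s_w)$ be the time at which $\Gamma_{p_v}$ and $\Gamma_{p_w}$ first coalesce, and let $h_d\leq \min(s_v,s_w)$ be the corresponding coalescence time for $\Upsilon_{p_v}$ and $\Upsilon_{p_w}$. Then $\partial R$ is precisely the union of the four possibly-degenerate segments $\Gamma_{p_v}|_{[s_v,h_u]}$, $\Gamma_{p_w}|_{[s_w,h_u]}$, $\Upsilon_{p_v}|_{[h_d,s_v]}$, and $\Upsilon_{p_w}|_{[h_d,s_w]}$. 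A short case check (handling in turn the situations $p_v\neq p_w$ with fully separate geodesics, $p_v\neq p_w$ with one point lying on the other's geodesic, and $p_v=p_w$ with distinct interfaces) shows that if all four segments were single points then $R$ would collapse to a point, contradicting $\leb(R)=w-v>0$. Hence at least one of the four segments is a non-degenerate segment of a geodesic of $\cL$ or $\wcL$, and a further application of Proposition \ref{prop:8} yields $\dim \partial R\geq 4/3$.

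Since Proposition \ref{prop:8} and its $\wcL$-analogue both hold simultaneously for all starting points on a common a.s.\ event, the resulting equality $\dim\partial \eta([v,w])=4/3$ is automatically simultaneous over all $v<w$. The main obstacle I anticipate lies in the topological verification of the inclusion from the first paragraph: when a boundary point $p\in R$ is reachable from $R^c$ by a sequence $p_n=\eta(v_n)$ with $v_n<v$, and $p$ is visited multiple times by $\eta$, one must invoke the corner ordering carefully to argue that $p$ lies on one of $\Gamma_{p_v},\Upsilon_{p_v}$ (rather than just on \emph{some} element of $\cT_\uparrow\cup \cI_\downarrow$); the remaining case analysis in the lower bound is routine once this framework is set up.
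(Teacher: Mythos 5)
Your proposal is correct and follows essentially the same route as the paper's proof: both reduce $\partial\eta([v,w])$ to (segments of) $\Gamma_{\eta(v)}\cup\Upsilon_{\eta(v)}\cup\Gamma_{\eta(w)}\cup\Upsilon_{\eta(w)}$ — the paper via the identity $\partial\eta([v,w])=\partial\eta((-\infty,w])\,\Delta\,\partial\eta((-\infty,v])$ and the remark that $\partial\eta((-\infty,v])=\Gamma_{\eta(v)}\cup\Upsilon_{\eta(v)}$ — and then invoke Proposition \ref{prop:8} together with the duality of Proposition \ref{prop:4} for the interfaces. Your added care about the lower bound (non-degeneracy of at least one segment, forced by $\leb(\eta([v,w]))=w-v>0$) is a point the paper leaves implicit, but it is the same argument.
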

 \begin{proof}
   Begin by noting that $\partial\eta([v,w])=\partial \eta( (-\infty,w])\Delta \partial \eta((-\infty,v])$, where $\Delta$ denotes the symmetric difference of sets. Further, it is not difficult to observe that $\partial \eta ((-\infty,v])=\Gamma_{\eta(v)}\cup \Upsilon_{\eta(v)}$, for some choice of geodesic and interface emanating from $\eta(v)$. Thus, to complete the proof, it is sufficient to prove that almost surely, any segment of any upward semi-infinite geodesic or downward interface has Euclidean dimension $4/3$. Due to the duality between the geodesic tree and the interface portrait, it suffices to show the above just for infinite geodesics, but this is the content of Proposition \ref{prop:8}. %
\end{proof}

\section{The variation process}
\label{sec:var}
The aim of this section is to prove Theorem \ref{thm:3} and Theorem \ref{thm:5}. For both the proofs, it is important to obtain some independence between different parts of the Peano curve $\eta$, and the next result will be helpful (see Figure \ref{fig:1}) in this regard. We will work in the setting of Proposition \ref{prop:3} and will use the notation $\eta ^\mathrm{re} _{z_i}$ to denote the Peano curve corresponding to $\cL^\mathrm{re}_{z_i}$ normalized to satisfy $\eta^\mathrm{re}_{z_i}(0)=z_i$. For our setting, we will only use the above-mentioned proposition in two cases, when the value of $k$ therein is $1$ or $2$. In the following lemma, we use the notation $v_z$ to denote the a.s.\ unique (by (2) in Theorem \ref{thm:1}) number $\eta^{-1}(z)\in \RR$.
\begin{figure}
  \centering
  \includegraphics[width=0.75\textwidth]{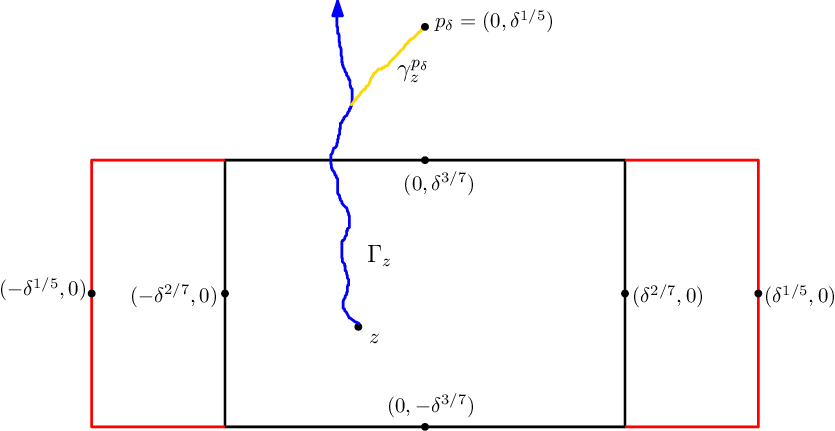}
  \caption{\textbf{The events in the proof of Lemma \ref{lem:19}:} The event $E_\delta^2$ asks that all geodesics $\Gamma_z,\gamma_z^{p_\delta}$ starting from points $z\in R_\delta$ (the black rectangle) do not exit the larger red rectangle. On the other hand, the event $E_\delta^1$ asks that the geodesics $\Gamma_z$ (the blue path) and $\gamma_z^{p_\delta}$ (a portion of the blue path concatenated with the golden path) only split after exiting the rectangle $R_\delta$.}
  \label{fig:local}
\end{figure}
\begin{lemma}
  \label{lem:19}
  Working in the coupling from Proposition \ref{prop:3} with $k=1$, for any fixed $z\in \RR^2$, there is an event $\cE^z_\delta\in \sigma(\cL)$ with probability going to $1$ as $\delta\rightarrow 0$ on which $\eta\lvert_{[v_z-\delta,v_z+\delta]}(v_z+\cdot)=\eta^\mathrm{re}_{z}\lvert_{[-\delta,\delta]}(\cdot)$, and further $\eta_z^\mathrm{re}([-\delta,\delta])\subseteq z+ [-\delta^{1/5},\delta^{1/5}]^2$. %
\end{lemma}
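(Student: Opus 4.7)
The plan is to decompose $\cE^z_\delta$ into three high-probability events: local coupling of $\cL$ with $\cL^\mathrm{re}_z$, both Peano curve segments lying in a small Euclidean box $R_\delta$ near $z$, and the semi-infinite geodesics and interfaces from points in $R_\delta$ being locally determined within a slightly larger box $R_\delta^+$. The inclusion assertion is handled first: applying Proposition \ref{prop:tails} to $\eta^\mathrm{re}_z$ (which has the same law as $\eta$ translated to start at $z$, using translation invariance from (4) in Theorem \ref{thm:1}) with slowly growing $\ell = \ell_\delta \to \infty$ gives $\eta^\mathrm{re}_z([-\delta,\delta]) \subseteq B^\mathrm{in}_{\delta^{1/5}\ell_\delta}(z)$ with probability tending to $1$. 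Since the intrinsic ball $B^\mathrm{in}_r(z)$ is contained in the Euclidean box $z + [-r^2,r^2]\times [-r^3,r^3]$, this yields $\eta^\mathrm{re}_z([-\delta,\delta])\subseteq z+[-\delta^{1/5},\delta^{1/5}]^2$ for $\delta$ small.

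For the equality assertion, set $R_\delta = z + [-\delta^{1/5},\delta^{1/5}]^2$ and introduce a larger rectangle $R_\delta^+ = z + [-w_\delta,w_\delta]\times [-h_\delta,h_\delta]$ with $h_\delta = \delta^{1/5}\ell_\delta$ and $w_\delta = \delta^{2/15}\ell_\delta$, so that $R_\delta\subset R_\delta^+ \to \{z\}$ and the KPZ transversal fluctuation scale $h_\delta^{2/3}$ is smaller than $w_\delta$ by a factor of $\ell_\delta^{1/3}$. Fix designated points $p_\delta^\mathrm{top} = (z_1, z_2+h_\delta)$ and $p_\delta^\mathrm{bot} = (z_1, z_2-h_\delta)$ on the top and bottom boundaries of $R_\delta^+$. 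Take $\cE^z_\delta$ to be the intersection of (a) $\cL = \cL^\mathrm{re}_z$ on $R_\delta^+$, by Proposition \ref{prop:3}; (b) both $\eta([v_z-\delta, v_z+\delta])$ and $\eta^\mathrm{re}_z([-\delta,\delta])$ lie in $R_\delta$, by the inclusion assertion applied to both curves (invoking Lemma \ref{lem:6} for the first one); and (c) for every $p\in R_\delta$, the finite geodesic $\gamma_p^{p_\delta^\mathrm{top}}$ stays in $R_\delta^+$ and coincides with $\Gamma_p$ on $R_\delta$, and analogously for the downward interface with $p_\delta^\mathrm{bot}$. All three events will be shown to have probability tending to $1$.

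Event (c) is the main technical ingredient, established by two KPZ inputs: transversal fluctuation estimates (Proposition \ref{prop:5}) ensure $\gamma_p^{p_\delta^\mathrm{top}}$ stays in $R_\delta^+$, while coalescence estimates between finite and semi-infinite geodesics (nearby geodesics coalesce at time scale $\sim d^{3/2}$ for horizontal separation $d$) ensure $\gamma_p^{p_\delta^\mathrm{top}}$ matches $\Gamma_p$ throughout $R_\delta$, given that $p_\delta^\mathrm{top}$ lies within the transversal fluctuation scale of the common trunk's exit from $R_\delta^+$. On the intersection of the three events, for every $p\in R_\delta$, the paths $\Gamma_p\lvert_{R_\delta}$ and $\Upsilon_p\lvert_{R_\delta}$ are $\cL\lvert_{R_\delta^+}$-measurable, hence coincide with $\Gamma^\mathrm{re}_p\lvert_{R_\delta}$ and $\Upsilon^\mathrm{re}_p\lvert_{R_\delta}$ in $\cL^\mathrm{re}_z$. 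This makes the local tree structures $(\cT_\uparrow, \cI_\downarrow)$ agree on $R_\delta$ in both landscapes, and since the Peano curve is the Lebesgue-area parametrization of the resulting corner ordering, the two curves agree on $[-\delta, \delta]$. The main obstacle is the interface argument: since the interfaces are defined via the dual landscape $\wcL$, itself non-locally constructed from $\cL$, one must either use the characterization $\cI_\downarrow = \NU_\uparrow^2(\cL)$ from Proposition \ref{prop:4} to reduce interface locality to upward-geodesic locality, or run a parallel downward-directed version of the coalescence argument using a downward analog of Proposition \ref{prop:5}.
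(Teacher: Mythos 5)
Your overall architecture is the same as the paper's: contain the curve segment in a small Euclidean box around $z$ using the tail estimates (Propositions \ref{prop:tails}, \ref{p:utail}, \ref{p:htail}), show that the upward geodesic data from points of that box, restricted to the box, is determined by $\cL$ on a larger but still shrinking box by comparing semi-infinite geodesics with finite geodesics to a designated far point, and then invoke the coupling of Proposition \ref{prop:3} on a shrinking square. Your way out of the interface difficulty is also the paper's (implicit) one: since $\cI_\downarrow=\NU_\uparrow^2(\cL)$, the paper only ever needs locality of $\cT_\uparrow\cap R_\delta$, and the Peano segment is determined by that set once it is known to stay in $R_\delta$.

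The genuine gap is in your event (c), which is the heart of the lemma. You justify the agreement of $\gamma_p^{p_\delta^{\mathrm{top}}}$ with $\Gamma_p$ on $R_\delta$ by the heuristic that ``nearby geodesics coalesce at time scale $d^{3/2}$,'' but that statement concerns coalescence of semi-infinite geodesics as a function of their \emph{starting points}; what is needed here is stability of the \emph{initial} segment of a geodesic when the far \emph{endpoint} is moved (from $p_\delta^{\mathrm{top}}$ out to infinity in direction $0$), uniformly over all $p$ in the box, and including the exceptional points admitting several semi-infinite geodesics. For those points your phrasing (``the finite geodesic \dots coincides with $\Gamma_p$'') is one-sided and treats both objects as unique, whereas one needs the two-sided correspondence (every $\Gamma_p$ is captured by some $\gamma_p^{p_\delta^{\mathrm{top}}}$ inside $R_\delta$ and vice versa) precisely because the non-uniqueness set is $\cI_\downarrow$ and meets every box; one also needs the analogous event for $\cL^{\mathrm{re}}_z$, since only the traces, not the event, are measurable with respect to $\cL\lvert_{R_\delta^+}$. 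No quantitative estimate of the required endpoint-stability form is among the results quoted in the paper (the introduction even notes that the known techniques for decorrelating the environment around a geodesic are non-quantitative), and the ``common trunk'' step presupposes the very finite-versus-infinite coalescence being proved. The paper avoids all of this with a soft argument: after KPZ rescaling the box becomes the fixed rectangle $R_1$ and the designated point becomes $q(\delta)=(0,\delta^{-8/35})\rightarrow\infty$, and the event $E_\delta^1$ is shown to have probability tending to $1$ by contradiction, using precompactness and overlap convergence of geodesics (Proposition \ref{prop:7}) together with the one-endedness of $\cT_\uparrow$ to concatenate a finite geodesic with the far tail of an infinite one. Unless you replace your coalescence heuristic by such a compactness argument (or a genuine sandwiching/Busemann argument covering all starting points and all choices under non-uniqueness), event (c) is not established.
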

\begin{proof}
  First, by Lemma \ref{lem:5}, we can assume that $z=0$ without loss of generality, and we do so throughout the proof. Also, by using Proposition \ref{prop:tails}, it is clear that the probability of $\eta_0^\mathrm{re}([-\delta,\delta])\subseteq [-\delta^{1/5},\delta^{1/5}]^2$ goes to $1$ as $\delta\rightarrow 0$, and thus it suffices to focus on the first assertion in the lemma.

  It can be seen that it suffices to create an event $A_\delta$ with probability going to $1$ as $\delta\rightarrow 0$ on which $\eta\lvert_{[-\delta,\delta]}$ is determined by $\cL\lvert_{[-\delta^{1/5},\delta^{1/5}]^2}$. Indeed, this would be enough since then on an analogous event, $\eta^\mathrm{re}_{0}\lvert_{[-\delta,\delta]}$ %
  would be determined by $\cL^\mathrm{re}_0\lvert_{[-\delta^{1/5},\delta^{1/5}]^2}$, and then by applying Proposition \ref{prop:3} with $\epsilon=\delta^{1/5}$, we would obtain the required event $\cE_\delta$ on which $\eta\lvert_{[-\delta,\delta]}=\eta^\mathrm{re}_{0}\lvert_{[-\delta,\delta]}$. The focus of the rest of the proof is to construct the required event $A_\delta$.

  Let $H_\delta$ denote the event that $\sup_{v\in[-\delta,\delta]}|\eta_u(v)|\leq \delta^{3/7},\sup_{v\in[-\delta,\delta]}|\eta_h(v)|\leq \delta^{2/7}$. Let $R_\delta$ denote the rectangle $[-\delta^{2/7},\delta^{2/7}]\times [-\delta^{3/7}, \delta^{3/7}]$. We know from Propositions \ref{p:utail}, \ref{p:htail} that $\PP(H_\delta)\rightarrow 1$ as $\delta\rightarrow 0$. On the event $H_\delta$, we note that $\eta\lvert_{[-\delta,\delta]}$ is determined by $\cT_\uparrow\cap R_\delta$. We now construct an event $E_\delta$ satisfying $\PP(E_\delta)\rightarrow 1$ as $\delta\rightarrow 0$ on which $\cT_\uparrow\cap R_\delta$ is determined by $\cL\lvert_{[-\delta^{1/5},\delta^{1/5}]^2}$ and having done so, we can simply define $A_\delta=H_\delta\cap E_\delta$.

  Let $p_\delta$ denote the point $(0,\delta^{1/5})$ and define events $E_\delta^1,E_\delta^2$ by
  \begin{align}
E^1_\delta&=
\left\{\text{For each } z\in R_\delta \text{ and every } \Gamma_z, \text{ there is a } \gamma_z^{p_\delta} \text{ such that }
  \Gamma_z\cap R_\delta=\gamma_z^{p_\delta}\cap R_\delta \text{ and vice versa}
    \right\},\nonumber\\
    E_\delta^2&=
\left\{
  |\gamma_z^{p_\delta}(t)|,|\Gamma_z(t)|\leq \delta^{1/5}\text{ for all }z\in R_\delta \text{ and } t\in [-\delta^{3/7}, \delta^{3/7}] 
\right\},
\end{align}
and we define $E_\delta=E_\delta^1\cap E_\delta^2$. We refer the reader to Figure \ref{fig:local} for a depiction of these events. It is easy to see that on the event $E_\delta$, $\cT_\uparrow \cap R_\delta$ is determined by $\cL\lvert_{[-\delta^{1/5},\delta^{1/5}]^2}$. By the uniform transversal fluctuation estimates for finite geodesics in Proposition \ref{prop:5} along with the $n\rightarrow \infty$ limit of the prelimiting transversal fluctuation estimates for infinite geodesics in Proposition \ref{p:tfstrong}, we obtain that $\PP(E_\delta^2)\rightarrow 1$ as $\delta\rightarrow 0$, and it remains to show the same for $E_\delta^1$. By the KPZ scaling of the directed landscape, and locally defining $q(\delta)=(0,\delta^{-8/35})\in \RR^2$, we obtain that $\PP(E_\delta^1)$ is equal to
\begin{equation}
  \label{eq:30}
  \PP
  \left(
    \text{For each }z\in R_1\text{ and every } \Gamma_z, \text{ there is a } \gamma_z^{q(\delta)} \text{ such that }\Gamma_z \cap R_1=\gamma_z^{q(\delta)}\cap R_1 \text{ and vice versa}
  \right).
\end{equation}
We will now argue by contradiction. If it were that $\PP(E_\delta^1)$ did not go to $1$ as $\delta \rightarrow 0$, then one of two cases must occur with positive probability. In the first case, we would have a sequence $\{\delta_i\}_{i\in \NN}$ converging to $0$ and a random sequence of points $z_i=(x_i,s_i)\in R_1$ such that there is a geodesic $\gamma_{z_i}^{q(\delta_i)}$ for which $\gamma_{z_i}^{q(\delta_i)}\cap R_1$ is not equal to $\Gamma_{z_i}\cap R_1$ for any choice of the latter geodesic. In this case, we use the compactness property of geodesics from Proposition \ref{prop:7} to obtain that there exists a $z\in R_1$ and a geodesic $\Gamma_z$ for which $\gamma_{z_i}^{q(\delta_i)}$ converges to $\Gamma_{z}$ locally uniformly, and further, for all $i$ large enough and any fixed $t>1$, we have $\gamma_{z_i}^{q(\delta_i)}(t)=\Gamma_{z}(t)$. Let $\Gamma_{z_i}$ be a choice of an infinite geodesic from $z_i$ and by compactness again, we note that $\Gamma_{z_i}$ converges locally uniformly to a geodesic $\Gamma_z'$ which might possibly be different from $\Gamma_z$. Similarly, for all $i$ large enough and any fixed $t>1$, we have $\Gamma_{z_i}(t)=\Gamma_z'(t)$. However, since $\cT_\uparrow$ is one-ended, we can choose a $t$ large enough so that $\Gamma_z(t)=\Gamma_z'(t)$. This now leads to a contradiction as by concatenating $\gamma_{z_i}^{q(\delta_i)}\lvert_{[s_i,t]}$ with $\Gamma_z'\lvert_{[t,\infty)}$, we obtain a semi-infinite geodesic starting from $z_i$ which when intersected with $R_1$ yields $\gamma_{z_i}^{q(\delta_i)}\cap R_1$.

In the second case, we have a sequence $\{\delta_i\}_{i\in \NN}$ converging to $0$ and a random sequence of points $z_i\in R_1$ such that there is a geodesic $\Gamma_{z_i}$ for which $\Gamma_{z_i}\cap R_1$ is not equal to $\gamma_{z_i}^{q(\delta_i)}\cap R_1$ for any choice of the latter geodesic. We again use a similar argument in this case. By compactness (Proposition \ref{prop:7}), we find a $z\in R_1$ and geodesics $\Gamma_z,\Gamma'_z$ such that the $\Gamma_{z_i}$ converge to $\Gamma_z$ in the overlap sense and some geodesics $\gamma_{z_i}^{q(\delta_i)}$ converge to $\Gamma'_{z_i}$ in the overlap sense. Since there exists a $t$ such that $\Gamma_{z_i}(t)=\Gamma_{z_i}'(t)$, if we take $\delta_i$ small enough, we will have that $\Gamma_{z_i}(t)=\gamma_{z_i}^{q(\delta_i)}(t)$ and thus $\Gamma_{z_i}$ in particular contains the restriction to $R_1$ of a geodesic between $z_i$ and $q(\delta_i)$, and this contradicts our assumption.

The above two cases together show that $\PP(E_\delta^1)\rightarrow 1$ as $\delta\rightarrow 0$, thereby showing that $\PP(A_\delta)\rightarrow 1$ as $\delta \rightarrow 0$, and completing the proof. %
\end{proof}

\noindent With the above independence statement at hand, we now begin to work towards the proof of Theorem \ref{thm:3}.

\subsection{Proof of Theorem \ref{thm:3}}
\label{sec:proof1}

Let $\Pi_n^1$ denote a Poisson point process on $\RR$ of rate $n$ which is independent of $\cL$, and let $\Pi_n^2=\{\eta(v):v\in \Pi_n^1\}$. The following observation provides the additional symmetry which makes it beneficial for us to prove the variation result using points coming from a Poisson process instead of points with uniform deterministic spacing.
\begin{lemma}
  \label{lem:8}
  $\Pi_n^2$ is a Poisson point process on $\RR^2$ with rate $n$ and is independent of $\cL$.
\end{lemma}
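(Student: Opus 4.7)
The plan is to condition on the $\sigma$-algebra generated by $\cL$ and then apply the standard mapping theorem for Poisson point processes. Since $\Pi_n^1$ is independent of $\cL$ by construction, conditional on $\cL$ it remains a rate-$n$ Poisson process on $\RR$; and since $\eta$ is $\cL$-measurable, conditional on $\cL$ it is a deterministic (continuous, hence Borel) map $\RR \to \RR^2$. The mapping theorem then yields that $\Pi_n^2 = \eta(\Pi_n^1)$ is, conditionally on $\cL$, a Poisson point process on $\RR^2$ with intensity $n \cdot \eta_* \leb$, where $\eta_*\leb$ denotes the pushforward of Lebesgue measure on $\RR$. If I can verify that a.s.\ $\eta_* \leb = \leb$ as measures on $\RR^2$, then this conditional law is the deterministic law of a rate-$n$ PPP on the plane; this simultaneously identifies the marginal of $\Pi_n^2$ and, because the conditional law is non-random, establishes the independence of $\Pi_n^2$ from $\cL$.

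The main technical step is thus the pushforward identity $\eta_*\leb = \leb$. This is essentially the measure-theoretic content already unpacked inside the proof of Lemma \ref{lem:6}: the area parametrization (item (1) of Theorem \ref{thm:1}) gives $\leb(\eta([v,w])) = w-v$, while the a.e.-injectivity established in Lemma \ref{lem:21} gives that $\eta^{-1}(\eta([v,w]))$ coincides with $[v,w]$ up to a $\leb$-null subset of $\RR$. Hence for every interval $[v,w]$ one has $\leb(\eta^{-1}(\eta([v,w]))) = w-v = \leb(\eta([v,w]))$. A routine monotone class / Dynkin $\pi$-$\lambda$ argument, using that sets of the form $\eta([v,w])$ together with their complements generate the Borel $\sigma$-algebra of $\RR^2$ (since $\eta$ is continuous and space-filling), then extends this equality to $\leb(\eta^{-1}(A)) = \leb(A)$ for all Borel $A \subseteq \RR^2$, which is exactly $\eta_*\leb = \leb$.

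The final small verification is that the mapping theorem indeed produces a simple point configuration, i.e., that no two points of $\Pi_n^1$ get identified by $\eta$. By Lemma \ref{lem:21}, the set of $v \in \RR$ whose image is multiply-hit is contained in $\eta^{-1}(\cT_\uparrow \cup \cI_\downarrow)$, and by the pushforward identity just obtained together with $\leb(\cT_\uparrow \cup \cI_\downarrow)=0$, this set has $\leb$-measure zero on $\RR$; hence a.s.\ $\Pi_n^1$ misses it. The only step I expect to require some care is the monotone class extension in the pushforward identity, but since the same disintegration-based argument was carried out in detail in the proof of Lemma \ref{lem:6} (and the key identity \eqref{eq:21} there is essentially a parametrized version of the claim $\eta_*\leb = \leb$), this part can be invoked almost directly from there.
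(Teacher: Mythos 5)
Your overall route is the same as the paper's: the paper also works conditionally on $\cL$, notes that $\#(\Pi_n^2\cap E)=\#(\Pi_n^1\cap \eta^{-1}(E))$, and derives both the Poisson counts and the independence over disjoint sets from the single identity $\leb_\RR(\eta^{-1}(E))=\leb_{\RR^2}(E)$; your appeal to the mapping theorem packages exactly this computation, and concluding independence from $\cL$ because the conditional law is deterministic is also how the paper finishes (your extra check that a.s.\ no two points of $\Pi^1_n$ are identified by $\eta$ is a point the paper glosses over, and is welcome). The paper simply asserts the pushforward identity as a consequence of the area parametrization, so the only place where your write-up genuinely diverges is your attempted proof of $\eta_*\leb=\leb$, and that attempt has two concrete gaps.

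First, Lemma \ref{lem:21} says that the multiply-hit set $\cT_\uparrow\cup\cI_\downarrow$ is $\leb_{\RR^2}$-null; it says nothing about the $\leb_\RR$-measure of the set of \emph{times} $\eta^{-1}(\cT_\uparrow\cup\cI_\downarrow)$, which is what you need to conclude that $\eta^{-1}(\eta([v,w]))$ agrees with $[v,w]$ up to a null subset of $\RR$. The implication ``preimage of a plane-null set is time-null'' is one direction of the very identity you are proving, so as written the step is circular. It is true, and can be obtained either from the last assertion of Lemma \ref{lem:6} (for each fixed $v_0$, a.s.\ $\Gamma_{\eta(v_0)},\Upsilon_{\eta(v_0)}$ are unique, i.e.\ $\eta(v_0)\notin\cT_\uparrow\cup\cI_\downarrow$) together with Fubini, or from the covering inequality below. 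Second, the family $\{\eta([v,w])\}$ is \emph{not} a $\pi$-system: the images of two disjoint parameter intervals intersect in a nonempty set of boundary (geodesic/interface) points that is not of this form, so two measures agreeing on this family and the fact that it generates the Borel $\sigma$-algebra do not by themselves give equality; and deferring to the proof of Lemma \ref{lem:6} does not close this, since \eqref{eq:21} there is justified by asserting the same marginal identity rather than proving it. A clean repair avoiding $\pi$-$\lambda$ altogether: from $\leb(\eta(I))=|I|$ (Theorem \ref{thm:1}(1)), covering an arbitrary $S\subseteq\RR$ by intervals gives $\leb^*_{\RR^2}(\eta(S))\le\leb^*_\RR(S)$; taking $S=\eta^{-1}(E)$ and using surjectivity yields $\leb_{\RR^2}(E)\le\leb_\RR(\eta^{-1}(E))$ for every Borel $E$, while the first point gives $\leb_\RR(\eta^{-1}(\eta([k,k+1])))=1=\leb_{\RR^2}(\eta([k,k+1]))$; since the two measures have equal finite mass on each cell $\eta([k,k+1])$ and one dominates the other, they agree on every Borel subset of each cell, hence everywhere (the cells cover $\RR^2$ and overlap in sets that are null for both). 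With that step repaired, your argument is complete and matches the paper's.
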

\begin{proof}
To avoid confusion, we locally use $\leb_\RR,\leb_{\RR^2}$ to denote the Lebesgue measures on $\RR$, $\RR^2$ respectively. By definition, we know that for any measurable set $E\subseteq \RR^2$, $\#\{\Pi_n^2\cap E\}=\#\{\Pi_n^1\cap \eta^{-1}(E)\}$. Since $\eta$ is always parametrized according to its Lebesgue area, we have $\leb_\RR (\eta^{-1}(E))=\leb_{\RR^2}(E)$. Using this along with the fact that $\Pi_n^1$ is independent of $\cL$, we obtain that even conditional on $\cL$, $\#\{\Pi_n^1\cap \eta^{-1}(E)\}$ is distributed as a Poisson variable with rate $\leb_{\RR^2}(E)$. To finish the proof, it remains to show that $\Pi_n^2$ has the independence structure present in a Poisson process. To see this, note that for any disjoint measurable sets $E,F\subseteq \RR^2$, the sets $\eta^{-1}(E),\eta^{-1}(F)$ are disjoint as well. Thus by using the properties of the one dimensional Poisson process $\Pi_n^1$, we obtain that conditional on $\cL$, $\#\{\Pi_n^2\cap E \}=\#\{\Pi_n^1\cap \eta^{-1} (E) \}$ and $\#\{\Pi_n^2\cap F \}=\#\{\Pi_n^1\cap \eta^{-1} (F)\}$ are independent. This completes the proof.
\end{proof}
Given a locally finite point process $\psi$ on $\RR$, order its points in increasing order and denote them by $\{\dots,p_{-1},p_0,p_{1},\dots\}$. %
We define the atomic measure $\mu_{n,\psi}$ by
\begin{equation}
  \label{eq:39}
  \mu_{n,\psi}(\{p_i\})=|\eta_u(p_{i+1})-\eta_u(p_i)|^{5/3}+|\eta_h(p_{i+1})-\eta_h(p_i)|^{5/2}.
\end{equation}
Similarly, given a locally finite point process $\phi$ on $\RR^2$, define $\nu_{n,\phi}$ to be the measure $\eta^*(\mu_{n,\eta^{-1}(\phi)})$, the push-forward of $\mu_{n,\eta^{-1}(\phi)}$ by $\eta$. We will often work in the case when the point process $\psi= \Pi_n^1$, and in this case, we use $\mu_n,\nu_n$ to denote $\mu_{n,\Pi_n^1},\nu_{n,\Pi_n^2}$ respectively. Recall the notation for the variation increment $\In^5_{v}(f,\epsilon)$ from \eqref{eq:38}. We now have the following simple lemma regarding the moments of $\In^5_0(\eta,1)$.
\begin{lemma}
  \label{lem:22}
  For any $p>0$, we have $\EE \In^5_0(\eta,1)^p<\infty$. Further, if $X$ is a positive random variable independent of $\cL$, then we have $\EE\In^5_0(\eta,X)^p=\EE[ X^p]\EE[\In^5_0(\eta,1)^p]$.
\end{lemma}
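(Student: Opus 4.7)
\textbf{Plan for the proof of Lemma \ref{lem:22}.} Since $\eta(0)=0$, we have $\In^5_0(\eta,1)=|\eta_u(1)|^{5/3}+|\eta_h(1)|^{5/2}$. The first assertion will therefore follow by showing that $\eta_u(1)$ and $\eta_h(1)$ have finite moments of all orders, which is immediate from the stretched exponential tail bounds already established: Proposition \ref{p:utail} (applied with $w=1$) yields $\PP(|\eta_u(1)|\geq t)\leq \exp(-ct^{5/11})$ for $t$ large, and Proposition \ref{p:htail} yields $\PP(|\eta_h(1)|\geq y)\leq \exp(-cy^{30/77})$ for $y$ large. Integrating these tails gives $\EE|\eta_u(1)|^q,\EE|\eta_h(1)|^q<\infty$ for all $q>0$, and then by the triangle inequality in $L^p$,
\begin{equation*}
\EE \In^5_0(\eta,1)^p \leq 2^{p-1}\bigl(\EE|\eta_u(1)|^{5p/3}+\EE|\eta_h(1)|^{5p/2}\bigr)<\infty
\end{equation*}
for every $p>0$, taking care of the first claim.

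For the second assertion, the key input is the KPZ scaling from Theorem \ref{thm:1} (5): for every fixed $\beta>0$, $\eta(\beta v)\stackrel{d}{=}(\beta^{2/5}\eta_h(v),\beta^{3/5}\eta_u(v))$ as parametrized curves. Specializing to $v=1$ gives the joint distributional identity $(\eta_h(\beta),\eta_u(\beta))\stackrel{d}{=}(\beta^{2/5}\eta_h(1),\beta^{3/5}\eta_u(1))$, so that after raising to the appropriate powers the scale factors collapse to a single $\beta$:
\begin{equation*}
\In^5_0(\eta,\beta)=|\eta_u(\beta)|^{5/3}+|\eta_h(\beta)|^{5/2}\stackrel{d}{=}\beta|\eta_u(1)|^{5/3}+\beta|\eta_h(1)|^{5/2}=\beta\,\In^5_0(\eta,1).
\end{equation*}
Taking the $p$-th moment gives the deterministic-$\beta$ identity $\EE \In^5_0(\eta,\beta)^p=\beta^p\,\EE \In^5_0(\eta,1)^p$.

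Finally, to pass from a fixed $\beta$ to a random $X$ independent of $\cL$, I would condition on $X$. Since $\eta$ is measurable with respect to $\cL$, conditional on $\{X=x\}$ the random variable $\In^5_0(\eta,X)$ has the same distribution as the unconditional $\In^5_0(\eta,x)$, and hence
\begin{equation*}
\EE\bigl[\In^5_0(\eta,X)^p\bigr]=\int\EE\bigl[\In^5_0(\eta,x)^p\bigr]\,dP_X(x)=\EE \In^5_0(\eta,1)^p\int x^p\,dP_X(x)=\EE[X^p]\,\EE \In^5_0(\eta,1)^p,
\end{equation*}
which is the desired factorization. I do not anticipate any real obstacle here: the first part is a direct consequence of the tail estimates proved in Section \ref{sec:holder}, and the second part is essentially just the KPZ scaling applied at a single time together with independence; the only minor point requiring attention is that the scaling in Theorem \ref{thm:1} (5) is a joint equality of $(\eta_h,\eta_u)$, which is what allows the two terms in $\In^5_0$ to combine into the common factor $\beta$.
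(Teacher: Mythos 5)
Your proposal is correct and follows essentially the same route as the paper: moment finiteness from the stretched exponential tails in Propositions \ref{p:utail} and \ref{p:htail}, and the identity $\In^5_0(\eta,\beta)\stackrel{d}{=}\beta\,\In^5_0(\eta,1)$ from the KPZ scaling, combined with conditioning on the independent $X$. The only difference is that you spell out the conditioning step and the joint nature of the scaling explicitly, which the paper leaves implicit.
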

\begin{proof}
  The finiteness of the moments is an immediate consequence of the tail estimates from Propositions \ref{p:utail}, \ref{p:htail}. For the latter statement, we first note that by the scaling relation from Lemma \ref{lem:3}, for any $v_0>0$, we have $\In^5_0(\eta,v_0)\stackrel{d}{=} v_0 \In^5_0(\eta,1)$, and thus by the independence between $X$ and $\cL$, we have $\In^5_0(\eta,X)\stackrel{d}{=} X \In^5_0(\eta,1)$, and this completes the proof.
\end{proof}
\noindent The following is the crucial probabilistic result that will be used in the proof of Theorem \ref{thm:3}.

\begin{proposition}
  \label{lem:10}
  For any fixed open or closed rectangle $R\subseteq \RR^2$, we have the convergence $\nu_n(R)\rightarrow  \EE\In^5_0(\eta,1)\leb(R)$ in probability.
\end{proposition}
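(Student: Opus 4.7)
The plan is to prove convergence in probability via Chebyshev's inequality: I will compute $\EE\nu_n(R)$ exactly and show $\Var(\nu_n(R))\to 0$. For the expectation, Campbell's formula for $\Pi_n^1$ gives
\[
\EE\nu_n(R) = n\int_\RR\EE\bigl[\1(\eta(v)\in R)\,\In^5_v(\eta,\tau)\bigr]\,dv,
\]
where $\tau\sim\mathrm{Exp}(n)$ is independent of $\cL$. Since $\eta$ is volume-parametrized and almost surely injective off a Lebesgue-null set (Theorem \ref{thm:1}(2)), changing variables to $p=\eta(v)$ with $v(p)=\eta^{-1}(p)$ yields $\EE\nu_n(R) = n\int_R\EE[\In^5_{v(p)}(\eta,\tau)]\,dp$. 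The Palm-type identity implicit in the proof of Lemma \ref{lem:6}, namely $(\eta(v(p)+\cdot)-p,\cL(p+\cdot;p+\cdot))\stackrel{d}{=}(\eta,\cL)$ for every fixed $p$, implies $\In^5_{v(p)}(\eta,\tau)\stackrel{d}{=}\In^5_0(\eta,\tau)$ for a.e.\ $p$. Combined with Lemma \ref{lem:22}, which gives $\EE[\In^5_0(\eta,\tau)]=\EE[\tau]\EE[\In^5_0(\eta,1)]=n^{-1}\EE[\In^5_0(\eta,1)]$, this yields the exact identity $\EE\nu_n(R)=\leb(R)\EE[\In^5_0(\eta,1)]$.

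For the variance I decompose $\EE\nu_n(R)^2$ via the one- and two-point Slivnyak-Mecke formulas into a diagonal contribution from single Poisson points and an off-diagonal contribution from ordered pairs. The same change of variables, together with $\EE[\tau^2]=2/n^2$ and the finite second moment from Lemma \ref{lem:22} (guaranteed by Propositions \ref{p:utail}, \ref{p:htail}), evaluates the diagonal piece to $2n^{-1}\leb(R)\EE[\In^5_0(\eta,1)^2]\to 0$. The off-diagonal piece, after exploiting symmetry in $(u,w)$, equals
\[
O_n = 2n^2\int\!\!\int_{u<w}\EE\bigl[\1(\eta(u),\eta(w)\in R)\,\In^5_u(\eta,\min(\tau_u,w-u))\,\In^5_w(\eta,\tau_w)\bigr]\,du\,dw,
\]
with $\tau_u,\tau_w$ independent $\mathrm{Exp}(n)$ variables also independent of $\cL$. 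The discrepancy between $\min(\tau_u,w-u)$ and $\tau_u$ is supported on the event $\{\tau_u\ge w-u\}$, which has conditional probability $e^{-n(w-u)}$; Cauchy-Schwarz together with the $L^2$ bound shows this discrepancy contributes $o(1)$ to $O_n$, so I reduce to analyzing the resulting quantity $O_n'$ with $\tau_u$ in place of the truncated gap.

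To show $O_n'\to(\EE\nu_n(R))^2$ I set $\delta_n=n^{-1/5+\theta}$ for a small fixed $\theta>0$ and split by $d_\mathrm{in}(\eta(u),\eta(w))$. After changing to $(p,q)=(\eta(u),\eta(w))$ coordinates, the close region $\{d_\mathrm{in}(p,q)\le\delta_n\}\cap(R\times R)$ has Lebesgue area $O(\delta_n^5\leb(R))$, so Cauchy-Schwarz combined with $\EE[\In^5_{v(p)}(\eta,\tau)^2]=O(n^{-2})$ bounds its contribution by $O(n^2\cdot\delta_n^5\cdot n^{-2})=O(n^{5\theta-1})\to 0$. On the far region, Proposition \ref{prop:3} with $k=2$ at Euclidean scale $\epsilon_n=\delta_n^2$ couples $\cL$ with independent $\cL^\mathrm{re}_p,\cL^\mathrm{re}_q$ that agree with $\cL$ on the disjoint boxes $p+[-\epsilon_n,\epsilon_n]^2$ and $q+[-\epsilon_n,\epsilon_n]^2$; Lemma \ref{lem:19} applied to each centre then shows that on an event of probability tending to $1$ uniformly, $\In^5_{v(p)}(\eta,\tau_p)$ equals $\In^5_0(\eta^\mathrm{re}_p,\tau_p)$ (and analogously at $q$), and the two factors are now independent by construction. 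Uniform integrability from Propositions \ref{p:utail}, \ref{p:htail} then allows me to replace $\EE[\cdots]$ by $\EE[\In^5_{v(p)}(\eta,\tau_p)]\EE[\In^5_{v(q)}(\eta,\tau_q)]$ up to $o(1)$, and integrating recovers $(\EE\nu_n(R))^2$, so $\Var(\nu_n(R))\to 0$.

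The main obstacle is the quantitative far-pair decorrelation: Lemma \ref{lem:19} provides only an asymptotic ($\delta\to 0$) single-point coupling, and Proposition \ref{prop:3} is an almost-sure coupling at unspecified small scales. The hard part will be extracting an effective lower bound on the probability of the coupling event at the $n$-dependent scale $\delta_n$, uniformly over pairs $(p,q)$, strong enough to survive the $n^2$ prefactor in the double integral. Once such a rate is in place, Chebyshev converts $\Var(\nu_n(R))\to 0$ into the desired convergence in probability.
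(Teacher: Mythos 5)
Your overall strategy --- exact computation of the mean via Campbell/Palm calculus plus translation invariance and scaling, then variance $\to 0$ by a diagonal/off-diagonal split of the second Mecke moment, with far pairs decorrelated through Proposition \ref{prop:3} and Lemma \ref{lem:19} --- is the same as the paper's. The genuine gap is the one you flag yourself, and within your setup it is not a deferrable technicality: by choosing the decorrelation scale $\delta_n=n^{-1/5+\theta}$ so that the close-pair contribution vanishes in $n$, you force yourself to need a quantitative lower bound, uniform over pairs $(p,q)$, on the probability of the coupling event at an $n$-dependent scale. Neither Proposition \ref{prop:3} (an almost-sure coupling valid ``for all small enough $\epsilon$'', with no rate) nor Lemma \ref{lem:19} (an event $\cE^z_\delta$ whose probability tends to $1$ only as $\delta\to0$, with no rate) provides such a bound, and no quantitative decorrelation estimate of this type is available in the paper; so the ``hard part'' you postpone cannot be filled in with the stated tools.

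The point you are missing is that no rate is needed once the two limits are decoupled: fix $\delta>0$ independent of $n$. Close pairs, say $|z_1-z_2|\le\delta^{1/5}$, then contribute at most $C\delta^{2/5}$ to the off-diagonal term uniformly in $n$ --- this does not vanish as $n\to\infty$ and does not need to, since it vanishes as $\delta\to0$. For far pairs one does not compute the two-point expectation exactly but only bounds it: restrict to the event $F_\delta$ that both Poisson gaps are at most $\delta$ and that $A_\delta=\cE^{z_1}_\delta\cap\cE^{z_2}_\delta$ holds, factorize there using the independence of $\cL^{\mathrm{re}}_{z_1},\cL^{\mathrm{re}}_{z_2}$, and absorb the complementary events by Cauchy--Schwarz against the finite moments of Lemma \ref{lem:22}, which produces errors of size $\sqrt{\PP(A_\delta^c)}$ plus $o_n(1)$. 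Sending $n\to\infty$ first and then $\delta\to0$ yields $\limsup_n\Var(\nu_n(R))=0$, which is all Chebyshev requires; this is exactly the content of Lemmas \ref{lem:25}--\ref{lem:28} and \eqref{eq:23}. A secondary remark: your line-based Mecke computation (with the indicator $\ind(\eta(u)\in R)$ and the truncated gap $\min(\tau_u,w-u)$) needs more care than stated, since discarding the indicator when applying Cauchy--Schwarz leaves a divergent $du$-integral over the whole line; the paper sidesteps this by first pushing $\Pi^1_n$ forward to the planar Poisson process $\Pi^2_n$ (Lemma \ref{lem:8}), so that the two-point Palm calculus is localized in $R\times R$ from the start.
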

We now go through a series of lemmas building up towards the proof of Proposition \ref{lem:10}. Throughout these lemmas, we will extensively use Palm measures, and for points $z_1,\dots,z_k\in \RR^2$, we use the notation $\Pi^2_{n,z_1,\dots,z_k}$ to denote the Palm measure corresponding to the Poisson process $\Pi^2_n$ conditioned to have points at $z_1,\dots ,z_k$. Since $\Pi^2_n$ is independent of $\cL$ by Lemma \ref{lem:8}, we will always couple the point process $\Pi^2_{n,z_1,\dots,z_k}$ and $\cL$ such that they are independent. %
We begin by computing $\EE[\nu_n(R)]$, and we note that in the lemmas that follow, $R$ will always denote a fixed open or closed rectangle in $\RR^2$.
\begin{lemma}
  \label{lem:23}
  For all $n\in \NN$, we have $\EE[\nu_n(R)]=\EE\In_0^5(\eta,1)\leb(R)$.
\end{lemma}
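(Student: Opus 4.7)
The strategy is to apply Mecke's (Slivnyak--Mecke) formula for the Poisson process $\Pi_n^1$ and then exploit the translation invariance from Lemma \ref{lem:5}. Writing $g(v)$ for the gap from $v \in \Pi_n^1$ to its successor in $\Pi_n^1$, I would first expand
\begin{equation*}
\nu_n(R) \;=\; \sum_{v \in \Pi_n^1} \ind(\eta(v) \in R)\, \In^5_v(\eta, g(v)).
\end{equation*}
Since $\Pi_n^1$ is a rate-$n$ Poisson process independent of $\cL$, Mecke's formula gives
\begin{equation*}
\EE[\nu_n(R)] \;=\; n \int_\RR \EE\bigl[\ind(\eta(v)\in R)\, \In^5_v(\eta, X)\bigr]\, dv,
\end{equation*}
where, under the Palm distribution that adds the point $v$ to $\Pi_n^1$, the gap $X$ from $v$ to the next point of $\Pi_n^1$ is $\mathrm{Exp}(n)$ and is independent of $\cL$ and of the rest of $\Pi_n^1$.

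Next, I would condition on $X$, invoke Fubini (justified by the moment bounds in Lemma \ref{lem:22} together with the tail estimates of Propositions \ref{p:utail}--\ref{p:htail}), and rewrite
\begin{equation*}
\EE[\nu_n(R)] \;=\; \int_0^\infty n^2 e^{-nx}\; \EE\!\left[\int_\RR \ind(\eta(v)\in R)\, \In^5_v(\eta, x)\, dv\right] dx.
\end{equation*}
For the inner integral I would perform the change of variables $z = \eta(v)$. The key point, which I would spell out as the only real subtlety of the argument, is that the pushforward measure $\eta_*\leb_\RR$ on $\RR^2$ coincides with $\leb_{\RR^2}$: this combines the volume parametrization from Theorem \ref{thm:1}(1) with the fact (Lemma \ref{lem:21}) that the set of points in $\RR^2$ hit more than once by $\eta$ has Lebesgue measure zero. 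Since $\eta$ is a.e.\ injective, an a.e.\ well-defined inverse $v_z := \eta^{-1}(z)$ exists, and the change of variables yields
\begin{equation*}
\EE\!\left[\int_\RR \ind(\eta(v)\in R)\, \In^5_v(\eta, x)\, dv\right] \;=\; \int_R \EE\, \In^5_{v_z}(\eta, x)\, dz.
\end{equation*}

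To finish, I would apply Lemma \ref{lem:5} at each fixed $p = z$: the statement $\eta(v_z + \cdot) - z \stackrel{d}{=} \eta(\cdot)$ gives $\In^5_{v_z}(\eta, x) \stackrel{d}{=} \In^5_0(\eta, x)$, and the scaling identity from Lemma \ref{lem:22} provides $\EE \In^5_0(\eta, x) = x\, \EE \In^5_0(\eta,1)$. Thus the inner expression equals $\leb(R)\cdot x\cdot \EE \In^5_0(\eta,1)$, and plugging back,
\begin{equation*}
\EE[\nu_n(R)] \;=\; \leb(R)\, \EE \In^5_0(\eta,1) \int_0^\infty n^2 x\, e^{-nx}\, dx \;=\; \leb(R)\, \EE \In^5_0(\eta,1),
\end{equation*}
as claimed. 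The main (quite mild) obstacle is the change-of-variables step: once one has the measure-preservation $\eta_* \leb_\RR = \leb_{\RR^2}$ in hand, the rest of the proof is a clean Mecke-plus-translation-invariance computation.
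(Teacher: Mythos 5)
Your proof is correct and follows essentially the same route as the paper's: a Campbell/Mecke computation combined with the translation invariance of Lemma \ref{lem:5} and the scaling identity of Lemma \ref{lem:22}. The only cosmetic difference is that you apply Mecke's formula to $\Pi_n^1$ on $\RR$ and transfer to the plane via the change of variables $\eta_*\leb_\RR=\leb_{\RR^2}$, whereas the paper applies Campbell's theorem directly to the planar process $\Pi_n^2$ (whose Poissonianity, Lemma \ref{lem:8}, encodes exactly that measure preservation) and then evaluates the Palm expectation at a single point.
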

\begin{proof}
   By using Campbell's theorem (see for e.g.\ \cite[Theorem 9.1]{LP17}), we obtain that %
  \begin{equation}
    \label{eq:7}
    \EE[\nu_n(R)]=\EE[\sum_{p\in \Pi_n^2\lvert_R}\nu_n(\{p\})]= n\int_{z\in R} \EE\nu_{n,\Pi_{n,z}^2}(\{z\}) d\leb(z).
  \end{equation}
  With $X_n$ being an $\exp(n)$ variable independent of $\cL$, we note by Lemma \ref{lem:5} that $\EE\nu_{n,\Pi_{n,z}^2}(\{z\})=\EE\nu_{n,\Pi_{n,0}^2}(\{0\})=\EE\In^5_0(\eta,X_n)$ and this combined with \eqref{eq:7} yields $\EE[\nu_n(R)]=n\leb(R)\EE\In^5_0(\eta,X_n)$. %
  We now use Lemma \ref{lem:22} to obtain that $\EE\In^5_0(\eta,X_n)=n^{-1}\EE\In^5_0(\eta,1)$ and thereby $\EE[\nu_n(R)]= \EE\In^5_0(\eta,1)\leb(R)$.
\end{proof}
In view of the above lemma, the main estimate needed to obtain Proposition \ref{lem:10} is to show that $\Var(\nu_n(R))\rightarrow 0$ as $n\rightarrow \infty$. We know that
\begin{align}
    \label{eq:16}
    \EE[\nu_n(R)^2]&= \EE[\sum_{p\in \Pi_n^2\lvert_R}\nu_n(\{p\})^2] + \EE[\sum_{p,q\in \Pi_n^2\lvert_R,p\neq q}\nu_n(\{p\})\nu_n(\{q\})].
\end{align}
In the following lemma, we show that the important term in the above expression is the second one.
\begin{lemma}
  \label{lem:24}
  We have $\EE[\sum_{p\in \Pi_n^2\lvert_R}\nu_n(\{p\})^2]\rightarrow 0$ as $n\rightarrow \infty$.
\end{lemma}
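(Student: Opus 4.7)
The plan is a direct Palm calculation: apply Campbell's theorem to convert the sum into an integral of a single second moment, then read off the $O(1/n)$ decay from the KPZ scaling of $\eta$.

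First I would apply Campbell's formula for the Poisson process $\Pi^2_n$, exactly as in the proof of Lemma \ref{lem:23}, to write
\begin{equation*}
  \EE\Big[\sum_{p\in \Pi_n^2\lvert_R}\nu_n(\{p\})^2\Big] \;=\; n\int_{z\in R}\EE\big[\nu_{n,\Pi^2_{n,z}}(\{z\})^2\big]\,d\leb(z).
\end{equation*}
Using the translation invariance in Lemma \ref{lem:5} together with Lemma \ref{lem:8} (which gives $\Pi^2_n\perp \cL$), the integrand is constant in $z$ and equals $\EE[\nu_{n,\Pi^2_{n,0}}(\{0\})^2]$. Next, by Slivnyak's theorem, under the Palm measure at $0$ the process $\Pi^2_{n,0}$ is just $\Pi^2_n\cup\{0\}$; pulling back by $\eta^{-1}$ yields $\Pi^1_n\cup\{0\}$, where $\Pi^1_n$ is a rate-$n$ Poisson process on $\RR$ independent of $\cL$. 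Hence the atom that $\nu_{n,\Pi^2_{n,0}}$ assigns to $0$ equals $\In^5_0(\eta,X_n)$, where $X_n$ is the distance from $0$ to the next point of $\Pi^1_n$ above $0$; this is $\exp(n)$-distributed and independent of $\cL$.

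Applying the scaling identity of Lemma \ref{lem:22} with $p=2$ (valid for random $X$ independent of $\cL$),
\begin{equation*}
  \EE[\In^5_0(\eta,X_n)^2] \;=\; \EE[X_n^2]\,\EE[\In^5_0(\eta,1)^2] \;=\; \tfrac{2}{n^2}\,\EE[\In^5_0(\eta,1)^2],
\end{equation*}
and since $\EE[\In^5_0(\eta,1)^2]<\infty$ also by Lemma \ref{lem:22}, substituting gives the bound $\tfrac{2\leb(R)}{n}\EE[\In^5_0(\eta,1)^2]\to 0$, as required. I do not expect any real obstacle: the only point requiring a brief justification is the Palm/Slivnyak identification of $\eta^{-1}(\Pi^2_{n,z})$ as $\Pi^1_n$ augmented with $v_z$, which is immediate from Lemma \ref{lem:8} and the area-preserving parametrization of $\eta$ built into \eqref{eq:57}.
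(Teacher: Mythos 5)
Your argument is correct and is essentially the paper's proof: the paper likewise applies the Campbell/Palm computation from Lemma \ref{lem:23} to get $n\leb(R)\EE\,\In^5_0(\eta,X_n)^2$ and then invokes Lemma \ref{lem:22} to obtain $2n^{-1}\leb(R)\EE\,\In^5_0(\eta,1)^2\rightarrow 0$. Your explicit Slivnyak justification of the identification $\nu_{n,\Pi^2_{n,0}}(\{0\})=\In^5_0(\eta,X_n)$ is a fine (slightly more detailed) way of stating what the paper leaves implicit.
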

\begin{proof}
  By a similar calculation to the one in Lemma \ref{lem:23}, we obtain
  \begin{equation}
    \label{eq:24}
    \EE[\sum_{p\in \Pi_n^2\lvert_R}\nu_n(\{p\})^2]=n\leb(R)\EE\In^5_0(\eta,X_n)^2=2n^{-1}\leb(R) \EE\In^5_0(\eta,1)^2,
  \end{equation}
  where the last equality follows by an application of Lemma \ref{lem:22}. The final expression above converges to $0$ as $n\rightarrow \infty$ and this completes the proof.
\end{proof}
  Intuitively, to control the second term in \eqref{eq:16}, we will need to argue that $\nu_n(\{p\})$ and $\nu_n(\{q\})$ are not correlated if $p$ and $q$ are far from each other. In fact, by using Palm measures, we will be able to reduce the estimation of the above-mentioned term to obtaining good estimates on the quantity
\begin{equation}
  \label{eq:79}
   \EE
    \left[
    \nu_{n,\Pi^2_{n,z_1,z_2}}(\{z_1\})\nu_{n,\Pi^2_{n,z_1,z_2}}(\{z_2\})
      \right]
    \end{equation}
    for fixed $z_1\neq z_2\in \RR^2$. To estimate the above, we will use Lemma \ref{lem:19}, with the intuition being that even if we resample the landscape in a neighbourhood of $z_2$, the behaviour of the Peano curve in a small neighbourhood of $z_1$ is left unchanged with high probability, and this will yield the desired independence between the Peano curve in a small neighbourhood of $z_1$ and a similarly small neighbourhood of a point $z_2\neq z_1$. %
  For fixed $z_1\neq z_2\in \RR^2$, we work in the coupling from Proposition \ref{prop:3} and use Lemma \ref{lem:19} to define the good event $A_\delta\in \sigma(\cL)$ by
  \begin{equation}
    \label{eq:69}
    A_\delta=\cE_\delta^{z_1}\cap \cE_\delta^{z_2}.
  \end{equation}
  By using Lemma \ref{lem:19} and the independence of $\cL$ and $\Pi_{n,z_1,z_2}^2$, we have the following lemma.
  \begin{lemma}
    \label{lem:25}
  We have $\PP(A_\delta)\rightarrow 1$ as $\delta\rightarrow 0$. Also, $A_\delta$ is independent of $\Pi_{n,z_1,z_2}^2$.
  \end{lemma}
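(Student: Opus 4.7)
The plan is to deduce both statements directly from Lemma \ref{lem:19} together with the coupling conventions already fixed in the text. First, for the convergence $\PP(A_\delta) \to 1$, I would simply apply a union bound: Lemma \ref{lem:19} gives $\PP(\cE_\delta^{z_i}) \to 1$ as $\delta \to 0$ for each of $i=1,2$, hence
\begin{equation*}
  \PP(A_\delta^c) \;\leq\; \PP((\cE_\delta^{z_1})^c) + \PP((\cE_\delta^{z_2})^c) \;\longrightarrow\; 0.
\end{equation*}
Note that Lemma \ref{lem:19} is stated in the coupling of Proposition \ref{prop:3} with $k=1$, but here we are working with $k=2$; this is not an obstruction since the marginal law of $\cL$ is unchanged, and $\cE_\delta^{z_i}$ is defined as an event in $\sigma(\cL)$ depending only on $\cL$ itself (not on the resampled landscapes $\cL^\mathrm{re}_{z_j}$).

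For the independence, I would observe that $\cE_\delta^{z_1}, \cE_\delta^{z_2} \in \sigma(\cL)$ by Lemma \ref{lem:19}, and hence $A_\delta \in \sigma(\cL)$ as well. On the other hand, the text preceding the lemma explicitly states the convention that $\Pi^2_{n,z_1,z_2}$ is always coupled with $\cL$ so as to be independent of it (this makes sense because by Lemma \ref{lem:8}, $\Pi^2_n$ is independent of $\cL$, and the Palm version inherits this). Therefore $A_\delta$ is independent of $\Pi^2_{n,z_1,z_2}$, as claimed.

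Neither step presents any real difficulty; the only subtle point is making sure the reader sees that $\cE_\delta^{z_i}$ truly lives in $\sigma(\cL)$ and not in the larger $\sigma$-algebra generated by the resampling coupling of Proposition \ref{prop:3}, so that the independence from $\Pi^2_{n,z_1,z_2}$ is immediate from the coupling convention. This is exactly the content of the parenthetical ``$\cE^z_\delta \in \sigma(\cL)$'' in the statement of Lemma \ref{lem:19}, so the proof reduces to two short lines.
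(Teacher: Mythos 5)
Your proposal is correct and follows essentially the same route as the paper, which justifies the lemma in one line by invoking Lemma \ref{lem:19} for the convergence $\PP(A_\delta)\to 1$ and the fact that $A_\delta\in\sigma(\cL)$ together with the stated coupling convention (the Palm process $\Pi^2_{n,z_1,z_2}$ is always taken independent of $\cL$) for the independence. Your additional remark about the $k=1$ versus $k=2$ coupling is a harmless clarification and does not change the argument.
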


  Recall the notation $v_z=\eta^{-1}(z)$ from Lemma \ref{lem:19}. Using the notation
  \begin{align}
    \label{eq:80}
    \Delta v_{z_1}&=\min\{v>v_{z_1}: \eta(v)\in \Pi^2_{n,z_1,z_2}\setminus \{z_1,z_2\}\}-v_{z_1},\nonumber\\
    \Delta v_{z_2}&=\min\{v>v_{z_2}: \eta(v)\in \Pi^2_{n,z_1,z_2}\setminus \{z_1,z_2\}\}-v_{z_2},  
  \end{align}
  we have the following result coming from basic properties of the one dimensional Poisson process $\eta^{-1}(\Pi^2_{n,z_1,z_2}\setminus \{z_1,z_2\})$ along with the independence of $\Pi^{2}_{n,z_1,z_2}$ and $\cL$.
  \begin{lemma}
    \label{lem:26}
    For any fixed $z_1\neq z_2\in \RR^2$, $\Delta v_{z_1},\Delta v_{z_2}$ are both marginally distributed as $\exp(n)$ even conditional on $\cL$. Further, $\Delta v_{z_1},\Delta v_{z_2}$ are conditionally independent given $\cL$ and given
  \begin{displaymath}
    (v_{z_1},v_{z_1}+\Delta v_{z_1})\cap (v_{z_2},v_{z_2}+\Delta v_{z_2})=\emptyset.
  \end{displaymath}
\end{lemma}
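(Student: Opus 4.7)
The plan is to transport the problem, via $\eta^{-1}$, into a routine statement about a one-dimensional Poisson point process. First, I would invoke Slivnyak's theorem to identify $\Pi^2_{n,z_1,z_2}\setminus\{z_1,z_2\}$ in distribution with $\Pi^2_n$, so that in particular it is a rate-$n$ Poisson process on $\RR^2$ independent of $\cL$. I would then establish the reverse direction of Lemma \ref{lem:8}: if $\Phi$ is a rate-$n$ Poisson process on $\RR^2$ independent of $\cL$, then $\eta^{-1}(\Phi)$ is a rate-$n$ Poisson process on $\RR$ independent of $\cL$. Indeed, for a Borel set $E\subseteq \RR$, one has $\#\{\eta^{-1}(\Phi)\cap E\}=\#\{\Phi\cap \eta(E)\}$; the set $\eta(E)$ has the same Lebesgue measure as $E$ by item (1) of Theorem \ref{thm:1}; and disjointness of $E,F\subseteq \RR$ translates to a.s.\ disjointness of $\eta(E),\eta(F)$ after invoking Lemma \ref{lem:21} to rule out $\Phi$ having atoms in the measure-zero set where $\eta$ fails to be injective. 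I set $\Psi_n:=\eta^{-1}(\Pi^2_{n,z_1,z_2}\setminus\{z_1,z_2\})$.

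The first claim then follows immediately: conditional on $\cL$, each $v_{z_i}$ is a deterministic number (since $\eta$, and hence $\eta^{-1}$ at any fixed point not lying on $\cT_\uparrow\cup \cI_\downarrow$, is $\cL$-measurable), while $\Psi_n$ is still a rate-$n$ Poisson process because it is independent of $\cL$; by the memoryless property, the waiting time $\Delta v_{z_i}$ from $v_{z_i}$ to the next atom of $\Psi_n$ in $(v_{z_i},\infty)$ is $\exp(n)$-distributed.

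For the conditional independence, I would fix $\cL$ and assume without loss of generality that $v_{z_1}<v_{z_2}$. Decompose $\Psi_n$ as the disjoint union of the independent Poisson processes $\Psi_n^{(1)}:=\Psi_n\cap (v_{z_1},v_{z_2}]$ and $\Psi_n^{(2)}:=\Psi_n\cap (v_{z_2},\infty)$. The event that the two intervals $(v_{z_i},v_{z_i}+\Delta v_{z_i})$ are disjoint is precisely the event $\{\Psi_n^{(1)}\neq\emptyset\}$; on this event $\Delta v_{z_1}=\min\Psi_n^{(1)}-v_{z_1}$ is a function of $\Psi_n^{(1)}$ alone, whereas $\Delta v_{z_2}=\min\Psi_n^{(2)}-v_{z_2}$ is a function of $\Psi_n^{(2)}$ alone, so the two increments are conditionally independent. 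The whole argument is essentially routine; the only mild care needed is running Lemma \ref{lem:8} in reverse, which is not a genuine obstacle.
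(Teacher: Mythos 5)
Your proof is correct and follows essentially the route the paper has in mind: the paper states Lemma \ref{lem:26} as a consequence of ``basic properties of the one dimensional Poisson process $\eta^{-1}(\Pi^2_{n,z_1,z_2}\setminus\{z_1,z_2\})$ along with the independence of $\Pi^2_{n,z_1,z_2}$ and $\cL$,'' and your argument simply makes this explicit via Slivnyak's theorem, the pullback (reverse Lemma \ref{lem:8}) step, memorylessness, and the independence of the restrictions of the conditional Poisson process to $(v_{z_1},v_{z_2}]$ and $(v_{z_2},\infty)$. No gaps.
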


We now define the event
\begin{equation}
  \label{eq:81}
  F_\delta=\{\Delta v_{z_1}\leq \delta\} \cap \{\Delta v_{z_2}\leq \delta\} \cap A_\delta,
\end{equation}
and for this event, we have the following lemma.
\begin{lemma}
  \label{lem:27}
  For any fixed $z_1,z_2\in R$ with $|z_1-z_2|> \delta^{1/5}$, the variables $\Delta v_{z_1},\Delta v_{z_2}$ are conditionally independent given $F_\delta=\{\Delta v_{z_1}\leq \delta\} \cap \{\Delta v_{z_2}\leq \delta\} \cap A_\delta$. Further,
  \begin{equation}
    \label{eq:82}
   \PP(F_\delta^c)\leq 2\PP(\exp(n)\geq \delta)+ \PP(A_\delta^c).  
  \end{equation}
\end{lemma}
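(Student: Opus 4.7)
\emph{Proof plan.} The bound on $\PP(F_\delta^c)$ is an immediate union bound on the three defining events, using that each $\Delta v_{z_i}$ is marginally $\exp(n)$ by Lemma~\ref{lem:26}.

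For the conditional independence, the plan is to reduce to the conditional independence statement in Lemma~\ref{lem:26} by verifying that on $F_\delta$ the disjointness hypothesis $(v_{z_1},v_{z_1}+\Delta v_{z_1})\cap(v_{z_2},v_{z_2}+\Delta v_{z_2})=\emptyset$ holds automatically. Applying Lemma~\ref{lem:19} at both $z_1$ and $z_2$, on $A_\delta=\cE_\delta^{z_1}\cap\cE_\delta^{z_2}$ the image $\eta([v_{z_i}-\delta,v_{z_i}+\delta])$ is contained in the box $z_i+[-\delta^{1/5},\delta^{1/5}]^2$ for $i=1,2$. Provided $\delta$ is sufficiently small that the hypothesis $|z_1-z_2|>\delta^{1/5}$ forces these two boxes to be disjoint, we conclude (assuming WLOG $v_{z_1}<v_{z_2}$) that $v_{z_2}>v_{z_1}+\delta$; combined with $\Delta v_{z_i}\le\delta$ on $F_\delta$, this yields the required disjointness of the two half-open intervals.

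To extract independence, I would condition on $\cL$. By Lemma~\ref{lem:25}, $A_\delta\in\sigma(\cL)$ is independent of $\Pi_{n,z_1,z_2}^2$. Given $\cL$, the process $\eta^{-1}(\Pi_{n,z_1,z_2}^2\setminus\{z_1,z_2\})$ is a rate-$n$ Poisson process on $\RR$ (Slivnyak together with independence of $\Pi_n^2$ and $\cL$), and on $A_\delta$ the truncated variables $\Delta v_{z_i}\wedge\delta$ depend only on its restrictions to the disjoint slices $(v_{z_i},v_{z_i}+\delta]$. Hence they are conditionally independent given $\cL$, each marginally equal in law to $\exp(n)$ restricted to $[0,\delta]$. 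Averaging over $\cL$, the conditional law of $(\Delta v_{z_1},\Delta v_{z_2})$ given $F_\delta$ factors as the product of two copies of $\exp(n)$ conditioned to lie in $[0,\delta]$, which in particular gives the claimed conditional independence.

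The only step that needs care is the geometric disjointness argument; the stated quantitative hypothesis $|z_1-z_2|>\delta^{1/5}$ suffices up to a harmless multiplicative constant in the box-size from Lemma~\ref{lem:19}, and this is harmless for the intended applications where $z_1,z_2$ are fixed and $\delta\to 0$. Everything else is routine bookkeeping building on Lemmas~\ref{lem:19}, \ref{lem:25}, and \ref{lem:26}.
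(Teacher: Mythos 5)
Your proposal is correct and follows essentially the same route as the paper: the tail bound is the same union bound using the marginal $\exp(n)$ law from Lemma~\ref{lem:26}, and the conditional independence is obtained exactly as the paper does, by combining the independence of $A_\delta$ from $\Pi^2_{n,z_1,z_2}$ (Lemma~\ref{lem:25}) with Lemma~\ref{lem:26}, the geometric content of $A_\delta$ coming from Lemma~\ref{lem:19} guaranteeing that the intervals $(v_{z_i},v_{z_i}+\Delta v_{z_i})$ are disjoint on $F_\delta$. You simply spell out the disjoint-slice/Slivnyak bookkeeping that the paper leaves implicit, and the multiplicative-constant caveat you flag in the box-separation step is equally present (and equally harmless) in the paper's own argument.
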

\begin{proof}
  We know that $A_\delta$ is independent of $\Pi^2_{n,z_1,z_2}$ and combining this with Lemma \ref{lem:26} and the definition of $A_\delta$, we obtain the first statement. The second statement is just a simple union bound using that $\Delta v_{z_1},\Delta v_{z_2}$ are marginally distributed as $\exp(n)$ variables.
\end{proof}
We note that in the expression in \eqref{eq:82}, the first term goes to $0$ as $n\rightarrow \infty$ for any fixed $\delta$, while the latter term goes to $0$ as $\delta\rightarrow 0$. We are now in position to estimate \eqref{eq:79}, and this is the heart of the proof.
\begin{figure}
  \centering
  \includegraphics[width=0.4\textwidth]{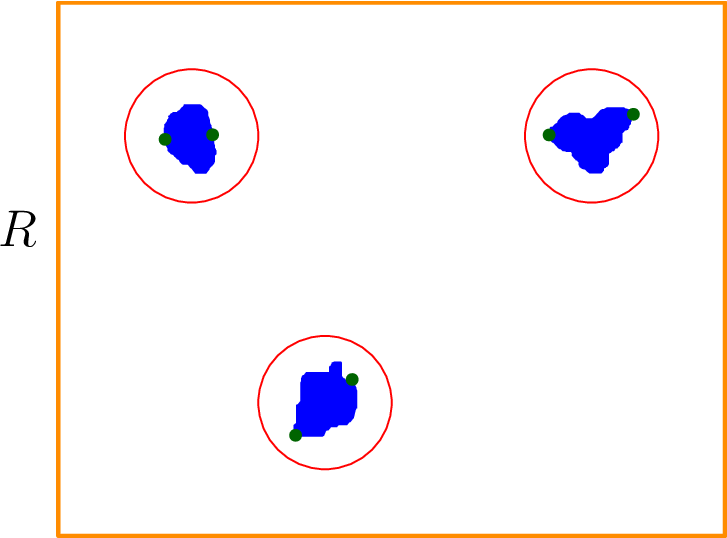}
  \caption{\textbf{The role of Lemma \ref{lem:19}:} Lemma \ref{lem:19} implies that $\eta\lvert_{[v_{z_i}-\delta, v_{z_i}+\delta]}$ are roughly independent for well-separated points $z_i$ and small enough $\delta$. Here, $z_1,z_2,z_3$ are centers of the three circles, and the blue regions are the graphs of  $\eta\lvert_{[v_{z_i}-\delta, v_{z_i}+\delta]}$, and these should be thought of as independent curves if the circles are small enough. This is used in the proofs of Lemmas \ref{lem:28} and \ref{lem:18}.}
  \label{fig:1}
\end{figure}
\begin{lemma}
  \label{lem:28}
  We have
  \begin{displaymath}
   \EE
    \left[
    \nu_{n,\Pi^2_{n,z_1,z_2}}(\{z_1\})\nu_{n,\Pi^2_{n,z_1,z_2}}(\{z_2\})
      \right]\leq  2n^{-2}\left(\EE
      \In^5_{0}(\eta,1)+\sqrt{2}\EE[\In^5_{0}(\eta,1)^2]^{1/2}\sqrt{\PP(A_\delta^c)}\right)^2+n^{-2}o(1),
    \end{displaymath}
  where we use $o(1)$ to denote a term that goes to $0$ when $n$ is sent to $\infty$ first, followed by sending $\delta$ to $0$.
\end{lemma}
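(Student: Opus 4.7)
The plan is to split $\EE[X_1 X_2]$ according to the event $F_\delta = A_\delta \cap \{\Delta v_{z_1}\le\delta\}\cap\{\Delta v_{z_2}\le\delta\}$, writing
\[
\EE[X_1 X_2] \;=\; \EE[X_1 X_2;F_\delta] \;+\; \EE[X_1 X_2;F_\delta^c],
\]
where $X_i:=\nu_{n,\Pi^2_{n,z_1,z_2}}(\{z_i\})=\In^5_{v_{z_i}}(\eta,\Delta v_{z_i})$. The main term is handled via the re-sampling provided by Lemma \ref{lem:19}: on $F_\delta$, $X_i$ coincides with $Y_i := \In^5_0(\eta^{\mathrm{re}}_{z_i},\Delta v_{z_i})$, so $\EE[X_1 X_2;F_\delta]\le\EE[Y_1 Y_2]$. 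We then aim to factorize $\EE[Y_1 Y_2]$ into a product approximating $(\EE Y_1)(\EE Y_2) = n^{-2}(\EE\In^5_0(\eta,1))^2$, by Lemma \ref{lem:22} and Lemma \ref{lem:5}.

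The factorization rests on three independence inputs: (a) Proposition \ref{prop:3} yields $\cL^{\mathrm{re}}_{z_1}\perp\cL^{\mathrm{re}}_{z_2}$ (and we may take them conditionally independent given $\cL$ in the natural coupling); (b) the Poisson process $\Pi^2_{n,z_1,z_2}$ is independent of all the landscapes; and (c) conditional on $\cL$, the waiting times $\Delta v_{z_1}$ and $\Delta v_{z_2}$ are independent, as $\eta^{-1}(\Pi^2_{n,z_1,z_2}\setminus\{z_1,z_2\})$ is a rate-$n$ Poisson process on $\RR$ independent of $\cL$ by the argument of Lemma \ref{lem:8}. Conditioning first on $\cL$ and then on the $\cL^{\mathrm{re}}$'s produces $\EE[Y_1 Y_2]=\EE\bigl[\EE[Y_1|\cL]\,\EE[Y_2|\cL]\bigr]$. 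The inner factors $\EE[Y_i|\cL]$ are not deterministic (since each $\cL^{\mathrm{re}}_{z_i}$ is coupled to $\cL$ in a small neighbourhood of $z_i$), so we decompose $\EE[Y_i|\cL]= \EE Y_i + (\EE[Y_i|\cL]-\EE Y_i)$ and control the fluctuation term via Cauchy--Schwarz together with the moment bound $\EE Y_i^2\le 2n^{-2}\EE[\In^5_0(\eta,1)^2]$ coming from Lemmas \ref{lem:5} and \ref{lem:22}. This is what introduces the cross term $\sqrt{2}\,\EE[\In^5_0(\eta,1)^2]^{1/2}\sqrt{\PP(A_\delta^c)}$ inside the squared bracket.

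The complementary term $\EE[X_1X_2;F_\delta^c]$ is handled by the bound $X_1X_2\le\tfrac12(X_1^2+X_2^2)$ followed by Cauchy--Schwarz, using the moment estimate $\EE X_i^2\le 2n^{-2}\EE[\In^5_0(\eta,1)^2]$ (again via Lemmas \ref{lem:5} and \ref{lem:22}). Combined with Lemma \ref{lem:27}'s estimate $\PP(F_\delta^c)\le 2e^{-n\delta}+\PP(A_\delta^c)$, the $e^{-n\delta}$ piece is swept into the $n^{-2}o(1)$ remainder (for fixed $\delta$ as $n\to\infty$), while the $\PP(A_\delta^c)$ piece refines the correction term already appearing in the main step.

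The main obstacle is precisely the coupling between $\cL$ and the re-sampled $\cL^{\mathrm{re}}_{z_i}$: although the two re-sampled landscapes are unconditionally independent, each remains correlated with $\cL$ (they only agree on a neighbourhood of $z_i$), so $Y_i$ does not split as a function of entirely independent pieces of randomness. Working through the residual fluctuations of $\EE[Y_i|\cL]$ via Cauchy--Schwarz and the second-moment bound is the delicate step that ultimately produces both the overall prefactor of $2$ and the specific form of the $\sqrt{\PP(A_\delta^c)}$ correction in the stated inequality.
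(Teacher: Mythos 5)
Your skeleton matches the paper's at the top level — split on $F_\delta$, use Lemma \ref{lem:19} and the re-sampled curves for decorrelation, kill the $F_\delta^c$ contribution by moment bounds together with Lemma \ref{lem:27} (your handling of that piece is fine in spirit, though ``AM--GM then Cauchy--Schwarz'' needs fourth rather than second moments; these are finite by Lemma \ref{lem:22}, so this is cosmetic, and under the stated $o(1)$ convention the $\PP(A_\delta^c)$ part of $\PP(F_\delta^c)$ may simply be absorbed into $n^{-2}o(1)$). The genuine gap is in your factorization of the main term. Once you drop the indicator of $F_\delta$ and bound $\EE[X_1X_2;F_\delta]\le\EE[Y_1Y_2]$ with $Y_i=\In^5_0(\eta^{\mathrm{re}}_{z_i},\Delta v_{z_i})$, the identity $\EE[Y_1Y_2]=\EE\bigl[\EE[Y_1\mid\cL]\,\EE[Y_2\mid\cL]\bigr]$ requires conditional independence of $(\cL^{\mathrm{re}}_{z_1},\Delta v_{z_1})$ and $(\cL^{\mathrm{re}}_{z_2},\Delta v_{z_2})$ given $\cL$, and neither input you cite supplies it. Proposition \ref{prop:3} gives only \emph{unconditional} independence of $\cL^{\mathrm{re}}_{z_1},\cL^{\mathrm{re}}_{z_2}$; your parenthetical ``we may take them conditionally independent given $\cL$'' is an unproved assumption about the coupling. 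More seriously, your claim (c) that $\Delta v_{z_1},\Delta v_{z_2}$ are independent given $\cL$ is false: they are waiting times of the \emph{same} rate-$n$ Poisson process after the points $v_{z_1},v_{z_2}$, and on the event that the waiting intervals overlap one has the deterministic relation $\Delta v_{z_1}=(v_{z_2}-v_{z_1})+\Delta v_{z_2}$ (when $v_{z_1}<v_{z_2}$). This is exactly why Lemma \ref{lem:26} includes the disjoint-interval condition and why the paper retains the restriction to $F_\delta$ until the true waiting times have been replaced by genuinely i.i.d.\ $\exp(n)$ variables $X_n^1,X_n^2$.

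Second, even granting your factorization, the step that is supposed to produce the stated bound — writing $\EE[Y_i\mid\cL]=\EE Y_i+(\EE[Y_i\mid\cL]-\EE Y_i)$ and controlling the fluctuation ``by Cauchy--Schwarz'' — has no stated connection to $\PP(A_\delta^c)$ and cannot yield the required smallness. What you would get is the covariance $\mathrm{Cov}(\EE[Y_1\mid\cL],\EE[Y_2\mid\cL])$, which Cauchy--Schwarz bounds by the product of the standard deviations of the conditional expectations; each $\cL^{\mathrm{re}}_{z_i}$ is pinned to $\cL$ near $z_i$, so $\EE[Y_i\mid\cL]$ fluctuates on the scale of its mean $n^{-1}\EE\In^5_0(\eta,1)$, and the resulting error is $O(n^{-2})$ with a constant that does not vanish as $\delta\to0$ — useless for Proposition \ref{lem:10}. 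In the paper the correction arises from a different mechanism: on $F_\delta$ one substitutes the i.i.d.\ exponentials, swaps $\eta$ for $\eta^{\mathrm{re}}_{z_i}$ exactly on $A_\delta$, drops the indicator, and factorizes using the \emph{unconditional} independence of $\eta^{\mathrm{re}}_{z_1},\eta^{\mathrm{re}}_{z_2}$ and of $X_n^1,X_n^2$, obtaining the square of $\EE[\In^5_0(\eta^{\mathrm{re}}_{z_1},X_n^1)\mid X_n^1\le\delta]$; the term $\sqrt{2}\,\EE[\In^5_0(\eta,1)^2]^{1/2}\sqrt{\PP(A_\delta^c)}$ and the prefactor $2$ (from $\PP(X_n^1\le\delta)^{-2}\le 2$ for large $n$) appear only when converting this single factor back to the original curve $\eta$ by splitting on $A_\delta$ and applying Cauchy--Schwarz on $A_\delta^c$, then invoking Lemmas \ref{lem:5} and \ref{lem:22}. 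To repair your argument, do not condition on $\cL$: keep the $F_\delta$ restriction until the i.i.d.\ exponentials are in place, and factorize unconditionally.
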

\begin{proof}
  With $X_n^1,X_n^2$ denoting independent $\exp(n)$ random variables, for any $z_1,z_2$ with $|z_1-z_2|>\delta^{1/5}$, we obtain
  \begin{align}
    \label{eq:26}
    &\EE
    \left[
    \nu_{n,\Pi^2_{n,z_1,z_2}}(\{z_1\})\nu_{n,\Pi^2_{n,z_1,z_2}}(\{z_2\})
      \right]\nonumber\\
    &=\EE
    \left[
    \nu_{n,\Pi^2_{n,z_1,z_2}}(\{z_1\})\nu_{n,\Pi^2_{n,z_1,z_2}}(\{z_2\});F_\delta
               \right]+\EE
    \left[
    \nu_{n,\Pi^2_{n,z_1,z_2}}(\{z_1\})\nu_{n,\Pi^2_{n,z_1,z_2}}(\{z_2\});F_\delta^c
      \right]\nonumber\\
    &\leq \EE
    \left[
    \nu_{n,\Pi^2_{n,z_1,z_2}}(\{z_1\})\nu_{n,\Pi^2_{n,z_1,z_2}}(\{z_2\}); F_\delta
      \right]+\left(\PP(F_\delta^c)\EE \nu_{n,\Pi^2_{n,z_1}}(\{z_1\})^3\EE \nu_{n,\Pi^2_{n,z_2}}(\{z_2\})^3\right)^{1/3}\nonumber\\
    &=\EE
    \left[
   \In^5_{v_{z_1}}(\eta,X_n^1)\In^5_{v_{z_2}}(\eta,X_n^2)
      ; X_n^1,X_n^2\leq \delta,A_\delta
      \right]+\PP(F_\delta^c)^{1/3}(\EE \nu_{n,\Pi^2_{n,0}}(\{0\})^3)^{2/3}.
  \end{align}
  To obtain the first term in the last expression above, we use  (Lemma \ref{lem:26}) that $\Delta v_{z_1},\Delta v_{z_2}$ are marginally $\exp(n)$, even conditional on $\cL$, along with the fact that $\Delta v_{z_1}, \Delta v_{z_2}$ are conditionally independent given $F_\delta$ (Lemma \ref{lem:27}). On the other hand, the second term is obtained by using the translation invariance of the directed landscape (Lemma \ref{lem:6}). We now note that by Lemma \ref{lem:22}, $\EE \nu_{n,\Pi^2_{n,0}}(\{0\})^3=\EE \In^5_0(\eta, X_n)^3=6n^{-3}\EE\In^5_0(\eta,1)^3$. Using this along with the fact (Lemma \ref{lem:27}) that $\PP(F_\delta^c)\rightarrow 0$ when $n\rightarrow \infty$ followed by sending $\delta$ to $0$, we obtain that $\PP(F_\delta^c)^{1/3}(\EE \nu_{n,\Pi^2_{n,0}}(\{0\})^3)^{2/3}=n^{-2}o(1)$. With this in mind, we write
  \begin{align}
    \label{eq:40}
    \EE
    \left[
    \nu_{n,\Pi^2_{n,z_1,z_2}}(\{z_1\})\nu_{n,\Pi^2_{n,z_1,z_2}}(\{z_2\})
      \right]&\leq  \EE
      \left[
      \In^5_{v_{z_1}}(\eta,X_n^1)\In^5_{v_{z_2}}(\eta,X_n^2)
      ; X_n^1,X_n^2\leq \delta,A_\delta\right]+n^{-2}o(1)\nonumber\\
    &=   \EE
      \left[
       \In^5_{0}(\eta^\mathrm{re}_{z_1},X_n^1)\In^5_{0}(\eta^\mathrm{re}_{z_2},X_n^2)
      ; X_n^1,X_n^2\leq \delta,A_\delta\right]+n^{-2}o(1)\nonumber\\
     &\leq  \EE
      \left[
      \In^5_{0}(\eta^\mathrm{re}_{z_1},X_n^1)\In^5_{0}(\eta^\mathrm{re}_{z_2},X_n^2)
       \lvert X_n^1,X_n^2\leq \delta\right]+n^{-2}o(1),
  \end{align}
  where to obtain the second line, we use the definition of $A_\delta$ in \eqref{eq:69}. Using the independence of $\eta_{z_1}^\mathrm{re}$ and $\eta_{z_2}^\mathrm{re}$, we now obtain
  \begin{align}
    \label{eq:32}
     &\EE
      \left[
      \In^5_{0}(\eta^\mathrm{re}_{z_1},X_n^1)\In^5_{0}(\eta^\mathrm{re}_{z_2},X_n^2)
    \lvert X_n^1,X_n^2\leq \delta\right]\nonumber\\
    &= \EE
      \left[
       \In^5_{0}(\eta^\mathrm{re}_{z_1},X_n^1)
      \lvert X_n^1\leq \delta\right]\EE
      \left[
      \In^5_{0}(\eta^\mathrm{re}_{z_2},X_n^2)
      \lvert X_n^2\leq \delta\right]\nonumber\\
    &=  \EE
      \left[
      \In^5_{0}(\eta^\mathrm{re}_{z_1},X_n^1)
      \lvert X_n^1\leq \delta\right]^2\nonumber\\
    &= \EE
      \left[
       \In^5_{0}(\eta^\mathrm{re}_{z_1},X_n^1)
      ; X_n^1\leq \delta\right]^2\PP(X_n^1\leq \delta)^{-2}\nonumber\\
    &\leq  \left(\EE
      \left[
       \In^5_{0}(\eta^\mathrm{re}_{z_1},X_n^1)
      ; X_n^1\leq \delta, A_\delta\right]+\EE
      \left[
       \In^5_{0}(\eta^\mathrm{re}_{z_1},X_n^1)^2\right]^{1/2}\sqrt{\PP(A_\delta^c)}\right)^2\PP(X_n^1\leq \delta)^{-2}\nonumber\\
    &=\left(\EE
      \left[
      \In^5_{v_{z_1}}(\eta,X_n^1); X_n^1\leq \delta, A_\delta
      \right]+\EE
      \left[
       \In^5_{0}(\eta,X_n^1)^2\right]^{1/2}\sqrt{\PP(A_\delta^c)}\right)^2\PP(X_n^1\leq \delta)^{-2}\nonumber\\
    &\leq 2\left(\EE
      \left[
       \In^5_{v_{z_1}}(\eta,X_n^1) \right]+\EE[\In^5_{0}(\eta,X_n^1)^2]^{1/2}\sqrt{\PP(A_\delta^c)}\right)^2\nonumber\\
    &=2\left(\EE
      \left[
        \In^5_{0}(\eta,X_n^1)
      \right]+\EE[\In^5_{0}(\eta,X_n^1)^2]^{1/2}\sqrt{\PP(A_\delta^c)}\right)^2\nonumber\\
    &= 2n^{-2}\left(\EE
      \left[
        \In^5_{0}(\eta,1)
      \right]+\sqrt{2}\EE[\In^5_{0}(\eta,1)^2]^{1/2}\sqrt{\PP(A_\delta^c)}\right)^2
  \end{align}
  where the factor of $2$ is obtained by noting that $\PP(X_n^1\leq \delta)\geq 1/\sqrt{2}$ for all $n$ large enough compared to $\delta$. Moreover, the penultimate line uses Lemma \ref{lem:5}, while the last line uses Lemma \ref{lem:22}.
 Combining the above with \eqref{eq:40} completes the proof of the lemma. 
\end{proof}
We now use the above lemmas to finish the proof of Proposition \ref{lem:10}.
\begin{proof}[Proof of Proposition \ref{lem:10}]
 
  In view of Lemma \ref{lem:23}, it suffices to show that $\Var(\nu_n(R))\rightarrow 0$ as $n\rightarrow \infty$. This is equivalent to showing that
  \begin{equation}
    \label{eq:84}
    \limsup_{n\rightarrow \infty} \EE[\sum_{p,q\in \Pi_n^2\lvert_R,p\neq q}\nu_n(\{p\})\nu_n(\{q\})]\leq (\EE\In^5_0(\eta,1)\leb(R))^2,
  \end{equation}
  and this is what we shall show.

 By Lemma \ref{lem:22}, we know $\EE
        \In^5_{0}(\eta,X_n^1)=n^{-1} \EE  \In^5_{0}(\eta,1)$ and $\EE
        [\In^5_{0}(\eta,X_n^1)^2]^{1/2}=\sqrt{2}n^{-1} \EE  [\In^5_{0}(\eta,1)^2]^{1/2}$ and thus by using Lemma \ref{lem:28} along with Campbell's theorem, we can write for some constant $C$,
  \begin{align}
    \label{eq:23}
    &\EE[\sum_{p,q\in \Pi_n^2\lvert_R,p\neq q}\nu_n(\{p\})\nu_n(\{q\})]=n^2\int_{z_1,z_2\in R}\EE
    \left[
    \nu_{n,\Pi^2_{n,z_1,z_2}}(\{z_1\})\nu_{n,\Pi^2_{n,z_1,z_2}}(\{z_2\})
      \right]d\leb(z_1,z_2)\nonumber\\
    &\leq C\delta^{2/5} n^2\EE\In^5_{0}(\eta,X_n^1)^2+n^2\int_{z_1,z_2\in R,|z_1-z_2|\geq \delta^{1/5}} \EE
    \left[
    \nu_{n,\Pi^2_{n,z_1,z_2}}(\{z_1\})\nu_{n,\Pi^2_{n,z_1,z_2}}(\{z_2\})
      \right]d\leb(z_1,z_2)\nonumber\\
    &\leq C\delta^{2/5} \EE\In^5_{0}(\eta,1)^2+2\leb(R)^2\left(\EE \In^5_{0}(\eta,1)+ \sqrt{2}\EE[\In^5_{0}(\eta,1)^2]^{1/2}\sqrt{\PP(A_\delta^c)} \right)^2+o(1).
  \end{align}
The first term in the second line above is obtained by using the Cauchy-Schwarz inequality along with the fact that $\EE[ \nu_{n,\Pi^2_{n,z_1,z_2}}(\{z_i\})^2]=\EE\In^5_{0}(\eta,1)^2$ for $i\in \{1,2\}$. By Lemma \ref{lem:25}, we know that $\PP(A_\delta^c)\rightarrow 0$ as $\delta\rightarrow 0$ and since $\delta$ was arbitrary, we obtain %
      \eqref{eq:84}, and this completes the proof.
\end{proof}

We now use measure theoretic arguments to upgrade Proposition \ref{lem:10}, which proves convergence of $\nu_n(R)$ for a single rectangle $R$, into the weak convergence of the entire measure $\nu_n$. Recall that random measures $\psi_n$ on a Polish space are said to converge weakly in probability to a measure $\psi$ if for each bounded continuous function $f$, $\int fd\psi_n$ converges in probability to $\int f d\psi$, and we note that this is equivalent to the statement that $d(\psi_n,\psi)\rightarrow 0$ in probability as $n\rightarrow \infty$, where $d$, the L\'evy-Prokhorov metric, metrizes the weak topology on the space of measures (see for e.g.\ \cite[Section 6]{Bil13}). %
Also, recall that random variables $X_n$ converge to a random variable in $X$ in probability if and only if any deterministic sequence $n_i$ has a deterministic subsequence along which the $X_n$ converge almost surely to $X$. We are now ready to state the lemma.
\begin{lemma}
  \label{lem:11}
  For any fixed closed rectangle $K\subseteq \RR^2$, the measures $\nu_n\lvert_K$ converge weakly in probability to $\EE \In^5_0(\eta,1)$ times the Lebesgue measure on $K$.
\end{lemma}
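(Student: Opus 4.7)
The plan is to upgrade the rectangle-by-rectangle convergence in probability from Proposition \ref{lem:10} to weak convergence in probability via a routine approximation argument. First, I will extract two easy consequences of Proposition \ref{lem:10}. Applying it to $R = K$ shows that the total masses $\nu_n(K)$ converge in probability to $\mu(K) := \EE \In^5_0(\eta,1) \leb(K)$, so in particular the sequence $\{\nu_n(K)\}_n$ is bounded in probability. Applying it separately to the closure and the interior of any rectangle $R \subseteq K$ (both giving the same limit, since Lebesgue measure is non-atomic on lines) and taking differences, we obtain $\nu_n(\partial R) \to 0$ in probability. By the sandwich $\nu_n(R^\circ) \leq \nu_n(R) \leq \nu_n(\bar R)$, this also yields $\nu_n(R) \to \mu(R)$ in probability for any half-open rectangle $R$, not just open or closed ones.

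Next, I will fix a bounded continuous $f: K \to \RR$; since $K$ is compact metrizable, weak convergence in probability of $\nu_n|_K$ to $\mu$ is equivalent to $\int f d\nu_n \to \int f d\mu$ in probability for every such $f$. Given $\epsilon > 0$, I will partition $K$ into finitely many pairwise disjoint half-open rectangles $R_1, \ldots, R_N$ each of diameter at most $\epsilon$, choose a point $p_i \in R_i$, and define the step function $f_\epsilon = \sum_{i=1}^N f(p_i) \mathbf{1}_{R_i}$. By uniform continuity of $f$ on $K$, $\|f - f_\epsilon\|_\infty \to 0$ as $\epsilon \to 0$. By the preceding paragraph applied to each of the finitely many $R_i$, we have $\int f_\epsilon d\nu_n = \sum_i f(p_i) \nu_n(R_i) \to \sum_i f(p_i) \mu(R_i) = \int f_\epsilon d\mu$ in probability as $n \to \infty$.

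To conclude, I will apply the triangle inequality
\begin{displaymath}
\left|\int f d\nu_n - \int f d\mu\right| \leq \|f - f_\epsilon\|_\infty \nu_n(K) + \left|\int f_\epsilon d\nu_n - \int f_\epsilon d\mu\right| + \|f - f_\epsilon\|_\infty \mu(K),
\end{displaymath}
sending $n \to \infty$ first (killing the middle term in probability) and then $\epsilon \to 0$ (killing the two boundary terms using that $\nu_n(K)$ is bounded in probability and $\mu(K)$ is deterministic and finite). I do not anticipate any serious obstacle here: all the heavy lifting was done in Proposition \ref{lem:10}, and the remainder is the standard upgrade of convergence on a generating class of rectangles to weak convergence, facilitated by the fact that the limiting measure $\mu$ is a constant multiple of Lebesgue measure and hence puts no mass on the boundaries of the approximating rectangles.
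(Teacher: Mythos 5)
Your proposal is correct, but it takes a genuinely different route from the paper. You verify the definition of weak convergence in probability directly: fix a bounded continuous $f$, approximate it uniformly by a step function over a fine partition of $K$ into half-open rectangles, transfer Proposition \ref{lem:10} to half-open rectangles by sandwiching $\nu_n(R^\circ)\leq\nu_n(R)\leq\nu_n(\bar R)$ (both extremes having the same limit since $\leb(\partial R)=0$), and close with a triangle inequality using tightness of $\nu_n(K)$. The paper instead works at the level of the measures: it uses the subsequence criterion for convergence in probability, extracts a deterministic subsequence along which the $\nu_n\lvert_K$ are a.s.\ tight and admit an a.s.\ weak limit $\nu^K$, and then identifies $\nu^K$ on the $\pi$-system of open rectangles by squeezing between rational closed and open rectangles via monotonicity and the Portmanteau lemma (with a diagonal argument to get a.s.\ convergence simultaneously for all rational rectangles). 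Your argument is more elementary and avoids the subsequence bookkeeping and Portmanteau entirely; the paper's approach has the advantage that the same template (subsequential a.s.\ weak limits plus Portmanteau on a generating class) is reused essentially verbatim in the proof of Proposition \ref{prop:2}, where the relevant sets are images $\eta(I)$ rather than rectangles and a direct Riemann-sum argument would be less convenient. One trivial point to tidy in your write-up: a closed rectangle $K$ cannot be partitioned into rectangles of the form $(a,b]\times(c,d]$ alone, so the pieces meeting the left/bottom edges of $K$ must be taken closed on those sides; since your sandwich argument applies to any product of intervals (its boundary still being Lebesgue-null), this changes nothing.
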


\begin{proof}
  It suffices to show that any deterministic sequence $\{n_i\}$ has a further deterministic subsequence $\{n'_j\}$ along which the measures $\nu_n\lvert_K$ converge a.s.\ weakly to $\EE \In^5_0(\eta,1)$ times the Lebesgue measure on $K$. We now fix a subsequence $\{n_i\}$. By Proposition \ref{lem:10}, we have $\nu_n(K)\rightarrow \EE \In^5_0(\eta,1)\leb(K)$ in probability and this in particular implies that along a deterministic subsequence $\{n'_j\}\subseteq \{n_i\}$, the $\nu_n\lvert_K$ are a.s.\ tight and thus admit a weak limit which we call $\nu^K$. Since open rectangles $R\subseteq K$ along with the empty set create a $\pi$-system generating the Borel $\sigma$-algebra on $K$, it suffices to show that almost surely, for all open rectangles $R\subseteq K$, we have $\nu^K(R)=\EE \In^5_0(\eta,1)\leb(R)$; indeed, this would characterize $\nu^K$ and thereby imply that almost surely, $\nu_{n_j'}\lvert_K$ converges weakly to $\EE \In^5_0(\eta,1)$ times the Lebesgue measure on $K$, and this would complete the proof.

  Now, we choose a sequence of closed rational rectangles $\underline{R}_k$ such that $\underline{R}_k$ increase to $R$ and open rational rectangles $\overline{R}_k$ such that $\overline{R}_k$ decrease to $R$ as $k\rightarrow \infty$. As a consequence of Proposition \ref{lem:10} and a diagonal argument to replace $\{n'_j\}$ by a further deterministic subsequence $\{n''_j\}$, we know that almost surely, for all rational closed rectangles $R'\subseteq K$, we simultaneously have $\lim_{j\rightarrow \infty} \nu_{n''_j} (R')=\EE \In^5_0(\eta,1)\leb(R')$. Using the above along with the monotonicity of measures and the Portmanteau lemma, we obtain that almost surely,
  \begin{equation}
    \label{eq:2}
          \EE \In^5_0(\eta,1)\leb(\underline{R}_k)= \limsup_{j\rightarrow \infty} \nu_{n''_j}(\underline{R}_k) \leq \nu^K(\underline{R}_k)\leq \nu^K(R)\leq \nu^K(\overline{R}_k)\leq \liminf_{j\rightarrow \infty} \nu_{n''_j}(\overline{R}_k) =\EE \In^5_0(\eta,1)\leb(\overline{R}_k).
        \end{equation}
        Finally, we send $k\rightarrow \infty$ to obtain that $\nu^K(R)=\EE \In^5_0(\eta,1)\leb(R)$ for every open rectangle $R\subseteq K$. This completes the proof.
      \end{proof}

We can now employ another measure theoretic argument to conclude the weak convergence of the measures $\mu_n$ themselves. This is executed in the following proposition, which we in fact consider to be a more natural statement that Theorem \ref{thm:3} itself.
\begin{proposition}
  \label{prop:2}
 For any $M>0$, the measures $\mu_n\lvert_{[-M,M]}$ converge weakly in probability to $\EE \In^5_0(\eta,1)$ times the Lebesgue measure on $[-M,M]$. 
\end{proposition}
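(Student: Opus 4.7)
The plan is to deduce Proposition \ref{prop:2} from the two-dimensional Lemma \ref{lem:11} via the pushforward relation $\nu_n=\eta_*\mu_n$. Since $\Pi_n^1$ is independent of $\cL$ and the multi-valued locus $\cT_\uparrow\cup \cI_\downarrow$ has zero Lebesgue measure (Lemma \ref{lem:21}), the points of $\Pi_n^1$ almost surely avoid $\eta^{-1}(\cT_\uparrow\cup \cI_\downarrow)$ and each $p\in \Pi_n^1$ is the unique preimage under $\eta$ of its image $z=\eta(p)\in \Pi_n^2$. As a result, for every Borel set $I\subseteq \RR$, the bijection $p\leftrightarrow \eta(p)$ restricts to a bijection between $\Pi_n^1\cap I$ and $\Pi_n^2\cap \eta(I)$, which yields the a.s.\ identity $\mu_n(I)=\nu_n(\eta(I))$.

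The first substantive step is to prove, for any fixed closed interval $J=[a,b]\subseteq [-M,M]$, the convergence $\mu_n(J)\to \EE\In^5_0(\eta,1)(b-a)$ in probability. For $L>0$, set $K_L=[-L,L]^2$ and $\Omega_L=\{\eta([-M,M])\subseteq K_L\}$; by continuity of $\eta$ on $[-M,M]$, $\PP(\Omega_L)\to 1$ as $L\to\infty$. Lemma \ref{lem:11} gives $\nu_n|_{K_L}\to \EE\In^5_0(\eta,1)\leb|_{K_L}$ weakly in probability, so along any prescribed subsequence a further subsequence $\{n'_j\}$ delivers a.s.\ weak convergence. On the intersection of that a.s.\ event with $\Omega_L$, the random compact set $\eta(J)\subseteq K_L$ satisfies $\leb(\partial \eta(J))=0$ by Lemma \ref{lem:12}, so the Portmanteau lemma applied pathwise yields $\nu_{n'_j}(\eta(J))\to \EE\In^5_0(\eta,1)\leb(\eta(J))=\EE\In^5_0(\eta,1)(b-a)$. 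Combined with $\mu_{n'_j}(J)=\nu_{n'_j}(\eta(J))$ and the freedom to take $L$ arbitrarily large, this forces $\mu_n(J)\to \EE\In^5_0(\eta,1)(b-a)$ in probability.

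The final step upgrades convergence on single intervals to weak convergence of the measures, following the measure-theoretic template used in the proof of Lemma \ref{lem:11}. Given any subsequence, a diagonal extraction combined with the previous step applied to the countable collection of closed rational subintervals of $[-M,M]$ produces a further subsequence along which, almost surely, $\mu_{n''_j}(I)\to \EE\In^5_0(\eta,1)\leb(I)$ simultaneously for every such $I$. In particular $\mu_{n''_j}([-M,M])\to 2M\EE\In^5_0(\eta,1)$ a.s., so $\{\mu_{n''_j}|_{[-M,M]}\}$ is a.s.\ tight and admits subsequential weak limits; sandwiching an arbitrary subinterval between closed and open rational intervals and invoking the Portmanteau inequalities as in \eqref{eq:2} shows that any such limit equals $\EE\In^5_0(\eta,1)\leb|_{[-M,M]}$ on the generating $\pi$-system of intervals, and hence everywhere. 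Since the initial subsequence was arbitrary, $\mu_n|_{[-M,M]}$ converges weakly in probability to $\EE\In^5_0(\eta,1)\leb|_{[-M,M]}$, as desired. The main delicacy is the pathwise Portmanteau application to the random set $\eta(J)$ in the middle step, which is legitimate precisely because Lemma \ref{lem:12} guarantees $\leb(\partial \eta(J))=0$ on a full-probability event.
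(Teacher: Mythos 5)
Your proposal is correct and follows essentially the same route as the paper: transfer Lemma \ref{lem:11} through the identity $\mu_n(I)=\nu_n(\eta(I))$, use Lemma \ref{lem:12} to get $\leb(\partial\eta(I))=0$ so the Portmanteau lemma applies pathwise, and then run the subsequence/tightness/$\pi$-system sandwiching argument as in \eqref{eq:2}. The only difference is that you spell out two points the paper leaves implicit, namely the a.s.\ injectivity of $\eta$ on the Poisson points justifying $\mu_n(I)=\nu_n(\eta(I))$ and the localization of $\eta([-M,M])$ into a large fixed rectangle $K_L$ before invoking Lemma \ref{lem:11}, both of which are welcome clarifications rather than deviations.
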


\begin{proof}
Again, it suffices to show that any deterministic sequence $\{n_i\}$ has a further deterministic subsequence $\{n'_j\}$ along which the measures $\mu_n\lvert_{[-M,M]}$ converge a.s.\ weakly to $\EE \In^5_0(\eta,1)$ times the Lebesgue measure on $[-M,M]$. We now fix a sequence $\{n_i\}$ and furnish the needed subsequence. By Lemma \ref{lem:12}, we know that almost surely, simultaneously for all intervals $I\subseteq \RR$, $\dim \partial \eta(I)=4/3$ and this in particular implies that $\leb (\partial \eta(I))=0$, and thus by using the Portmanteau lemma along with Lemma \ref{lem:11}, we obtain that for a deterministic subsequence $\{n'_j\}\subseteq \{n_i\}$, $\mu_{n'_j} (I)=\nu_{n'_j}( \eta(I) )\rightarrow  \EE \In^5_0(\eta,1) \leb(I)$ a.s.\ as $j\rightarrow \infty$. Thus we obtain that $\mu_{n'_j} ( [-M,M])$ a.s.\ converges, and as a consequence, we have that $\mu_{n'_j}\lvert_{[-M,M]}$ are a.s.\ tight and thus admit a weak limit which we call $\mu^M$. The rest of the argument is the same as the one in Lemma \ref{lem:11} and thus we only give a sketch. We know that open intervals $J\subseteq [-M,M]$ along with the empty set form a $\pi$-system generating the Borel $\sigma$-algebra and we approximate each such open interval by rational closed intervals from in the inside and rational open intervals from the outside. We then use the Portmanteau lemma long with the monotonicity of measures as in \eqref{eq:2} to obtain that $\mu^M(J)=\EE\In^5(\eta,1) \leb(J)$, thereby characterizing $\mu^M$ and completing the proof.
\end{proof}
\begin{proof}[Proof of Theorem \ref{thm:3}]
  Note that the sum in \eqref{eq:3} is just $\mu_n(I)$ and since $\leb(\partial I)=0$, the required result follows by using Proposition \ref{prop:2} along with the Portmanteau lemma.
\end{proof}
\noindent The aim of the remainder of the section is to show that $\eta$ is not H\"older $1/5$ continuous with respect to the intrinsic metric.

\subsection{Proof of Theorem \ref{thm:5}}
\label{sec:proof2}
The proof of Theorem \ref{thm:5} again crucially uses the independence coming from Lemma \ref{lem:19}. However, in order to be able to use this independence, we do need to argue that the Peano curve does have positive probability of taking unusually large values, and this is the content of the previously stated Proposition \ref{lem:9}, which we will prove by a barrier argument. %
Recall the notion of induced passage times from \eqref{eq:64}. To prepare for the upcoming barrier argument, we first prove a simple lemma regarding induced passage times.
\begin{lemma}
  \label{lem:13}
  Consider a rectangle $R=[x_1,y_1]\times [s_1,t_1]$. For $M>0$ and $n\in \NN$, use $\Across^n(M,R)$ to denote the event on which $\cL^n(x_1,s;y_1,t\vert R),\cL^n(y_1,s;x_1,t\vert R)\leq -M$ for all $s_1\leq s<t\leq t_1$. Then there exists a constant $C$ depending on $M,x_1,s_1,y_1,t_1$ such that for every $M$ large enough depending on $x_1,s_1,y_1,t_1$, and all $n$ large enough, we have $\PP(\Across^n(M,R))>C>0$. 
\end{lemma}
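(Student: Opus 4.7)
The plan is a barrier argument, reducing the statement to Lemma \ref{l:bstrong}. The event $\Across^n(M, R)$ is a (countable) intersection of upper bounds on rescaled induced passage times, each of which is a monotone decreasing function of the environment $\{X^n_{(i,j)}\}$. Moreover $T^n(\cdot, \cdot \mid R^\dagger) \le T^n(\cdot, \cdot)$, so it suffices to produce, with probability bounded away from zero uniformly in $n$, a configuration on which the \emph{unrestricted} passage times between any two lattice points on the left and right vertical sides of $R^\dagger$ (i.e.\ horizontal coordinates $x_1$ and $y_1$) are atypically negative by $\Omega(n^{2/3})$, which translates after rescaling to the required $\Omega(n^{1/3})$ bound.

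The key observation is that $R^\dagger$ is, geometrically, a KPZ-scaled rectangle. Setting $N = \lceil (t_1 - s_1) n \rceil$ and $\Delta = 2^{-1/3}(y_1 - x_1)(t_1 - s_1)^{-2/3}$, the definition \eqref{eq:65} shows that $R^\dagger$ is, up to an $O(1)$ lattice adjustment, a rectangle of time-height $N$ and horizontal width $2\Delta N^{2/3}$. After a deterministic lattice translation (which preserves the law of the i.i.d.\ weights), $R^\dagger$ sits inside a rectangle $U$ of the form to which Lemma \ref{l:bstrong} applies with parameters $\Delta$ and, say, $M_0 = 1$. This yields a $\beta = \beta(\Delta) > 0$ such that, on an event of probability at least $\beta$,
\begin{equation*}
\sup_{p \in L_U,\, q \in R_U} T_{p, q} - 4|t(q) - t(p)| \le -N^{2/3},
\end{equation*}
and inspection of the proof of Lemma \ref{l:bstrong} (which separately bounds the events $A_i$ and $B_i$) shows that the analogous bound holds simultaneously in the opposite direction $p \in R_U$, $q \in L_U$.

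On this barrier event, for any $(s, t) \in [s_1, t_1]^2$ with $s < t$, the lattice points corresponding via $\mathfrak{r}$ to the endpoint pairs $\{(x_1, s), (y_1, t)\}$ and $\{(y_1, s), (x_1, t)\}$ lie on opposite vertical sides of $R^\dagger \subseteq U$; for $n$ large (depending only on $x_1, y_1, s_1, t_1$) they are comparable in the lattice partial order. Bounding the induced passage time by the unrestricted one and using $t(q) - t(p) = (t - s) n + O(1)$, we get $T^n(p, q \mid R^\dagger) - 4(t - s) n \le -(t_1 - s_1)^{2/3} n^{2/3} + O(1)$, and hence $\cL^n(x_1, s; y_1, t \mid R) \le -2^{-4/3} (t_1 - s_1)^{2/3} n^{1/3} + O(n^{-1/3}) \le -M$ for $n$ sufficiently large depending on $M$ and on $x_1, y_1, s_1, t_1$. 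The same bound for the reversed pair follows identically, yielding $\PP(\Across^n(M, R)) \ge \beta$ and thus $C = \beta$. The main technical point is ensuring that Lemma \ref{l:bstrong} controls both orderings of endpoints simultaneously, which is implicit in the $A_i$/$B_i$ decomposition in its proof; the $O(1)$ discrepancies in the lattice-to-continuum correspondence are comfortably absorbed by the $n^{2/3}$-versus-$n^{1/3}$ gap between the available bound and the required one.
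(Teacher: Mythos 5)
Your proposal is correct and follows essentially the same route as the paper, whose entire proof of Lemma \ref{lem:13} is the one-line reduction to Lemma \ref{l:bstrong} that you have spelled out (identifying $R^\dagger$ with a $U_\Delta$-type rectangle, bounding induced passage times by unrestricted ones, using both directions controlled in the $A_i$/$B_i$ decomposition, and exploiting the $n^{2/3}$-versus-$Mn^{1/3}$ slack). The one small imprecision---endpoint pairs with $t-s$ of order $n^{-1/3}$ or smaller need not be lattice-comparable for large $n$---is harmless, since for such pairs the induced passage time is a maximum over an empty set of paths and the required bound holds vacuously.
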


\begin{proof}
This follows from Lemma \ref{l:bstrong}. %
\end{proof}

We now come to the proof of Proposition \ref{lem:9}. The idea here is that if there are two interfaces $\Upsilon_p,\Upsilon_q$ which stay very close but do not meet each other for a long time, then by planarity, the upward semi-infinite geodesics starting from points in between the two interfaces can only have geodesics from other points merging into them if these other points also lie in between $\Upsilon_p,\Upsilon_q$. As a consequence, the Peano curve $\eta$ must cover a large vertical height relatively quickly when started from a point between $\Upsilon_p,\Upsilon_q$.
\begin{figure}
  \centering
  \includegraphics[width=0.8\textwidth]{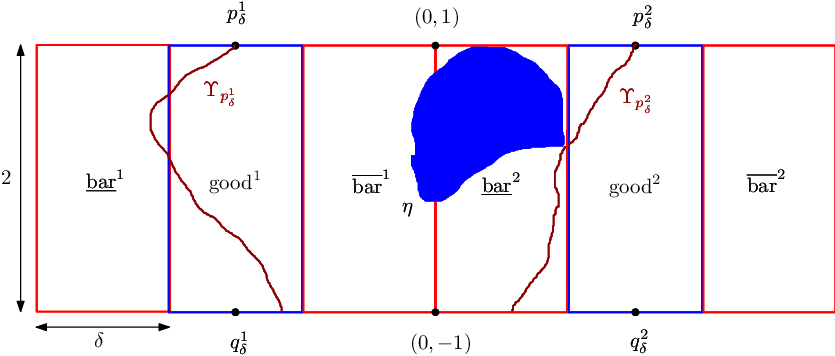}
  \caption{\textbf{Proof of Proposition \ref{lem:9}}: If the interfaces $\Upsilon_{p_{\delta}^1},\Upsilon_{p_\delta^2}$ stay in the regions $\corr^1=\ubarr^1\cup\good^1\cup\obarr^1$ and $\corr^2=\ubarr^2\cup \good^2\cup\obarr^2$ respectively, then the Peano curve $\eta$ (the blue blob) is forced to cover a large vertical height quickly since it would otherwise have to cross one of  $\Upsilon_{p_{\delta}^1},\Upsilon_{p_\delta^2}$.} %
  \label{fig:11}
\end{figure}

\begin{proof}[Proof of Proposition \ref{lem:9}]
  For some fixed $\delta>0$, define the regions $\corr^1,\corr^2$ by
  \begin{equation}
    \label{eq:4}
    \corr^1=[-3\delta,0]\times [-1,1], \corr^2=[0,3\delta]\times [-1,1].
  \end{equation}
   Let $p^1_\delta=(-3\delta/2,1)$ and $p^2_\delta=(3\delta/2,1)$ and define the event $\cE_\delta$ by
   \begin{equation}
     \label{eq:6}
     \cE_\delta=
     \left\{
       \Upsilon_{p^1_\delta}\lvert_{[-1,1]}\subseteq \corr^1, \Upsilon_{p^2_\delta}\lvert_{[-1,1]}\subseteq \corr^2
     \right\}.
   \end{equation}
   We first observe that on the event $\cE_\delta$, we have $\sup_{v\in [0,12\delta]}\eta_u(v)\geq 1$. To see this, we note that since the curve $\eta$ cannot cross the interfaces $\Upsilon_{p_\delta^1},\Upsilon_{p_\delta^2}$ and thus on the event $\cE_\delta$, we must have
   \begin{equation}
     \label{eq:88}
     \{\eta(v):v\in[ 0,\inf_{w\geq 0}\{|\eta_u(w)|=1\}]\}\subseteq [-3\delta,3\delta]\times [-1,1],
   \end{equation}
   which of course implies that $|\eta_u(v)|$ must be equal to $1$ for some $v\leq \leb([-3\delta,3\delta]\times [-1,1])=12\delta$. %

   We now claim that it is sufficient to prove that $\PP(\cE_\delta)>0$ for every $\delta>0$ to complete the proof. To see this, note that $\PP(\cE_\delta)>0$ would imply that $\PP(\sup_{v\in [0,12\delta]}|\eta_u(v)|\geq 1)>0$. By the scaling from (5) in Theorem \ref{thm:1}, we also note that
   \begin{equation}
     \label{eq:9}
     \PP(\sup_{v\in [0,12\delta]}|\eta_u(v)|\geq 1)=\PP(\sup_{v\in [0,1]}|\eta_u(v)|\geq 12^{-3/5}\delta^{-3/5}),
   \end{equation}
   and thus if $\PP(\cE_\delta)>0$ for every $\delta$, then the latter expression would have positive probability for every $\delta$. However, if there existed a constant $K$ such that $\eta_u(1)\leq K$ almost surely, by using that $\eta_{u}(\beta)\stackrel{d}{=}\beta^{3/5}\eta_{u}(1)$, we would have that $|\eta_u(v)|\leq K$ for all $v\in [0,1]\cap \QQ$ almost surely, and by continuity, this would imply that $\sup_{v\in [0,1]}|\eta_u(v)|\leq K$ almost surely. Now, we note that the above is contradicted by the fact that we can take $\delta$ small enough depending on $K$.

   Thus the task is reduced to showing that $\PP(\cE_\delta)>0$ for every $\delta>0$. We now use duality to change the event $\cE_\delta$ to a corresponding event $\widetilde{\cE}_\delta$ about infinite geodesics. With  $q_\delta^1,q_\delta^2$ denoting the points $(-3\delta/2,-1), (3\delta/2,-1)$, we define the event $\widetilde{\cE}_\delta$ by
   \begin{equation}
     \label{eq:10}
     \widetilde{\cE}_\delta= 
      \left\{
       \Gamma_{q^1_\delta}\lvert_{[-1,1]}\subseteq \corr^1, \Gamma_{q^2_\delta}\lvert_{[-1,1]}\subseteq \corr^2
     \right\}.
   \end{equation}
   By an application of the duality between the geodesic tree and the interface portrait (Proposition \ref{prop:4}), we obtain that $\PP(\cE_\delta)=\PP(\widetilde{\cE}_\delta)$. In the remainder of the proof, we use a barrier argument to show that $\PP(\widetilde{\cE}_\delta)>0$ for any $\delta>0$. A very similar barrier argument was used in \cite{BB21}; indeed, the latter work involved placing barriers to restrict a geodesic inside a thin strip, whereas the current setting involves placing barriers to ensure that the geodesics $\Gamma_{q_\delta^1},\Gamma_{q_\delta^2}$ stay inside their respective strips. While the one geodesic case does not directly imply the two geodesic case, the arguments proceed along similar lines and in fact the current setting is easier in the sense that we do not need to quantify the dependence of $\PP(\widetilde{\cE}_\delta)$ on $\delta$. We now move on to the barrier argument, but we note that we present it in a terse and condensed manner since such arguments have already been used earlier. %

 In order to place barriers, we will need to use the FKG inequality, and for this reason, we first work in the discrete-- we use the notation and coupling from Proposition \ref{prop:6}. We define $\widetilde{\cE}^n_\delta$, the pre-limiting analogue of $\widetilde{\cE}_\delta$, by
   \begin{equation}
     \label{eq:61}
     \widetilde{\cE}^n_\delta= 
      \left\{
       \Gamma^n_{q^1_\delta}\lvert_{[-1,1]}\subseteq \corr^1, \Gamma^n_{q^2_\delta}\lvert_{[-1,1]}\subseteq \corr^2
     \right\},
   \end{equation}
   and the goal now is to show that $\PP(\cE_\delta^n)>0$ uniformly for all large $n$, for every fixed $\delta>0$. Indeed, in combination with the geodesic convergence statement from Proposition \ref{prop:6}, the above would immediately yield $\PP(\widetilde{\cE}_\delta)>0$ and complete the proof.
   
   Define the barrier regions $\ubarr^1,\obarr^1,\ubarr^2,\obarr^2$ by
   \begin{equation}
     \label{eq:5}
     \ubarr^1=[-3\delta,-2\delta]\times [-1,1], \obarr^2=[2\delta,3\delta]\times [-1,1],
   \end{equation}
   \begin{equation}
     \label{eq:11}
     \obarr^1=[-\delta,0]\times [-1,1], \ubarr^2=[0,\delta]\times [-1,1].
   \end{equation}
 Define the good regions $\good^1,\good^2$ by
   \begin{equation}
     \label{eq:12}
     \good^1=[-2\delta,-\delta]\times [-1,1],\good^2=[\delta,2\delta]\times [-1,1].
   \end{equation}
  We refer the reader to Figure \ref{fig:11} for a depiction of the above newly defined regions. %
  \begin{figure}
  \centering
  \includegraphics[width=0.8\textwidth]{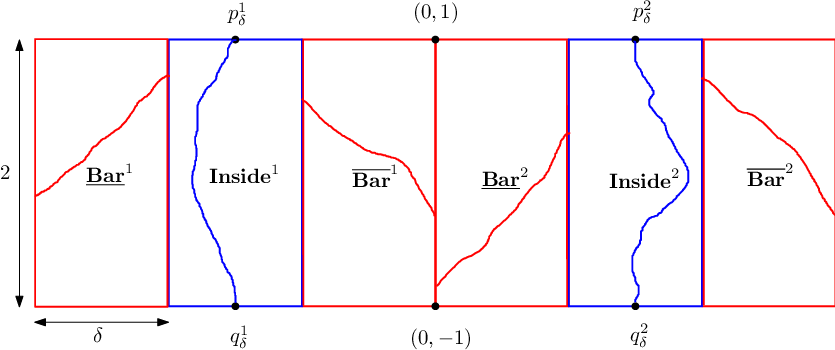}
  \caption{\textbf{The events in the proof of Proposition \ref{lem:9}}: The two $\Inside$ events ensure that there are paths between $q_\delta^1,p_\delta^1$ and $q_\delta^2,p_\delta^2$ staying inside the blue rectangles (the blue paths) which have good lengths. On the other hands, the $\Barr$ events are barriers which ensure that any paths crossing the red regions are highly uncompetitive. The role of the regularity event $\Reg$ (not shown above) is to make sure that paths which do cross a barrier do not make up for the incurred loss in the remaining time when they are not crossing barriers. All the above-mentioned events together ensure that the geodesics $\Gamma_{q_\delta^1}$ and $\Gamma_{q_\delta^2}$ stay inside their respective blue regions.}
  \label{fig:1.1}
\end{figure}
 For a constant $\alpha>0$, define events $\Inside^1,\Inside^2$ by
   \begin{equation}
     \label{eq:31}
     \Inside^1=
     \left\{
       \cL^n(q_\delta^1;p_\delta^1\vert \good^1)\geq \alpha
     \right\}; \Inside^2=
     \left\{
       \cL^n(q_\delta^2;p_\delta^2\vert \good^2)\geq \alpha
     \right\}.
   \end{equation}
   and set $\Inside=\Inside^1\cap \Inside^2$. By translation invariance, we know that $\PP(\Inside^1)=\PP(\Inside^2)$, and we now fix an $\alpha$ for which $\PP(\Inside)=\PP(\Inside^1)^2>c>0$ for all large $n$. In fact, the above holds for any choice of the constant $\alpha$ (see the proof of \cite[Lemma 4.1]{BB21}), but we will not require this fact.
   
   Recall the definition of the scaled discrete Busemann function %
   $\cB^n(\cdot,\cdot)$ from \eqref{eq:66}. %
   For a constant $\beta>0$, we define the events
   \begin{align}
     \label{eq:41}
     \underline{\Reg}^1&=\left\{
     \sup_{-1\leq t\leq 1} 
     \cL^n(q_\delta^1;-2\delta,t)\leq \beta  
     \right\}\cap 
     \left\{
     \sup_{t\in (-1,1)}\cB^n ( (-3\delta,t),p_\delta^1)\leq \beta 
                         \right\},\nonumber\\
     \overline{\Reg}^1&=
     \left\{
     \sup_{-1\leq t\leq 1} 
     \cL^n(q_\delta^1;-\delta,t)\leq \beta  
     \right\}\cap 
     \left\{
     \sup_{t\in (-1,1)}\cB^n ( (0,t),p_\delta^1)\leq \beta 
                        \right\},\nonumber\\
      \underline{\Reg}^2&=
     \left\{
     \sup_{-1\leq t\leq 1} 
     \cL^n(q_\delta^2;\delta,t)\leq \beta  
     \right\}\cap 
     \left\{
  \sup_{t\in (-1,1)}\cB^n ( (0,t),p_\delta^2)\leq \beta 
     \right\},\nonumber\\
     \overline{\Reg}^2&=
     \left\{
     \sup_{-1\leq t\leq 1} 
     \cL^n(q_\delta^2;2\delta,t)\leq \beta  
     \right\}\cap 
     \left\{
 \sup_{t\in (-1,1)}\cB^n ( (3\delta,t),p_\delta^2)\leq \beta 
                         \right\}
   \end{align}
   respectively,
 and we define $\Reg$ to be the intersection of the above four events. We fix $\beta$ large enough such that
   \begin{equation}
     \label{eq:42}
     \PP(\Reg\lvert \Inside)>0
   \end{equation}
for all large $n$. Indeed, such a $\beta$ does exist since both $\cL^n,\cB^n$ are tight in $n$ (see Proposition \ref{prop:6} and \cite[Proposition 35]{Bha23}) and moreover, we can choose $\beta$ large enough such that the analogue of the event $\Reg$ for the landscape has probability at least $1-c/2$, where we recall that $\PP(\Inside)\geq c$ for all $n$ large enough. Thus for the above choice of $\beta$, we would have $\PP(\Reg\cap \Inside)>c/2>0$ for all large $n$. 

We now define the barrier event $\Barr=\underline{\Barr}^1\cap \overline{\Barr}^1\cap \underline{\Barr}^2\cap \overline{\Barr}^2$, where the individual events are defined in terms of the notation from Lemma \ref{lem:13} by
   \begin{align}
     \label{eq:43}
     \underline{\Barr}^1&= \Across^n(M,\ubarr^1); \overline{\Barr}^1= \Across^n(M,\obarr^1),\nonumber\\
     \underline{\Barr}^2&= \Across^n(M,\ubarr^2); \overline{\Barr}^2= \Across^n(M,\obarr^2).
   \end{align}
 To gain a quick understanding of the above defined events, the reader could refer to Figure \ref{fig:1.1}. We fix $M$ to be a large constant satisfying $2\beta-M< \alpha$ and note that for this choice of $M$, we have $\PP(\Barr)>0$ as a consequence of Lemma \ref{lem:13} and the FKG inequality. We now define the favourable event $\Fav=\Barr \cap \Reg\cap \Inside$ and write
   \begin{equation}
     \label{eq:44}
     \PP(\Fav)=\PP(\Inside)\PP(\Barr\cap \Reg\vert \Inside).
   \end{equation}
   Now note that $\Inside$ is measurable with respect to $\sigma(\cL(\cdot;\cdot\vert \good^1),\cL(\cdot;\cdot\lvert \good^2))$. Also, conditional on the same $\sigma$-algebra, $\Barr$ and $\Reg$ are both decreasing events in the remainder of the discrete configuration $X^n$. Thus by the FKG inequality, we obtain
   \begin{align}
     \label{eq:45}
     \PP(\Barr\cap \Reg\vert \Inside)&\geq \PP(\Barr\vert \Inside)\PP(\Reg\vert \Inside)\nonumber\\
     &=\PP(\Barr)\PP(\Reg\vert \Inside)>0,
   \end{align}
uniformly for large $n$, where we used that $\Barr$ is independent of $\Inside$. We now show that $\widetilde{\cE}^n_\delta\supseteq \Fav$. By symmetry, it suffices to show that $\Gamma^n_{q^1_\delta}\lvert_{[-1,1]}\subseteq \corr^1$ on the latter event. In fact, we only show that $\inf_{s\in[-1,1]}\Gamma^n_{q_\delta^1}(s)\geq -3\delta$ on the above event. %
 An almost identical proof %
 shows that $\sup_{s\in [-1,1]}\Gamma^n_{q_\delta^1}(s)\leq 0$ on the event $\Fav$. 

 With the eventual goal of detecting a contradiction, assume that $\inf_{s\in[-1,1]}\Gamma^n_{q_\delta^1}(s)< -3\delta$ is possible on the event $\Fav$ and let $t_1$ be the smallest number for which $\Gamma^n_{q^1_\delta}(t_1)=-3\delta$ and let $t_2=\sup\{t<t_1:\Gamma^n_{q_\delta^1}(t)=-2\delta\}$. %
 It follows that the restriction of the geodesic $\Gamma^n_{q_\delta^1}$
 on $[t_1,t_2]$ is contained in $\ubarr^1$. Therefore, on the event $\mathbf{Fav}$, we can write
   \begin{align}
     \label{eq:46}
    \cB^n( q_\delta^1,p_\delta^1) &= \cL(q_\delta^1;\Gamma_{q_\delta^1}^n(t_2),t_2)+\cL^n(\Gamma^n_{q^1_\delta}(t_2),t_2;\Gamma^n_{q^1_\delta}(t_1),t_1)+\cB^n( (\Gamma_{q_\delta^1}^n(t_1),t_1),p_\delta^1)\nonumber\\   
     &<2\beta-M<\alpha\leq \cL^n(q_\delta^1;p_\delta^1\vert \good^1)\leq \cL^n(q_\delta^1;p_\delta^1),
   \end{align}
  and this contradicts the reverse triangle inequality. %
   This shows that the inequality $\inf_{s\in[-1,1]}\Gamma^n_{q_\delta^1}(s)\geq -3\delta$ holds on $\Fav$, and arguing the other cases similarly we get $\widetilde{\cE}^n_\delta\supseteq \Fav$. This implies $\PP(\widetilde{\cE}_\delta^n)>0$, thereby completing the proof. %
 \end{proof}

\noindent We now combine the above with the independence coming from Lemma \ref{lem:19} and then go on to finally complete the proof of Theorem \ref{thm:5}.
 \begin{lemma}
   \label{lem:18}
   Fix an open set $U\subseteq \RR^2$. For $M>0$, let $A_M$ denote the event that there almost surely exists a sequence of points $z'_m\in U$ for which $|\eta_u(v_{z'_m}+m^{-1})-z'_m|^{5/3}\geq Mm^{-1}$. Then $A_M$ occurs almost surely for any fixed $M>0$. 
 \end{lemma}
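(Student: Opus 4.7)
I will combine the strict positivity of $p := \PP(|\eta_u(1)|^{5/3} \geq M)$ from Proposition \ref{lem:9} with the approximate independence at well-separated base points provided by Lemma \ref{lem:19} and Proposition \ref{prop:3}, and upgrade positive probability to almost-sure existence via a Borel--Cantelli argument along a sparse sub-sequence $m_j \to \infty$. This will produce the claimed sequence $\{z'_m\}$ along the sub-sequence $\{m_j\}$, which is the intended interpretation of the conclusion.

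By the KPZ scaling (Theorem \ref{thm:1}(5)) and translation invariance (Theorem \ref{thm:1}(4)), for every $z\in \RR^2$ and every $m\in \NN$ the event
\[
B_{z,m} := \{|\eta_u(v_z + m^{-1}) - \eta_u(v_z)|^{5/3}\geq Mm^{-1}\}
\]
has probability exactly $p$. For each $j\in \NN$ I will pick $k_j = j$ points $z_1^{(j)},\ldots,z_j^{(j)}$ inside pairwise disjoint subregions $U_j \subset U$, with the points in $U_j$ at mutual Euclidean distance at least $r_j$. Applying Proposition \ref{prop:3} (extended to the pooled countable collection $\{z_i^{(j)}\}$) yields mutually independent resampled landscapes $\cL^{\mathrm{re}}_{z_i^{(j)}}$ coupled with $\cL$, and corresponding resampled Peano curves $\eta^{\mathrm{re}}_{z_i^{(j)}}$ with $\eta^{\mathrm{re}}_{z_i^{(j)}}(0) = z_i^{(j)}$. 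By the natural multi-point extension of Lemma \ref{lem:19}, on the event $G_j := \bigcap_{i=1}^{j} \cE^{z_i^{(j)}}_{m_j^{-1}}$ each $B_{z_i^{(j)},m_j}$ coincides with its resampled analogue $B^{\mathrm{re}}_{z_i^{(j)},m_j}$ built from $\eta^{\mathrm{re}}_{z_i^{(j)}}$. Since $\PP(\cE^{z}_{\delta})\to 1$ as $\delta\to 0$ uniformly in $z$ (by translation invariance), I will choose $m_j$ large enough that $\PP(G_j^c)\leq j\cdot \PP((\cE^0_{m_j^{-1}})^c) \leq 2^{-j}$, and take $r_j \geq m_j^{-1/5}$ so the Lemma \ref{lem:19} coupling is valid at every grid point.

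The events $\{B^{\mathrm{re}}_{z_i^{(j)},m_j}\}_{i,j}$ are then mutually independent across both $i$ and $j$ (as the resampled landscapes are independent at distinct base points), each with probability $p$. Setting $R_j := \bigcup_{i=1}^{j} B^{\mathrm{re}}_{z_i^{(j)},m_j}$, I obtain $\PP(R_j^c) = (1-p)^j$, which is summable in $j$. The first Borel--Cantelli lemma applied separately to $\{G_j^c\}$ and $\{R_j^c\}$ will then give that almost surely both $G_j$ and $R_j$ hold for all sufficiently large $j$; on this full-probability event some $B_{z_i^{(j)},m_j}$ is realized for each large $j$, and setting $z'_{m_j}$ to that $z_i^{(j)}$ completes the construction.

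The main obstacle is that Lemma \ref{lem:19} yields only the qualitative statement $\PP(\cE^z_\delta)\to 1$ with no quantitative tail. Consequently, at any single scale $m$ one cannot let the grid size grow arbitrarily while preserving control of the union bound on $G_j^c$. The resolution is the coordinated growth of $k_j = j$ together with the choice of $m_j$ large enough (depending on the unknown rate of convergence in Lemma \ref{lem:19}) to enforce $\PP(G_j^c)\leq 2^{-j}$, balancing both Borel--Cantelli sums simultaneously, and naturally giving the conclusion along a sub-sequence of scales rather than for every $m$.
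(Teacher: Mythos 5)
Your proof is correct, but it is organized differently from the paper's. The paper fixes $K$ distinct points $z_1,\dots,z_K\in U$ once and for all, uses the multi-point version of Lemma \ref{lem:19} at scale $\delta=1/N$, and exploits the monotonicity in $N$ of the events $D_N=\{\text{the inequality holds for some }m> N\text{ and some }z\in U\}$: since $\PP(D_N)\geq 1-\PP(|\eta_u(1)|^{5/3}\leq M)^K-\PP(E_{1/N}^c)$ and the error term vanishes in the $\liminf$ over $N$, one gets $\PP(A_M)\geq 1-\PP(|\eta_u(1)|^{5/3}\leq M)^K$ for every $K$, and then sends $K\to\infty$ at the level of probabilities — no Borel--Cantelli, no subsequence of scales, and no coupling across different scales. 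You instead let the number of base points grow with the scale index $j$, force $\PP(G_j^c)\leq 2^{-j}$ by choosing $m_j$ to beat the unquantified rate in Lemma \ref{lem:19}, and apply the first Borel--Cantelli lemma to $\{G_j^c\}$ and $\{R_j^c\}$; this buys a slightly more constructive conclusion (a.s. success at every sufficiently large scale along the explicit subsequence $\{m_j\}$), at the cost of extra bookkeeping. Two remarks on that bookkeeping: (i) the claimed independence of the resampled events \emph{across} $j$ (via a countable pooled extension of Proposition \ref{prop:3}) is not actually needed — $\PP(R_j^c)=(1-p)^j$ only uses within-$j$ independence, and Borel--Cantelli needs only summability — but you do need all the per-$j$ couplings realized on one probability space, e.g.\ by sampling the auxiliary randomness for each $j$ conditionally independently given $\cL$; this should be said rather than asserted through the unproven countable extension. (ii) Your conclusion along the subsequence $\{m_j\}$ is the right interpretation: the paper's own argument likewise only yields the inequality at arbitrarily large $m$ (via $\bigcap_N D_N$), and that is all the proof of Theorem \ref{thm:5} requires. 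Both routes rest on the same two inputs, Proposition \ref{lem:9} (so that $p>0$) and the multi-point form of Lemma \ref{lem:19} together with the translation/scaling symmetries of Theorem \ref{thm:1}.
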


 \begin{proof}
   Let $K\in \NN$ be a parameter which will be sent to $\infty$ at the end of the argument. Choose and fix $K$ distinct points $\{z_1,\dots,z_{K}\}\subseteq U$. Now, using $v_{z_i}$ to denote the a.s.\ unique point $\eta^{-1}(z_i)$ as in Lemma \ref{lem:19}, we note that there is an event $E_\delta$, on which we have %
   \begin{equation}
     \label{eq:48}
     \eta\lvert_{[v_{z_i}-\delta,v_{z_i}+\delta]}(v_{z_1}+\cdot)=\eta^\mathrm{re}_{z_i}\lvert_{[-\delta,\delta]}(\cdot),
   \end{equation}
   and the event $E_\delta$ satisfies $\PP(E_\delta)\rightarrow 1$ as $\delta\rightarrow 0$ for any fixed $K$. Note that the latter curves in \eqref{eq:48} are independent of each other and thus we can write 
   \begin{align}
     \label{eq:49}
     \PP( |\eta^\mathrm{re}_{z_i,u}(\delta)-z_i|^{5/3}\geq M\delta \text{ for some } i )& = 1 - \PP( |\eta_u(\delta)|^{5/3} \leq M\delta)^K\nonumber\\
     &= 1 - \PP( |\eta_u(1)|^{5/3} \leq M)^K,
   \end{align}
   where $\eta_{z_i,u}^\mathrm{re}$ denotes the second coordinate of $\eta_{z_i}^{\mathrm{re}}$, and we note that we have used the scaling property of $\eta$ from Theorem \ref{thm:1} (5). Thus we obtain that for any fixed $K$ and any $N\in \NN$,
   \begin{align}
     \PP(A_M)&\geq \liminf_{N\rightarrow \infty}
     \PP( |\eta_u(v_z+m^{-1})-v_z|^{5/3}\geq Mm^{-1} \text{ for some } m> N \text{ and } z\in U)\nonumber\\
     &\geq \liminf_{N\to \infty}\left( \PP( |\eta^\mathrm{re}_{z_i,u}(N^{-1})-z_{i}|^{5/3}\geq MN^{-1} \text{ for some } i)-\PP(E^{c}_{1/N})\right)\nonumber\\
     &\geq 1 - \PP( |\eta_u(1)|^{5/3} \leq M)^K,
    \label{eq:50}
   \end{align}
   where in the second inequality we have used \eqref{eq:48} and in the third inequality we have used \eqref{eq:49} together with the fact the $\PP(E^{c}_{1/N})\to 0$. Finally, we note that as a consequence of Proposition \ref{lem:9}, the right-hand side above converges to $0$ as $K\rightarrow \infty$, and this completes the proof.  
 \end{proof}

 \begin{proof}[Proof of Theorem \ref{thm:5}]
By using Theorem \ref{thm:1} (4) and (2), for any fixed $v_0\in \RR$, almost surely, there is no $w\neq v_0$ satisfying $\eta(w)=\eta(v_0)$. By using the above along with Fubini's theorem, the set of $v\in \inte I$ such that $\eta(v)$ is visited by $\eta$ exactly once form a full measure subset of $I$. In particular, this implies that there is an Euclidean ball $B\subseteq \RR^2$ with rational center and rational radius %
such that $\eta(v)\in B\subseteq \eta(I)$. We now apply Lemma \ref{lem:18} with the open sets $U$ being balls with rational centers and radii to obtain that $\eta_u\lvert_I$ is not $3/5$ H\"older continuous with respect to the Euclidean metric. As a consequence, we obtain that $\eta\lvert_I$ is not $1/5$ H\"older continuous with respect to the intrinsic metric $d_\mathrm{in}$ defined in \eqref{eq:54}.
\end{proof}
\begin{remark}
  \label{rem:unb}
  In the above proof, to show that $\eta$ is not $1/5$ locally H\"older continuous with respect to the intrinsic metric, we showed that $\eta_u$ is not locally $3/5$ H\"older continuous anywhere. Though we do not pursue this, it can be shown by an analogous argument that $\eta_h$ is not locally $2/5$ H\"older continuous anywhere. Indeed, the main task is to establish the unboundedness of the law of $\eta_h(1)$ and this can be done by a similar argument as in the proof of Proposition \ref{lem:9}, with the modification that all the rectangles in Figures \ref{fig:1}, \ref{fig:1.1} are tilted instead of lying vertically upwards. This would indeed force $\eta$ to cover a large horizontal distance since the $\Upsilon_{p_{\delta}^1},\Upsilon_{p_\delta^2}$ would themselves do so, and $\eta$ would be sandwiched between these.
\end{remark}

\printbibliography
\end{document}